\DeclarePairedDelimiter\vbracket{\langle}{\rangle}
\DeclareMathOperator{\sh}{sh}
\DeclareMathOperator{\Homeo}{Homeo}
\DeclarePairedDelimiter{\floor}{\lfloor}{\rfloor}
\newlist{casesp}{enumerate}{3} 
\setlist[casesp]{align=left, 
     listparindent=\parindent, 
     parsep=\parskip, 
     font=\normalfont\bfseries, 
     leftmargin=0pt, 
     labelwidth=0pt, 
     itemindent=.4em,labelsep=.4em, 
     partopsep=0pt, 
     }
\setlist[casesp,1]{label=Case~\arabic*:,ref=\arabic*}
\setlist[casesp,2]{label=Subcase~\thecasespi(\roman*):,ref=\thecasespi.\roman*}
\setlist[casesp,3]{label=Case~\thecasespii.\alph*:,ref=\thecasespii.\alph*}
\newtheorem{theorem}{Theorem}[section]
\newtheorem{lemma}[theorem]{Lemma}
\newtheorem{proposition}[theorem]{Proposition}
\newtheorem{definition}[theorem]{Definition}
\newtheorem{corollary}[theorem]{Corollary}
\theoremstyle{definition} 
\newtheorem{remark}[theorem]{Remark}
\newcommand{\RR}{\mathbb{R}}
\newcommand{\QQ}{\mathbb{Q}}
\newcommand{\ZZ}{\mathbb{Z}}
\newcommand{\LO}{\mathrm{LO}}
\newcommand{\ooo}{\mathfrak{o}}
\newcommand{\Stab}{\mathrm{Stab}}
\newcommand{\PSL}{\mathrm{PSL}}
\newcommand{\LL}{\mathcal{L}}
\newcommand{\hstilde}{\mathrm{H\widetilde{ome}o}_+(S^1)}
\newcommand{\TT}[1]{\mathcal{T}(#1)}
\newcommand{\TTT}[1]{\mathcal{T}_{\text{str}}(#1)}
\newcommand{\cpq}{C_{p,q}}
\title{order-detection, representation-detection, and applications to cable knots}
\begin{document}

\emergencystretch 3em 

\author[A.~Clay]{Adam Clay}

\address{Department of Mathematics, 420 Machray Hall, University of Manitoba, Winnipeg, MB, R3T 2N2}
\email{Adam.Clay@umanitoba.ca}

\author[J.~Lu]{Junyu Lu}

\address{Department of Mathematics, 420 Machray Hall, University of Manitoba, Winnipeg, MB, R3T 2N2}
\email{luj9@myumanitoba.ca}

\begin{abstract}
    Given a $3$-manifold $M$ with multiple incompressible torus boundary components, we develop a general definition of order-detection of tuples of slopes on the boundary components of $M$.	In parallel, we arrive at a general definition of representation-detection of tuples of slopes, and show that these two kinds of slope detection are equivalent---in the sense that a tuple of slopes on the boundary of $M$ is order-detected if and only if it is representation-detected. We use these results, together with new ``relative gluing theorems," to show how the work of Eisenbud-Hirsch-Neumann, Jankins-Neumann and Naimi can be used to determine tuples of representation-detected slopes and, in turn, the behaviour of order-detected slopes on the boundary of a knot manifold with respect to cabling. Our cabling results improve upon work of the first author and Watson, and in particular, this new approach shows how one can use the equivalence between representation-detection and order-detection to derive orderability results that parallel known behaviour of L-spaces with respect to Dehn filling.
\end{abstract}

\maketitle

\tableofcontents

\section{Introduction}

The L-space conjecture asserts that for an irreducible rational homology $3$-sphere $M$, the properties of supporting a co-orientable taut foliation, having a left-orderable fundamental group, and not being a Heegaard-Floer homology L-space are equivalent (\cite[Conjecture 1]{BGW13} \cite[Conjecture 5]{Juh15}).

An essential tool for investigating the L-space conjecture for $3$-manifolds admitting a nontrivial JSJ decomposition is the notion of detected slopes, introduced in \cite{BC17} and developed further in \cite{RR17, HW23, BC23, BGYpreprinta, BGYpreprintb}. Slope detection is a method of recording the behaviour of left-orderings, co-orientable taut foliations, or Heegaard-Floer homology relative to the incompressible torus boundary components of a compact, connected, orientable $3$-manifold, and so slope detection naturally comes in three flavours: order-detection, foliation-detection, and NLS-detection (here, NLS stands for non-L-space). Connecting order-detection and foliation-detection is often done via representations of the fundamental group in $\mathrm{Homeo}_+(\mathbb{R})$, and so representation-detection is a fourth type of detection that, while related to a structure not explicitly mentioned in the L-space conjecture, will likely be key to its eventual resolution in the case of toroidal $3$-manifolds.

Each kind of detection comes in two flavours---regular detection (which we will simply call ``detection'', only including the word ``regular" when needed for emphasis) and strong detection.
Detection encodes the boundary information needed to establish left-orderability of the fundamental group of the manifold $W$ obtained by gluing together two 3-manifolds along incompressible torus boundary components, and whether $W$ admits a co-orientable taut foliation or is not a Heegaard Floer L-space.
Strong detection is a kind of detection adapted to deal with manifolds that arise from Dehn filling a boundary component and to similarly analyse the structures supported by the resulting manifolds.

Given a compact, connected $3$-manifold $M$ whose boundary consists of incompressible torus boundary components $T_1, \ldots, T_n$, let $\mathcal{S}(T_i) = H_1(T_i; \mathbb{R})/\{\pm 1\}$ denote the set of slopes on $T_i$, and set $\mathcal{S}(M) = \mathcal{S}(T_1) \times \dots \times \mathcal{S}(T_n)$. We use the shorthand $[\alpha_*]$ to denote a tuple of slopes $([\alpha_1], \dots, [\alpha_n]) \in \mathcal{S}(M)$ where $\alpha_i \in H_1(T_i; \mathbb{R}) \setminus \{ 0\}$ for $i = 1, \ldots , n$. Below, we provide a rough sketch of the program for using slope detection to attack the L-space conjecture in the case of toroidal irreducible rational homology $3$-spheres, in order to properly frame the contributions made in this manuscript. The program proceeds in three parts.
\begin{enumerate}
    \item Define sets of order-detected, foliation-detected and NLS-detected slopes in $\mathcal{S}(M)$.
    \item Show that for every compact, connected $3$-manifold $M$ whose boundary consists of a disjoint union of incompressible tori, the three sets in (1) are equal. This step seems to require that an ancillary notion of representation-detection be added to (1), as in \cite{BC17}.
    \item Given an irreducible rational homology $3$-sphere $W$, express $W$ as a union $\cup_{j}M_j$ of compact connected pieces glued together along a family $T_1, \dots, T_n$ of disjoint incompressible tori. Set $\mathcal{S}(W) = \mathcal{S}(T_1) \times \dots \times \mathcal{S}(T_n)$ and let $p_j : \mathcal{S}(W) \rightarrow \mathcal{S}(M_j)$ denote the map which extracts the slopes corresponding to the boundary tori of $M_j$. Show that there exists $[\alpha_*] \in \mathcal{S}(W)$ such that $p_j([\alpha_*])$ is $X$-detected (where $X$ is one of order-, foliation-, or NLS-) for all $j$ if and only if $W$ has the property corresponding to $X$.
\end{enumerate}

While steps (1) and (3) are relatively well understood in the cases of NLS-detection and foliation-detection \cite{RR17, Ras17, HW23, BGYpreprinta, BGYpreprintb}, the case of order-detection and the supporting notion of representation-detection are not so well developed, though there are significant treatises on each \cite{BGYpreprinta, BC17, BC23}. This underdevelopment is due to an additional technical complication that arises in the cases of order-detection and representation-detection: aside from regular detection and strong detection, there is a third kind of detection which is more natural to define and often easier to work with, but which is inadequate to carry out step (3) in the program outlined above. We call this third kind of detection weak detection.


For manifolds with a single boundary component, all three kinds of order-detection are introduced and studied in \cite{BC23}, with a brief mention of multiple incompressible torus boundary components. The manuscript \cite[Section 6]{BGYpreprinta} studies weak order-detection and order-detection, including for manifolds with multiple boundary components. The present manuscript establishes a definition of order-detection in full generality, allowing multiple incompressible torus boundary components, with each either having a slope that is weakly, strongly, or regularly order-detected (Definition \ref{def:order-detect}). Our definitions reduce to those of \cite{BGYpreprinta, BC23} if we restrict our attention to manifolds with a single boundary component, or if we ignore strong detection. In parallel, we establish a definition of representation-detection for manifolds with multiple incompressible torus boundary components, with each slope being either weakly, strongly, or regularly representation-detected (Definition \ref{representation-detect}).

Given a $3$-manifold $M$ as earlier in the introduction, our definition of detection therefore involves detection of $(J, K; [\alpha_*])$, where $J \subset K$ are subsets of $\{ 1, \dots, n\}$ that record which slopes of the tuple $[\alpha_*]$ are strongly detected and which are regularly detected, respectively, rather than just weakly detected. We prove the following.

\begin{theorem}
    \label{prop:equbetweenorddetectandrepdetect}
    Suppose $M$ is a compact connected irreducible orientable $3$-manifold whose boundary is a union of incompressible tori $T_1, \ldots, T_n$; fix $J \subset K \subset \{1, \dots, n\}$ and $[\alpha_*] \in \mathcal{S}(M)$. Then $(J, K; [\alpha_*])$ is order-detected if and only if it is representation-detected.
\end{theorem}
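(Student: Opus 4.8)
The plan is to prove the two directions separately, establishing both as instances of a single underlying equivalence between dynamical realizations of left-orderings and conjugacy classes of representations into $\Homeo_+(\RR)$. First I would recall the standard correspondence, going back to the work on the single-boundary case in \cite{BC17, BC23}: a left-ordering of a countable group $G$ gives rise to a dynamical realization $\rho \colon G \to \Homeo_+(\RR)$ that is fixed-point free (except for the identity), and conversely any such representation pulls back an ordering of $\RR$ to a left-ordering of $G/\ker\rho$, which can then be lifted to a left-ordering of $G$ via a convex extension argument when $\ker \rho$ is left-orderable. The content of the theorem is that this correspondence is compatible with the boundary data $(J,K;[\alpha_*])$: namely, the conditions placed on the restriction of a left-ordering to the peripheral subgroups $\pi_1(T_i) \cong \ZZ^2$ translate exactly into the conditions placed on the restrictions of the corresponding representation.

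Next I would unwind Definitions \ref{def:order-detect} and \ref{representation-detect} to isolate, for each index $i$, what weak/strong/regular detection of the slope $[\alpha_i]$ asks of an ordering versus a representation restricted to $\pi_1(T_i)$. For order-detection, weak detection asks that the slope $[\alpha_i]$ lie in the boundary of (or be a limit of) the cone of slopes compatible with the restricted ordering; regular detection asks it lie in the interior; strong detection is the condition tailored to Dehn filling, asking that the ordering descend to the filled peripheral quotient. For representation-detection the analogous trichotomy concerns whether $\alpha_i$ is sent to a parabolic/identity element, an element with a fixed point, or acts freely, together with the filling compatibility. The key step is a lemma-by-lemma matching of these local conditions across the dynamical realization correspondence; for a single peripheral $\ZZ^2$ this is essentially a computation about the rotation number / translation number of the image of a rank-two abelian group, and the semiconjugacy class of such an action is determined by which elements act with fixed points. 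I would handle the three cases (weak, strong, regular) uniformly by phrasing each as a condition on the germ of the action near $\pm\infty$ or on a specified invariant set.

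For the forward direction (order-detected $\Rightarrow$ representation-detected), I would take a left-ordering witnessing detection of $(J,K;[\alpha_*])$, pass to its dynamical realization to get a representation $\rho$, and verify using the matching lemma that $\rho$ witnesses representation-detection; the strong-detection indices in $J$ require checking that the descent to the Dehn-filled group is preserved, which follows because a slope being in the kernel on the ordering side corresponds to it acting trivially (or with a global fixed point, after semiconjugacy) on the representation side. For the converse, I would start with a representation $\rho$ witnessing representation-detection, choose an invariant ordering of $\RR$ (or of an appropriate $\rho$-minimal invariant subset), pull it back, and then run the convex-extension argument to promote the resulting left-ordering of $\rho(\pi_1(M))$ to one of $\pi_1(M)$ — here one needs irreducibility of $M$ to know $\ker \rho$ is left-orderable, which is why the hypothesis appears. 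Again the matching lemma certifies that the peripheral conditions are preserved.

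The main obstacle I anticipate is the bookkeeping around \emph{strong} detection and the simultaneous handling of multiple boundary components. In the single-boundary, non-strong setting the equivalence is classical; the work here is (a) ensuring that the three-tier condition $(J,K)$ is respected \emph{coordinate by coordinate} by a single global dynamical realization — there is no difficulty gluing the local data since the peripheral subgroups are disjoint, but one must be careful that the realization of a global ordering restricts to the dynamical realization of the restricted ordering on each $\pi_1(T_i)$, which requires a cofinality/convexity argument — and (b) checking that strong detection, defined via descent to a Dehn filling, corresponds precisely to the filling slope acting trivially after semiconjugacy, so that the quotient representation is well-defined and still witnesses the appropriate detection of the remaining slopes. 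I expect most of the proof to consist of carefully citing or adapting the single-boundary lemmas of \cite{BC23} and the multi-boundary weak-detection results of \cite{BGYpreprinta}, with the genuinely new content being the strong-detection compatibility.
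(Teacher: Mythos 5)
Your skeleton matches the paper's (forward via the dynamic realisation of the detecting order, converse via pulling back an ordering of $\RR$, with semi-conjugacy carrying the strong-detection data), but there is a genuine gap in the converse direction. Order-detection requires, in condition O3, an $\ooo$-convex \emph{normal} subgroup $C$ with $\pi_1(T_i)\cap C\leq\langle\alpha_i\rangle$, equality when $i\in J$ and triviality for irrational slopes. From a representation $(\rho,x_0)$ the natural source of convexity is the stabiliser $H=\Stab_\varphi(\nu(x_0))$ of the basepoint under the semi-conjugate representation $\varphi$ from R3: building the ordering lexicographically from the ordered coset space $G/H$ makes $H$ convex, and since $\ker\varphi\leq H$ and $\varphi(\alpha_i)=id$ for $i\in J$ one gets $\langle\langle\alpha_i\rangle\rangle\leq H$. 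But $H$ is a point stabiliser and is in general not normal, so this does not yet produce the subgroup that O3 demands. The paper needs a dedicated preliminary result (Lemma \ref{lemma:equivO3}) showing that a convex, not-necessarily-normal $H$ with these peripheral properties suffices: one passes to $C=\bigcap_{g}gHg^{-1}$ and builds a \emph{new} ordering from the $G$-action on an enumeration of cosets of $H$, and then must re-verify O1 and O2 for that new ordering (this re-verification is where O2 of the original ordering is used essentially). Your ``convex-extension argument'' refers to extending an ordering over $\ker\rho$ via a short exact sequence, which is a different and easier matter and does not touch the normality problem; without something playing the role of Lemma \ref{lemma:equivO3} the converse is incomplete.

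Two further points. Your dictionary for the three tiers is not the one in Definitions \ref{def:order-detect} and \ref{representation-detect}: regular detection ($i\in K$) is not about the slope lying in the ``interior of a cone'' or about fixed-point-free versus parabolic behaviour, it is invariance of the peripheral slope under the conjugation action on orderings (O2), matched on the representation side by invariance of the slope as the basepoint ranges over the $\rho$-orbit of $x_0$ (R2). Deducing O2 from R2 does work for the constructed ordering, but only because the tie-breaking is arranged so that the first comparison point is in the orbit of $x_0$ and because R2 guarantees the relevant peripheral elements move every orbit point; this needs to be argued, not assumed. Likewise, in the forward direction the statement that a strongly detected slope ``acts trivially after semiconjugacy'' requires actually constructing the semi-conjugacy from the dynamic realisation of $\ooo$ to a dynamic realisation of $\pi_1(M)/C$ with $\nu(0)=0$ (the paper's Lemma \ref{lemma:semi-conj}); it is a concrete construction, not a citation to the single-boundary literature.
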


Our definitions and Theorem \ref{prop:equbetweenorddetectandrepdetect} together complete steps (1) and (2) in the program outlined above, for the cases of order-detection and representation-detection. In this article, it will be understood that the $3$-manifolds discussed are compact, connected, irreducible, and orientable unless specified otherwise.

We also establish a relative gluing theorem. This theorem roughly says that if $W = \cup_j M_j$ is as in (3) above, but $\partial W$ is nonempty, then the boundary behaviour of left-orderings and representations on each piece $M_i$ carries over to the manifold $W$. Here is a simplified version of our relative gluing theorem which requires the gluing map to identify two detected slopes satisfying an additional technical assumption; in general, we are able to weaken this technical assumption and are even able to glue along weakly detected slopes in some special cases. See Theorems \ref{thm:main_gluing_thm} and \ref{thm:special_gluing_case}.

\begin{theorem}\label{thm:introduction_gluing}
    Let $M_1$ and $M_2$ be 3-manifolds such that for $i=1,2$ the boundary $\partial M_i$ is a union of incompressible tori $T_{i,1}\sqcup T_{i,2}\sqcup\dots\sqcup T_{i,{r_i}}$, and such that there exists a left-ordering $\ooo_i$ of $\pi_1(M_i)$ that order-detects $(J_i,K_i;[\alpha_{i,1}],[\alpha_{i,2}],\dots,[\alpha_{i, r_i}])$ but does not order-detect $(J_i \cup \{1\},K_i;[\alpha_{i,1}],[\alpha_{i,2}],\dots,[\alpha_{i, r_i}])$. Further suppose that $f:T_{1,1}\to T_{2,1}$ is a homeomorphism that identifies $[\alpha_{1,1}]$ with $[\alpha_{2,1}]$. Reindex the boundary components $T_{1,2}, T_{1,3}, \ldots, T_{1,r_1}, T_{2,2}, T_{2,3}, \ldots , T_{2,r_2}$ of the manifold $M_1\cup_f M_2$ as $T_1, \ldots, T_{r_1+r_2-2}$ respectively. Set $J_1' = \{ n-1 : n \in J_1, n\geq2 \}$, $K_1' = \{ n -1 : n\in K_1, n\geq 2\}$, $J_2' = \{n+r_1-2 : n \in J_2, n \geq 2 \}$ and $K_2' = \{n+r_1-2 : n \in K_2, n \geq 2 \}$.
    If $1 \in K_i$ for $i = 1, 2$ then $\pi_1(M_1\cup_f M_2)$ is left-orderable and admits a left-ordering detecting $$(J_1'\cup J_2', K_1'\cup K_2';[\alpha_{1,2}],\dots,[\alpha_{1, r_1}], [\alpha_{2,2}],\dots,[\alpha_{2, r_2}]).$$
\end{theorem}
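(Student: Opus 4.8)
The plan is to pass from orderings to representations. By Theorem~\ref{prop:equbetweenorddetectandrepdetect} we may replace each $\ooo_i$ by a representation $\rho_i\colon\pi_1(M_i)\to\hstilde$ that representation-detects $(J_i,K_i;[\alpha_{i,1}],\dots,[\alpha_{i,r_i}])$ but not $(J_i\cup\{1\},K_i;[\alpha_{i,1}],\dots,[\alpha_{i,r_i}])$; it then suffices to construct a representation $\rho\colon\pi_1(M_1\cup_f M_2)\to\hstilde$ representation-detecting $(J_1'\cup J_2',K_1'\cup K_2';[\alpha_{1,2}],\dots,[\alpha_{1,r_1}],[\alpha_{2,2}],\dots,[\alpha_{2,r_2}])$ and then to invoke Theorem~\ref{prop:equbetweenorddetectandrepdetect} once more to obtain the left-ordering asserted in the statement. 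Since $T_{1,1}$ and $T_{2,1}$ are incompressible, $\pi_1(M_1\cup_f M_2)$ is the amalgamated free product $\pi_1(M_1)\ast_C\pi_1(M_2)$, where $C$ is the common image of $\pi_1(T_{1,1})$ and $\pi_1(T_{2,1})$ under $f_\ast$, and both factors embed.

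The core of the proof is to reconcile the peripheral actions $\rho_1|_{\pi_1(T_{1,1})}$ and $\rho_2|_{\pi_1(T_{2,1})}$ and to amalgamate. The hypothesis $1\in K_i$ forces $[\alpha_{i,1}]$ to be regularly, not merely weakly, detected, which pins down the restricted action: up to semiconjugacy $\rho_i(\alpha_{i,1})$ fixes the basepoint and the induced action of $\pi_1(T_{i,1})/\langle\alpha_{i,1}\rangle\cong\mathbb{Z}$ has no global fixed point on the relevant invariant set. The role of the hypothesis that $(J_i\cup\{1\},K_i;\dots)$ fails to be detected (equivalently $1\notin J_i$, so $[\alpha_{i,1}]$ is not strongly detected) is to rule out the orientation/side incompatibility along $T_{i,1}$ that would otherwise obstruct the gluing; this is the ``additional technical assumption'' referred to before the statement. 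Using $f_\ast[\alpha_{1,1}]=[\alpha_{2,1}]$, I would conjugate and reparametrize so that the fixed sets of $\rho_1(\alpha_{1,1})$ and $\rho_2(\alpha_{2,1})$ are identified and the two induced $C$-actions coincide, and then run the standard Bass--Serre assembly that builds an action of $\pi_1(M_1)\ast_C\pi_1(M_2)$ on a line from compatible actions of the factors, arranging that $\rho$ restricted to $\pi_1(M_i)$ is semiconjugate to $\rho_i$. For each surviving boundary torus $T_j$ of $M_1\cup_f M_2$ one then checks that the restriction of $\rho$ to $\pi_1(T_j)$ inherits the detection behaviour of the corresponding $\pi_1(T_{i,k})$ under $\rho_i$, so the strong and regular labels transfer; the reindexing $J_1',K_1',J_2',K_2'$ in the statement is exactly the bookkeeping of deleting the glued index $1$ and shifting. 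One could instead argue directly with orderings, amalgamating $\ooo_1$ and $\ooo_2$ in the style of Bludov--Glass after using regular detection (and possibly reversing one ordering) to make their restrictions to $C$ agree, but tracking detection on $\partial(M_1\cup_f M_2)$ is cleaner in the representation picture.

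The main obstacle, I expect, is to perform the amalgamation without degrading detection on the surviving tori to \emph{weak} detection---that is, to ensure the assembled action acquires no new global fixed point when restricted to $\pi_1(M_1)$ or to $\pi_1(M_2)$---which is where the non-strong-detection hypothesis and the robustness built into regular detection are used together, alongside the routine but delicate check that semiconjugacy preserves each flavour of detection. Once the peripheral actions on $C$ have been matched, the Bass--Serre assembly itself is standard, and left-orderability of $\pi_1(M_1\cup_f M_2)$ follows upon converting $\rho$ back through Theorem~\ref{prop:equbetweenorddetectandrepdetect}. This simplified statement is then subsumed by the general Theorems~\ref{thm:main_gluing_thm} and~\ref{thm:special_gluing_case}, which replace the technical hypothesis by weaker conditions and, in special cases, allow gluing along weakly detected slopes.
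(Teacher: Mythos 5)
Your plan (glue representations, then convert back via Theorem \ref{prop:equbetweenorddetectandrepdetect}) is not the route the paper takes, and as written it has a genuine gap at its central step. You claim that $1\in K_i$ ``pins down the restricted action'' so that, up to semiconjugacy, $\rho_i(\alpha_{i,1})$ fixes the basepoint and induces an action of $\pi_1(T_{i,1})/\langle\alpha_{i,1}\rangle$, and you then propose to conjugate so that the two peripheral actions of the edge group coincide before running a Bass--Serre assembly. Neither claim holds. Regular detection ($1\in K_i$) is condition R2, conjugation-invariance of the detected slope along the orbit of the basepoint; it does not force $\rho_i(\alpha_{i,1})$ to fix the basepoint even after semiconjugacy --- that is essentially strong detection ($1\in J_i$, via R3: $\varphi(\alpha_{i,1})=id$), which your hypotheses explicitly exclude (a rational peripheral element lying \emph{on} the line $\mathcal{L}(\rho|_{\pi_1(T_{i,1})},x_0)$ may still move $x_0$). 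More seriously, two pointed $\mathbb{Z}\oplus\mathbb{Z}$-actions on $\mathbb{R}$ that merely determine the same slope at their basepoints need not be conjugate, nor semiconjugate to a common model (one can act by translations while the other has complicated fixed sets), so ``conjugate and reparametrize so that \dots the two induced $C$-actions coincide'' is not available in general; this matching problem is precisely what the paper avoids by working with orderings. You also misread the second hypothesis: that $\ooo_i$ fails to detect $(J_i\cup\{1\},K_i;\dots)$ is not an orientation/side condition, and it is strictly stronger than $1\notin J_i$; its actual role is to force the O3 convex subgroup to satisfy $C_i\cap\pi_1(T_{i,1})=\{id\}$, so that $\pi_1(T_{i,1})$ survives as a full $\mathbb{Z}\oplus\mathbb{Z}$ in the quotient $\pi_1(M_i)/C_i$.

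For comparison, the paper obtains the statement as a special case of Theorem \ref{thm:main_gluing_thm}: quotient each $\pi_1(M_i)$ by $C_i$, build normal ``ready to glue'' families of orderings of the quotients (conjugates and reverses of $\hat{\ooo}_i$, enlarged using Proposition \ref{normal_construction} and Lemma \ref{convex swap}), apply the Bludov--Glass theorem via Proposition \ref{prop:ready_to_glue} to order the amalgam of the quotients, and pull the ordering back lexicographically through the short exact sequence; $1\in K_i$ is what makes these families compatible along the gluing torus, and $\ker(q)$ supplies the normal convex subgroup required by O3 for the glued manifold. If you wish to salvage a representation-theoretic argument, you must reproduce this structure: compatibility is needed not for the two actual peripheral actions but for a conjugation-invariant family of basepoints (to recover R2/O2 on the surviving tori at every point of the orbit), and a convex-subgroup/semiconjugacy mechanism is needed to produce R3/O3 downstairs. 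As written, the ``match the peripheral actions, then assemble'' step does not go through, so the proof is incomplete.
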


This result complements the gluing theorems in the literature that deal with the decompositions of closed manifolds \cite[Theorem 7.6]{BGYpreprinta}, \cite[Section 11]{BC17}, and expands upon the relative gluing theorem of \cite[Theorem 7.10]{BC23} by including considerations of weak and strong detection. However, the converse of each of these orderability gluing theorems is essential to the program of using detected slopes to address the L-space conjecture, and at present we do not know if the converse holds \cite[Conjecture 1.5]{BC23}.

To demonstrate the utility of allowing multiple boundary components, and of our relative gluing theorems, we provide a careful analysis of cable knots and, more generally, $(p,q)$-cable spaces glued to a $3$-manifold $M$ with a known set of detected slopes (see Theorem \ref{thm:cpqtord} and Corollary \ref{cor:specialcasedetect}). For the case of cable knots in $S^3$, we are able to produce order-detection results similar to the L-space results of Hedden and Hom \cite{Hedden09, Hom11} which describe the behaviour of L-space knots with respect to cabling.

\begin{theorem}[See Theorem \ref{thm:torusknot} and Corollary \ref{cor:cableintervals}]
    \label{thm:introcables}
    Suppose that $K$ is a nontrivial knot in $S^3$ and let $M$ denote its knot complement. Given integers $p\geq1$ and $q> 1$ that are coprime, let $M'$ denote the knot complement of $K'$, the $(p,q)$-cable of $K$.
    \begin{enumerate}
        \item If $2g(K)-1 < p/q$, then the set of order-detected slopes on $\partial M'$ contains $[-\infty, 2g(K')-1]$; the set of order-detected slopes on $\partial M'$ is equal to $[-\infty, 2g(K')-1]$ if the set of weakly order-detected slopes on $\partial M$ is contained in $[-\infty, 2g(K)-1]$.

        \item If $2g(K)-1 > p/q$, then the set of order-detected slopes on $\partial M'$ contains $[-\infty, pq-p] \cup [2g(K')-1, \infty]$; the set of order-detected slopes on $\partial M'$ is equal to $\RR \cup \{ \infty\}$ if the set of order-detected slopes on $\partial M$ contains $[-\infty, 2g(K)-1]$.

    \end{enumerate}
\end{theorem}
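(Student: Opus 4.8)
The plan is to derive Theorem~\ref{thm:introcables} as an application of the cabling analysis culminating in Theorem~\ref{thm:cpqtord} and Corollary~\ref{cor:specialcasedetect}, combined with the known description of order-detected slopes on torus knot complements and the relative gluing theorem (Theorem~\ref{thm:introduction_gluing}, or rather its sharper forms Theorems~\ref{thm:main_gluing_thm} and~\ref{thm:special_gluing_case}). Throughout I identify $M'$ with the union $M \cup_f C_{p,q}$, where $C_{p,q}$ is the $(p,q)$-cable space, $f$ glues the outer boundary torus of $C_{p,q}$ to $\partial M$ along the meridian-longitude framings, and the remaining boundary torus of $C_{p,q}$ becomes $\partial M'$. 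The first step is bookkeeping: using the Seifert framings, translate the statement ``the $(p,q)$-cable'' into explicit slope-coordinate data, so that I can track exactly how a slope on $\partial M'$ is matched, through $C_{p,q}$, to a slope on $\partial M$. This is where the genus formula $g(K') = pq - p - q + 1 + p\cdot g(K)$, i.e. $2g(K')-1 = p(2g(K)-1) + (pq-p-q)$ wait --- more precisely $2g(K') - 1 = 2pg(K) + (p-1)(q-1) - 1$; the correct identity to record is $2g(K')-1 = p q - p - q + 2 p g(K)$, enters and pins down the endpoints $2g(K')-1$, $pq-p$ appearing in the statement.

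The core of the argument splits along the two cases in the hypothesis, governed by whether the cabling slope $p/q$ lies above or below the critical value $2g(K)-1$. In each case I invoke Theorem~\ref{thm:cpqtord} to reduce order-detection of a slope $[\alpha]$ on $\partial M'$ to the combinatorial condition describing which slopes on $\partial C_{p,q}$ the cable space order-detects relative to a given slope on its other boundary torus; this is precisely the content that the Eisenbud--Hirsch--Neumann / Jankins--Neumann / Naimi machinery (quoted in the introduction) resolves for Seifert pieces, and here $C_{p,q}$ is exactly such a piece. Combining this with the relative gluing theorem: a slope $[\alpha]$ on $\partial M'$ is order-detected whenever there is a slope $[\beta]$ on $\partial M$, order-detected by $M$, that is ``compatible'' with $[\alpha]$ through $C_{p,q}$ in the strong/regular sense required by Theorems~\ref{thm:main_gluing_thm}--\ref{thm:special_gluing_case}. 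The containment statements then follow by exhibiting, for every target slope in the claimed interval, an appropriate witness slope on $\partial M$: in case (1), since $2g(K)-1 < p/q$, the slope $0 \in [-\infty, 2g(K)-1]$ is always weakly order-detected on any nontrivial knot complement (this is the standard fact that $\pi_1$ of a nontrivial knot complement is left-orderable with the longitude detected), and pushing it through $C_{p,q}$ covers all of $[-\infty, 2g(K')-1]$; in case (2), one needs two families of witnesses, one producing $[-\infty, pq-p]$ directly from the Seifert structure of $C_{p,q}$ itself (independent of $M$) and one producing $[2g(K')-1,\infty]$ from slopes near $2g(K)-1$ on $\partial M$, whence the displayed union.

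For the ``equal to'' refinements I argue the reverse containment contrapositively. If a slope $[\alpha]$ on $\partial M'$ lies outside the claimed set, I use Theorem~\ref{prop:equbetweenorddetectandrepdetect} to pass to representation-detection, and then analyze representations of $\pi_1(M') = \pi_1(M) *_{\mathbb{Z}^2} \pi_1(C_{p,q})$ into $\mathrm{Homeo}_+(\mathbb{R})$ (equivalently, into $\widetilde{\mathrm{Homeo}}_+(S^1)$): any such representation restricts to $\pi_1(M)$, and the Seifert/fibered structure of $C_{p,q}$ forces the restricted boundary slope on $\partial M$ to lie in a controlled range --- outside $[-\infty, 2g(K)-1]$ under the stated hypothesis on $M$ --- contradicting the assumption that $M$'s set of (weakly) order-detected slopes is contained in $[-\infty, 2g(K)-1]$. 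The main obstacle is precisely this last step: keeping careful track of the distinction between weak, regular, and strong detection across the gluing, since the relative gluing theorem consumes a ``$1 \in K_i$'' (regular, not merely weak) hypothesis on the pieces while the converse direction only gives information about weakly detected slopes --- reconciling these is exactly why the hypotheses in (1) and (2) are phrased asymmetrically (weak in (1), regular in (2)), and verifying that $C_{p,q}$ supplies the needed strong-detection upgrade on its gluing torus (so that the hypothesis of Theorem~\ref{thm:special_gluing_case} is met) is the technical heart of the proof.
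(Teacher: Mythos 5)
Your high-level route (reduce to the cable-space computations of Theorem \ref{thm:cpqtord} and Corollary \ref{cor:specialcasedetect}, plus change-of-basis bookkeeping, then get the reverse containment by restricting an ordering/representation of $\pi_1(M')$ to the two pieces) is the paper's route, but the containment half of your argument has a genuine gap. Pushing a single detected slope $\tau$ on $\partial M$ through $\cpq$ only produces the relative interval $\TT{\cpq;\emptyset;\tau}$, and for the longitude ($\tau=0$ in $\mathcal{B}_K$) this interval is $[-1-\frac{1}{p},-1]$ in the basis $\mathcal{B}_2$, i.e.\ exactly $[-\infty,pq-p]$ after the change of basis on the outer torus --- not $[-\infty,2g(K')-1]$. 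So your claim in case (1) that the longitude witness ``covers all of $[-\infty,2g(K')-1]$'' is false, and in case (2) your ``slopes near $2g(K)-1$'' witnesses are not known to be detected by anything you cite. The essential missing ingredient is \cite[Corollary 1.4]{BGYpreprinta}: for every nontrivial knot the slope $2g(K)-1$ itself is order-detected on $\partial M$, and it is only by pushing this particular slope through $\cpq$ (Propositions \ref{prop:tpqspecialcase} and \ref{prop:tpqspecialcase2}, split according to whether $2g(K)-1$ is smaller or larger than $p/q$) that one reaches the endpoint $2g(K')-1$ in (1) and the interval $[2g(K')-1,\infty]$ in (2). Relatedly, your recorded genus identity $2g(K')-1=pq-p-q+2pg(K)$ is wrong in the paper's conventions: $g(K')=qg(K)+(p-1)(q-1)/2$, so $2g(K')-1=pq-p-q+2qg(K)$, which is what matches the endpoint $pq-p+(2g(K)-1)q$ coming out of the JN-realisability computation; with your identity the endpoints of the statement would not be recovered.

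For the equality refinements your contrapositive sketch is in the right spirit (restrict the ordering of $\pi_1(M')$ to $\pi_1(M)$ and to $\pi_1(\cpq)$, use Lemma \ref{lemma:onenonhorizontalslopeimplies2} and Proposition \ref{prop:detectedimplyJN} to force relative JN-realisability against a weakly detected slope of $M$), but two pieces are still needed to close it: the hypothesis must be upgraded from weak to regular detection on $M$, which in the paper is done via \cite[Theorem 1.2]{BC23} (if the weakly detected set is not all of $\RR\cup\{\infty\}$ then weak equals regular), feeding Corollary \ref{cor:specialcasedetect}; and one must actually compute the union $\bigcup_{\tau}\TT{\cpq;\emptyset;\tau}$ over the whole detected interval of $M$, which requires the monotone ``inchworm'' behaviour of these intervals (Proposition \ref{prop:tcpqintervalbehav}, Corollary \ref{cor:tcpqbehav}) rather than an appeal to the EHN/Jankins--Neumann/Naimi machinery in the abstract.
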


This improves upon the results of \cite{CW11}, which shows that the set of strongly order-detected slopes on $\partial M'$ contains $[-\infty, pq-p-q] \cap \mathbb{Q}$, and the set of weakly order-detected slopes does not contain sufficiently large positive slopes when $2g(K)-1 <p/q$. 

Our cabling result and its technique of proof are also similar to \cite[Section 5]{Ras17}, which describes the behaviour of NLS-detected slopes with respect to cabling \cite[Theorem 5.2]{Ras17}.
The technique is to use results of Jankins, Neumann and Naimi to precisely compute all representation-detected pairs of slopes on the boundary of a $(p,q)$-cable space (as remarked in \cite{Ras17}, this could also be done using \cite{CalWal11}). The results of \cite{Ras17} then follow from Jankins-Neumann-Naimi type calculations and L-space gluing theorems, while in our case we must deal with the additional technical obstacle of having three kinds of order-detection, and incomplete knowledge of the behaviour of left-orderings with respect to gluing and Dehn filling (e.g. \cite[Conjecture 1.5]{BC23} remains open, and it is unknown what form the set of order-detected slopes on the boundary of a knot manifold may take). Thus, in the general setting, we are only able to arrive at containments of sets of detected slopes (Theorem \ref{thm:cpqtord}), where \cite{Ras17} has equalities (\cite[Theorem 5.2]{Ras17}).

\subsection{Organisation of the paper} Section \ref{sec:defs} reviews notions related to orderability, and introduces the definitions of order-detection and representation-detection in Subsections \ref{subsec:order-detect} and \ref{subsec:rep-detect}. In Section \ref{sec:equiv} we prove Theorem \ref{prop:equbetweenorddetectandrepdetect}. In Section \ref{sec:gluing} we introduce the Bludov-Glass theorem for left-ordering amalgams of left-orderable groups, review known gluing theorems, and prove Theorem \ref{thm:main_gluing_thm}, our main gluing theorem. Section \ref{sec:seifertmanifold} reviews JN-realisability and introduces our conventions for Seifert fibered manifolds. In Section \ref{sec:cable} we discuss our conventions for cable spaces and introduce special cases of our main gluing theorem for use in analysing cable knots. Section \ref{sec:cpqcalculations} introduces intervals of relatively JN-realisable slopes, and computes their properties. In Section \ref{sec:finally cables}, we tie everything together to analyse the gluing of cable spaces to knot manifolds (Theorem \ref{thm:cpqtord}), and explicitly calculate intervals of relatively JN-realisable slopes in order to provide a more detailed analysis in the case of cable knots in $S^3$ (Theorem \ref{thm:torusknot}).

\section{Definitions}
\label{sec:defs}

A left-ordering $\mathfrak{o}$ of a group $G$ is determined by a strict, total ordering $<_{\mathfrak{o}}$ of its elements such that $g<_{\mathfrak{o}}h$ implies $fg <_{\mathfrak{o}} fh$ for all $f, g,h \in G$. Such an ordering can also be specified by a positive cone $P(\mathfrak{o})$, which is a subset of $G$ satisfying $G \setminus \{ id \} = P(\mathfrak{o}) \sqcup P(\mathfrak{o})^{-1}$ and $P(\mathfrak{o})\cdot P(\mathfrak{o}) \subset P(\mathfrak{o})$. The correspondence between strict total orderings and positive cones is determined by the prescription
\[ g<_{\mathfrak{o}} h \mbox{ if and only if } g^{-1}h \in P(\mathfrak{o}).
\]
We denote the set of all left-orderings of $G$ by $\LO(G)$, and topologise this set as follows. Using the correspondence between orderings and cones, we can view $\LO(G)$ as a subset of the power set $\mathcal{P}(G) = \{0, 1\}^G$. We topologise $ \{0, 1\}^G$ using the product topology and give $\LO(G)$ the subspace topology. This makes $\LO(G)$ into a compact space, since it turns out to be a closed subset of the compact space $ \{0, 1\}^G$. The space $\LO(G)$ is also totally disconnected and Hausdorff, since $ \{0, 1\}^G$ has these properties, and it is metrisable if $G$ is countable \cite[Proposition 1.3]{Sik04}.

By setting $P(g \cdot \ooo)=gP(\ooo)g^{-1}$ for every $g\in G$, we obtain a new left-ordering $g\cdot\ooo$, where $f<_{g \cdot \ooo}h$ if and only if $fg<_\ooo hg$ for all $f,h\in G$. This construction gives an action of $G$ on $\LO(G)$, one can check this is an action by homeomorphisms. More generally, if we have an injective homomorphism $\phi:H\to G$, then the left-ordering $\ooo$ on $G$ induces a left-ordering $\phi^{-1}(\ooo)$ on $H$ by defining $h_1<_{\phi^{-1}(\ooo)} h_2$ if $\phi(h_1)<_\ooo \phi(h_2)$ for $h_1,h_2\in H$. Note that $g \cdot \ooo$ is a special case of this construction, using an inner automorphism in place of $\phi$.

A subgroup $C$ of $G$ is \emph{convex} relative to the ordering $\mathfrak{o}$ of $G$ (or $\mathfrak{o}$-convex for short) if for all $c_1, c_2 \in C$ and $g \in G$, the implication $c_1 <_{\mathfrak{o}} g <_{\mathfrak{o}} c_2$ implies $g \in C$ holds. A subgroup $C$ of $G$ is \emph{relatively convex} if there exists an ordering $\mathfrak{o}$ of $G$ relative to which $C$ is convex. Whenever $C$ is relatively convex, the set of left cosets $\{gC\}_{g \in G}$ inherits a natural quotient ordering and becomes a left $G$-set with the natural action given by left-multiplication by elements of $G$, which is order-preserving with respect to the quotient ordering.






\subsection{Dynamic realisations and representations}
When $G$ is countable, left-orderings correspond to certain kinds of actions on $\mathbb{R}$ via the construction of dynamic realisations, which we summarize below.

Let $G$ be a countable group with a left-ordering $\mathfrak{o}$. A \textit{gap} in $(G, \mathfrak{o})$ is a pair of elements $g, h \in G$ with $g<_{\mathfrak{o}}h$ such that no element $f \in G$ satisfies $g<_{\mathfrak{o}}f<_{\mathfrak{o}}h$. Gaps exist if and only if the ordering $<_{\mathfrak{o}}$ of $G$ is discrete, or equivalently, if the positive cone $P(\ooo)$ admits a least element.

An order-preserving embedding $t: (G, \mathfrak{o}) \rightarrow (\mathbb{R}, <)$ is called a \textit{tight embedding} if, whenever $(a, b) \subset \mathbb{R} \setminus t(G),$ there is a gap $g, h \in G$ with $g <_{\mathfrak{o}} h$ such that $(a,b) \subset (t(g), t(h))$ (see e.g. \cite{BC23}). For ease of exposition, we note that by conjugating by an appropriate translation, we can always require that our tight embeddings satisfy $t(id) =0$. We make this assumption from here forward.

If $G$ is countable, there is a standard method for constructing a tight embedding from a given enumeration $\{g_0=id,g_1,g_2,\dots\}$ of $G$, see e.g. \cite{Navas10a}. The \emph{ tight embedding associated with the enumeration} can be constructed as follows: first set $t(g_0)=0$; if $t(g_0),t(g_1),\dots,t(g_i)$ have already been defined, then \[t(g_{i+1}) = \begin{cases}
        \max\{t(g_0),t(g_1),\dots,t(g_i)\}+1 & \quad\mbox{if } g_{i+1}>\max\{g_0,g_1,\dots,g_i\}  \\
        \min\{t(g_0),t(g_1),\dots,t(g_i)\}-1 & \quad\mbox{if } g_{i+1}<\min\{g_0,g_1,\dots,g_i\};
    \end{cases}\]
otherwise, $t(g_{i+1})=\frac{t(g_j)+t(g_k)}{2}$ if $g_j<g_{i+1} <g_k$ and $j$, $k$ are chosen so that there is no $l\in\{0,1,\dots,i\}$ such that $ g_j<g_l <g_k$.

Now given a tight embedding $t$, we define a homomorphism $\rho_{\mathfrak{o}} : G \rightarrow \mathrm{Homeo}_+(\mathbb{R})$ as follows. For each $g \in G$ and $x \in \mathbb{R}$, define $\rho_{\mathfrak{o}}(g)(x)$ according to:

\begin{itemize}
    \item If $x = t(h) \in t(G)$ for some $h \in G$, then define $\rho_{\mathfrak{o}}(g)(t(h)) = t(gh)$.
    \item If $x \in \overline{t(G)}\setminus t(G)$, then choose a sequence $\{t(g_i)\} \subset t(G)$ converging to $x$, and define $\rho_{\mathfrak{o}}(g)(x) = \lim t(g g_i)$.
    \item If $x \in \mathbb{R} \setminus \overline{t(G)}$, then there must be a gap $h, k$ in $G$ with $x \in (t(h), t(k))$. Write $x = (1-s)t(h) + st(k)$ for some $s \in (0,1)$ and set $ \rho_{\mathfrak{o}}(g)(x) = (1-s)t(gh) + st(gk).$
\end{itemize}

The resulting homomorphism $\rho_{\mathfrak{o}}$ is a \textit{dynamic realisation} of $(G,\mathfrak{o})$; one can check that it is well-defined (i.e. independent of the choice of tight embedding) up to conjugation by a homeomorphism of $\mathbb{R}$.


Moreover, from a dynamic realisation one can recover the ordering $\ooo$ by examining the orbit of $t(id) = 0$:
\[(\forall g, h \in G) [g<_{\mathfrak{o}}h \iff t(g) < t(h) \iff \rho_{\mathfrak{o}}(g)(0) < \rho_{\mathfrak{o}}(h)(0)].
\]


\subsection{Order-detection}

\label{subsec:order-detect}
The definitions in this section generalise those appearing in \cite{BC23} from the case of knot manifolds to $3$-manifolds with multiple incompressible torus boundary components.

Recall that for every left-ordering $\mathfrak{o}$ of $\mathbb{Z} \oplus \mathbb{Z}$, there is a corresponding line $\mathcal{L}(\mathfrak{o})$ in $\mathbb{R}^2 \cong (\mathbb{Z} \oplus \mathbb{Z}) \otimes \mathbb{R}$, which is completely determined by the prescription that all integer lattice points on either side of $\mathcal{L}(\mathfrak{o})$ must have the same sign (relative to the ordering $\mathfrak{o}$) \cite[Proposition 1.7]{Sik04}. If we topologise the set of lines through the origin in $\mathbb{R}^2$ in the usual way and write $[\ell]$ for the image of a line $\ell$ in $\RR^2$ in the resulting copy of $\RR P^1 \cong S^1$, we arrive at the following lemma \cite{BC23}.

\begin{lemma}\label{slope cts}
    The map $\mathcal{L} : \LO(\mathbb{Z} \oplus \mathbb{Z}) \rightarrow \mathbb{R}P^1$ given by $\mathcal{L}(\mathfrak{o}) = [\mathcal{L}(\mathfrak{o})]$ is continuous.
\end{lemma}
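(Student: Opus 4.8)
The plan is to work directly with the defining property of $\mathcal{L}(\mathfrak{o})$ and exploit the fact that a small perturbation of a line in $\mathbb{R}P^1$ is detected by finitely many lattice points. Fix a left-ordering $\mathfrak{o}_0$ of $\mathbb{Z}\oplus\mathbb{Z}$ and let $\ell_0 = \mathcal{L}(\mathfrak{o}_0)$. I would split into two cases according to whether $\ell_0$ passes through a nonzero integer lattice point (the ``rational'' case) or not (the ``irrational'' case). In the irrational case there are no lattice points on $\ell_0$ itself, so every lattice point has a well-defined sign, and the sign pattern of finitely many lattice points near the origin already pins down $[\ell_0]$ to within any prescribed neighbourhood in $\mathbb{R}P^1$. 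Concretely, given $\e > 0$, choose finitely many nonzero lattice points $v_1,\dots,v_m$ such that any line through the origin whose induced half-plane decomposition assigns to each $v_j$ the same side as $\ell_0$ does must lie within the $\e$-ball of $[\ell_0]$; this is possible because the rays through denser and denser lattice points accumulate on every line. The set of left-orderings $\mathfrak{o}$ with $v_j \in P(\mathfrak{o}) \iff v_j \in P(\mathfrak{o}_0)$ for all $j$ is a basic open set in $\LO(\mathbb{Z}\oplus\mathbb{Z})$ (it is a cylinder set in $\{0,1\}^{\mathbb{Z}\oplus\mathbb{Z}}$ intersected with $\LO$), and on this open set $\mathcal{L}$ takes values in the chosen $\e$-ball. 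That gives continuity at $\mathfrak{o}_0$.

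In the rational case $\ell_0$ contains a primitive lattice vector $w$ (and its negative), and by \cite[Proposition 1.7]{Sik04} the orderings with $\mathcal{L}(\mathfrak{o}) = [\ell_0]$ are exactly the two ``lexicographic-type'' orderings built from the sign of the $\ell_0$-coordinate, broken by the sign along $w$. The subtlety is that $w$ itself can be positive or negative in $\mathfrak{o}_0$, and nearby lines do not contain $w$, so I cannot control them by looking at the sign of $w$ alone. Instead I would again pick finitely many lattice points $v_1,\dots,v_m$ lying strictly off $\ell_0$ and close enough to the two rays of $\ell_0$ that agreeing with $\mathfrak{o}_0$ on their signs forces a line into the $\e$-neighbourhood of $[\ell_0]$; the point $w$ need not be among them. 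The same cylinder-set argument then shows $\mathcal{L}$ maps a neighbourhood of $\mathfrak{o}_0$ into the $\e$-ball around $[\ell_0]$. The fact that this works regardless of which of the two orderings with line $\ell_0$ we started from, and regardless of how nearby orderings resolve the ambiguity along $\ell_0$, is exactly what makes the statement true; the half-plane condition only ever constrains lattice points \emph{off} the line.

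The key lemma underpinning both cases is the geometric fact that, for any line $\ell$ through the origin and any $\e>0$, there is a finite set $F$ of nonzero lattice points, all off $\ell$, such that every line $\ell'$ through the origin which puts all of $F$ on the correct side (i.e., the side determined by $\ell$) satisfies $d([\ell'],[\ell]) < \e$. I would prove this by a compactness/contradiction argument: if no finite $F$ worked, one could extract a sequence of lines $\ell'_k \to \ell'_\infty \neq \ell$ in $\mathbb{R}P^1$ each agreeing with $\ell$ on more and more lattice points; but then $\ell'_\infty$ would have to agree with $\ell$ on \emph{every} lattice point off $\ell$, which (since lattice points are dense in directions) forces $\ell'_\infty = \ell$, a contradiction. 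This is the step I expect to require the most care: one must handle the lattice points that lie between $\ell$ and $\ell'_\infty$, and in the rational case verify that the finitely many lattice points on $\ell$ itself cause no trouble since we are free to omit them from $F$. Once this lemma is in hand, continuity of $\mathcal{L}$ is immediate: $\mathcal{L}^{-1}(\text{$\e$-ball around }[\ell_0])$ contains the basic open neighbourhood of $\mathfrak{o}_0$ cut out by the sign conditions on $F$.

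One final remark on bookkeeping: I would phrase everything in terms of the subbasic sets $U_v^{\pm} = \{\mathfrak{o} \in \LO(\mathbb{Z}\oplus\mathbb{Z}) : v \in P(\mathfrak{o})^{\pm 1}\}$, which are clopen by definition of the product topology on $\{0,1\}^{\mathbb{Z}\oplus\mathbb{Z}}$, so that the neighbourhood of $\mathfrak{o}_0$ used above is literally $\bigcap_{v \in F} U_v^{\epsilon(v)}$ with $\epsilon(v)$ the sign of $v$ under $\mathfrak{o}_0$. This makes the openness of the preimage transparent and avoids any appeal to metrisability of $\LO(\mathbb{Z}\oplus\mathbb{Z})$, working instead with an arbitrary basic open neighbourhood of $[\ell_0]$ in $\mathbb{R}P^1\cong S^1$.
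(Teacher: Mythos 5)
The paper itself gives no argument for this lemma (it is quoted from \cite{BC23}), so your proposal is judged on its own terms; the strategy you use --- pin down the line by the signs of finitely many lattice points, obtained from a compactness argument on $\mathbb{R}P^1$, and pull back along cylinder sets in $\{0,1\}^{\mathbb{Z}\oplus\mathbb{Z}}$ --- is the standard one and is essentially sound. The case split rational/irrational for $\ell_0$ is unnecessary (the same argument covers both, since $F$ avoids $\ell_0$ in either case), but harmless.

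There is, however, one genuine mismatch between your key lemma and the way you apply it. Your lemma assumes that $\ell'$ puts \emph{every} point of $F$ strictly on the correct side, but for an ordering $\mathfrak{o}$ in the cylinder neighbourhood $\bigcap_{v\in F}U_v^{\epsilon(v)}$ the defining property of $\mathcal{L}(\mathfrak{o})$ only constrains lattice points \emph{off} $\mathcal{L}(\mathfrak{o})$: if $\mathcal{L}(\mathfrak{o})$ is a rational line passing through some $v\in F$, the sign of $v$ tells you nothing about which side of $\mathcal{L}(\mathfrak{o})$ anything lies on, so sign-agreement on $F$ does not imply the hypothesis of your lemma, and the deduction ``$\mathcal{L}$ maps this open set into the $\epsilon$-ball'' does not follow as written. (You make exactly this observation --- ``the half-plane condition only ever constrains lattice points off the line'' --- for $\ell_0$, but not for the unknown line $\mathcal{L}(\mathfrak{o})$, which is where it bites.) The fix is to prove the lemma in a slightly stronger form that your compactness argument in fact yields: for every line $\ell'$ with $d([\ell'],[\ell_0])\geq\epsilon$ there exist $u,v\in F$ lying on \emph{opposite} sides of $\ell_0$ and \emph{strictly on the same} side of $\ell'$. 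Concretely, for a fixed such $\ell'$ choose an open half-plane $H'$ of $\ell'$; the two components of $H'\setminus\ell_0$ are nonempty open cones, hence each contains a nonzero lattice point, giving the pair $(u,v)$; since $u,v\notin\ell'$, the same pair works for all lines in a projective neighbourhood of $\ell'$, and compactness of $\{d\geq\epsilon\}$ produces the finite $F$. With this version, if $\mathfrak{o}$ agrees with $\mathfrak{o}_0$ on $F$ and $d([\mathcal{L}(\mathfrak{o})],[\ell_0])\geq\epsilon$, the pair $u,v$ lies strictly on one side of $\mathcal{L}(\mathfrak{o})$ and hence would have the same $\mathfrak{o}$-sign, contradicting that they have opposite $\mathfrak{o}_0$-signs; no discussion of points of $F$ on $\mathcal{L}(\mathfrak{o})$ is needed. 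The same strictness issue is the one you flag in your sequence-compactness sketch (points drifting onto the limit line), and it is resolved by the same device: a lattice point in the open sector between $\ell_0$ and $\ell'_\infty$ yields a strict violation.
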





We use this map in the setting of $3$-manifolds as follows. Suppose that $M$ is a compact connected orientable $3$-manifold whose boundary is a union of incompressible tori, say $\partial M = T_1 \cup \dots \cup T_n$. A \emph{slope} on $T_i$ is an element $[\alpha] \in \mathbb{P}H_1(T_i; \mathbb{R})$ (the projective space of $H_1(T_i; \mathbb{R})$), where $\alpha \in H_1(T_i; \mathbb{R}) \setminus \{ 0\}$. Since the boundary tori are incompressible, there are inclusions $\pi_1(T_i) \rightarrow \pi_1(M)$ allowing us to implicitly identify each group $\pi_1(T_i)$ with a subgroup of $\pi_1(M)$ isomorphic to $\mathbb{Z} \oplus \mathbb{Z}$. We fix such an identification for each $i$ and from here forward simply write $\pi_1(T_i) \subset \pi_1(M)$.

We use $\mathcal{S}(T_i)$ to denote the set of all slopes on $T_i$, topologized so that $\mathcal{S}(T_i)$ is homeomorphic to $S^1$ as in the previous lemma, and write $\mathcal{S}(M) = \mathcal{S}(T_1) \times \dots \times \mathcal{S}(T_n)$. Using $r_i : \LO(\pi_1(M)) \rightarrow \LO(\pi_1(T_i))$ to denote the restriction map, we define the \emph{slope map} $s: \mathrm{LO}(\pi_1(M)) \rightarrow \mathcal{S}(M)$ by $s(\mathfrak{o}) = ([\mathcal{L}(r_1(\mathfrak{o}))], \ldots, [\mathcal{L}(r_n(\mathfrak{o}))])$. As the restriction map is continuous, so is the slope map by Lemma \ref{slope cts}. We will use $s_i : \LO(\pi_1(M)) \rightarrow \mathcal{S}(T_i)$ to denote the composition of $s$ with projection onto the $i$-th factor, equivalently, $s_i$ is the composition $\mathcal{L} \circ r_i$.

Identifying $H_1(T_i; \mathbb{Z})$ with the integer lattice points in $H_1(T_i; \mathbb{R})$, we define a slope $[\alpha]$ to be \emph{rational} if $\alpha \in H_1(T_i; \mathbb{Z})$, and \emph{irrational} otherwise. We will call a tuple $([\alpha_1], \dots , [\alpha_n])$ of slopes rational if $[\alpha_i]$ is rational for all $i$. If $[\alpha]$ is rational, then we always assume that $\alpha$ is primitive. In terms of slopes arising from orderings, one can show that this means $[\mathcal{L}(r_i(\mathfrak{o}))]$ is rational if $\mathcal{L}(r_i(\mathfrak{o})) \cap H_1(T_i; \mathbb{Z}) \cong \mathbb{Z}$, otherwise the slope is irrational.

\begin{definition}\label{def:order-detect}
    Suppose that $M$ is a compact connected orientable $3$-manifold with boundary $\partial M = T_1 \cup \dots \cup T_n$ a union of incompressible tori, and let $J \subset K \subset \{ 1, \ldots, n\}$ and $([\alpha_1], \ldots, [\alpha_n])\in \mathcal{S}(M)$. We say that $(J, K; [\alpha_1], \ldots, [\alpha_n])$ is \emph{order-detected} if there exists $\mathfrak{o} \in \LO(M)$ such that
    \begin{enumerate}
        \item[O1.] $s(\mathfrak{o}) = ([\alpha_1], \ldots, [\alpha_n])$;
        \item[O2.] for all $g \in \pi_1(M)$, we have $s(g \cdot \ooo) = ([\beta_1], \ldots, [\beta_n])$ where $[\beta_i] = [\alpha_i]$ for all $i \in K$;
        \item[O3.] there exists an $\mathfrak{o}$-convex normal subgroup $C$ such that if $[\alpha_i]$ is rational then $\pi_1(T_i)\cap C\leq \langle \alpha_i \rangle $ with $\pi_1(T_i)\cap C = \langle \alpha_i \rangle $ whenever $i \in J$, and if $[\alpha_i]$ is irrational then $\pi_1(T_i)\cap C=\{ id \}$.

    \end{enumerate}
\end{definition}

In this case, we also say $(J, K; [\alpha_1], \ldots, [\alpha_n])$ is order-detected by $\ooo$, or sometimes we say $\ooo$ order-detects $(J, K; [\alpha_1], \ldots, [\alpha_n])$. For short, we often write $(J, K; [\alpha_*])$ in place of $(J, K; [\alpha_1], \ldots, [\alpha_n])$. From this definition, if $(J, K; [\alpha_1], \ldots, [\alpha_n])$ is order-detected and $i \in K$ corresponds to an irrational slope $[\alpha_i]$, then $(J\cup\{i\}, K; [\alpha_1], \ldots, [\alpha_n])$ is also order-detected. We write $\mathcal{D}_{ord}(J,K;M)\subset \mathcal{S}(M)$ to denote the set of tuples $[\alpha_*]$ for which $(J,K;[\alpha_*])$ is order-detected. If $(J, K; [\alpha_1], \ldots, [\alpha_n])$ is order-detected
, we say that $[\alpha_i]$ weakly order-detected; it is strongly order-detected if $i \in J$, and (regularly) order-detected if $i \in K$.

A special case is when $M$ is the complement of a nontrivial knot in $S^3$, or more generally, when $M$ is a knot manifold, that is, a compact connected irreducible orientable 3-manifold not homeomorphic to $D^2\times S^1$ with incompressible torus boundary. In these special cases, the language we have just introduced (strong detection, weak detection, detection) agrees with \cite{BC23}.





\subsection{Representation-detection}
\label{subsec:rep-detect}
All representations of groups in our discussions will be assumed to have images in $\mathrm{Homeo}_+(\mathbb{R})$ unless otherwise stated. A \emph{pointed representation} of a group $G$ is a pair $(\rho, x_0)$ where $\rho$ is a representation $\rho:G \rightarrow \mathrm{Homeo}_+(\mathbb{R})$ and $x_0 \in \mathbb{R}$ is a choice of basepoint which is not a global fixed point for the action of $G$ on $\mathbb{R}$ determined by $\rho$. We use the notation $\mathcal{R}(G)$ to denote the set of all pointed representations of $G$.

\begin{lemma}\label{lemma:z2slope}
    If $G = \mathbb{Z} \oplus \mathbb{Z}$, then a pointed representation $(\rho, x_0)$ of $G$ determines a line $\mathcal{L}(\rho, x_0)$ in $\mathbb{R}^2 \cong (\mathbb{Z} \oplus \mathbb{Z}) \otimes \mathbb{R}$ according to the prescription that all elements $g$ of $\mathbb{Z} \oplus \mathbb{Z}$ lying to one side of $\mathcal{L}(\rho, x_0)$ must satisfy $\rho(g)(x_0) > x_0$, and those to the other side $\rho(g)(x_0) < x_0$.
\end{lemma}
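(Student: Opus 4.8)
The plan is to mimic the proof of Lemma~\ref{slope cts}/the corresponding statement for left-orderings (\cite[Proposition~1.7]{Sik04}), replacing the positive cone of an ordering of $\mathbb{Z}\oplus\mathbb{Z}$ by the ``positive set'' $P = \{ g \in \mathbb{Z}\oplus\mathbb{Z} : \rho(g)(x_0) > x_0\}$ determined by a pointed representation $(\rho,x_0)$. First I would record the elementary properties of $P$: it is closed under multiplication (since $\rho(gh)(x_0) = \rho(g)(\rho(h)(x_0)) \ge \rho(g)(x_0) > x_0$ when $g,h \in P$, using that $\rho(g)$ is order-preserving and $\rho(h)(x_0) > x_0$), and $g \in P \iff g^{-1} \notin P \cup \{ \text{elements fixing } x_0\}$. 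Unlike the left-ordering case, $P$ need not be the positive cone of an ordering, because the stabiliser $Z = \{ g : \rho(g)(x_0) = x_0\}$ may be nontrivial; however $Z$ is a subgroup and, crucially, since $x_0$ is not a global fixed point and $\mathbb{Z}\oplus\mathbb{Z}$ is abelian, $Z$ is a proper subgroup (if $Z$ were everything, $x_0$ would be globally fixed). So $\mathbb{Z}\oplus\mathbb{Z} = P \sqcup Z \sqcup P^{-1}$ with $P, P^{-1}$ nonempty.

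Next I would produce the line. Consider the convex hull in $\mathbb{R}^2$ of the set $Z \cup P$ viewed as lattice points; equivalently, look at the ``boundary'' between the lattice points in $P$ and those in $P^{-1}$. The key geometric claim is: the set $P$ (together with $Z$) is ``half-plane-like'', i.e.\ there is a line $\ell$ through the origin such that every lattice point strictly on one side lies in $P$ and every lattice point strictly on the other side lies in $P^{-1}$. To see this, I would use that $P$ is a sub-semigroup of $\mathbb{Z}\oplus\mathbb{Z}$ closed under the partial operations available, argue that $P \cup Z$ is closed under addition and contains no pair $g, -g$ with $g \notin Z$, and invoke the standard fact that a sub-semigroup $S$ of $\mathbb{Z}^2$ with $S \cup (-S) = \mathbb{Z}^2$ and $S \cap (-S)$ a subgroup is, up to the subgroup, one side of a rational or irrational line through the origin. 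Concretely: either $Z$ has rank $1$, say $Z = \langle w \rangle$, in which case $\ell = \mathbb{R}w$ works and one checks $P$ is exactly the open half-plane on one side (the semigroup property forces it); or $Z = \{0\}$, in which case $P$ is a semigroup with $P \sqcup P^{-1} = \mathbb{Z}^2 \setminus \{0\}$, and the same argument as in \cite[Proposition~1.7]{Sik04} gives a (possibly irrational) line $\ell$ with $P$ on one side. In all cases $\mathcal{L}(\rho,x_0) := \ell$ is uniquely determined by the stated prescription, since two distinct lines through the origin always have lattice points in the open ``lune'' between them (for the irrational case this requires the density argument: if $\ell \ne \ell'$, there are lattice points arbitrarily close to $\ell$ on either side, forcing a contradiction with membership in $P$ versus $P^{-1}$).

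I expect the main obstacle to be the case analysis forced by a nontrivial stabiliser $Z$, which has no analogue in the left-ordering setting: one must check that even when $\rho(g)(x_0) = x_0$ for some $g \ne 0$, the set $P$ still cleanly separates into one side of a single line, and that $Z$ itself lies on that line. The semigroup property of $P$ does the work --- if $g \in Z \setminus \{0\}$ and $h \in P$ with $h$ not parallel to $g$, then $g+h$ and $-g+h$ are both forced into $P$ by considering $\rho(g)^{\pm 1}\rho(h)(x_0)$ versus $x_0$ and order-preservation --- but making this rigorous, together with the uniqueness argument in the irrational case (which is where the density of irrational-slope lattice directions enters), is where the real content lies. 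The well-definedness under the choices implicit in ``pointed representation'' (there are none beyond $\rho$ and $x_0$ themselves) is immediate from the prescription, so no further checking is needed there.
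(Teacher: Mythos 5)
Your proposal is correct, but it takes a genuinely different route from the paper. You work directly with the positive set $P=\{g\in\mathbb{Z}\oplus\mathbb{Z} : \rho(g)(x_0)>x_0\}$ and the stabiliser $Z=\Stab_\rho(x_0)\cap G$, prove the semigroup and trichotomy properties, and then split on the rank of $Z$: if $Z=\langle w\rangle$ has rank one, the identities $P+Z\subset P$ and $P+P\subset P$ force $P$ to be exactly the lattice points on one open side of $\mathbb{R}w$, while if $Z$ is trivial, $P$ is an honest positive cone and \cite[Proposition 1.7]{Sik04} applies; you finish with an explicit uniqueness argument via lattice points in the open cone between two candidate lines. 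The paper instead completes the partial positivity data to a genuine left-ordering of $G$: it pulls back the lexicographic ordering of $\Homeo_+(\RR)$ determined by a countable dense subset $\{y_0=x_0,y_1,\dots\}$ (handling non-injective $\rho$ by passing to $G/\ker(\rho)\cong\mathbb{Z}$), and then defines $\mathcal{L}(\rho,x_0)$ as the line of that left-ordering via the already-established ordering-to-line correspondence. Your approach makes the well-definedness (independence of auxiliary choices) and the role of the stabiliser completely transparent, at the cost of a hands-on case analysis; the paper's is shorter and reuses the dense-subset ordering, which reappears in the proof of Theorem \ref{prop:equbetweenorddetectandrepdetect}. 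Two small points you should make explicit: $Z$ cannot have rank two (if $g\notin Z$ then $\rho(g)(x_0)>x_0$ or $<x_0$, hence $\rho(g^n)(x_0)\neq x_0$ for all $n\neq 0$, so no power of $g$ lies in $Z$), which is what legitimises your dichotomy ``$Z=\langle w\rangle$ or $Z=\{0\}$''; and in the rank-one case note that the lemma places no constraint on lattice points lying on the line itself, so your verification that off-line points split by side is all that is required.
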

\begin{proof}
    First consider the case where $\rho$ is injective. Let $\{y_0=x_0,y_1,y_2,\dots\}$ be an enumeration of a dense subset of $\RR$. This enumeration determines a left-ordering $\ooo$ on $\Homeo_+(\RR)$, namely, $f<_{\ooo}g$ if there is an $i$ such that $f(y_i)< g(y_i)$ and $f(y_j)=g(y_j)$ for all $j=0,1,\dots,i-1$. Let $\rho^{-1}(\ooo)$ be the pull back of $\ooo$. Then $\rho^{-1}(\ooo)$ is a left-ordering on $G$ and induces a line $\LL(\rho^{-1}(\ooo))=\LL(\rho,x_0)$. By definition, for an element $g\in G$, we have $id<_{\rho^{-1}(\ooo)} g$ if $\rho(id)(x_0)=x_0<\rho(g)(x_0)$, and $g <_{\rho^{-1}(\ooo)} id$ if $\rho(g)(x_0)<x_0$. Note that the line $\LL(\rho,x_0)$ is completely determined by these prescriptions and is independent of the choice of a dense countable subset of $\RR$.

    If $\rho$ is not injective, then $\ker(\rho)\cong \ZZ$, and there is an injective map $\rho':G/\ker(\rho) \cong \ZZ \to \mathrm{Homeo}_+(\mathbb{R})$ with $x_0$ not being a fixed point of any element in $G/\ker(\rho)$ except the identity. Let $s$ be a generator of $G/\ker(\rho)$. Then either $\rho'(s)(x_0)>0$ or $\rho'(s)(x_0)<0$. If $\rho'(s)(x_0)>0$, then $\rho'(s^n)(x_0)>0$ for $n>0$ and $\rho'(s^n)(x_0)<0$ for $n<0$. The elements of the cosets $s^n\ker(G)$ exhibit the same property. The same applies to the case $\rho'(s)(x_0)<0$. The statement of this lemma follows.
\end{proof}

\begin{definition}
    Suppose $G$ is a group and $\rho_1,\rho_2$ are representations of $G$. Then $\rho_1$ is said to be \emph{semi-conjugate} to $\rho_2$ if there is a proper\footnote{Here properness means the preimage of any bounded set is bounded.} non-decreasing map $h:\RR\to \RR$ such that $h\circ \rho_1(g)=\rho_2(g)\circ h$ for all $g \in G$.
\end{definition}
\begin{lemma}\label{lemma:semi-conj}
    Let $\ooo$ be a left-ordering of a group $G$ and $C$ an $\ooo$-convex normal subgroup of $G$. If $t: G\to\RR$ is a tight embedding associated to an enumeration $\{g_0=id,g_1,g_2,\dots\}$ of $G$ and $\rho$ its associated dynamic realisation, then there is a dynamic realisation $\eta: G/C\to \Homeo_+(\RR)$ such that $\rho$ is semi-conjugate to $\eta \circ p$ where $p:G\to G/C$ is the canonical projection. Moreover, the proper non-decreasing map $\nu :\RR \rightarrow \RR$ demonstrating the semiconjugacy satisfies $\nu(0) =0$.
\end{lemma}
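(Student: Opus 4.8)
The plan is to produce $\eta$ directly from the given enumeration of $G$, and to define $\nu$ by a supremum that collapses the $C$-cosets. Since $C$ is $\ooo$-convex and normal, the left cosets carry a left-ordering $\bar\ooo$ of $G/C$, determined by $aC<_{\bar\ooo}bC$ iff $aC\neq bC$ and $a<_\ooo b$ (well-definedness is exactly $\ooo$-convexity; left-invariance uses normality of $C$). The enumeration $\{g_0=id,g_1,g_2,\dots\}$ of $G$ induces an enumeration $\{\bar g_0=C,\bar g_1,\bar g_2,\dots\}$ of $G/C$ by listing $p(g_0),p(g_1),\dots$ and deleting repetitions. Let $\bar t:G/C\to\RR$ be the tight embedding associated with this enumeration and $\eta:G/C\to\Homeo_+(\RR)$ its dynamic realisation; thus $\bar t(C)=0$ and $\eta(\bar k)(\bar t(\bar h))=\bar t(\bar k\bar h)$ for all $\bar h,\bar k$. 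I take this $\eta$ as the desired dynamic realisation and set
\[ \nu(x)=\sup\{\bar t(p(g)) : g\in G,\ t(g)\le x\},\qquad x\in\RR. \]
(We may assume $G/C$ is nontrivial; otherwise the statement degenerates.)

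Next I would verify the elementary properties. Monotonicity of $\nu$ is immediate, and $\nu(0)=0$ because $t(g)\le 0=t(id)$ forces $g\le_\ooo id$, hence $p(g)\le_{\bar\ooo}C$ and $\bar t(p(g))\le 0$, the value $0$ being attained at $g=id$. The substantive point is that the supremum is finite and that $\nu$ is proper; both rest on the standard fact that a nontrivial left-ordered group has no greatest or least element, so by inspection of the tight-embedding construction $t(G)$ and $\bar t(G/C)$ are each cofinal and coinitial in $\RR$. For finiteness: if $\{\bar t(p(g)) : t(g)\le x\}$ were unbounded above then, since $\bar t$-values tend to $+\infty$ only along cofinal sequences, the cosets $p(g)$ would be cofinal in $G/C$; but choosing $h$ with $t(h)>x$ (possible since $t(G)$ is cofinal) gives $g<_\ooo h$ for all such $g$, hence $p(g)\le_{\bar\ooo}p(h)$, bounding them above --- a contradiction. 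Properness reduces, by monotonicity, to $\nu(x)\to\pm\infty$ as $x\to\pm\infty$; the $+\infty$ direction is clear, and for the $-\infty$ direction I would note that for each $M$ the up-set $\{\bar g:\bar t(\bar g)>M\}$ of $G/C$ pulls back under $p$ to an up-set of $G$ bounded below by any representative $g_0$ of a coset not in it, so $\nu(x)\le M$ whenever $x<t(g_0)$; coinitiality of $\bar t(G/C)$ supplies such $M$ arbitrarily negative.

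For the semiconjugacy, the key identity is that for every $h\in G$,
\[ \nu(t(h))=\sup\{\bar t(\bar g):\bar g\le_{\bar\ooo}p(h)\}=\bar t(p(h)), \]
the supremum being attained at $\bar g=p(h)$. Using that $\rho(g)$ and $\eta(\bar k)$ are increasing homeomorphisms, that $\rho(g)^{-1}(t(g'))=t(g^{-1}g')$, that $\eta(\bar k)(\bar t(\bar h))=\bar t(\bar k\bar h)$, and that an increasing continuous bijection of $\RR$ commutes with suprema of nonempty bounded-above sets, one finds that both $\nu\circ\rho(g)$ and $\eta(p(g))\circ\nu$ evaluate at $x$ to $\sup\{\bar t(p(g)p(g'')) : g''\in G,\ t(g'')\le x\}$; hence the two functions agree on all of $\RR$, not merely on $t(G)$. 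This gives $\nu\circ\rho(g)=\eta(p(g))\circ\nu$, i.e. $\rho$ is semi-conjugate to $\eta\circ p$, and $\nu(0)=0$ has already been recorded.

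I expect the main obstacle to be the finiteness-and-properness step: one must extract precisely the right consequences of $\ooo$-convexity of $C$ --- that cosets respect cofinal and coinitial sequences and that up-sets of $G/C$ pull back to bounded-below up-sets of $G$ --- to prevent the defining supremum from escaping to $\pm\infty$. Once $\nu$ is known to be a genuine proper non-decreasing self-map of $\RR$ fixing $0$, the semiconjugacy itself is the clean supremum computation above.
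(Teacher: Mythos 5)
Your proof is correct and follows essentially the same route as the paper: pass to the quotient ordering on $G/C$, take a tight embedding and dynamic realisation of $G/C$ sending the coset $C$ to $0$, and define $\nu$ by collapsing cosets, then verify properness and the intertwining relation. The only difference is presentational---you define $\nu$ by a single supremum formula and check $\nu\circ\rho(g)=\eta(p(g))\circ\nu$ in one computation (a reindexing plus the fact that an increasing homeomorphism commutes with suprema), whereas the paper defines $\nu$ case-by-case on $t(G)$, on gaps, and on limit points and verifies the relation in three corresponding cases.
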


\begin{proof}
    Since $C$ is convex, $P(\ooo)\backslash C$ is a union of left $C$-cosets. In other words, given two cosets $gC$ and $hC$, we have either $gc<_\ooo hc'$ for all $c', c\in C$ or $hc'<_\ooo gc$ for all $c', c\in C$. Therefore, $t(gC)$ is bounded for each coset $gC$ and so $\inf\{t(gC)\}$ and $\sup\{t(gC)\}$ exist.


    Fixing a tight embedding $\omega: G/C\to \RR$ with $\omega(C)=0$ and $\eta$ its dynamic realisation, we define $\nu: \RR \to \RR$ by the following prescription: if $r=t(g)$ for some $g\in G$, then $\nu(r)=\omega(gC)$; if $r\in (t(g),t(h))$ for some gap $g<_\ooo h$ in $G$, then $\nu(r)=\omega(hC)=\omega(gC)$ since $g^{-1}h$ is the least positive element in $G$ and hence $gC=hC$; finally if $r\in \overline{t(G)}\setminus t(G)$, then set $\nu(r) = \sup\{\nu(g) : t(g) < r \}.$

    Note that the image of $\nu$ is unbounded and $\nu$ is a well-defined non-decreasing function, which by definition satisfies $\nu(0) = 0$. We check that it provides the required function to prove the semiconjugacy relation claimed.

    For any bounded subset $I$ of $\RR$, choose cosets $gC$ and $hC$ such that $\omega(gC)$ and $\omega(hC)$ are lower and upper bounds of $I$ respectively. Then $\nu^{-1}(I)$ is bounded by $t(g')$ and $t(h')$, where $g'<_\ooo gc$ and $hc<_\ooo h'$ for any $c\in C$. This implies $\nu$ is proper. Fix an arbitrary element $g\in G$. It is left to check that $\nu\circ \rho(g) (x)=\eta \circ p(g)\circ \nu(x)$ for all $x\in \RR$. Firstly, note $\nu\circ \rho(g) (t(h)) =\nu(t(gh))=\omega(ghC) $ and $ \eta \circ p(g)\circ \nu(t(h))= \eta(gC) \omega(hC)=\omega(ghC)$ for any $g, h\in G$. In other words, $\nu\circ \rho(g)$ and $\eta \circ p(g)\circ \nu$ agree on $t(G)$. Secondly, if $x\notin \overline{t(G)}$, then $t(k)<x<t(h)$ and $x=(1-s)t(k)+s t(h)$ for some gap $k<_\ooo h$ and $s\in(0,1)$. It follows that $k,h\in kC=hC$ and so $\eta \circ p(g)\circ \nu(x)= \eta(gC)(\omega(hC)) =\omega(ghC)$. On the other hand, we have $\nu\circ \rho(g) (x)=\nu( (1-s)t(gk)+st(gh))=\omega(ghC)$. So $\nu\circ \rho(g)$ and $\eta \circ p(g)\circ \nu$ also agree on $\RR\setminus \overline{t(G)}$. Finally, consider the case $x\in \overline{t(G)}\setminus t(G)$. We have
    \begin{align*}
        \eta \circ p(g) \circ \nu(x) & = \eta(gC)(\sup\{\omega(hC) : t(h) < x\}) \\
                                     & = \sup \{ \omega(ghC) : t(h) <x \}        \\
                                     & = \sup \{ \nu(t(gh)) : t(h) <x \}         \\
                                     & = \sup \{ \nu(t(h)) : t(g^{-1}h) <x \}    \\
                                     & = \nu \circ \rho(g)(x).
    \end{align*}
\end{proof}

Let $M$ be a compact connected irreducible orientable $3$-manifold not homeomorphic to $D^2\times S^1$ or $S^1\times S^1 \times I$, whose boundary $\partial M = T_1 \cup \dots \cup T_n$ consists of incompressible tori as above. For each $i \in \{ 1, \ldots, n\}$, if $\pi_1(T_i) \not\subset \Stab_{\rho}(x_0)$ for some $(\rho, x_0) \in \mathcal{R}(\pi_1(M))$, then $(\rho, x_0)$ determines an element $(\rho|_{\pi_1(T_i)}, x_0) \in \mathcal{R}(\pi_1(T_i))$ via restriction of $\rho$ to the subgroup $\pi_1(T_i)$. We therefore focus on the subset $\mathcal{R}^*(\pi_1(M)) \subset \mathcal{R}(\pi_1(M))$, where
\[\mathcal{R}^*(\pi_1(M)) = \{(\rho, x_0) \in \mathcal{R}(\pi_1(M)) : \pi_1(T_i) \not\subset \Stab_{\rho}(x_0) \mbox{ for $i = 1, \ldots, n$} \}.
\]

\begin{definition}\label{representation-detect}
    Suppose $M$ is a compact connected irreducible orientable $3$-manifold with boundary $\partial M = T_1 \cup \dots \cup T_n$ a union of incompressible tori, and let $J \subset K \subset \{ 1, \ldots, n\}$ and $([\alpha_1], \ldots, [\alpha_n]) \in \mathcal{S}(M)$. We say that $(J, K; [\alpha_1], \ldots, [\alpha_n])$ is \emph{representation-detected} if there exists $(\rho, x_0) \in \mathcal{R}^*(\pi_1(M))$ such that
    \begin{enumerate}
        \item[R1.] $([\mathcal{L}(\rho|_{\pi_1(T_1)}, x_0)], \ldots,[\mathcal{L}(\rho|_{\pi_1(T_n)}, x_0)]) = ([\alpha_1], \ldots, [\alpha_n])$;
        \item[R2.] for every $g\in \pi_1(M)$, we have $(\rho, \rho(g)(x_0)) \in \mathcal{R}^*(\pi_1(M))$ with
              $$([\mathcal{L}(\rho|_{\pi_1(T_1)}, \rho(g)(x_0))], \ldots,[\mathcal{L}(\rho|_{\pi_1(T_n)}, \rho(g)(x_0))]) = ([\beta_1], \ldots, [\beta_n]),$$
              where $[\beta_i] = [\alpha_i]$ for all $i \in K$;
        \item[R3.] $\rho$ is semi-conjugate to a representation $\varphi: \pi_1(M)\to \Homeo_+(\RR)$ via some $\nu:\RR\to \RR$ such that $(\varphi, \nu(x_0))\in \mathcal{R}^*(\pi_1(M))$ and $[\mathcal{L}(\varphi|_{\pi_1(T_i)}, \nu(x_0))] = [\alpha_i]$, with $\varphi(\alpha_i)=id$ whenever $i\in J$ and $[\alpha_i]$ is rational.
    \end{enumerate}
\end{definition}

In this case we say that $(J, K; [\alpha_1], \ldots, [\alpha_n])$ is representation-detected by $(\rho, x_0)$. We write $\mathcal{D}_{rep}(J,K;M)\subset \mathcal{S}(M)$ to denote the set of tuples $[\alpha_*]$ for which $(J,K;[\alpha_*])$ is representation-detected. As in the case of orderings, if $(J,K;[\alpha_1], \ldots, [\alpha_n])$ is representation-detected, we say that $[\alpha_i]$ weakly representation-detected; it is strongly representation-detected if $i \in J$ and (regularly) representation-detected if $i \in K$.




\section{Equivalence of order- and representation-detection, and alternative definitions}
\label{sec:equiv}

We first begin with a lemma that allows us to restate the third condition of order-detection.

\begin{lemma}\label{lemma:equivO3}
    With the same assumptions as in Definition \ref{def:order-detect}, there exists a left-ordering $\ooo'$ of $\pi_1(M)$ satisfying O1 and O2 and \begin{itemize}
        \item [O3$'$.] there exists an $\mathfrak{o}'$-convex subgroup $H$ such that if $[\alpha_i]$ is rational then $\pi_1(T_i)\cap H \leq \langle \alpha_i \rangle $ with $\langle \langle \alpha_i \rangle \rangle \leq H $ whenever $i \in J$; and if $[\alpha_i]$ is irrational then $\pi_1(T_i)\cap H=\{ id \}$,
    \end{itemize} if and only if $(J, K; [\alpha_1], \ldots, [\alpha_n])$ is order-detected.
\end{lemma}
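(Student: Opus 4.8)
The plan is to prove the two implications separately; the substance lies entirely in manufacturing a \emph{normal} convex subgroup out of the possibly non-normal subgroup $H$ appearing in O3$'$. For the ``if'' direction, suppose $\ooo$ order-detects $(J,K;[\alpha_*])$ with $\ooo$-convex normal subgroup $C$ as in O3, and take $\ooo'=\ooo$, $H=C$. Then O1 and O2 are unchanged, $H$ is $\ooo'$-convex, and $\pi_1(T_i)\cap H\leq\langle\alpha_i\rangle$ (resp.\ $=\{id\}$) when $[\alpha_i]$ is rational (resp.\ irrational); moreover for $i\in J$ with $[\alpha_i]$ rational we have $\alpha_i\in\pi_1(T_i)\cap C=\langle\alpha_i\rangle\subset C$, so normality of $C$ gives $\langle\langle\alpha_i\rangle\rangle\leq C=H$. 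This verifies O3$'$.

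For the ``only if'' direction, suppose $\ooo'$ satisfies O1, O2 and O3$'$ with $\ooo'$-convex subgroup $H$; set $G=\pi_1(M)$ and let $C=\bigcap_{g\in G}gHg^{-1}$ be the normal core of $H$. First I would check that $C$ has exactly the intersection behaviour demanded of the convex subgroup in O3. For each $g\in G$, conjugation by $g^{-1}$ is an order-isomorphism $(G,g\cdot\ooo')\to(G,\ooo')$ carrying $gHg^{-1}$ to $H$, so $gHg^{-1}$ is $(g\cdot\ooo')$-convex and hence $\pi_1(T_i)\cap gHg^{-1}$ is a convex subgroup of $\pi_1(T_i)\cong\ZZ\oplus\ZZ$; thus it is trivial, primitive cyclic, or all of $\pi_1(T_i)$. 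Since $\pi_1(T_i)\cap C=\bigcap_g\bigl(\pi_1(T_i)\cap gHg^{-1}\bigr)$, the $g=id$ term lies in $\langle\alpha_i\rangle$ by O3$'$, and two distinct primitive cyclic subgroups of $\ZZ\oplus\ZZ$ meet trivially, one gets $\pi_1(T_i)\cap C\in\{\{id\},\langle\alpha_i\rangle\}$ for $[\alpha_i]$ rational and $\pi_1(T_i)\cap C=\{id\}$ for $[\alpha_i]$ irrational. Finally $N:=\langle\langle\,\alpha_i : i\in J,\ [\alpha_i]\text{ rational}\,\rangle\rangle$ is normal and contained in $H$ by O3$'$, hence $N\leq C$, which upgrades the containment to $\pi_1(T_i)\cap C=\langle\alpha_i\rangle$ whenever $i\in J$ and $[\alpha_i]$ is rational.

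It then remains to produce a left-ordering $\ooo$ of $G$ having $C$ convex and still satisfying O1 and O2; such an $\ooo$ order-detects $(J,K;[\alpha_*])$ via the normal convex subgroup $C$. Because $H$ is $\ooo'$-convex it induces a $G$-invariant total order on the coset space $G/H$, and the kernel of the $G$-action on this ordered set is exactly $C$; order-embedding the countable set $G/H$ into $\RR$ and extending the action to $\Homeo_+(\RR)$ exhibits $G/C$ as a countable subgroup of $\Homeo_+(\RR)$, so $G/C$, and therefore $H/C$, is left-orderable. I would then choose a left-ordering $\ooo_H$ of $H$ in which $C$ is convex (a lexicographic combination of left-orderings of $H/C$ and of $C$), and let $\ooo$ be the left-ordering of $G$ obtained by keeping the $G/H$-order of $\ooo'$ and refining it inside each $H$-coset by $\ooo_H$. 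Then $H$, and hence $C$, is $\ooo$-convex, $C\trianglelefteq G$, and $\ooo$ agrees with $\ooo'$ off $H$, so $g\cdot\ooo$ agrees with $g\cdot\ooo'$ off $gHg^{-1}$ for every $g$. For O1: if $\pi_1(T_i)\cap H=\{id\}$ (in particular whenever $[\alpha_i]$ is irrational) then $\ooo$ and $\ooo'$ restrict identically to $\pi_1(T_i)$, and otherwise $\pi_1(T_i)\cap H=\langle\alpha_i\rangle$ and convexity of $\langle\alpha_i\rangle$ in $\pi_1(T_i)$ forces the slope to be $[\alpha_i]$. For O2, with $g$ fixed and $i\in K$: if $\pi_1(T_i)\cap gHg^{-1}$ is trivial the two orderings agree on $\pi_1(T_i)$, and if it is primitive cyclic then, since $gHg^{-1}$ is convex for both orderings, both slopes equal that of the cyclic subgroup; in either case O2 for $\ooo'$ yields the slope $[\alpha_i]$.

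The main obstacle is the one surviving case of O2: $\pi_1(T_i)\subset gHg^{-1}$, equivalently $g^{-1}\pi_1(T_i)g\subset H$, for some $g$ and some $i\in K$. Here $(g\cdot\ooo)|_{\pi_1(T_i)}$ is governed by the restriction of $\ooo_H$ to the conjugate peripheral subgroup $g^{-1}\pi_1(T_i)g$, and one must ensure this restriction induces the same slope that $\ooo'$ does there (which, by O2 for $\ooo'$, conjugates to $[\alpha_i]$). When $g^{-1}\pi_1(T_i)g$ meets $C$ nontrivially this is automatic, since convexity of $C$ already pins that slope; the remaining work is to choose $\ooo_H$ --- equivalently, the left-ordering of $H/C$ entering its construction --- compatibly with those conjugate peripheral subgroups that lie in $H$ but meet $C$ trivially, using that $\ooo'|_H$ realises the correct slopes on all of them. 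I expect proving that such a compatible choice always exists to be the technical heart of the argument; by contrast the ``if'' direction and the intersection bookkeeping establishing the properties of $C$ are routine.
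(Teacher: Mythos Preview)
Your ``if'' direction and the bookkeeping showing that the normal core $C=\bigcap_g gHg^{-1}$ meets each $\pi_1(T_i)$ correctly are fine and match the paper. The gap is exactly where you locate it, and your proposed construction of $\ooo$ does not close it.

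The paper does \emph{not} build $\ooo$ by keeping the $G/H$-order and refining inside $H$-cosets with some $\ooo_H$; that is what creates your obstacle, because the slope information on conjugate peripheral subgroups landing inside $H$ is then hostage to the choice of $\ooo_H$. Instead the paper uses the ordered $G$-\emph{action} on $G/H$: enumerate coset representatives $g_0=id,g_1,g_2,\ldots$ and declare $g<_\ooo h$ if $g g_jH\prec h g_jH$ at the first index $j$ with $g g_jH\neq h g_jH$, and otherwise (i.e.\ when $g^{-1}h$ acts trivially on $G/H$, equivalently $g^{-1}h\in C$) fall back to $g<_{\ooo'}h$. Two features of this construction are decisive. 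First, the convex subgroup it produces is $C$ itself, the kernel of the action, with no intermediate passage through $H$; O3 then holds by your own core computation. Second, for any $g$ and any $\gamma\in\pi_1(T_i)$ off the $[\alpha_i]$-line, the sign of $\gamma$ under $g\cdot\ooo$ is read off either from $\ooo'$ (when $g^{-1}\gamma g\in C$) or from a coset comparison $g_jH\prec g^{-1}\gamma g\,g_jH$, which unwinds to $id<_{gg_j\cdot\ooo'}\gamma$. In every case the sign is inherited from some conjugate $h\cdot\ooo'$, whose restriction to $\pi_1(T_i)$ has slope $[\alpha_i]$ by O2 for $\ooo'$; this is what drives O2 for $\ooo$.

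So the ``technical heart'' you anticipate---manufacturing a compatible $\ooo_H$---is not needed and is replaced by a cleaner device: order by the action on $G/H$ rather than by position in $G/H$, and use $\ooo'$ itself (not an auxiliary ordering of $H$) as the tiebreaker on $C$. With this change the rest of your outline goes through.
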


\begin{proof}
    Assume first that $(J, K; [\alpha_1], \ldots, [\alpha_n])$ is order-detected, say by an ordering $\ooo$ of $\pi_1(M)$. Then O1, O2 are satisfied and the subgroup $C$ from O3 automatically satisfies the properties required of $H$ in O3$'$. So in this direction, there is nothing to prove.

    Now assume there is a left-ordering $\ooo'$ satisfying O1, O2 and O3$'$. Since $H$ is $\ooo'$-convex, for every $g\in \pi_1(M)$ one can check that the conjugate $gHg^{-1}$ is $g \cdot \ooo'$-convex. For ease of exposition, write $G=\pi_1(M)$ and $C=\bigcap_{g\in G} gHg^{-1}$. Then $C$ is normal and $\ooo$-convex for some left-ordering $\ooo$ by \cite[Proposition 5.1.10]{KM96}; we will provide a construction of such an ordering $\ooo$ and verify that it satisfies O1, O2 and O3.

    Since $H$ is $\ooo'$-convex, the set of left cosets $G/H$ inherits a natural ordering $\prec$ and the canonical $G$-action from the left preserves this ordering, the kernel of this $G$-action being $C$. Choose a complete set of coset representatives $E=\{g_0=id, g_1,g_2,\dots\}$ and define the ordering $\ooo$ according to the rule $ g <_{\ooo} h $ if $ g g_i H \prec h g_i H $ where $g_i H$ is the first element in the enumeration for which $g g_iH \neq h g_i H$; or if $g g_i H= h g_i H$ for all $i$ and $g <_{\ooo'} h$.

    For any $i\in \{1,2,\dots,n\}$, let $\gamma\in \pi_1(T_i)$ with $\gamma\notin \langle \alpha_i \rangle$ if $[\alpha_i]$ is rational. From O3$'$, we have $\gamma\notin H$, and so $H\not= \gamma H$. Therefore, $id<_\ooo \gamma$ if and only if $H\prec \gamma H$ by the construction of $\ooo$, which is implied by $id<_{\ooo'} \gamma$. In other words, $id<_\ooo \gamma$ if and only if $id<_{\ooo'} \gamma$ and so $\ooo$ satisfies O1.

    Next, suppose that $i\in K$ and $\gamma \in \pi_1(T_i)$ with $\gamma\notin \langle \alpha_i \rangle$ if $[\alpha_i]$ is rational. Then $id<_{g \cdot \ooo} \gamma$ if and only if $id <_\ooo g^{-1}\gamma g$. If $g^{-1} \gamma g$ lies in $C$, then $id<_{\ooo} g^{-1}\gamma g$ if and only if $id<_{\ooo'} g^{-1}\gamma g$, which is equivalent to $id<_{g \cdot \ooo'}\gamma$. So in this case, $id<_{g \cdot \ooo} \gamma$ if and only if $id<_{g \cdot \ooo'}\gamma$. If $g^{-1}\gamma g$ does not lie in $C$, then $id<_\ooo g^{-1}\gamma g$ if and only if there is an index $j$ such that $g_jH\prec g^{-1}\gamma g g_jH$ and $g_s H=g^{-1}\gamma g g_s H$ for all $s<j$. Note that $g_jH\prec g^{-1}\gamma g g_jH$ if and only if $g_j <_{\ooo'} g^{-1}\gamma g g_j$, which is true if and only if $id <_{gg_j \cdot \ooo'} \gamma $. Combined with the property O2 of $\ooo'$, these two cases allow us to conclude that $id<_{g\cdot \ooo} \gamma$ if and only of $id<_{\ooo'} \gamma$, that is, $\ooo$ satisfies O2. Finally, O3 is automatically true, simply by the construction of $C$ and the property O3$'$ of $H$.
\end{proof}

We are now ready to prove Theorem \ref{prop:equbetweenorddetectandrepdetect} from the introduction.


\begin{proof}[Proof of Theorem \ref{prop:equbetweenorddetectandrepdetect}]
    First, we show that order-detection implies representation-detection. Assume $(J, K; [\alpha_*])$ is order-detected by $\ooo\in \LO(\pi_1(M))$. Let $t:\pi_1(M)\to \RR$ be a tight embedding associated with an enumeration $\{g_0=id,g_1, g_2,\ldots\}$ of $\pi_1(M)$. In this setup, $0=t(g_0)$ is not a fixed point of $\rho(g)$ for any non-identity element $g\in \pi_1(M)$. We claim that $(J, K; [\alpha_*])$ is representation-detected by $(\rho_\ooo, 0)$, where $\rho_\ooo$ is the dynamic realisation associated with $t$. Notice that we have $[\LL(\ooo|_{\pi_1(T_i)})]=[\LL(\rho_\ooo|_{\pi_1(T_i)},0)]$ by O1 and Lemma \ref{lemma:z2slope}, that is, R1 is satisfied.

    If $x=t(g)=\rho(g)(0)$ for some $g\in\pi_1(M)$, then $x$ is not a fixed point of $\rho(h)$ for any non-identity element $h\in \pi_1(M)$. Therefore, we have $(\rho,\rho(g)(0))\in \mathcal{R}^*(\pi_1(M))$ for every $g\in G$. Moreover, $\rho(g_1)(x)<\rho(g_2)(x)$ if and only if $t(g_1g)<t(g_2g)$. Since $\rho$ is constructed from the tight embedding $t$, this is equivalent to saying $g_1g<_\ooo g_2g$, in other words, $g_1 <_{g\cdot \ooo} g_2$. Then by O2, we have \[[\LL(\rho_\ooo|_{\pi_1(T_j)},x)]=[\LL((g \cdot \ooo)|_{\pi_1(T_j)})]=[\LL(\ooo|_{\pi_1(T_j)})]=[\alpha_j]\] for all $j\in K$. Hence, R2 is fulfilled.

    Applying Lemma \ref{lemma:semi-conj} to the convex subgroup $C \leq \pi_1(M)$ using the tight embedding $t$, we obtain a representation $\varphi=\eta \circ p:\pi_1(M)\to \Homeo_+(\RR)$, and $\rho$ is semi-conjugate to this representation by $\nu : \RR \rightarrow \RR$, where $\nu(0)=0$. For any $i\in\{1,\dots,n\}$, let $\gamma\in \pi_1(T_i)$ be given, with $\gamma\notin \vbracket{\alpha_i}$ if $[\alpha_i]$ is rational. Then we see that $\gamma\notin C$ by O3. Because of that $id <_\ooo \gamma$ if and only if $0<t(\gamma)$ and also that $\varphi(\gamma)(0)=\varphi(\gamma)\nu(0)=\nu \rho(\gamma)(0)=\nu(t(\gamma))>0$ if and only if $t(\gamma)>0$ by the construction of $\nu$ in Lemma \ref{lemma:semi-conj}, we see $id <_\ooo \gamma$ if and only if $\varphi(\gamma)(0)>0$. It follows that $(\varphi, \nu(0))\in \mathcal{R}^*(\pi_1(M))$ and, in fact, $[\mathcal{L}(\varphi|_{\pi_1(T_i)}, \nu(0))] = [\alpha_i]$. This proves R3 and finishes the proof in the first direction.

    For the other direction, assume $(J, K; [\alpha_*])$ is representation-detected by $(\rho,x_0)$ with $\varphi$ the representation whose existence is guaranteed by R3, and with $\nu :\RR \rightarrow \RR$ the proper, non-decreasing map demonstrating the semiconjugacy from $\rho$ to $\varphi$. Since $M$ is compact connected irreducible and orientable, a non-trivial representation $\rho:\pi_1(M)\to \Homeo_+(\RR)$ ensures that $\pi_1(M)$ is left-orderable \cite[Theorem 1.1]{BRW05}. By choosing a countable dense subset $E=\{r_0=x_0,r_1,r_2,\dots\}$ of $\RR$, we define a left-ordering $\ooo'$ on $\Homeo_+(\RR)$ in the usual way. If $\ker(\rho)$ is trivial, then $\rho^{-1}(\ooo')$ is a left-ordering on $\pi_1(M)$. If $\ker(\rho)$ is non-trivial, then we can give a left-ordering on $\pi_1(M)$ via the short exact sequence $0\to \ker(\rho)\to \pi_1(M)\to \rho(\pi_1(M))\to 0$, where we take any left-ordering of $\ker(\rho)$ and the restriction left-ordering $\ooo'|_{\rho(\pi_1(M))}$ on $\rho(\pi_1(M))$. By abuse of notation, we also denote this left-ordering of $\pi_1(M)$ as $\ooo'$. By the construction of $\ooo'$, we have $g<_{\ooo'} f$ for $g,f\in \pi_1(M)$, if there is an index $i$ such that $\rho(g)(r_i)<\rho(f)(r_i)$ and $\rho(g)(r_j)=\rho(f)(r_j)$ for all $j=0,1,\dots,i-1$.

    Set $H=\Stab_\varphi(\nu(x_0))$ and order the cosets of $H$ according to $gH \prec fH$ if $\varphi(g)(\nu(x_0))<\varphi(f)(\nu(x_0))$. Now we create the desired left-ordering $\ooo$ on $\pi_1(M)$ by defining $g<_\ooo f$ if either $gH\prec fH$ or $gH=fH$ and $id<_{\ooo'} g^{-1}f\in H$. We claim that $\ooo$ together with $H$ satisfies O1, O2 and O3'. Then by Lemma \ref{lemma:equivO3}, this direction is done. It remains to prove the claim.

    To show O1, we fix $i\in \{1,2,\dots,n\}$ and let $g\in \pi_1(T_i)$ such that if $[\alpha_i]$ is rational, then $g\notin \langle \alpha_i\rangle$. We have $g\notin \ker(\rho)$ by the definition of $\mathcal{R}^*(\pi_1(M))$. It suffices to show that $id<_\ooo g$ if and only if $\rho(g)(x_0)>x_0$. There are two cases depending on whether $g$ lies in $H$ or not. If $g\in H$, then $id<_\ooo g$ if and only if $id<_{\ooo'} g$. Since $g\not\in\ker(\rho)$, the latter is true if there is an index $j$ such that $\rho(g)(r_j)>r_j$ and $\rho(g)(r_s)=r_s$ for all $s=0,1,\dots,j-1$. By R1, the first $r_i$ where they differ is $r_0=x_0$ and so indeed $\rho(g)(x_0)>x_0$ if and only if $id<_\ooo g$. This finishes the first case. For the second, suppose $g \notin H$, then by the construction of $\ooo$, we have $id<_\ooo g$ if and only if $H\prec gH$, which is equivalent to $\varphi(g)(\nu(x_0))>\nu(x_0)$ or equivalently $\nu(\rho(g)(x_0))>\nu(x_0)$. Now if $\rho(g)(x_0)\leq x_0$, we would have $\nu(\rho(g)(x_0))\leq h\nu(x_0)$ since $\nu$ is non-decreasing, a contradiction. Therefore, it must be the case that $\rho(g)(x_0)>x_0$. Putting these two cases together proves O1.

    To show O2, fix an $i\in K$ and take $g\in \pi_1(T_i)$ in such a way that if $[\alpha_i]$ is rational, then $g$ does not lie in $\langle \alpha_i\rangle$. It suffices to show that for any $f\in \pi_1(M)$, we have $id<_\ooo g$ if and only if $id<_{f \cdot \ooo} g$. By the definition of $f \cdot \ooo$, the latter is equivalent to $id<_\ooo f^{-1}gf$. Again, we divide into two cases depending on whether $g$ is in $H$ or not.

    Consider the case where $g \notin H$ first. Then $id<_\ooo g$ if and only if $\varphi(g)(\nu(x_0))>\nu(x_0)$. By semiconjugacy, the latter is equivalent to $\nu(\rho(g)(x_0))>\nu(x_0)$. By the same argument as in the last paragraph, this implies $\rho(g)(x_0)>x_0$. By R2, it follows that $\rho(g)(\rho(f)(x_0))>\rho(f)(x_0)$, that is, $\rho(f^{-1}gf)(x_0)>x_0$. If $f^{-1}gf\in H$, then $\rho(f^{-1}gf)(x_0)>x_0$ implies $id<_{\ooo'} f^{-1}gf$, and so $id<_\ooo f^{-1}gf$. If $f^{-1}gf\notin H$, then $\rho(f^{-1}gf)(x_0)>x_0$ implies $\nu(\rho(f^{-1}gf)(x_0)) > \nu(x_0)$. It follows that $\varphi(f^{-1}gf)(\nu(x_0))>\nu(x_0)$ and therefore $id<_\ooo f^{-1}gf$.

    Now consider the second case $g\in H$. Then $id<_\ooo g$ if and only if $id<_{\ooo'} g$. Since $g\not\in\ker(\rho)$, The latter is equivalent to $\rho(g)(x_0)>x_0$ by R1. By R2, it follows that $\rho(g)(\rho(f)(x_0))>\rho(f)(x_0)$ for any $f\in \pi_1(M)$. The second case now follows from an argument identical to the final steps of the first case. Therefore, the property O2 is satisfied.

    To show O3$'$, we first note that $H$ is $\ooo$-convex by construction. Next we observe that $\ker(\varphi)< H$ and $\alpha_i\in \ker(\varphi)$ for any $i\in J$ with $[\alpha_i]$ rational, which implies $\langle \langle \alpha_i \rangle \rangle \leq H$. Since $(\varphi, \nu(x_0))\in \mathcal{R}^*(\pi_1(M))$, $H\cap \pi_1(T_i)$ is at most of rank 1, and since $[\mathcal{L}(\varphi|_{\pi_1(T_i)}, \nu(x_0))] = [\alpha_i]$ we know that $H \cap \pi_1(T_i) \leq \langle \alpha_i \rangle$ when $[\alpha_i]$ is rational and $H \cap \pi_1(T_i) = \{id \}$ when $[\alpha_i]$ is irrational. It follows that O3$'$ is satisfied.
\end{proof}

This result also allows us to rework our definition of representation-detection as follows.

\begin{lemma}\label{lemma:equivR2}
    With the same assumptions as in Definition \ref{representation-detect}, there exists $(\rho, x_0) \in \mathcal{R}^*(\pi_1(M))$ satisfying R1, R3 and \begin{itemize}
        \item [R2$'$.] for all $x \in \mathbb{R}$, if $(\rho, x) \in \mathcal{R}^*(\pi_1(M))$ then
              $$([\mathcal{L}(\rho|_{\pi_1(T_1)}, x)], \ldots,[\mathcal{L}(\rho|_{\pi_1(T_n)}, x)]) = ([\beta_1], \ldots, [\beta_n]),$$
              with $[\beta_i] = [\alpha_i]$ for all $i \in K$,
    \end{itemize} if and only if $(J, K; [\alpha_1], \ldots, [\alpha_n])$ is representation-detected.
\end{lemma}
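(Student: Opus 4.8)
The plan is to prove the substantive direction first: if $(J,K;[\alpha_*])$ is representation-detected, I will exhibit a pointed representation satisfying R1, R3 and the strengthened condition R2$'$, namely the dynamic realisation of an order-detecting ordering based at the image of the identity. By Theorem \ref{prop:equbetweenorddetectandrepdetect}, $(J,K;[\alpha_*])$ is order-detected, say by $\ooo\in\LO(\pi_1(M))$ with an associated $\ooo$-convex normal subgroup $C$ as in O3. I would fix an enumeration $\{g_0=id,g_1,g_2,\dots\}$ of $\pi_1(M)$, take $t:\pi_1(M)\to\RR$ the associated tight embedding, $\rho_\ooo$ the associated dynamic realisation, and set $x_0=t(id)=0$. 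Then $\Stab_{\rho_\ooo}(0)$ is trivial, so $(\rho_\ooo,0)\in\mathcal R^*(\pi_1(M))$, and R1 (via Lemma \ref{lemma:z2slope}) and R3 (via Lemma \ref{lemma:semi-conj} applied to $C$) hold by exactly the arguments used in the proof of Theorem \ref{prop:equbetweenorddetectandrepdetect}.

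The work is in verifying R2$'$. I would fix $x\in\RR$ with $(\rho_\ooo,x)\in\mathcal R^*(\pi_1(M))$ and show $[\LL(\rho_\ooo|_{\pi_1(T_i)},x)]=[\alpha_i]$ for all $i\in K$, splitting into three cases according to the position of $x$ relative to $t(\pi_1(M))$ (which partition $\RR$). If $x=t(g)$ for some $g\in\pi_1(M)$, then $x=\rho_\ooo(g)(0)$ and the claim is precisely the R2 computation already carried out in the proof of Theorem \ref{prop:equbetweenorddetectandrepdetect}. If $x$ lies in a gap interval $(t(h),t(k))$ coming from a gap $h<_\ooo k$, then from the affine formula defining $\rho_\ooo$ on gap intervals one checks that for every non-identity $\gamma\in\pi_1(M)$ one has $\rho_\ooo(\gamma)(x)>x$ if and only if $\gamma h>_\ooo h$, i.e. if and only if $id<_{h\cdot\ooo}\gamma$; therefore $\LL(\rho_\ooo|_{\pi_1(T_i)},x)=\LL((h\cdot\ooo)|_{\pi_1(T_i)})$, which is the line determined by $[\alpha_i]$ for $i\in K$ by property O2 of $\ooo$.

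The hard part is the third case, $x\in\overline{t(\pi_1(M))}\setminus t(\pi_1(M))$: here $\rho_\ooo$ may genuinely have nontrivial stabiliser at $x$, and there is no single reference ordering as before. I plan to argue by contradiction. Suppose that for some $i\in K$ the line $L:=\LL(\rho_\ooo|_{\pi_1(T_i)},x)$ (which is well defined because $(\rho_\ooo,x)\in\mathcal R^*$) differs from the line $L_\alpha$ determined by $[\alpha_i]$. Then $L_\alpha$ cuts the open half-plane of points lying on the positive side of $L$ into two open sectors on opposite sides of $L_\alpha$, each of which contains a nonzero lattice point of $H_1(T_i;\ZZ)$, say $\gamma_1$ and $\gamma_2$; by Lemma \ref{lemma:z2slope}, $\rho_\ooo(\gamma_k)(x)>x$ for $k=1,2$. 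Choosing a sequence $t(g_{j})\to x$ and using continuity of $\rho_\ooo(\gamma_1)$ and $\rho_\ooo(\gamma_2)$, for all sufficiently large $j$ one gets $t(\gamma_k g_{j})>t(g_{j})$, i.e. $\gamma_k g_{j}>_\ooo g_{j}$, i.e. $id<_{g_{j}\cdot\ooo}\gamma_k$; since $\LL((g_{j}\cdot\ooo)|_{\pi_1(T_i)})=L_\alpha$ by properties O1 and O2, this forces $\gamma_1$ and $\gamma_2$ onto the same side of $L_\alpha$, a contradiction. Hence $L=L_\alpha$, and R2$'$ holds.

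Finally, for the converse direction, a pointed representation $(\rho,x_0)\in\mathcal R^*(\pi_1(M))$ satisfying R1, R3 and R2$'$ in fact satisfies R1, R2 and R3: R2$'$ evaluated at the orbit points $x=\rho(g)(x_0)$ supplies the slope equalities of R2, and the required memberships $(\rho,\rho(g)(x_0))\in\mathcal R^*(\pi_1(M))$ are checked directly, so $(\rho,x_0)$ representation-detects $(J,K;[\alpha_*])$. I expect the limit-point case and this membership check to be the only delicate points; the remainder rests on Theorem \ref{prop:equbetweenorddetectandrepdetect}, Lemma \ref{lemma:semi-conj}, and the standard properties of dynamic realisations.
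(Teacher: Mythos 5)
Your proposal is correct and takes essentially the same approach as the paper: both directions reduce, via Theorem \ref{prop:equbetweenorddetectandrepdetect}, to checking that the dynamic realisation $(\rho_{\ooo},0)$ of an order-detecting ordering satisfies R2$'$, verified by the same three-case analysis on the position of $x$ (orbit point, gap interval, limit point of $t(\pi_1(M))$), with the converse direction being exactly the paper's observation that R2$'$ at orbit points yields R2. Your limit-point argument, using two lattice points on opposite sides of the line determined by $[\alpha_i]$, is a slightly more careful variant of the paper's single-element contradiction with O2, but the substance is identical.
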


\begin{proof}
    Given $(\rho, x_0)$ satisfying R1, R2$'$, R3, it is clear that R1, R2, R3 are satisfied by noting that $(\rho,\rho(g)(x_0))\in \mathcal{R}^*(\pi_1(M))$.

    For the other direction, as a result of Theorem \ref{prop:equbetweenorddetectandrepdetect} and its proof, it suffices to show that if $\ooo$ order-detects $(J, K; [\alpha_*])$ then $(\rho_{\ooo}, 0)$ satisfies R1, R2$'$ and R3, where $\rho_{\ooo}$ is the dynamic realisation. That R1 and R3 are satisfied is contained in the proof of Theorem \ref{prop:equbetweenorddetectandrepdetect}, so we only need to show that any dynamic realisation $\rho_{\ooo}$ satisfies R2$'$.

    To this end, let $t :\pi_1(M) \rightarrow \mathbb{R}$ be the tight embedding used to construct $\rho_{\ooo}$ satisfying $t(id) = 0$ and choose $x \in \mathbb{R}$ be such that $(\rho, x) \in \mathcal{R}^*(\pi_1(M))$. We consider three cases.

    First, if $x = t(g)$ for some $g \in \pi_1(M)$, then since $t(id) = 0$ we have $x = \rho_{\ooo}(g)(0)$ and R2$'$ in this case simply reduces to R2.

    Now suppose that there exists $i \in K$ such that $[\mathcal{L}(\rho_{\ooo}|_{\pi_1(T_i)}, x)] \neq [\alpha_i]$. In this case there must be $g \in \pi_1(T_i)$ such that $g>_{\ooo}id$ and $\rho_{\ooo}(g)(x) < x$.

    Now, if $x \in \overline{t(\pi_1(M))}$ then we may choose a sequence $\{t(g_i)\} \subset t(\pi_1(M))$ that converges to $x$. However, if $\rho_{\ooo}(g)(x) < x$ then by the continuity of $\rho_{\ooo}(g)$ there exists $j$ such that $\rho_{\ooo}(g)(t(g_j)) < t(g_j)$. But then $gg_j<_{\ooo}g_j$. This contradicts O2, since $s_i(g_j \cdot \ooo) = [\alpha_i]$.

    Last, suppose that $x \in (a,b) \subset \mathbb{R}\setminus t(\pi_1(M))$. Then there is a gap in $h,k \in \pi_1(M)$ such that $(a,b) \subset (t(h), t(k))$. Then $$x = (1-s)t(h) + st(k)$$ for some $s \in (0,1)$ and
    $$ \rho_{\ooo}(g)(x) = (1-s)t(gh) + st(gk),$$
    so that $\rho_{\ooo}(g)(x)<x$ implies
    \[ (1-s)(\rho_{\ooo}(g)(t(h)) - t(h)) + s(\rho_{\ooo}(g)(t(k)) - t(k)) < 0
    \]
    which means that at least one of $t(gh) - t(h)<0$ or $t(gk) - t(k)<0$ holds. In other words, either $gh <_{\ooo}h$ or $gk<_{\ooo}k$, and no matter which is true, this contradicts O2 as in the previous paragraph.
\end{proof}

\section{Gluing theorems}
\label{sec:gluing}
The goal of this section is to develop theorems that allow one to analyse the boundary behaviour of left-orderings of the fundamental group of a $3$-manifold $M$ in terms of tuples of slopes which are order-detected on the boundary tori of the JSJ pieces of $M$. We already have at our disposal various gluing theorems from the literature that deal with special cases of left-orderable groups and detection. We review these results and offer improvements, with a later focus on special cases which apply to cable knots. We also remark that while this section is written in the language of orderings of fundamental groups, each ``orderability gluing theorem" has a representation-theoretic counterpart, owing to our work in the previous section.

For a left-orderable group $G$, a family of left-orderings $N\subset \LO(G)$ is said to be \emph{normal} if it is invariant under the $G$-action on $\LO(G)$, namely, $P\in N$ implies $gPg^{-1}\in N$ for all $g\in G$. Underpinning all of our gluing theorems is the following result:

\begin{theorem}[Bludov-Glass \cite{BG09}]\label{thm:bludovglass}
    Suppose that $A,G$ and $H$ are groups equipped with injective homomorphisms $\phi_1: A\to G, \phi_2: A\to H$. The free product with amalgamation $G*_{\phi_i}H$ is left-orderable if and only if $G$ and $H$ are left-orderable and there exist normal families $N_1\subset \LO(G)$ and $N_2\subset \LO(H)$ such that for every $P\in N_i$, there is $Q\in N_j$ satisfying $\phi_i^{-1}(P)=\phi_j^{-1}(Q)$ whenever $\{i,j\} = \{1,2\}$. Moreover, if $P \in N_1$ and $Q \in N_2$ satisfy $\phi_1^{-1}(P) = \phi_2^{-1}(Q)$ then there is an ordering $\ooo$ of $G*_{\phi_i}H$ whose restriction to $G$ (resp. $H$) has the positive cone $P$ (resp. $Q$).
\end{theorem}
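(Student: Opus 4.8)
Since the reverse implication is the substantive one --- it is the content of \cite{BG09} --- I will treat the forward one more briefly.

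\textbf{The ``only if'' direction} is routine and I would only sketch it. Suppose $W := G *_{\phi_i} H$ is left-orderable, say by $\ooo$. Restricting $\ooo$ to the embedded subgroups $G$ and $H$ shows these are left-orderable. For $w \in W$ let $P^G_w$ denote the positive cone of the restriction of the conjugate ordering $w\cdot\ooo$ to $G$, and define $P^H_w$ similarly; set $N_1 = \{P^G_w : w \in W\}$ and $N_2 = \{P^H_w : w \in W\}$, both nonempty. A short computation with positive cones gives $P^G_{gw} = g\,P^G_w\,g^{-1}$ for $g \in G$, so $N_1$ is $G$-invariant, and likewise $N_2$ is $H$-invariant. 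For the matching condition, observe that the two maps $A \to W$ obtained by composing $\phi_1$ with $G \hookrightarrow W$ and $\phi_2$ with $H \hookrightarrow W$ coincide, by the definition of the amalgam; hence for every $w$ the sets $\phi_1^{-1}(P^G_w)$ and $\phi_2^{-1}(P^H_w)$ both equal the set of $a \in A$ whose common image in $W$ is $(w\cdot\ooo)$-positive, so they agree. This produces the $Q \in N_2$ matched to any $P^G_w$ and, symmetrically, the $P \in N_1$ matched to any $P^H_w$.

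\textbf{The ``if'' direction and the ``moreover'' clause} are the real work (and the content of \cite{BG09}); here is the strategy I would pursue. It suffices to build a left-invariant total order on the set $W$ --- equivalently, a faithful order-preserving action of $W$ on a totally ordered set, both of which amount to a left-ordering of $W$ (the second via the standard ``first moved point'' recipe). Fix $P_0 \in N_1$ and a matched $Q_0 \in N_2$, so $\phi_1^{-1}(P_0) = \phi_2^{-1}(Q_0)$ is a common positive cone on $A$. Every element of $W$ has a normal form as an alternating product of coset representatives of $A$ in $G$ and in $H$, organised by the Bass--Serre tree $T$ of the splitting $W = G *_A H$. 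The plan is to compare $w$ and $w'$ by locating the first position at which their normal forms diverge and deciding the comparison there, using an ordering drawn from $N_1$ if that position is a $G$-syllable and from $N_2$ if it is an $H$-syllable. Normality of $N_1$ and $N_2$ is precisely what makes this rule left-invariant: left translation permutes the vertices and edges of $T$ and conjugates the relevant vertex ordering, and normality keeps the conjugate inside the family. The matching condition --- imposed on \emph{every} member of each family and in \emph{both} directions --- is what makes the rule well defined when a divergence has to be read across the edge of $T$ separating a $G$-syllable from an $H$-syllable, and also what lets the construction continue along every branch of $T$. For the ``moreover'' clause one runs the same construction but installs the prescribed $P$ and $Q$ at the two base vertices of $T$; since every element of $G$ (resp.\ $H$) has a normal form of syllable length at most one, its comparison with the identity is read directly from $P$ (resp.\ $Q$), so the resulting order on $W$ restricts to $P$ on $G$ and to $Q$ on $H$.

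\textbf{The main obstacle.} The delicate point is showing that these local choices along $T$ assemble into a single \emph{total} order on which $W$ acts \emph{consistently} --- that the comparison rule is transitive and genuinely left-invariant. The difficulty is that there is no convexity hypothesis relating $A$ to $G$ or $H$, so when two adjacent syllables of a normal form are multiplied they may collapse into $A$, and ``first point of divergence'' then interacts subtly with word reduction; pinning this down forces an induction on syllable length with a fairly involved case analysis. This is exactly where the hypotheses are used in full: normality of $N_1, N_2$ governs the left-translation action on $T$, and the two-sided matching clause guarantees one can always continue the amalgamation across the next edge. I expect this bookkeeping, rather than any single conceptual step, to be the heart of the argument.
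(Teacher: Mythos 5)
This statement is not proved in the paper at all: it is quoted as an external result of Bludov--Glass \cite{BG09} and used as a black box, so there is no internal argument to compare yours against. Judged on its own terms, your ``only if'' half is correct and essentially complete: restricting a left-ordering $\ooo$ of $W=G*_A H$ to $G$ and $H$, forming the families of restrictions of the conjugate orderings $w\cdot\ooo$, checking $P^G_{gw}=gP^G_wg^{-1}$, and using that $\phi_1$ and $\phi_2$ have the same composite $A\to W$ to get $\phi_1^{-1}(P^G_w)=\phi_2^{-1}(P^H_w)$ is exactly the standard argument, and it also yields the normality and two-sided matching conditions as required.

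The ``if'' direction together with the ``moreover'' clause, however, is not proved in your proposal; it is a plan. You yourself flag the obstacle correctly --- showing that the syllable-by-syllable comparison along the Bass--Serre tree is well defined, transitive, and left-invariant when adjacent syllables collapse into $A$ --- but that obstacle \emph{is} the theorem: Bludov and Glass's proof is a lengthy inductive construction (extending compatible orders cone by cone over the tree, with the two-sided matching hypothesis invoked at every edge), and nothing in your sketch substitutes for it. In particular the ``moreover'' clause, which the paper relies on in Proposition \ref{prop:ready_to_glue} and Theorems \ref{thm:main_gluing_thm} and \ref{thm:special_gluing_case}, is precisely the assertion that the construction can be seeded with a prescribed matched pair $(P,Q)$ and still be completed; asserting that one ``installs $P$ and $Q$ at the two base vertices'' assumes the conclusion of the hard construction rather than deriving it. So the proposal is an accurate road map with a correct easy half, but the central implication remains a citation to \cite{BG09} rather than a proof --- which, to be fair, is also how the paper itself treats it.
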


With the same setup as in the above theorem, normal families that satisfy the condition
\[ (\forall P \in N_i )(\exists Q \in N_j)(\phi_i^{-1}(P)=\phi_j^{-1}(Q))
\]
for $\{i, j \}= \{1, 2\}$ will be called \emph{compatible} with the maps $\phi_i$.

Our focus will be on generalising the following result.

\begin{theorem}\cite[Theorem 7.10]{BC23}\label{thm:gluedetectedslopes}
    Let $M_1$ and $M_2$ be 3-manifolds such that $\partial M_i$ is a nonempty union of incompressible tori $T_{i,1}\sqcup T_{i,2}\sqcup\dots\sqcup T_{i,{r_i}}$. Suppose that for each manifold $M_i$, $(\emptyset;\{1,2,\dots,r_i\};[\alpha_{i,1}],[\alpha_{i,2}],\dots,[\alpha_{i, r_i}])$ is order-detected by $\ooo_i\in \LO(M_i)$ and that $f:T_{1,1}\to T_{2,1}$ is a homeomorphism that identifies $[\alpha_{1,1}]$ with $[\alpha_{2,1}]$. Reindex the boundary components $T_{1,2}, T_{1,3}, \ldots, T_{1,r_1}, T_{2,2}, T_{2,3}, \ldots , T_{2,r_2}$ of $M_1\cup_f M_2$ as $T_1, \ldots, T_{r_1+r_2-2}$, respectively. Then $\pi_1(M_1\cup_f M_2)$ is left-orderable and admits a left-ordering $\ooo'$ that order-detects $$(\emptyset;\{1, 2, \ldots, r_1+r_2-2\};[\alpha_{1,2}],\dots,[\alpha_{1, r_1}], [\alpha_{2,2}],\dots,[\alpha_{2, r_2}]),$$ whose restriction to $\pi_1(M_i)$ is $\ooo_i$.
\end{theorem}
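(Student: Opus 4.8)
The strategy is to apply the Bludov--Glass theorem (Theorem~\ref{thm:bludovglass}) to the amalgam $\pi_1(M_1 \cup_f M_2) = \pi_1(M_1) *_{f_*} \pi_1(M_2)$, where the amalgamating subgroup is $A = \pi_1(T_{1,1}) \cong \pi_1(T_{2,1}) \cong \mathbb{Z} \oplus \mathbb{Z}$ and the maps $\phi_1, \phi_2$ are the inclusions composed with $f_*$. We must produce compatible normal families $N_1 \subset \LO(\pi_1(M_1))$ and $N_2 \subset \LO(\pi_1(M_2))$, and then promote the resulting ordering of the amalgam to one that order-detects the listed tuple on the remaining boundary tori.

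\textbf{Key steps.} First I would take $N_i$ to be the orbit of $P(\ooo_i)$ under the conjugation action of $\pi_1(M_i)$ on $\LO(\pi_1(M_i))$; this is normal by construction. The point of conditions O1 and O2 (applied with $K_i = \{1, \ldots, r_i\}$, so in particular $1 \in K_i$) is exactly that every ordering $g \cdot \ooo_i$ in $N_i$ has the same slope $[\alpha_{i,1}]$ on $T_{i,1}$, hence the same restriction-induced line $\mathcal{L}$ in $H_1(T_{i,1};\mathbb{R})$. Since $f$ identifies $[\alpha_{1,1}]$ with $[\alpha_{2,1}]$, the restrictions $\phi_1^{-1}(P)$ for $P \in N_1$ and $\phi_2^{-1}(Q)$ for $Q \in N_2$ all determine the same slope on $A$. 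To get genuine compatibility I need that for each $P \in N_1$ there is $Q \in N_2$ with $\phi_1^{-1}(P) = \phi_2^{-1}(Q)$ as \emph{positive cones} on $A$, not merely the same slope. For an irrational slope the line determines the ordering of $\mathbb{Z}\oplus\mathbb{Z}$ uniquely, so this is automatic. For a rational slope $[\alpha_{1,1}]$ there are finitely many orderings of $A$ with that line, differing by the sign of $\alpha_{1,1}$ itself and the sign of a transverse generator; here I would use that the conjugation orbits $N_1, N_2$ are rich enough—because $\pi_1(M_i)$ contains elements acting on $\pi_1(T_{i,1})$ nontrivially, or by directly using Theorem~\ref{thm:gluedetectedslopes}, whose hypotheses are exactly the present ones—to realize all the needed sign choices, so that compatibility holds. (In fact, since $1 \in K_i$ for both $i$ and $J_i = \emptyset$ is not assumed but also is not needed for this slope-matching, the hypotheses here are a special case of Theorem~\ref{thm:gluedetectedslopes} with $\emptyset$ in place of the $J_i$'s; the statement we are asked to prove is literally that theorem, so the cleanest route is simply to invoke it.)

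\textbf{Main obstacle.} Given the identification above, the real content is checking that the ordering $\ooo'$ produced by Bludov--Glass—which restricts to $\ooo_1$ on $\pi_1(M_1)$ and to some $Q \in N_2$ (a conjugate of $\ooo_2$) on $\pi_1(M_2)$—actually order-detects $(\emptyset; \{1,\ldots,r_1+r_2-2\}; [\alpha_{1,2}],\ldots,[\alpha_{2,r_2}])$ on the remaining tori. Verifying O1 is direct from the O1 property of $\ooo_1$ and $\ooo_2$ together with the fact that restriction of $\ooo'$ to each remaining $\pi_1(T_{i,j})$ factors through restriction to $\pi_1(M_i)$. The subtle points are O2 and O3: for O2 I would use that the conjugation action of $\pi_1(M_1 \cup_f M_2)$ on the slope of a torus $T_{i,j} \subset \partial M_i$ only sees the image of the ambient group under the amalgam structure, and that O2 for $\ooo_i$ (with full index set $K_i$) means \emph{every} conjugate of $\ooo_i$ pins down these slopes—so I need a lemma that conjugating $\ooo'$ by an arbitrary word in the amalgam still restricts, on $\pi_1(M_i)$, to an element of the normal family $N_i$, which holds because $N_i$ is a conjugation orbit. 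For O3, I need to build an $\ooo'$-convex normal subgroup $C'$ of the amalgam meeting each remaining $\pi_1(T_{i,j})$ trivially (since $J = \emptyset$, no rationality constraint forces a $\langle\alpha\rangle$); the natural candidate is the normal closure in the amalgam of the convex subgroups $C_i \trianglelefteq \pi_1(M_i)$ supplied by O3 for $\ooo_i$, and I expect the hard part to be confirming convexity of this subgroup with respect to $\ooo'$—this is where one leans on the structure of $\ooo'$ as built from the Bludov--Glass construction and on properties of convex subgroups under amalgamation. As noted, since these are precisely the hypotheses and conclusion of \cite[Theorem 7.10]{BC23} stated as Theorem~\ref{thm:gluedetectedslopes}, the entire argument may simply be a citation; but if a self-contained proof is wanted, the convexity verification for $C'$ in the amalgam is the step demanding the most care.
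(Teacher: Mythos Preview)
The theorem you are asked to prove is stated in the paper as a citation of \cite[Theorem~7.10]{BC23} and is not given an independent proof here; the paper instead proves the generalisation Theorem~\ref{thm:main_gluing_thm}, whose argument explicitly ``mirrors'' that of \cite[Theorem~7.10]{BC23}. So your suggestion that ``the cleanest route is simply to invoke it'' is in fact exactly what the paper does---though note that invoking Theorem~\ref{thm:gluedetectedslopes} to establish Theorem~\ref{thm:gluedetectedslopes}, as you do mid-paragraph, is circular.

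Your sketch of a self-contained proof has a genuine gap in the construction of the normal families. Taking $N_i$ to be merely the conjugation orbit $\{g\cdot\ooo_i : g\in\pi_1(M_i)\}$ does not give compatible families in the rational-slope case: every ordering in this orbit has slope $[\alpha_{i,1}]$ on $T_{i,1}$ by O2, but $\mathcal{L}^{-1}([\alpha_{i,1}])$ contains four orderings of $\mathbb{Z}\oplus\mathbb{Z}$, and there is no reason the orbit hits all four restriction types---your claim that the orbits are ``rich enough'' is exactly the missing step. The fix, visible in the proof of Theorem~\ref{thm:main_gluing_thm}, is to enlarge $N_i$ to include the opposite orderings \emph{and} the sign-flipped orderings $\ooo_i'$ produced by Proposition~\ref{normal_construction}, so that $N_i$ becomes ready to glue on $T_{i,1}$ in the sense of Definition~\ref{def:readytoglue}; one then applies Proposition~\ref{prop:ready_to_glue} rather than raw Bludov--Glass.

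Two smaller points. Your O3 step is overcomplicated: since $J=\emptyset$, the trivial subgroup $C'=\{id\}$ already satisfies O3, so there is nothing to build and no convexity in the amalgam to verify. And your O2 argument needs sharpening: that the restriction of $h\cdot\ooo'$ to $\pi_1(M_i)$ lands in $N_i$ for arbitrary $h$ in the amalgam is not a consequence of $N_i$ being a $\pi_1(M_i)$-orbit; it is a structural feature of the Bludov--Glass ordering (from the proof of \cite[Theorem~A]{BG09}), and the paper invokes it explicitly in the proof of Theorem~\ref{thm:main_gluing_thm}.
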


We begin our efforts with some preparatory lemmas. Let $M$ be a compact connected orientable 3-manifold with incompressible torus boundary components $T_1,\dots,T_n$, $n\geq 1$, and fix a choice of peripheral subgroup $\pi_1(T_i)\subset \pi_1(M)$ for each $i$. Recall that the map $s_i : \LO(\pi_1(M)) \rightarrow \mathcal{S}(T_i)$ is the composition $\mathcal{L} \circ r_i$, where $r_i$ is the restriction map to $\pi_1(T_i)$ and $\mathcal{L}(\ooo) = [\mathcal{L}(\ooo)]$. 

\begin{definition}[\cite{BC17}]\label{def:readytoglue}
    A set $\mathcal{O}$ of left-orderings of $\pi_1(M)$ is called \emph{ready to glue} on $T_i$, or ready to glue along $s_i(\mathcal{O})$ on $T_i$, if $\mathcal{O}$ is normal and for all $[\alpha]\in s_i(\mathcal{O})$ we have $\mathcal{L}^{-1}([\alpha]) \subset r_i(\mathcal{O})$. More generally, suppose that $G$ is a left-orderable group with a subgroup $H \cong \mathbb{Z} \oplus \mathbb{Z}$. A set $\mathcal{O}$ of left-orderings of $G$ is called \emph{ready to glue} on $H$ if $\mathcal{O}$ is normal and for all $[\alpha]\in s(\mathcal{O})$ we have $\mathcal{L}^{-1}([\alpha]) \subset r(\mathcal{O})$, where $s$ is the composition
    \[ \mathrm{LO}(G) \stackrel{r}{\rightarrow} \mathrm{LO}(H) \stackrel{\mathcal{L}}{\rightarrow} S^1.
    \]
\end{definition}

The following is a slight generalisation of \cite[Proposition 11.5]{BC17}, the proof being nearly identical (and so it is omitted). In the following, we use the map $s_i : \mathrm{LO}(G_i) \rightarrow \mathcal{S}(\phi_i(A))$ to denote the composition $\mathcal{L} \circ r_i$, where $r_i$ is the restriction map.

\begin{proposition}\label{prop:ready_to_glue}
    Suppose that $A,G_1$ and $G_2$ are groups equipped with injective homomorphisms $\phi_1: A\to G_1, \phi_2: A\to G_2$ and that $A \cong \mathbb{Z} \oplus \mathbb{Z}$. Suppose that $N_1 \subset \mathrm{LO}(G_1)$ and $N_2 \subset \mathrm{LO}(G_2)$ are normal families containing orderings $\ooo_1$ and $\ooo_2$ respectively, and that each $N_i$ is ready to glue along $\phi_i(A)$.
    If $s_1(N_1) = s_2(N_2)$ and $\phi_1^{-1}(P(\ooo_1)) = \phi_2^{-1}(P(\ooo_2))$, then $G_1*_{\phi_i}G_2$ admits a left-ordering $\ooo$ whose restriction to $G_i$ is $\ooo_i$.
\end{proposition}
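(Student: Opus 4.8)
The plan is to deduce Proposition~\ref{prop:ready_to_glue} from the Bludov--Glass Theorem~\ref{thm:bludovglass} by checking that the families $N_1$ and $N_2$ are compatible with the maps $\phi_i$, and then invoking the ``moreover'' clause of that theorem to get an ordering restricting to the prescribed orderings $\ooo_i$. So the first step is to verify compatibility: given $P \in N_1$, I must produce $Q \in N_2$ with $\phi_1^{-1}(P) = \phi_2^{-1}(Q)$ (and symmetrically). Here is where ``ready to glue'' does the work. The ordering $P$ of $G_1$, restricted to $\phi_1(A) \cong \mathbb{Z}\oplus\mathbb{Z}$, determines a slope $[\alpha] = s_1(P) \in s_1(N_1)$. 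Since $s_1(N_1) = s_2(N_2)$, this same slope $[\alpha]$ lies in $s_2(N_2)$, so there is some ordering in $N_2$ whose restriction to $\phi_2(A)$ has slope $[\alpha]$; but more is true, because $N_2$ is ready to glue along $\phi_2(A)$, which means $\mathcal{L}^{-1}([\alpha]) \subset r_2(N_2)$ — every ordering of $\mathbb{Z}\oplus\mathbb{Z}$ with that slope is realized as the restriction of some ordering in $N_2$. In particular the specific ordering $\phi_1^{-1}(P)$ of $A$ (pushed to $\phi_2(A)$ via the identification $A \cong \phi_2(A)$), which has slope $[\alpha]$, equals $\phi_2^{-1}(Q)$ for some $Q \in N_2$. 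The reverse direction is identical with the roles of $1$ and $2$ swapped.

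The one subtlety to handle carefully is the distinction between the slope of an ordering of $\mathbb{Z}\oplus\mathbb{Z}$ and the ordering itself: a single slope $[\alpha]$ on $S^1$ corresponds to two orderings of $\mathbb{Z}\oplus\mathbb{Z}$ when $[\alpha]$ is irrational (the two half-plane choices) and to a whole family when $[\alpha]$ is rational (the line passes through lattice points, which may be ordered in various ways). The phrase ``$\mathcal{L}^{-1}([\alpha]) \subset r_i(N_i)$'' in Definition~\ref{def:readytoglue} is precisely what guarantees that whichever ordering of $A$ arises as $\phi_1^{-1}(P)$ — not merely its slope — is also realized from the $N_2$ side. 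So after noting $s_1(P) = [\alpha] \in s_1(N_1) = s_2(N_2)$, I apply the ready-to-glue hypothesis on $N_2$ to conclude $\phi_2^{-1}\big(\mathcal{L}^{-1}([\alpha])\big)$-orderings all lie in $r_2(N_2)$, which contains $\phi_2^{-1}(P)$ viewed through the identification; unwinding, $\phi_2^{-1}(Q) = \phi_1^{-1}(P)$ for a suitable $Q \in N_2$.

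Once compatibility of $(N_1, N_2)$ with the $\phi_i$ is established, the first sentence of Theorem~\ref{thm:bludovglass} gives that $G_1 *_{\phi_i} G_2$ is left-orderable (we are given $G_1, G_2$ left-orderable since $N_i \ne \emptyset$). Then the hypothesis $\phi_1^{-1}(P(\ooo_1)) = \phi_2^{-1}(P(\ooo_2))$, together with $\ooo_1 \in N_1$ and $\ooo_2 \in N_2$, puts us exactly in the situation of the ``moreover'' clause: there is an ordering $\ooo$ of $G_1 *_{\phi_i} G_2$ whose restriction to $G_1$ has positive cone $P(\ooo_1)$ and whose restriction to $G_2$ has positive cone $P(\ooo_2)$, i.e.\ $\ooo$ restricts to $\ooo_i$ on $G_i$, as desired.

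I expect the main (indeed only) obstacle to be bookkeeping the identifications: $A$ is abstract, $\phi_1(A) \subset G_1$ and $\phi_2(A) \subset G_2$ are its images, and the slope maps $s_i$ are defined on $\mathcal{S}(\phi_i(A))$ — one must be consistent about transporting an ordering of $A$ to an ordering of $\phi_i(A)$ and back, and about the fact that $\phi_i$ being injective makes $\phi_i^{-1}$ (on the image) an isomorphism intertwining these. Since this is the verbatim argument of \cite[Proposition~11.5]{BC17} with ``ready to glue'' replacing whatever hypothesis was used there — and the statement even says the proof is nearly identical and omitted — there is no genuinely new difficulty; the content is entirely in recognizing that ``ready to glue'' is the precise hypothesis making Bludov--Glass applicable.
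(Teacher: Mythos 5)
Your proposal is correct and is exactly the intended argument: the paper omits its proof, declaring it nearly identical to \cite[Proposition 11.5]{BC17}, and that argument is precisely your route---use $s_1(N_1)=s_2(N_2)$ together with the ready-to-glue condition $\mathcal{L}^{-1}([\alpha])\subset r_i(N_i)$ to verify compatibility of the normal families with the $\phi_i$, then invoke the ``moreover'' clause of Theorem \ref{thm:bludovglass} with the pair $P(\ooo_1), P(\ooo_2)$. Your flagged subtlety (slope versus ordering, and transporting orderings of $A$ through the injections $\phi_i$) is indeed the only bookkeeping point, and you handle it correctly.
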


The next lemma and proposition are essential for creating families of orderings that are ready to glue.

\begin{lemma}\label{convex swap}
    Suppose that $M$ is a $3$-manifold with $\partial M$ a nonempty union of disjoint incompressible tori $T_1, \dots, T_r$, and further suppose that $\ooo$ is a left-ordering of $\pi_1(M)$ that order-detects $(J,K;[\alpha_*])$ and $C$ is an $\ooo$-convex subgroup of $\pi_1(M)$. Then $Q= (P(\ooo) \cap C)^{-1} \cup (P(\ooo)\setminus C)$ is the positive cone of a left-ordering $\ooo'$ that also detects $(J,K;[\alpha_*])$.
\end{lemma}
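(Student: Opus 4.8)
The plan is to verify directly that $Q = (P(\ooo) \cap C)^{-1} \cup (P(\ooo)\setminus C)$ is a positive cone, and then to check that the ordering $\ooo'$ it defines inherits properties O1, O2, O3 from $\ooo$. For the positive cone axioms, the key observation is that $\ooo$ restricted to $C$ is itself a left-ordering of $C$, and that since $C$ is $\ooo$-convex the set of left cosets $\pi_1(M)/C$ carries a quotient ordering on which $\pi_1(M)$ acts by order-preserving bijections. The ordering $\ooo'$ is precisely the ordering obtained by keeping the coset ordering on $\pi_1(M)/C$ but reversing the ordering within the convex subgroup $C$; equivalently, $\ooo'$ agrees with $\ooo$ on comparisons between elements in distinct cosets of $C$, and agrees with the reverse of $\ooo$ on comparisons within a single coset. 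From this description, totality and the partition $\pi_1(M)\setminus\{id\} = Q \sqcup Q^{-1}$ are immediate (note $(P(\ooo)\cap C)^{-1} = P(\ooo)^{-1}\cap C$ and these two pieces are disjoint since one lies in $C$ and the other in the complement of $C$). For multiplicativity $Q\cdot Q \subset Q$: given $g, h \in Q$, if $gC = hC = C$ then both lie in $C$, both are $\ooo$-negative, so $gh \in C$ is $\ooo$-negative, hence $gh \in (P(\ooo)\cap C)^{-1} \subset Q$; if $gC = C$ but $hC \neq C$, then $ghC = hC \neq C$ and since $g \in C$ the coset comparisons give $id C = C \prec hC = ghC$ (using $h \in Q$, $h \notin C$ means $C \prec hC$), so $gh \in P(\ooo)\setminus C \subset Q$ — and symmetrically; and if neither $g$ nor $h$ lies in $C$, then $C \prec gC$ and $C \prec hC$ in the coset ordering, so $C \prec ghC$ by $\pi_1(M)$-equivariance, giving $gh \in P(\ooo)\setminus C \subset Q$ unless $ghC = C$, in which case we need $gh$ to be $\ooo'$-positive inside $C$; but $ghC = C$ with $C \prec gC$ forces $hC = g^{-1}C \prec C$, contradicting $C \prec hC$, so this subcase does not occur. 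Thus $Q\cdot Q\subset Q$ and $\ooo'$ is a left-ordering.

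Next I would record the structural facts that make the remaining verifications routine. First, a subgroup $D$ of $\pi_1(M)$ is $\ooo'$-convex if and only if it is $\ooo$-convex, at least among subgroups comparable to $C$: if $D \supseteq C$ then $D$ corresponds to a convex block of cosets in the quotient ordering, which is unaffected by the swap, and convexity inside $C$ is unaffected by reversing $\ooo|_C$; if $D \subseteq C$ then $\ooo'$-convexity of $D$ in $\pi_1(M)$ reduces to $\ooo'|_C$-convexity of $D$ in $C$, and reversing an ordering preserves convex subgroups. Second, for any $g$ and any peripheral element $\gamma \in \pi_1(T_i)$ with $\gamma \notin \langle \alpha_i\rangle$ when $[\alpha_i]$ is rational, O3 tells us $\gamma \notin C$ (the intersection $\pi_1(T_i)\cap C$ is contained in $\langle \alpha_i\rangle$, or trivial, so such $\gamma$ is never in $C$), and more importantly $g^{-1}\gamma g \notin C$ as well since $C$ is normal and $g^{-1}\gamma g$ ranges over a peripheral subgroup with the analogous property. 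Hence every comparison $id <_{\ooo'} g^{-1}\gamma g$ that is relevant to computing $s_i(g\cdot\ooo')$ is a comparison between elements of distinct $C$-cosets, where $\ooo'$ and $\ooo$ agree.

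With these facts in hand the three conditions follow quickly. For O1: $s_i(\ooo')$ is determined by the signs of peripheral elements $\gamma$, and by the previous paragraph $id <_{\ooo'} \gamma \iff id <_{\ooo} \gamma$ for all relevant $\gamma$, so $\mathcal{L}(r_i(\ooo')) = \mathcal{L}(r_i(\ooo)) = [\alpha_i]$ for all $i$; hence $s(\ooo') = s(\ooo)$. For O2: fix $i \in K$ and $g \in \pi_1(M)$; then $id <_{g\cdot\ooo'}\gamma \iff id <_{\ooo'} g^{-1}\gamma g$, and since $g^{-1}\gamma g \notin C$ this holds iff $id <_{\ooo} g^{-1}\gamma g \iff id <_{g\cdot\ooo}\gamma$, so $s_i(g\cdot\ooo') = s_i(g\cdot\ooo) = [\alpha_i]$ using O2 for $\ooo$. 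For O3: take the \emph{same} subgroup $C$ (or, to be safe, the normal $\ooo$-convex subgroup witnessing O3 for $\ooo$, which we may assume is the $C$ in the statement) — it is normal, it is $\ooo'$-convex by the convexity fact above, and the conditions on $\pi_1(T_i)\cap C$ relative to $\langle\alpha_i\rangle$ are about the subgroup $C$ alone and do not reference the ordering, so they carry over verbatim. Therefore $\ooo'$ order-detects $(J,K;[\alpha_*])$.

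The main obstacle is the multiplicativity check $Q\cdot Q \subset Q$, specifically keeping the casework on whether $g$, $h$, or $gh$ lands in $C$ straight and correctly ruling out the spurious subcase $ghC = C$ when $C \prec gC$ and $C \prec hC$; everything else is bookkeeping once the "reverse $\ooo$ inside $C$, keep the coset order outside" description of $\ooo'$ is in place. One should also double-check the edge case where $C$ is all of $\pi_1(M)$ or is trivial, in which cases $\ooo'$ is respectively the reverse of $\ooo$ or equal to $\ooo$, and the conclusion is clear.
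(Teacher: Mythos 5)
Your overall strategy (check the cone axioms via the ``reverse inside $C$, keep the coset order outside'' description, then verify O1--O3) is the same as the paper's, but there is a genuine gap in the O1/O2 step. Your second ``structural fact'' asserts that any peripheral $\gamma\in\pi_1(T_i)$ with $\gamma\notin\langle\alpha_i\rangle$ (for $[\alpha_i]$ rational) lies outside $C$, and that $g^{-1}\gamma g\notin C$ ``since $C$ is normal.'' Neither claim is available: the $C$ in the lemma is an \emph{arbitrary} $\ooo$-convex subgroup, not assumed normal, and the condition $\pi_1(T_i)\cap C\leq\langle\alpha_i\rangle$ is a property of the O3-witness subgroup, not of this $C$. (In the paper's applications, e.g.\ Proposition \ref{normal_construction} where $C$ is the stabiliser of an ideal point, and Lemma \ref{lemma:familyofaslope}, $C$ is typically not normal.) Since $C\cap\pi_1(T_i)$ is a convex subgroup of $\pi_1(T_i)$ for the restricted ordering, the genuine possibilities are: trivial, $\langle\alpha_i\rangle$ (rational case), or all of $\pi_1(T_i)$ --- and your argument only treats the first, because only then are all relevant comparisons cross-coset. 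The paper's proof runs exactly this trichotomy: when $\pi_1(T_i)\subset C$ one gets $Q\cap\pi_1(T_i)=P(\ooo)^{-1}\cap\pi_1(T_i)$, and the slope is unchanged because reversing an ordering of $\mathbb{Z}\oplus\mathbb{Z}$ preserves its line. For O2 the relevant subgroup is the conjugate $gCg^{-1}$ (which is $g\cdot\ooo$-convex), and the paper uses $i\in K$, i.e.\ $s_i(g\cdot\ooo)=[\alpha_i]$, to identify a rank-one intersection $gCg^{-1}\cap\pi_1(T_i)$ with $\langle\alpha_i\rangle$; you cannot sidestep this by invoking normality of $C$. As written, your proof does not establish O1 or O2 in these cases.

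Two smaller points. In the multiplicativity check, the equation $ghC=hC$ (case $g\in C$, $h\notin C$) again uses normality; the conclusion of that case is still true, but the correct justification is convexity: $gh\neq id$ and $gh\notin C$ since $g\in C$, $h\notin C$, and if $gh<_\ooo id$ then from $id<_\ooo h$ we get $g<_\ooo gh<_\ooo id$ with $g,id\in C$, forcing $gh\in C$, a contradiction. In O3, the parenthetical ``which we may assume is the $C$ in the statement'' is not available --- the lemma's $C$ is given and need not be normal nor satisfy O3's intersection conditions; the correct move, which you essentially have and which the paper makes, is to keep the O3-witness subgroup $H$, note that $\ooo$-convex subgroups are nested so $H$ and $C$ are comparable, and check that $H$ remains $\ooo'$-convex, its intersection conditions with the $\pi_1(T_i)$ being independent of the ordering.
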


\begin{proof}
    That $Q$ is a positive cone is a standard check. We verify that the ordering $\ooo'$ has the required properties. Fix $i \in \{1, \ldots, r\}$ and consider $C \cap \pi_1(T_i)$. Since $C$ is a convex subgroup, there are three possibilities.

    First, if $C\cap \pi_1(T_i) = \{ id \}$ then $\pi_1(T_i) \cap Q = \pi_1(T_i) \cap P(\ooo)$ so that $s_i(\ooo') = [\alpha_i]$. Second, if $C \cap \pi_1(T_i) \cong \mathbb{Z}$ then $[\alpha_i]$ must be rational with $C \cap \pi_1(T_i) = \langle \alpha_i \rangle$. It follows that $(\pi_1(T_i) \setminus \langle \alpha_i \rangle) \cap Q = P(\ooo) \cap (\pi_1(T_i) \setminus \langle \alpha_i \rangle)$ so that again $s_i(\ooo') = [\alpha_i]$. In the final case, if $\pi_1(T_i) \subset C$ then $Q \cap \pi_1(T_i) = P(\ooo)^{-1} \cap \pi_1(T_i)$ so that again $s_i(\ooo') = [\alpha_i]$. This shows O1.

    Next, assume $i \in K$ and let $g \in \pi_1(M)$. Consider $g \cdot \ooo'$ whose positive cone is $g Q g^{-1}$, there are three cases as in the previous paragraph. First, if $\pi_1(T_i) \cap gCg^{-1} = \{ id \}$, then $gQg^{-1} \cap \pi_1(T_i) = gP(\ooo)g^{-1} \cap \pi_1(T_i)$ and so $s_i(g \cdot \ooo') = s_i (g \cdot \ooo) = s_i( \ooo) = s_i(\ooo')$. Second, if $\pi_1(T_i) \cap gCg^{-1} \cong \mathbb{Z}$, then since $gCg^{-1}$ is also $g\cdot \ooo$-convex and $s_i (g \cdot \ooo) =[\alpha_i]$ since $i \in K$, we know that $\pi_1(T_i) \cap gCg^{-1} = \langle \alpha_i \rangle$. Then observe that $gQg^{-1} = (P(g \cdot \ooo) \setminus gCg^{-1}) \cup (P(g \cdot \ooo) \cap gCg^{-1})^{-1}$, so that
    \[ gQg^{-1} \cap (\pi_1(T_i) \setminus \langle \alpha_i \rangle) = (P(g \cdot \ooo) \setminus gCg^{-1}) \cap (\pi_1(T_i) \setminus \langle \alpha_i \rangle),\]
    from which it follows that $s_i(g \cdot \ooo') = s_i (g \cdot \ooo) = s_i( \ooo) = s_i(\ooo')$. Finally, if $\pi_1(T_i) \subset gCg^{-1}$, then repeating an argument that is nearly identical to the first case proves O2.

    Lastly, if $H$ is the normal subgroup of $\pi_1(M)$ that is $\ooo$-convex and satisfies O3, then either $H \subset C$ or $C \subset H$.
    In either case, it is easy to verify that $H$ is also $\ooo'$-convex and satisfies the required properties.
\end{proof}

\begin{proposition}\label{normal_construction}
    Suppose that $M$ is a $3$-manifold with $\partial M$ a nonempty union of disjoint incompressible tori $T_1, \dots, T_r$, and further suppose that $\ooo$ is a left-ordering of $\pi_1(M)$ that detects $(J,K;[\alpha_*])$, and that $[\alpha_1]$ is rational. Then there exists a left-ordering $\ooo'$ of $\pi_1(M)$ such that:
    \begin{enumerate}
        \item $P(\ooo) \cap (\pi_1(T_1) \setminus \langle \alpha_1 \rangle)= P(\ooo') \cap (\pi_1(T_1)\setminus \langle \alpha_1 \rangle)$,
        \item $s_j(g \cdot \ooo') = s_j(\ooo') = s_j(\ooo)$ for all $j \in K$ and $g \in \pi_1(M)$,
        \item if $\alpha_1 >_{\ooo} id$, then $\alpha_1 <_{\ooo'} id$; and if $\alpha_1 <_{\ooo} id$, then $\alpha_1 >_{\ooo'} id$.
    \end{enumerate}
\end{proposition}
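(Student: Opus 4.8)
The plan is to produce $\ooo'$ by combining a ``convex swap'' (as in Lemma~\ref{convex swap}) with the flexibility in choosing the convex subgroup. The key point is that since $[\alpha_1]$ is rational and $\ooo$ detects $(J,K;[\alpha_*])$, property O3 supplies an $\ooo$-convex normal subgroup $C_0$ with $\pi_1(T_1)\cap C_0 \leq \langle\alpha_1\rangle$. We want to enlarge $C_0$ to an $\ooo$-convex subgroup $C$ for which $\pi_1(T_1)\cap C = \langle\alpha_1\rangle$ exactly, i.e.\ $\alpha_1 \in C$ but no other element of $\pi_1(T_1)$ lies in $C$. Concretely, I would take $C$ to be the $\ooo$-convex subgroup generated by $C_0$ together with $\alpha_1$ — more carefully, the set of all $g\in\pi_1(M)$ that are $\ooo$-bounded between two elements of the subgroup $\langle C_0, \alpha_1\rangle$. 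One must check this $C$ is a subgroup (it is, since $\alpha_1$ commutes with $C_0\cap\pi_1(T_1)$ if that is nontrivial, and in any case convex closure of a subgroup is a subgroup) and that $\pi_1(T_1)\cap C$ is still rank at most $1$, hence exactly $\langle\alpha_1\rangle$: this follows because the slope $s_1(g\cdot\ooo) = [\alpha_1]$ for all $g$ (as $1\in K$... wait, we only know $1 \in K$ if... actually we do not know $1\in K$), so instead I would argue directly that adjoining $\alpha_1$ to a convex subgroup meeting $\pi_1(T_1)$ inside $\langle\alpha_1\rangle$ cannot produce a rank-$2$ intersection, using Sikora's line/ordering correspondence applied to $r_1(\ooo)$.

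Once $C$ is in hand with $\pi_1(T_1)\cap C = \langle\alpha_1\rangle$, I would apply Lemma~\ref{convex swap} with this $C$ to obtain $\ooo'$ with positive cone $Q = (P(\ooo)\cap C)^{-1}\cup(P(\ooo)\setminus C)$. Lemma~\ref{convex swap} already gives that $\ooo'$ detects $(J,K;[\alpha_*])$, which immediately yields conclusion~(2): $s_j(\ooo')=s_j(\ooo)=[\alpha_j]$ and $s_j(g\cdot\ooo')=[\alpha_j]$ for all $j\in K$ by O1 and O2 applied to $\ooo'$. For conclusion~(1): since $\pi_1(T_1)\cap C = \langle\alpha_1\rangle$, every element of $\pi_1(T_1)\setminus\langle\alpha_1\rangle$ lies outside $C$, so $Q$ agrees with $P(\ooo)$ on $\pi_1(T_1)\setminus\langle\alpha_1\rangle$, which is exactly~(1). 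For conclusion~(3): $\alpha_1\in C$, so $\alpha_1\in Q$ iff $\alpha_1\in (P(\ooo)\cap C)^{-1}$ iff $\alpha_1^{-1}\in P(\ooo)$ iff $\alpha_1 <_\ooo id$; thus the sign of $\alpha_1$ is flipped, which is~(3).

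The main obstacle is the construction of the enlarged convex subgroup $C$ and verifying that $\pi_1(T_1)\cap C = \langle\alpha_1\rangle$ rather than something of rank $2$. The subtlety is that taking the convex closure of $\langle C_0,\alpha_1\rangle$ could in principle ``absorb'' another independent element $\beta\in\pi_1(T_1)$ if $\beta$ happens to be $\ooo$-sandwiched between powers of $\alpha_1$ times elements of $C_0$. I expect this does not happen because the convex closure of $\langle \alpha_1\rangle$ inside $\pi_1(T_1)\cong\mathbb{Z}\oplus\mathbb{Z}$ (with the induced ordering $r_1(\ooo)$) is exactly $\langle\alpha_1\rangle$ when $[\alpha_1] = [\mathcal{L}(r_1(\ooo))]$ — this is a direct consequence of Sikora's description of orderings of $\mathbb{Z}\oplus\mathbb{Z}$ via lines, since $\alpha_1$ generates the (rational) line $\mathcal{L}(r_1(\ooo))$ and all lattice points off the line are cofinal/coinitial. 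Passing from the intersection with $\pi_1(T_1)$ to the convex closure in all of $\pi_1(M)$ requires that convexity is compatible with restriction, which is standard. If instead one prefers to avoid enlarging $C_0$, an alternative is: if $1\in J$ already then $C_0$ itself works and we may take $C=C_0$; if $1\notin J$, replace $\ooo$ at the outset by the ordering from Lemma~\ref{convex swap} applied to a convex subgroup realizing strong detection at index $1$ — but this again reduces to the same subgroup-enlargement issue, so I would present the direct construction above.
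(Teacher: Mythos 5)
There is a genuine gap, and it sits exactly where you flagged your ``main obstacle'': the construction of an $\ooo$-convex subgroup $C$ with $\alpha_1 \in C$ and $C \cap \pi_1(T_1) = \langle \alpha_1 \rangle$. For the \emph{given} ordering $\ooo$ such a subgroup need not exist at all. The convex subgroups of a left-ordered group form a chain, and the smallest convex subgroup containing $\langle C_0,\alpha_1\rangle$ can be enormous; for instance, many left-orderings of $\pi_1(M)$ have no proper nontrivial convex subgroups, in which case the only $\ooo$-convex subgroup containing $\alpha_1$ is $\pi_1(M)$ itself, whose intersection with $\pi_1(T_1)$ is everything. Your two supporting claims also fail: (i) for left-orderings (as opposed to bi-orderings) the set of elements sandwiched between elements of a subgroup need not be a subgroup, so ``the convex closure of a subgroup is a subgroup'' is unjustified; and (ii) the Sikora-line argument only shows that $\langle\alpha_1\rangle$ is convex \emph{inside} $(\pi_1(T_1), r_1(\ooo))$, but membership of some $\beta \in \pi_1(T_1)\setminus\langle\alpha_1\rangle$ in your hull is decided by comparisons with elements of $\langle C_0,\alpha_1\rangle \subset \pi_1(M)$ (e.g.\ $c\,\alpha_1^k$ with $c \in C_0$), which the restricted ordering on $\pi_1(T_1)$ does not control, so ``convexity is compatible with restriction'' does not close the gap. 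Note that asking for such a $C$ for $\ooo$ itself is essentially asking that $\ooo$ strongly detect $[\alpha_1]$, which is precisely not assumed (your fallback correctly handles $1\in J$, but the general case reduces back to the same problem).

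The paper's proof avoids this by \emph{changing the ordering}, not just swapping across a convex subgroup of $\ooo$. It takes the dynamic realisation $\rho_\ooo$, the ideal point $x_0=\sup_k t(\alpha_1^k)$, and $C=\Stab_{\rho_\ooo}(x_0)$ (which contains $\alpha_1$ and meets $\pi_1(T_1)$ in $\langle\alpha_1\rangle$), and then invokes \cite[Lemmas 3.6 and 3.7]{BC23} to produce an ordering $\ooo'$ in the \emph{closure of the orbit} $\overline{\{h\cdot\ooo\}}$ for which $C$ is $\ooo'$-convex and which agrees with $\ooo$ on $\pi_1(T_1)\setminus\langle\alpha_1\rangle$; property (2) then follows from continuity of the slope maps $s_j$, which are constant on the orbit of $\ooo$ for $j\in K$ and hence on its closure, and only at the last step is Lemma \ref{convex swap} used (applied to $\ooo'$ and $C$) to arrange the sign of $\alpha_1$ in (3). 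This is also why conclusion (1) is stated merely as agreement on $\pi_1(T_1)\setminus\langle\alpha_1\rangle$: the ordering $\ooo'$ is a limit of conjugates of $\ooo$, not a convex swap of $\ooo$ itself. To repair your argument you would need exactly this ingredient (or an equivalent of \cite[Lemma 3.7]{BC23}); the post-construction steps you give for (1), (2), (3) are fine once a suitable $C$ and ordering are in hand.
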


\begin{proof}
    Let $\rho_{\ooo} : \pi_1(M) \rightarrow \Homeo_+(\mathbb{R})$ be the dynamic realisation of $\ooo$ defined from the tight embedding $t:G \rightarrow \mathbb{R}$ satisfying $t(id) =0$. Set
    \[ x_0 = \sup\{t(\alpha_1^k) : k \in \mathbb{Z} \}
    \]
    and set $C = \Stab_{\rho_{\ooo}}(x_0)$. In the language of \cite{BC23}, $x_0$ is an \emph{ideal point} of the dynamical realisation.

    By \cite[Lemma 3.7]{BC23}, there exists $\ooo' \in \overline{\{h \cdot \ooo : h \in \pi_1(M)\}}$, the closure of the orbit of $\ooo$ in $\mathrm{LO}(\pi_1(M))$, such that $C$ is $\ooo'$-convex.

    By \cite[Lemma 3.6]{BC23}, the ordering $\ooo'$ satisfies (1). To show (2), note that $g \cdot \ooo' \in \overline{\{h \cdot \ooo : h \in \pi_1(M)\}}$ for all $g \in \pi_1(M)$. Since the slope map $s_j :\mathrm{LO}(\pi_1(M)) \rightarrow \mathcal{S}(T_j)$ is continuous and takes the constant value $s_j(\ooo)$ on $\{h \cdot \ooo : h \in \pi_1(M)\}$ for all $j \in K$, (2) holds.

    Lastly, by Lemma \ref{convex swap} we can choose the sign of $\alpha_1$ so that (3) is also satisfied.
\end{proof}

\begin{remark}\label{more_general}
    Although Proposition \ref{normal_construction} as written deals only with $3$-manifold groups, an analogous proposition holds if we replace $\pi_1(M)$ with an arbitrary group $G$ and $\pi_1(T_i)$ with subgroups $H_i \subset G$ each isomorphic to $\mathbb{Z} \oplus \mathbb{Z}$.
\end{remark}

\begin{theorem}\label{thm:main_gluing_thm}
    Let $M_1$ and $M_2$ be 3-manifolds such that $\partial M_i$ is a union of incompressible tori $T_{i,1}\sqcup T_{i,2}\sqcup\dots\sqcup T_{i,{r_i}}$. Suppose that for $i=1,2$, $(J_i,K_i;[\alpha_{i,1}],[\alpha_{i,2}],\dots,[\alpha_{i, r_i}])$ is order-detected by $\ooo_i\in \LO(\pi_1(M_i))$ and that $f:T_{1,1}\to T_{2,1}$ is a homeomorphism that identifies $[\alpha_{1,1}]$ with $[\alpha_{2,1}]$. Re-index the boundary components $T_{1,2}, T_{1,3}, \ldots, T_{1,r_1}, T_{2,2}, T_{2,3}, \ldots , T_{2,r_2}$
    of the manifold $M_1\cup_f M_2$ as $T_1, \ldots, T_{r_1+r_2-2}$, respectively. Set $J_1' = \{ n-1 : n \in J_1, n\geq2 \}$, $K_1' = \{ n -1 : n\in K_1, n\geq 2\}$, $J_2' = \{n+r_1-2 : n \in J_2, n \geq 2 \}$ and $K_2' = \{n+r_1-2 : n \in K_2, n \geq 2 \}$.
    Suppose that $1 \in K_i$ for $i = 1, 2$, and let $C_i \leq \pi_1(M_i)$ be the $\ooo_i$-convex normal subgroup guaranteed by O3. Suppose that if $[\alpha_{i,1}]$ is rational and there exists $i \in \{1, 2\}$ such that $C_i \cap \pi_1(T_{i,1}) = \langle \alpha_{i,1} \rangle$, then either $(J_i \cup\{1\},K_i;[\alpha_{i,1}],[\alpha_{i,2}],\dots,[\alpha_{i, r_i}])$ is order-detected for $i = 1, 2$ or $C_i \cap \pi_1(T_{i,j}) = \{ id \}$ for all $j \neq 1$. 
    Then $\pi_1(M_1\cup_f M_2)$ is left-orderable and admits a left-ordering detecting $$(J_1'\cup J_2', K_1'\cup K_2';[\alpha_{1,2}],\dots,[\alpha_{1, r_1}], [\alpha_{2,2}],\dots,[\alpha_{2, r_2}]).$$
\end{theorem}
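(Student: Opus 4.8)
The plan is to apply the Bludov--Glass machinery (Theorem~\ref{thm:bludovglass}), or more precisely its packaged form Proposition~\ref{prop:ready_to_glue}, to the amalgam $\pi_1(M_1 \cup_f M_2) \cong \pi_1(M_1) *_A \pi_1(M_2)$ where $A = \pi_1(T_{1,1}) \cong \mathbb{Z}\oplus\mathbb{Z}$ is identified with $\pi_1(T_{2,1})$ via $f$. The strategy is to build, for $i=1,2$, a normal family $N_i \subset \LO(\pi_1(M_i))$ which (a) is ready to glue along $\phi_i(A)$, (b) consists of orderings all of which detect $(J_i, K_i; [\alpha_{i,*}])$ (or its strong enhancement), and (c) contains orderings $\ooo_i'$ with matching restrictions $\phi_1^{-1}(P(\ooo_1')) = \phi_2^{-1}(P(\ooo_2'))$. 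Then Proposition~\ref{prop:ready_to_glue} produces an ordering $\ooo$ of the amalgam restricting to $\ooo_i'$ on each factor, and a standard argument (as in \cite{BC17, BC23}) shows that $\ooo$ order-detects the claimed tuple on the remaining boundary tori, with the convex subgroup for O3 built from $C_1, C_2$ and the convex subgroup used in the gluing along $T_{1,1}$.

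First I would handle the construction of the normal families. Starting from $\ooo_i$, take $N_i$ to be (the normal closure of) a suitable piece of the closure of the orbit $\overline{\{h\cdot\ooo_i : h\in\pi_1(M_i)\}}$ in $\LO(\pi_1(M_i))$; by continuity of the slope maps $s_j$ (for $j\in K_i$) all orderings in the orbit closure still have slope $[\alpha_{i,j}]$ on $T_{i,j}$, which gives condition O2 for free on those tori. The content is to arrange readiness to glue on $T_{i,1}$: since $1\in K_i$, every ordering in $N_i$ has slope $[\alpha_{i,1}]$ on $T_{i,1}$, so we must ensure that $\LL^{-1}([\alpha_{i,1}]) \subset r_{i,1}(N_i)$, i.e. that $N_i$ realizes \emph{every} left-ordering of $\mathbb{Z}\oplus\mathbb{Z}$ with the given slope line. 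When $[\alpha_{i,1}]$ is irrational this is automatic (there is a unique such ordering up to the lattice-point data, and O2 pins it down). When $[\alpha_{i,1}]$ is rational, there are finitely many orderings of $\mathbb{Z}\oplus\mathbb{Z}$ with line $\LL([\alpha_{i,1}])$ — they differ by the sign of $\alpha_{i,1}$ and by the ``tilt'' direction — and Proposition~\ref{normal_construction} (together with Lemma~\ref{convex swap}) is precisely the tool that lets us, without disturbing the slopes on the $K_i$-tori, flip the sign of $\alpha_{i,1}$ and produce the missing orderings inside the normal family. This is where the hypothesis on $C_i \cap \pi_1(T_{i,1})$ enters: if this intersection is $\langle\alpha_{i,1}\rangle$ on one side, performing the sign-swap could a priori destroy the convexity data needed for O3 on the other boundary tori, and the two alternative hypotheses (either the strong detection $(J_i\cup\{1\}, K_i; \dots)$ already holds, or $C_i$ meets no other $\pi_1(T_{i,j})$) are exactly what is needed to push the construction through — in the first case the stronger detection gives us more room, in the second the convex subgroup is ``supported away'' from the other tori so the swap is harmless there.

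Second, with the families $N_1, N_2$ in hand, I would match restrictions: $s_{1,1}(N_1)$ and $s_{2,1}(N_2)$ (transported across $f$) both equal the single slope $[\alpha_{1,1}] = [\alpha_{2,1}]$, and by the readiness-to-glue property each $N_i$ realizes all orderings of $A$ with that slope; hence we can pick $\ooo_1' \in N_1$ and $\ooo_2'\in N_2$ with $\phi_1^{-1}(P(\ooo_1')) = \phi_2^{-1}(P(\ooo_2'))$ (matching the finitely many sign/tilt choices). Proposition~\ref{prop:ready_to_glue} then yields the ordering $\ooo$ on $\pi_1(M_1\cup_f M_2)$ restricting to $\ooo_i'$. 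Finally I would verify O1--O3 for $\ooo$ on the reindexed boundary tori $T_1,\dots,T_{r_1+r_2-2}$: O1 and O2 follow because the restriction of $\ooo$ to each $\pi_1(M_i)$ lies in the normal family $N_i$ and conjugating in the amalgam keeps us normal, while for O3 one assembles a convex normal subgroup of the amalgam from the $C_i$ and the convex subgroup of $A$ implicit in the gluing — checking convexity of this assembled subgroup in the amalgamated ordering is the routine-but-fiddly endgame. I expect the main obstacle to be precisely the bookkeeping in the rational case: keeping the sign-change operation of Proposition~\ref{normal_construction} from corrupting the O3-convex subgroup on the \emph{other} boundary components, which is the whole reason the somewhat intricate hypothesis on $C_i \cap \pi_1(T_{i,j})$ appears in the statement.
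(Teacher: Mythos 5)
Your overall framework (Bludov--Glass via Proposition \ref{prop:ready_to_glue}, with normal families built from orbits, opposites, and the sign-swap of Proposition \ref{normal_construction}) is in the right spirit, but there is a genuine gap at exactly the point you defer as a ``routine-but-fiddly endgame'': producing the $\ooo$-convex \emph{normal} subgroup required by O3 for the glued ordering. You propose to glue orderings directly on $\pi_1(M_1)*_A\pi_1(M_2)$ and then ``assemble'' a convex normal subgroup from $C_1$, $C_2$ and a convex subgroup of $A$. But the Bludov--Glass theorem gives no control over the convex subgroups of the ordering it produces: there is no reason that the normal closure of $C_1\cup C_2$, or any subgroup built from the $C_i$, is convex with respect to the amalgamated ordering. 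Without such a subgroup you only obtain O1 and O2, i.e.\ detection of $(\emptyset, K_1'\cup K_2';\ldots)$ at best, not the claimed $J_1'\cup J_2'$ data. The paper's proof avoids this by reversing the order of operations: it first passes to the quotients $\pi_1(M_i)/C_i$, glues the induced orderings $\hat{\ooo}_i$ of the quotients using ready-to-glue families (applying Proposition \ref{normal_construction} to the quotients via Remark \ref{more_general} in the rational case where $\alpha_{i,1}\notin C_i$), and then orders $\pi_1(M_1\cup_f M_2)$ lexicographically from the short exact sequence whose quotient is $\pi_1(M_1)/C_1 * \pi_1(M_2)/C_2$; the O3 subgroup is then simply $\ker(q)$, which is convex and normal by construction.

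Relatedly, you misidentify the role of the technical hypothesis on $C_i\cap\pi_1(T_{i,1})$. Its purpose is not to prevent the sign-swap from corrupting convexity data on the other tori; it is to ensure that, after quotienting, the peripheral images $q_i(\pi_1(T_{i,1}))$ on the two sides match so that the quotients can actually be amalgamated along a common subgroup: either both tuples are strongly detected at index $1$ (so both images are $\mathbb{Z}$ and one amalgamates over a cyclic subgroup, where compatibility of orderings is automatic), or $C_i$ meets no other peripheral subgroup, in which case one may harmlessly replace $C_i$ by $\{id\}$ so that both images are $\mathbb{Z}\oplus\mathbb{Z}$ and the four-ordering ready-to-glue families apply. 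Since your construction never passes to these quotients, it never confronts this matching problem --- but it also never recovers O3, which is the heart of the theorem.
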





\begin{proof}
    We can largely mirror the proof of \cite[Theorem 7.10]{BC23}, making changes as needed. Observe as in \cite{BC23} that if either manifold is a product $T^2 \times [0,1]$ then the result holds trivially. Therefore, for each $i$, $\pi_1(T_{i,1})$ is a proper subgroup of $\pi_1(M_i)$, and we consider two cases. 

    For the first case, suppose each $[\alpha_{i,1}]$ is irrational. Let $C_i \subset \pi_1(M_i)$ be the $\ooo_i$-convex subgroup guaranteed by O3, and $q_i : \pi_1(M_i) \rightarrow \pi_1(M_i)/C_i$ the quotient map. The maps $q_1, q_2$ induce a map
    \[ q: \pi_1(M_1 \cup_f M_2) \rightarrow \pi_1(M_1)/C_1 *_{f_i} \pi_1(M_2)/C_2
    \]
    where $f_i:\mathbb{Z} \oplus \mathbb{Z} \rightarrow q_i(\pi_1(T_{i,1}))$ are isomorphisms satisfying $f_* \circ f_1=f_2$ where $f_* : q_1(\pi_1(T_{1,1})) \rightarrow q_2(\pi_1(T_{2,1}))$ is the isomorphism induced by $f$.

    Let $\hat{\ooo}_i$ denote the natural left-ordering of the quotient $\pi_1(M_i)/C_i$ induced by $\ooo_i$, and consider the normal families
    \[N_i = \{g \cdot \hat{\ooo}_i : g \in \pi_1(M_i)/C_i \} \cup \{ g \cdot \hat{\ooo}^{op}_i : g \in \pi_1(M_i)/C_i\} \subset \mathrm{LO}(\pi_1(M_i)/C_i)
    \]
    for $i=1, 2$. Since $1 \in K_i$ for $ i= 1, 2$, and therefore $s_{i,1}(g \cdot \ooo_i) = [\alpha_{i,1}]$ for all $g \in \pi_1(M_i)$, we conclude that the image of $N_i$ under the composition
    \[ \pi_1(M_i)/C_i \stackrel{r_{i,1}}{\longrightarrow} q_i(\pi_1(T_{i,1})) \stackrel{\mathcal{L}}{\longrightarrow} \mathcal{S}(\pi_1(T_{i,1}))
    \]
    is the singleton $\{[\alpha_{i,1}]\}$, here $r_{i,1}$ is the restriction map and $s_{i, 1} : \mathrm{LO}(\pi_1(M_i)) \rightarrow \mathcal{S}(T_{i,1})$ the slope map. Moreover, $\mathcal{L}^{-1}([\alpha_{i,1}]) \subset r_{i,1}(N_i)$. Therefore, the families $N_1, N_2$ are compatible with the maps $f_i$, in fact, they are ready to glue. So, by Theorem \ref{thm:bludovglass} there exists an ordering $\hat{\ooo}$ of $\pi_1(M_1)/C_1 *_{f_i} \pi_1(M_2)/C_2$ whose restriction to $\pi_1(M_i)/C_i$ is $\hat{\ooo}_i$.

    Construct a left-ordering $\ooo$ of $\pi_1(M_1 \cup_f M_2)$ lexicographically from the short exact sequence
    \[ \{ id \} \rightarrow \ker(q) \rightarrow \pi_1(M_1 \cup_f M_2) \stackrel{q}{\rightarrow} \pi_1(M_1)/C_1 *_{f_i} \pi_1(M_2)/C_2 \rightarrow \{ id \}
    \]
    using $\hat{\ooo}$ on the quotient and an arbitrary left-ordering of the kernel (note that, indeed, the kernel is left-orderable since it is a subgroup of $\pi_1(M_1 \cup_f M_2)$ which is left-orderable by Theorem \ref{thm:gluedetectedslopes}.) We verify that this ordering satisfies the required properties.

    First by construction, $s(\ooo) = ([\alpha_{1,2}],\dots,[\alpha_{1, r_1}], [\alpha_{2,2}],\dots,[\alpha_{2, r_2}])$ so that O1 holds. Moreover, from the proof of \cite[Theorem A]{BG09}, the left-ordering $\hat{\ooo}$ of $\pi_1(M_1)/C_1 *_{f_i} \pi_1(M_2)/C_2$ satisfies $\hat{\ooo}|_{h\pi_1(M_i)/C_i h^{-1}} \in N_i$ for all $h \in \pi_1(M_1)/C_1 *_{f_i} \pi_1(M_2)/C_2$. Therefore, the restriction of $\hat{\ooo}$ to $h\pi_1(M_i)/C_i h^{-1}$ is of the form $g \cdot \hat{\ooo}_i$ or $g \cdot \hat{\ooo}^{op}_i$ for some $g \in \pi_1(M_i)/C_i$ for all $h \in \pi_1(M_1)/C_1 *_{f_i} \pi_1(M_2)/C_2$. It follows that $s(h \cdot \ooo) = ([\beta_1], \dots, [\beta_{r_1+r_2-2}])$ where $[\beta_j] = s_j(\ooo)$ for all $j \in K_1' \cup K_2'$. Thus, O2 holds.
    Finally, the kernel $C = \ker(q)$ is the normal, $\ooo$-convex subgroup required by O3, so we conclude that $$(J_1'\cup J_2', K_1'\cup K_2';[\alpha_{1,2}],\dots,[\alpha_{1, r_1}], [\alpha_{2,2}],\dots,[\alpha_{2, r_2}])$$
    is detected by $\ooo$. This proves the theorem in the first case, where $[\alpha_{i,1}]$ is irrational.

    Next, we modify the proof to handle the case where $[\alpha_{i,1}]$ is rational. 

    First, suppose that there exists $i$ such that $C_i \cap \pi_1(T_{i,1}) = \langle \alpha_{i,1} \rangle$ and that $(J_i \cup\{1\},K_i;[\alpha_{i,1}],[\alpha_{i,2}],\dots,[\alpha_{i, r_i}])$ is order-detected for $i = 1, 2$. Then perhaps after replacing one of the orderings $\ooo_i$, we may assume that $(J_i \cup\{1\},K_i;[\alpha_{i,1}],[\alpha_{i,2}],\dots,[\alpha_{i, r_i}])$ is $\ooo_i$-detected for $i = 1, 2$.

    In this case, $q_1, q_2$ induce a map
    \[ q: \pi_1(M_1 \cup_f M_2) \rightarrow \pi_1(M_1)/C_1 *_{f_i} \pi_1(M_2)/C_2
    \]
    where $f_i:\mathbb{Z} \rightarrow q_i(\pi_1(T_{i,1}))$ are isomorphisms satisfying $f_* \circ f_1=f_2$ where $f_* : \mathbb{Z} \cong q_1(\pi_1(T_{1,1})) \rightarrow q_2(\pi_1(T_{2,1})) \cong \mathbb{Z}$ is the isomorphism induced by $f$. Since the image of $q$ is an amalgam of left-orderable groups along a cyclic subgroup, it is left-orderable, with any two compatible orderings of the factors extending to an ordering of the amalgam.

    Denoting the natural left-orderings of the quotients $\pi_1(M_i)/C_i$ again by $\hat{\ooo}_i$, we fix an ordering $\hat{\ooo}$ of $\pi_1(M_1)/C_1 *_{f_i} \pi_1(M_2)/C_2$ that restricts to $\hat{\ooo}_i$ on $\pi_1(M_i)/C_i$. As before, construct a left-ordering $\ooo$ of $\pi_1(M_1 \cup_f M_2)$ lexicographically from the short exact sequence
    \[ \{ id \} \rightarrow \ker(q) \rightarrow \pi_1(M_1 \cup_f M_2) \stackrel{q}{\rightarrow} \pi_1(M_1)/C_1 *_{f_i} \pi_1(M_2)/C_2 \rightarrow \{ id \}
    \]
    using $\hat{\ooo}$ on the quotient and an arbitrary left-ordering of the kernel. Arguing exactly as in the previous case, we see that $\ooo$ satisfies O1, O2, and O3, so that $$(J_1'\cup J_2', K_1'\cup K_2';[\alpha_{1,2}],\dots,[\alpha_{1, r_1}], [\alpha_{2,2}],\dots,[\alpha_{2, r_2}])$$
    is order-detected in this case.

    Last, suppose that $[\alpha_{i,1}]$ is rational 
    and that for each $i$, either $C_i \cap \pi_1(T_{i,1}) = \{id\}$ or $C_i \cap \pi_1(T_{i,1}) = \langle \alpha_{i,1} \rangle$ and $C_i \cap \pi_1(T_{i,j}) = \{ id \}$ for all $j \neq 1$. Then, as in the previous cases, we begin with convex subgroups $C_1, C_2$ arising from O3, but we replace $C_i$ with $C_i = \{ id\}$ if $C_i \cap \pi_1(T_{i,1}) = \langle \alpha_{i,1} \rangle$ and $C_i \cap \pi_1(T_{i,j}) = \{ id \}$ for all $j \neq 1$. Then again we have quotient maps $q_1, q_2$, and
    \[ q: \pi_1(M_1 \cup_f M_2) \rightarrow \pi_1(M_1)/C_1 *_{f_i} \pi_1(M_2)/C_2
    \]
    where $f_i:\mathbb{Z} \oplus \mathbb{Z} \rightarrow q_i(\pi_1(T_{i,1}))$ are isomorphisms satisfying $f_2 \circ f_*=f_1$ where $f_* : q_1(\pi_1(T_{1,1})) \rightarrow q_2(\pi_1(T_{2,1}))$ is the isomorphism induced by $f$. Note that our maps $f_i$ are isomorphisms precisely because we chose $C_i$ such that $\alpha_{i,1} \notin C_i$ for $i =1, 2$. Let $\hat{\ooo}_i$ denote the natural ordering of the quotient $\pi_1(M_i)/C_i$ induced by $\ooo_i$. By Remark \ref{more_general} and Proposition \ref{normal_construction} there is an ordering $\hat{\ooo}_i'$ of each quotient $\pi_1(M_i)/C_i$ such that:
    \begin{enumerate}
        \item $P(\hat{\ooo}_i) \cap (q_i(\pi_1(T_{i,1}))\setminus \langle q(\alpha_{i,1}) \rangle) = P(\hat{\ooo}_i') \cap (q_i(\pi_1(T_{i,1}))\setminus \langle q(\alpha_{i,1}) \rangle)$,
        \item when restricted to $q_i(\pi_1(T_{i,j}))$ where $j \in K$, the orderings $g \cdot \hat{\ooo}_i'$ and $\hat{\ooo}_i$ determine the same slope, and
        \item if $\alpha_{i,1} >_{\hat{\ooo}_i} id$ then $\alpha_{i,1} <_{\hat{\ooo}_i'} id$ and if $\alpha_{i,1} <_{\hat{\ooo}_i} id$ then $\alpha >_{\hat{\ooo}_i'} id$.
    \end{enumerate}

    Now define
    \begin{multline*}
        N_i = \{g \cdot \hat{\ooo}_i : g \in \pi_1(M_i)/C_i \} \cup
        \{ g \cdot \hat{\ooo}^{op}_i : g \in \pi_1(M_i)/C_i\} \\
        \cup \{g \cdot \hat{\ooo}_i' : g \in \pi_1(M_i)/C_i \} \cup
        \{ g \cdot (\hat{\ooo}'_i)^{op} : g \in \pi_1(M_i)/C_i\}
    \end{multline*}

    By construction, the families $N_i$ are compatible with the maps $f_i$ and are ready to glue along $f_i(\pi_1(T_{i,1}))$. So by Proposition \ref{prop:ready_to_glue} there is a left-ordering $\hat{\ooo}$ of $\pi_1(M_1)/C_1 *_{f_i} \pi_1(M_2)/C_2$ whose restriction to $\pi_1(M_i)/C_i$ is $\hat{\ooo}_i$, and we can construct $\ooo$ of $\pi_1(M_1 \cup_f M_2)$ lexicographically as in the first case. By arguments identical to the first case, the ordering $\ooo$ detects
    $$(J_1'\cup J_2', K_1'\cup K_2';[\alpha_{1,2}],\dots,[\alpha_{1, r_1}], [\alpha_{2,2}],\dots,[\alpha_{2, r_2}]),$$
    as required.
\end{proof}

We can also glue, in some circumstances, along slopes that are weakly detected. However, in doing so, we cannot control the boundary behaviour of the resulting left-ordering to the same degree as in Theorem \ref{thm:main_gluing_thm}. We begin with lemmas that allow us to construct families of orderings which are ready to glue. The next lemma can be viewed as a special case of Proposition \ref{normal_construction} which is sufficient for our purposes here.

\begin{lemma}\cite[Lemma 11.10]{BC17}\label{lemma:readytoglueprep}
    Suppose that $\ooo$ is a left-ordering of $G$ and $g,f\in G$. If $\{g^k : k\in \ZZ\}$ is bounded above by $f$ in the left-ordering $\ooo$, then there is a left-ordering $\ooo'$ of $G$ and a proper $\ooo'$-convex subgroup $C$ of $G$ that contains $g$ but not $f$. 
\end{lemma}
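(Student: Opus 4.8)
The plan is to follow the construction in the proof of Proposition~\ref{normal_construction}, with the element $g$ playing the role of $\alpha_1$; the additional work is to check that $f$ escapes the resulting convex subgroup. As in that proof we may take $G$ countable (this covers all our applications; for general $G$ one argues with the Dedekind completion of $(G,\ooo)$ in place of the dynamical realisation). First I would dispose of the case $g = id$: then the hypothesis forces $f \neq id$, and $C = \{id\}$ together with $\ooo' = \ooo$ does the job. So assume $g \neq id$; since $\{g^k : k \in \ZZ\} = \{g^{-k} : k \in \ZZ\}$ and $\langle g \rangle = \langle g^{-1} \rangle$, we may further assume $g >_\ooo id$. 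Now fix a tight embedding $t : G \to \RR$ associated to an enumeration of $G$, with $t(id) = 0$, and let $\rho = \rho_\ooo$ be its dynamical realisation, so $t(h) = \rho(h)(0)$ for all $h \in G$. Since $g^k <_\ooo f$ for every $k$, the set $\{t(g^k) : k \in \ZZ\}$ is bounded above by $t(f)$, so $x_0 := \sup_{k \in \ZZ} t(g^k)$ is a finite real number with $x_0 \leq t(f)$, and it is readily checked that $x_0 > 0 = t(id)$ (otherwise $t(g) \leq 0$ with $t(g) \neq 0$, forcing $g <_\ooo id$). Put $C = \Stab_\rho(x_0)$.

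The two points to verify are $g \in C$ and $f \notin C$. For the first: $\rho(g)$ is an orientation-preserving homeomorphism of $\RR$, hence continuous and strictly increasing, and it carries $\{t(g^k) : k \in \ZZ\}$ onto itself via the index shift $k \mapsto k+1$; since $x_0$ is approached by this set, continuity gives $\rho(g)(x_0) = \lim_{k\to\infty} \rho(g)(t(g^k)) = \lim_{k\to\infty} t(g^{k+1}) = x_0$, so $g \in C$. Moreover, as $\rho(g)(t(h)) = t(gh)$ for $h \in G$ and $g \neq id$, the relation $\rho(g)(x_0) = x_0$ also shows $x_0 \notin t(G)$, so $x_0$ is an ideal point of the dynamical realisation. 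For the second point: if $\rho(f)(x_0) = x_0$, then strict monotonicity of $\rho(f)$ together with $0 < x_0$ yields $t(f) = \rho(f)(0) < \rho(f)(x_0) = x_0 \leq t(f)$, a contradiction; hence $f \notin C$, and in particular $C$ is a proper subgroup of $G$.

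Finally, because $x_0$ is an ideal point, I would invoke \cite[Lemma~3.7]{BC23} to obtain a left-ordering $\ooo' \in \overline{\{h \cdot \ooo : h \in G\}}$ with respect to which $C = \Stab_\rho(x_0)$ is convex; this $\ooo'$ and $C$ satisfy all the required conclusions. The only non-elementary ingredient is \cite[Lemma~3.7]{BC23}, the passage to a limit ordering that convexifies the stabiliser of an ideal point, and I expect that (already invoked in Proposition~\ref{normal_construction}) to be the ``hard part''; beyond it, the one new check not present in that proof is $f \notin C$, which is the short monotonicity argument above.
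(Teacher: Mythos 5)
This lemma is not proved in the paper at all---it is imported verbatim from \cite[Lemma 11.10]{BC17}---so there is no in-paper argument to measure you against; judged on its own, your proof is correct for countable $G$. The key computations are sound: $x_0=\sup_k t(g^k)$ is finite and positive, lies in $\overline{t(G)}\setminus t(G)$ (so it is an ideal point in the sense used in Proposition \ref{normal_construction}), is fixed by $\rho(g)$ by the continuity/shift argument, and is moved by $\rho(f)$ by your monotonicity argument, which is exactly the one new check beyond Proposition \ref{normal_construction}; the appeal to \cite[Lemma 3.7]{BC23} to produce $\ooo'$ in the orbit closure making $C=\Stab_\rho(x_0)$ convex is the same use of that lemma that the paper itself makes. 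Where you differ from the cited source is in the machinery: the lemma has a more elementary and fully general proof along the lines you only gesture at. Let $G$ act on the Dedekind completion of $(G,<_{\ooo})$, set $x_0=\sup\{g^k\}$ there, and put $C=\Stab(x_0)$; the same two computations show $g\in C$ and $f\notin C$, and one gets convexity for free by defining $\ooo'$ directly via $a<_{\ooo'}b$ whenever $a\cdot x_0<b\cdot x_0$, breaking ties on cosets of $C$ using $\ooo$. That route needs no countability, no dynamic realisation, and no \cite[Lemma 3.7]{BC23}, whereas your route buys nothing extra here beyond reusing Proposition \ref{normal_construction} verbatim. Since the statement carries no countability hypothesis, your main argument as written covers only the countable case (sufficient for every application in this paper, where $G=\pi_1(M)$); if you keep the dynamic-realisation version you should either state the countability restriction or actually carry out the completion argument, which, as above, in fact shortens the proof.
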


For the statement of the next lemma, recall that $r_i : \LO(\pi_1(M)) \rightarrow \LO(\pi_1(T_i))$ denotes the restriction map, and $\mathcal{L}_i$ the map that associates a slope with each ordering of $\pi_1(T_i)$; the map $s_i$ denotes their composition.

\begin{lemma}\label{lemma:familyofaslope}
    Suppose that $\mathcal{O}$ is a family of left-orderings of $\pi_1(M)$. Then for each fixed $i=1,\dots,n$, there is a family $R_i(\mathcal{O})$ of left-orderings of $\pi_1(M)$ that contains $\mathcal{O}$ and satisfies $\mathcal{L}^{-1}([\alpha]) \subset r_i(R_i(\mathcal{O}))$ for all $[\alpha] \in s_i(R_i(\mathcal{O}))$.
\end{lemma}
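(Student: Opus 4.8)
The plan is to build $R_i(\mathcal{O})$ by adjoining to $\mathcal{O}$, for each $\ooo\in\mathcal{O}$, finitely many auxiliary orderings whose restrictions to $\pi_1(T_i)$ carry the \emph{same} slope as $\ooo$. Because no new slope is ever created on $T_i$, we will have $s_i(R_i(\mathcal{O}))=s_i(\mathcal{O})$, so there is no need for a transfinite closure; it will then suffice to check that for each $[\alpha]\in s_i(\mathcal{O})$ the restrictions of the orderings we have adjoined exhaust the (finite) fibre $\mathcal{L}^{-1}([\alpha])\subset\LO(\pi_1(T_i))$.

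First I would record the structure of $\mathcal{L}^{-1}([\alpha])$, following Sikora's classification \cite{Sik04}. If $[\alpha]$ is irrational, then $\mathcal{L}^{-1}([\alpha])=\{\omega,\omega^{op}\}$ for any $\omega$ in it. If $[\alpha]$ is rational with $\alpha$ primitive, then $\langle\alpha\rangle$ is convex in every $\omega\in\mathcal{L}^{-1}([\alpha])$ — since translating a lattice point by a multiple of $\alpha$ moves it parallel to $\mathcal{L}(\omega)$ and hence does not change which open side of $\mathcal{L}(\omega)$ it lies on — and consequently $\mathcal{L}^{-1}([\alpha])$ has exactly four elements, parametrised by a choice of positive side together with a choice of ordering of $\langle\alpha\rangle\cong\ZZ$. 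Concretely, from any one such $\omega_0$ the other three are $\omega_0^{op}$, the ordering $\omega_1$ with positive cone $(P(\omega_0)\cap\langle\alpha\rangle)^{-1}\cup(P(\omega_0)\setminus\langle\alpha\rangle)$, and $\omega_1^{op}$.

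The substantive step is the rational case for a fixed $\ooo\in\mathcal{O}$ with $[\alpha]=s_i(\ooo)$. Choose $f\in\pi_1(T_i)\setminus\langle\alpha\rangle$ with $f>_\ooo id$ (possible since $\langle\alpha\rangle$ is a proper subgroup and not every element outside it can be negative). Because $\mathcal{L}(r_i(\ooo))=[\alpha]$ and $f\notin\langle\alpha\rangle$, the element $f$ lies strictly on the positive side of $\mathcal{L}(r_i(\ooo))$, and $f\alpha^{-k}$ remains on that side for every $k$, so $\{\alpha^k:k\in\ZZ\}$ is bounded above by $f$ in $\ooo$. Lemma \ref{lemma:readytoglueprep} then yields a left-ordering $\ooo'$ of $\pi_1(M)$ together with a proper $\ooo'$-convex subgroup $C$ with $\alpha\in C$ and $f\notin C$. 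Intersecting with $\pi_1(T_i)$: the subgroup $C\cap\pi_1(T_i)$ is an $r_i(\ooo')$-convex subgroup of $\ZZ^2$ containing the primitive element $\alpha$ but omitting $f$, so it must equal $\langle\alpha\rangle$; a rank-one convex subgroup of $\ZZ^2$ pins down the slope, hence $s_i(\ooo')=[\alpha]$ and $\langle\alpha\rangle$ is $r_i(\ooo')$-convex. Now let $\ooo''$ be the ordering with positive cone $(P(\ooo')\cap C)^{-1}\cup(P(\ooo')\setminus C)$ — the convexity swap of Lemma \ref{convex swap}, whose verification that this set is a positive cone is purely algebraic and needs no detection hypothesis. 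Intersecting this cone with $\pi_1(T_i)$ gives $P(r_i(\ooo''))=(P(r_i(\ooo'))\cap\langle\alpha\rangle)^{-1}\cup(P(r_i(\ooo'))\setminus\langle\alpha\rangle)$, so $r_i(\ooo'')$ is exactly the $\langle\alpha\rangle$-reversal of $r_i(\ooo')$. Therefore $\{r_i(\ooo'),\,r_i(\ooo''),\,r_i(\ooo')^{op},\,r_i(\ooo'')^{op}\}$ runs over all four elements of $\mathcal{L}^{-1}([\alpha])$.

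Finally I would set
\[
R_i(\mathcal{O})=\mathcal{O}\cup\{\ooo^{op}:\ooo\in\mathcal{O}\}\cup\{\ooo',\ooo'',\ooo'^{op},\ooo''^{op}:\ooo\in\mathcal{O},\ s_i(\ooo)\text{ rational}\},
\]
with $\ooo',\ooo''$ the orderings built above. Every adjoined ordering has the same $T_i$-slope as its parent, so $s_i(R_i(\mathcal{O}))=s_i(\mathcal{O})$; and for $[\alpha]\in s_i(R_i(\mathcal{O}))$, picking $\ooo\in\mathcal{O}$ with $s_i(\ooo)=[\alpha]$, the relevant adjoined orderings restrict onto all of $\mathcal{L}^{-1}([\alpha])$ — the pair $\{r_i(\ooo),r_i(\ooo^{op})\}$ in the irrational case, the four orderings of the previous paragraph in the rational case — which is what the lemma demands. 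The one genuine obstacle is the rational case: before one can reverse the order on $\langle\alpha\rangle$ without disturbing the slope, one must promote $\langle\alpha\rangle$ from a convex subgroup of $\pi_1(T_i)$ to (a subgroup of) a convex subgroup of $\pi_1(M)$, and Lemma \ref{lemma:readytoglueprep} is exactly the device that does this; the rest is bookkeeping with positive cones.
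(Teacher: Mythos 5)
Your proof is correct and follows essentially the same route as the paper's: irrational slopes are handled by adjoining opposites, and rational slopes by invoking Lemma \ref{lemma:readytoglueprep} to produce an ordering with a convex subgroup meeting $\pi_1(T_i)$ in $\langle\alpha\rangle$, then using the convex swap of Lemma \ref{convex swap} together with opposites to exhaust the four orderings in $\mathcal{L}^{-1}([\alpha])$. You simply spell out details the paper leaves implicit (the boundedness hypothesis for Lemma \ref{lemma:readytoglueprep}, the identification $C\cap\pi_1(T_i)=\langle\alpha\rangle$, and the count of the fibre), so no substantive difference.
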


\begin{proof}
    This is a restatement of \cite[Lemma 7.8]{BC23} in the case where $M$ has multiple boundary components; where the original lemma is stated for knot manifolds. We proceed as follows:

    For every slope $[\gamma]\in s_i(\mathcal{O})$, define a set $R([\gamma])$ as follows. If $[\gamma]$ is irrational, choose $\ooo\in \mathcal{O}$ with $s_i(\ooo)=[\gamma]$ and set $R([\gamma])=\{\ooo,\ooo^{op}\}$ so that $r_i(R([\gamma]))=\mathcal{L}^{-1}([\gamma])$. If $[\gamma]$ is rational, choose $\ooo\in \mathcal{O}$ with $s_i(\ooo)=[\gamma]$. By Lemma \ref{lemma:readytoglueprep}, there exists a left-ordering $\ooo'$ and a proper $\ooo'$-convex subgroup $C$ of $\pi_1(M)$ such that $\pi_1(T_i)\cap C=\vbracket{\gamma}$. So in this case, we can create a set $R([\gamma])$ of four left-orderings of $\pi_1(M)$ satisfying $r_i(R([\gamma]))=\mathcal{L}^{-1}([\gamma])$ as in Lemma \ref{convex swap}. Finally, set
    $$R_i(\mathcal{O})=\mathcal{O}\cup \left(\bigcup_{[\gamma]\in s_i(\mathcal{O})} R([\gamma])\right)$$
    and by construction, the set $R_i(\mathcal{O})$ satisfies the required condition.
\end{proof}

\begin{proposition}\label{prop:readytoglue}
    Fix a left-ordering $\ooo$ of $\pi_1(M)$ and $i\in \{1,\dots,n\}$. Then there exists a family $\mathcal{O}$ of left-orderings of $\pi_1(M)$ containing $\ooo$ that is ready to glue along $s_i(\mathcal{O})$ on $T_i$.
\end{proposition}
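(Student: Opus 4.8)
The plan is to construct $\mathcal{O}$ by a countable iteration that alternates the two operations responsible for the two clauses of Definition~\ref{def:readytoglue}: passing to $\pi_1(M)$-orbits (to secure normality) and applying Lemma~\ref{lemma:familyofaslope} (to secure that every slope realised on $T_i$ has all of $\mathcal{L}^{-1}([\alpha])$ realised). Write $G=\pi_1(M)$. First I would set $\mathcal{O}_0=\{g\cdot\ooo : g\in G\}$, the orbit of $\ooo$, which is the smallest normal family containing $\ooo$. Given a normal family $\mathcal{O}_k$, I would then let $\mathcal{O}_{k+1}$ be the normal closure $\{g\cdot\ooo'' : g\in G,\ \ooo''\in R_i(\mathcal{O}_k)\}$ of the family $R_i(\mathcal{O}_k)$ supplied by Lemma~\ref{lemma:familyofaslope}. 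Since a normal closure contains the family it is built from, and $\mathcal{O}_k\subseteq R_i(\mathcal{O}_k)$, the families form an increasing chain $\mathcal{O}_0\subseteq\mathcal{O}_1\subseteq\cdots$ of normal families, and I would take $\mathcal{O}=\bigcup_{k\geq 0}\mathcal{O}_k$. An increasing union of normal families is normal, and $\ooo\in\mathcal{O}_0\subseteq\mathcal{O}$, so the only thing left is the preimage condition of Definition~\ref{def:readytoglue}.

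For that, the key observation I would record is that the operation $R_i$ of Lemma~\ref{lemma:familyofaslope} does not enlarge the set of slopes realised on $T_i$: inspecting its proof, each ordering adjoined to a family $\mathcal{F}$ in forming $R_i(\mathcal{F})$ has $s_i$-value equal to a slope already in $s_i(\mathcal{F})$ (for an irrational slope one adjoins an ordering and its opposite, both carrying that same line; for a rational slope $[\gamma]$ one adjoins four orderings whose restrictions to $\pi_1(T_i)$ run over all of $\mathcal{L}^{-1}([\gamma])$), so $s_i(R_i(\mathcal{F}))=s_i(\mathcal{F})$. Hence Lemma~\ref{lemma:familyofaslope} gives $\mathcal{L}^{-1}([\alpha])\subseteq r_i\big(R_i(\mathcal{O}_k)\big)\subseteq r_i(\mathcal{O}_{k+1})$ for every $[\alpha]\in s_i(\mathcal{O}_k)$. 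Now if $[\alpha]\in s_i(\mathcal{O})$ then $[\alpha]\in s_i(\mathcal{O}_k)$ for some $k$, so $\mathcal{L}^{-1}([\alpha])\subseteq r_i(\mathcal{O}_{k+1})\subseteq r_i(\mathcal{O})$, as required, and the argument is complete.

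The only real obstacle is that the two operations genuinely interfere: passing to the normal closure can introduce slopes on $T_i$ that were not previously present, since $s_i(g\cdot\ooo'')$ need not equal $s_i(\ooo'')$ when $g^{-1}\pi_1(T_i)g\neq\pi_1(T_i)$, so a single application of Lemma~\ref{lemma:familyofaslope} after passing to the orbit does not suffice. What makes the iteration terminate in the limit is precisely that $R_i$ itself creates no new slopes, so any ``debt'' incurred at a normal-closure step is discharged at the very next $R_i$ step, and passing to the union over all finite stages closes the loop. (If $G$ is countable one sees along the way that every $\mathcal{O}_k$, hence $\mathcal{O}$, is countable, though this plays no role in the statement.)
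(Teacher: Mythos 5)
Your construction is the same as the paper's: the paper also sets $X_0=\{\ooo\}$, $X_{j+1}=N(R_i(X_j))$ (normal closure applied after Lemma \ref{lemma:familyofaslope}) and takes $\mathcal{O}=\bigcup_j X_j$, so your proof is correct and follows essentially the same route, just with the verification that the union is normal and ready to glue spelled out in more detail than the paper's one-line ``by construction.''
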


\begin{proof}
    Given a set $S$ of left-orderings of $\pi_1(M)$, write $N(S)$ for the set $\{g\cdot \ooo : \ooo\in S \mbox{ and }g\in \pi_1(M)\}$. Set $X_0=\{\ooo\}$ and for $j\geq 0$ set $X_{j+1}=N(R_i(X_j))$, where $R_i(S)$ is defined as in Lemma \ref{lemma:familyofaslope}. Set $\mathcal{O}=\cup_{j=0}^\infty X_j$. By construction, $\mathcal{O}$ is both normal and ready to glue along $s_i(\mathcal{O})$ on $T_i$.
\end{proof}

\begin{theorem}
    \label{thm:special_gluing_case}
    Let $M_1$ and $M_2$ be 3-manifolds such that $\partial M_i$ is a union of incompressible tori $T_{i,1}\sqcup T_{i,2}\sqcup\dots\sqcup T_{i,{r_i}}$. Suppose that for each manifold $M_i$, $(J_i,K_i;[\alpha_{i,1}],[\alpha_{i,2}],\dots,[\alpha_{i, r_i}])$ is order-detected by $\ooo_i\in \LO(M_i)$ and that $f:T_{1,1}\to T_{2,1}$ is a homeomorphism that identifies $[\alpha_{1,1}]$ with $[\alpha_{2,1}]$. Re-index the boundary components $T_{1,2}, T_{1,3}, \ldots, T_{1,r_1}, T_{2,2}, T_{2,3}, \ldots , T_{2,r_2}$
    of the manifold $M_1\cup_f M_2$ as $T_1, \ldots, T_{r_1+r_2-2}$ respectively. Suppose that $K_2=\{1, \dots, r_2\}$, and for every $[\alpha] \in \mathcal{S}(T_{2,1})$ there exists a left-ordering $\ooo$ of $\pi_1(M_2)$ detecting some $(J_2', K_2'; [\alpha_*])$ with $1 \in K_2'$ and $s_{2,1}(\ooo) = [\alpha]$.

    Then $\pi_1(M_1\cup_f M_2)$ is left-orderable and admits a left-ordering $\ooo$ detecting $$(\emptyset, \emptyset;[\alpha_{1,2}],\dots,[\alpha_{1, r_1}], [\alpha_{2,2}],\dots,[\alpha_{2, r_2}]);$$
    moreover, the restriction of $\ooo$ to $\pi_1(M_1)$ is $\ooo_1$.
\end{theorem}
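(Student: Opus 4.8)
The plan is to present $\pi_1(M_1\cup_f M_2)$ as the amalgamated free product $\pi_1(M_1)*_{\mathbb{Z}\oplus\mathbb{Z}}\pi_1(M_2)$ over the incompressible torus $T_{1,1}$ (identified with $T_{2,1}$ by $f$) and then to apply Proposition \ref{prop:ready_to_glue}. Concretely, it suffices to build a normal family $\mathcal{O}_1\subset\LO(\pi_1(M_1))$ that contains $\ooo_1$ and is ready to glue along $T_{1,1}$, a normal family $\mathcal{O}_2\subset\LO(\pi_1(M_2))$ that is ready to glue along $T_{2,1}$ with $s_{2,1}(\mathcal{O}_2)=s_{1,1}(\mathcal{O}_1)$ (using $f$ to identify slopes on the two tori), and an ordering $\ooo_2^{*}\in\mathcal{O}_2$ whose restriction to $\pi_1(T_{2,1})$ is carried by $f$ to the restriction of $\ooo_1$ to $\pi_1(T_{1,1})$ and which satisfies $s_{2,j}(\ooo_2^{*})=[\alpha_{2,j}]$ for $j\geq 2$. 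As in the proof of Theorem \ref{thm:main_gluing_thm}, if either $M_i$ is a product $T^2\times[0,1]$ the statement is trivial, so we may assume each $\pi_1(T_{i,1})$ is a proper subgroup. First I would apply Proposition \ref{prop:readytoglue} to $\ooo_1$ and the index $1$ to obtain such an $\mathcal{O}_1$, and set $T^{*}:=s_{1,1}(\mathcal{O}_1)$; note $[\alpha_{1,1}]\in T^{*}$ since $\ooo_1\in\mathcal{O}_1$. Taking the restriction of the eventual glued ordering to $\pi_1(M_1)$ to be $\ooo_1$ already pins down the slopes $[\alpha_{1,2}],\dots,[\alpha_{1,r_1}]$ on the boundary tori coming from $M_1$.

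The heart of the matter is the construction of $\mathcal{O}_2$. For each $[\gamma]\in T^{*}$, regarded as a slope on $T_{2,1}$ via $f$, the standing hypothesis gives an ordering $\ooo^{[\gamma]}$ of $\pi_1(M_2)$ with $s_{2,1}(\ooo^{[\gamma]})=[\gamma]$ that detects some $(J',K';[\alpha_*])$ with $1\in K'$, so that $s_{2,1}(h\cdot\ooo^{[\gamma]})=[\gamma]$ for every $h$. When $[\gamma]$ is rational I would additionally apply Proposition \ref{normal_construction} to $\ooo^{[\gamma]}$ at the component $T_{2,1}$ to produce a companion ordering $(\ooo^{[\gamma]})'$ that agrees with $\ooo^{[\gamma]}$ on $\pi_1(T_{2,1})$ except for reversing the sign of a primitive representative of $[\gamma]$, and which by Proposition \ref{normal_construction}(2) still has constant slope $[\gamma]$ on $T_{2,1}$ under conjugation. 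Together with their opposites, $\ooo^{[\gamma]}$ and $(\ooo^{[\gamma]})'$ restrict onto all of $\mathcal{L}^{-1}([\gamma])\subset\LO(\pi_1(T_{2,1}))$, and each of them regularly detects $[\gamma]$ on $T_{2,1}$. For the distinguished slope $[\alpha_{2,1}]\in T^{*}$ I would instead use $\ooo_2$ itself (and, when $[\alpha_{2,1}]$ is rational, the companion of $\ooo_2$ produced by Proposition \ref{normal_construction}), together with opposites; since $K_2=\{1,\dots,r_2\}$, these orderings and all their conjugates keep the slopes $[\alpha_{2,j}]$ on every $T_{2,j}$, again using Proposition \ref{normal_construction}(2) for the companion. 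Now let $\mathcal{O}_2$ be the normal closure of the union of all of these orderings. Since every generator of $\mathcal{O}_2$ regularly detects its slope on $T_{2,1}$, conjugation introduces no new slopes there, so $s_{2,1}(\mathcal{O}_2)=T^{*}$; and since $r_{2,1}(\mathcal{O}_2)\supseteq\mathcal{L}^{-1}([\gamma])$ for each $[\gamma]\in T^{*}$, the family $\mathcal{O}_2$ is normal and ready to glue along $T^{*}$ on $T_{2,1}$.

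To conclude, the ordering of $\mathbb{Z}\oplus\mathbb{Z}$ induced by $\ooo_1$ on $\pi_1(T_{1,1})$ corresponds under $f$ to an ordering of $\pi_1(T_{2,1})$ with slope $[\alpha_{2,1}]$; as $\ooo_2$, $\ooo_2^{op}$ and, in the rational case, the companion of $\ooo_2$ together with its opposite restrict onto all of $\mathcal{L}^{-1}([\alpha_{2,1}])$, I may choose $\ooo_2^{*}$ among them matching $\ooo_1$ on the amalgamating $\mathbb{Z}\oplus\mathbb{Z}$, and this $\ooo_2^{*}$ has $s_{2,j}(\ooo_2^{*})=[\alpha_{2,j}]$ for $j\geq 2$. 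Applying Proposition \ref{prop:ready_to_glue} with $N_1=\mathcal{O}_1$, $N_2=\mathcal{O}_2$ and base orderings $\ooo_1$, $\ooo_2^{*}$ (whose slope sets agree, both being $T^{*}$) yields that $\pi_1(M_1\cup_f M_2)$ is left-orderable and carries a left-ordering $\ooo$ restricting to $\ooo_1$ on $\pi_1(M_1)$ and to $\ooo_2^{*}$ on $\pi_1(M_2)$. Finally I would check the detection conditions for $(\emptyset,\emptyset;[\alpha_{1,2}],\dots,[\alpha_{1,r_1}],[\alpha_{2,2}],\dots,[\alpha_{2,r_2}])$: O1 holds because $\ooo$ reads off the slopes of $\ooo_1$ and $\ooo_2^{*}$ on the respective boundary tori; O2 is vacuous because $K=\emptyset$; and O3 holds with the trivial subgroup because $J=K=\emptyset$.

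The step I expect to be the main obstacle is the construction of $\mathcal{O}_2$: it must simultaneously be ready to glue along exactly the slope set $T^{*}$ forced on us by the $M_1$-side (so one cannot simply take a family realising every slope on $T_{2,1}$), remain normal, and still contain an ordering that both matches $\ooo_1$ on the amalgamating $\mathbb{Z}\oplus\mathbb{Z}$ and carries the correct slopes on the remaining boundary tori of $M_2$. The two hypotheses on $M_2$ are precisely what makes this work: the assumption that every slope on $T_{2,1}$ is realised by an ordering regularly detecting it there lets us fill out $\mathcal{L}^{-1}([\gamma])$ for each $[\gamma]\in T^{*}$ without breaking normality, since regular detection there means conjugates keep the same slope; and $K_2=\{1,\dots,r_2\}$ guarantees that the ordering we ultimately glue along carries the right slopes across all of $\partial M_2$, with Proposition \ref{normal_construction} supplying the sign-reversed orderings needed over rational slopes.
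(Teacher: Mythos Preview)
Your proof is correct and follows essentially the same approach as the paper: apply Proposition \ref{prop:readytoglue} on the $M_1$ side, build a matching ready-to-glue family on the $M_2$ side by choosing for each slope in $T^*$ an ordering regularly detecting it on $T_{2,1}$ (using $\ooo_2$ itself at the distinguished slope), augment with companions from Proposition \ref{normal_construction} and opposites to hit all of $\mathcal{L}^{-1}([\gamma])$, then invoke Proposition \ref{prop:ready_to_glue} and verify O1--O3 with $C=\{id\}$. You are in fact slightly more careful than the paper in distinguishing rational from irrational slopes when constructing the companions, since Proposition \ref{normal_construction} is only stated for rational $[\alpha_1]$.
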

\begin{proof}
    By Proposition \ref{prop:readytoglue}, there exists $N_1 \subset \mathrm{LO}(\pi_1(M_1))$ containing $\ooo_1$ that is ready to glue along $T_{1,1}$.

    Set
    \[ S = \{ [\alpha] \in \mathcal{S}(T_{2,1}) : \exists [\gamma] \in s_{1,1}(N_1) \mbox{ such that } f_*([\gamma]) = [\alpha] \}
    \]
    where $s_{1,1} : \mathrm{LO}(\pi_1(M_1)) \rightarrow \mathcal{S}(T_{1,1})$ is the slope map. For each $[\alpha] \in S$ with $[\alpha] \neq f_*(s_{1,1}(\ooo_1))$, choose an ordering $\ooo_{[\alpha]}$ of $\pi_1(M_2)$ that detects some $(J, K; [\alpha_*])$ with $s_{2,1}(\ooo_{[\alpha]}) = [\alpha]$. When $[\alpha] = f_*(s_{1,1}(\ooo_1))$, choose $\ooo_{[\alpha]} = \ooo_2$. Using Proposition \ref{normal_construction}, for each $\ooo_{[\alpha]}$ choose $\hat{\ooo}_{[\alpha]}$ that satisfies the conditions of Proposition \ref{normal_construction} and set
    \begin{multline*}
        N_{[\alpha]} = \{g \cdot \ooo_{[\alpha]} : g \in \pi_1(M_2) \} \cup
        \{ g \cdot \ooo_{[\alpha]}^{op} : g \in \pi_1(M_2)\} \\
        \cup \{g \cdot \hat{\ooo}_{[\alpha]} : g \in \pi_1(M_2) \} \cup
        \{ g \cdot (\hat{\ooo}_{[\alpha]})^{op} : g \in \pi_1(M_2)\},
    \end{multline*}
    by construction $N_{[\alpha]}$ is ready to glue along $T_{2,1}$. Now set
    \[ N_2 = \bigcup_{[\alpha] \in S} N_{[\alpha]},
    \]
    by construction $N_2$ is ready to glue along $T_{2,1}$. Choosing maps $f_i : \mathbb{Z} \oplus \mathbb{Z} \rightarrow \pi_1(M_i)$ for $i=1,2$ such that $f_i(\mathbb{Z} \oplus \mathbb{Z}) = \pi_1(T_{i,1})$ and $f_* \circ f_1 = f_2$, the normal families $N_1, N_2$ are compatible with the maps $f_i$ by construction, and so $\pi_1(M_1 \cup_f M_2) = \pi_1(M_1) *_{f_i} \pi_1(M_2)$ is left-orderable by Proposition \ref{prop:ready_to_glue}. Moreover, there exists $\ooo_2' \in \{ \ooo_{s_{1,1}(\ooo_1)}, \ooo_{s_{1,1}(\ooo_1)}^{op}, \hat{\ooo}_{s_{1,1}(\ooo_1)}, (\hat{\ooo}_{s_{1,1}(\ooo_1)})^{op}\}$ satisfying $f_1^{-1}(P(\ooo_1)) = f_2^{-1}(P(\ooo_2'))$, so that we may choose a left-ordering $\ooo$ of $\pi_1(M_1 \cup_f M_2)$ that restricts to $\ooo_1$ on $\pi_1(M_1)$ and $\ooo_2'$ on $\pi_1(M_2)$. Since $K_2=\{1, \dots, r_2\}$ and $\ooo_2'$ arises from an application of Proposition \ref{normal_construction}, the ordering $\ooo$ detects
    $$(\emptyset, \emptyset;[\alpha_{1,2}],\dots,[\alpha_{1, r_1}], [\alpha_{2,2}],\dots,[\alpha_{2, r_2}]).$$
\end{proof}

\section{JN-realisability and representation-detection}\label{sec:seifertmanifold}

Here we introduce Seifert fibered manifolds and JN-realisability, following \cite{BC17}. This serves as a way of computing which tuples of slopes on the boundary of a Seifert fibered manifold are representation-detected, see Proposition \ref{prop:jnimplyrepdetected}.

Throughout this section, we use $M$ to denote an orientable Seifert manifold with base orbifold $P(a_1,a_2,\dots,a_n)$, where $P$ is a punctured 2-sphere, and whose boundary is a nonempty collection of incompressible tori $T_1,\dots, T_r$. We use $h\in \pi_1(M)$ to denote the class of a regular Seifert fiber of $M$. Suppose that the Seifert invariants of the exceptional fibers of $M$ are given by $(a_1,b_1),\dots,(a_n,b_n)$ with $0<b_i<a_i$ for all $i=1,2,\dots,n$. Set \[\gamma_i=\frac{b_i}{a_i}\in (0,1).\]
The fundamental group of $M$ has a presentation \[\pi_1(M)=\vbracket{y_1,\dots,y_n,x_1,\dots,x_r,h \mid h \mbox{ central, } y_i^{a_i}=h^{b_i}, y_1y_2\dots y_nx_1x_2\dots x_r=1}.\]
In this presentation, $x_j$ is a dual class to $h$ on each $T_j$ for $1\leq j\leq r$. We say $[\alpha_*]\in \mathcal{S}(M)$ is rational if each $[\alpha_i]$ is rational, and we call $[\alpha_*]$ \emph{horizontal} if no $[\alpha_i]$ coincides with the slope of the fiber class $[h]$.

For $\gamma\in \RR$, denote by $\sh(\gamma)$ translation by $\gamma$, that is, $\sh(\gamma)(x) =x+\gamma$ for all $x \in \RR$. The universal cover $\hstilde$ of $\Homeo_+(S^1)$ is canonically isomorphic to the subgroup of $\Homeo_+(\RR)$ which consists of homeomorphisms that commute with $\sh(1)$:\[ \hstilde=\{f\in \Homeo_+(\RR): f(x+1)=f(x)+1, \forall x\in\RR\}. \]

We define the translation number $\tau:\hstilde\to \RR$
by
\[ \tau(h) = \lim_{n \to \infty}\frac{h^n(0)}{n},
\]
which satisfies $\tau(\sh(\gamma))=\gamma$.
The translation number map $\tau$ is invariant under conjugation and becomes a homomorphism when restricted to any abelian subgroup of $\hstilde$; moreover, $\tau(f)=0$ if and only if $f$ has a fixed point.


\begin{definition}\label{def:jnreal}\cite{BC17} For a subset $J\subset \{1,\dots,r\}$ and an $r$-tuple $(\tau_1,\dots,\tau_r)$ of real numbers, we say $(J;0;\gamma_1,\dots,\gamma_n;\tau_1,\dots,\tau_r)$ is \emph{JN-realisable} if there is a homomorphism $\rho:\pi_1(M)\to \hstilde$ such that \begin{enumerate}
        \item $\rho(h)=\sh(1)$;
        \item $\tau_j=\tau(\rho(x_j))$ for $1\leq j\leq r$;
        \item $\rho(x_j)$ is conjugate to $\sh(\tau_j)$ for each $j\in J$.
    \end{enumerate}

    More generally, $(J;b;\gamma_1,\dots,\gamma_n;\tau_1,\dots,\tau_r)$ is \emph{JN-realisable} if there are $f_i, g_j\in \hstilde$ such that \begin{enumerate}
        \item $f_i$ is conjugate to $\sh(\gamma_i)$ for all $i=1,2,\dots,n$;
        \item $\tau_j=\tau(g_j)$ for $1\leq j\leq r$;
        \item $g_j$ is conjugate to $\sh(\tau_j)$ for each $j\in J$;
        \item $f_1\circ \dots\circ f_n\circ g_1\circ \dots\circ g_r=\sh(b)$.
    \end{enumerate}
\end{definition}

For each $\tau_j=\tau(\rho(x_j))$, let $[\alpha_j]= [x_j-\tau_j h] \in H_1(T_j; \RR)$ and write $[\tau_*]=(\tau_1,\dots,\tau_r)$. Note that $[\alpha_*]$ is horizontal. 

\begin{proposition}\label{prop:jnimplyrepdetected}
    If $(J;0;\gamma_1,\dots,\gamma_n;\tau_1,\dots,\tau_r)$ is JN-realisable, then $(J, \{1, \ldots, r\}; [\alpha_*])$ is representation-detected.
\end{proposition}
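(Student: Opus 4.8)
The plan is to take a JN-realisation $\rho:\pi_1(M)\to\hstilde\subset\Homeo_+(\RR)$ and manufacture a pointed representation $(\rho, x_0)$ witnessing representation-detection. First I would check that $\rho$ (viewed as a representation into $\Homeo_+(\RR)$) is non-trivial and that $\pi_1(T_j)\not\subset\Stab_\rho(x_0)$ for a suitable basepoint. Since $[\alpha_*]$ is horizontal, for each $j$ the subgroup $\pi_1(T_j)=\langle h, x_j\rangle$ is sent by $\rho$ to the abelian group $\langle \sh(1), \rho(x_j)\rangle$; on this group the translation number $\tau$ is a homomorphism, so $\rho(x_j - \tau_j h) = \rho(x_j)\circ\sh(-\tau_j)$ has translation number $0$, hence has a fixed point, while $\rho(h)=\sh(1)$ has no fixed point. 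The natural choice of basepoint is any $x_0\in\RR$; since $\rho(h)=\sh(1)$ moves every point, $h\in\pi_1(T_j)$ is never in $\Stab_\rho(x_0)$, so $(\rho,x_0)\in\mathcal{R}^*(\pi_1(M))$ automatically. The line $\mathcal{L}(\rho|_{\pi_1(T_j)}, x_0)$ is, by Lemma~\ref{lemma:z2slope}, the line in $H_1(T_j;\RR)$ separating the elements pushing $x_0$ up from those pushing it down; I must identify this line with $[\alpha_j] = [x_j - \tau_j h]$.

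The key computation is that for $g = m h + k x_j \in \pi_1(T_j)$ we have $\tau(\rho(g)) = m + k\tau_j$, because $\tau$ is a homomorphism on the abelian image and $\tau(\sh(1))=1$, $\tau(\rho(x_j))=\tau_j$. An element of $\hstilde$ with positive translation number satisfies $f(x)>x$ for all $x$, and one with negative translation number satisfies $f(x)<x$ for all $x$; so the sign of $\rho(g)(x_0)-x_0$ is the sign of $m+k\tau_j$ \emph{whenever this quantity is nonzero} — and this sign is independent of $x_0$. Hence the separating line of Lemma~\ref{lemma:z2slope} is exactly $\{m + k\tau_j = 0\}$, i.e.\ the line through $x_j - \tau_j h$, giving $[\mathcal{L}(\rho|_{\pi_1(T_j)}, x_0)] = [\alpha_j]$. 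Because this holds for every $x_0\in\RR$ simultaneously, conditions R1 and R2 (indeed R2$'$ from Lemma~\ref{lemma:equivR2}) hold at once: moving the basepoint by any $\rho(g)(x_0)$ changes nothing.

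For R3 I would take $\varphi = \rho$ itself, with $\nu = \mathrm{id}_\RR$ the (trivial) semiconjugacy, so $\nu(0)=0$. Then $(\varphi, \nu(x_0)) = (\rho, x_0)\in\mathcal{R}^*(\pi_1(M))$ and the slope condition is what we just verified. The remaining requirement is that $\varphi(\alpha_j) = \mathrm{id}$ whenever $j\in J$ and $[\alpha_j]$ is rational; here $\alpha_j = x_j - \tau_j h$, but note $[\alpha_j]$ rational forces $\tau_j\in\QQ$ and, after clearing denominators, the primitive class is $a x_j - b h$ with $\tau_j = b/a$. Condition (3) of JN-realisability says $\rho(x_j)$ is conjugate to $\sh(\tau_j)$ for $j\in J$; conjugating, $\rho(x_j)^a \circ \sh(-b)$ is conjugate to $\sh(a\tau_j - b) = \sh(0) = \mathrm{id}$, hence equals $\mathrm{id}$, so $\rho$ kills the primitive class generating $\langle\alpha_j\rangle$. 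This is the step most likely to need care: reconciling the ``$\rho(x_j)$ conjugate to $\sh(\tau_j)$'' hypothesis with the exact form of the primitive representative of $[\alpha_j]$, and confirming that ``$\varphi(\alpha_j)=id$'' in Definition~\ref{representation-detect} refers to this primitive class. Once that bookkeeping is settled, R1, R2, R3 are all verified and $(J,\{1,\dots,r\};[\alpha_*])$ is representation-detected.
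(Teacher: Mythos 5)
Your proposal is correct and follows essentially the same route as the paper: use the translation number homomorphism on the abelian peripheral images to identify the separating line with $[x_j-\tau_j h]$, note that nonzero translation number forces the element to move \emph{every} point in the same direction (so R1, R2, and even R2$'$ hold for any basepoint), and verify R3 with $\varphi=\rho$, $\nu=\mathrm{id}$, killing the primitive class $ax_j-bh$ via the conjugacy of $\rho(x_j)$ to $\sh(\tau_j)$ together with the centrality of integer shifts in $\hstilde$.
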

\begin{proof}
    Suppose $(J;0;\gamma_1,\dots,\gamma_n;\tau_1,\dots,\tau_r)$ is JN-realisable with the homomorphism $\rho: \pi_1(M)\to \hstilde$ satisfying the conditions of JN-realisability. We will show that $(J, \{1, \ldots, r\}; [\alpha_*])$ is representation-detected by $(\rho, 0)$.

    Note that $(\rho,0) \in \mathcal{R}^*(\pi_1(M))$, since $\rho(h) = \sh(1)$.
    Since the translation number is a group homomorphism on abelian subgroups, the map $(\tau\circ \rho|_{\pi_1(T_i)}) \otimes \mathbf{1}_{\RR}:\pi_1(T_i) \otimes \RR \to \RR$ is a nontrivial linear map for all $i=1,2,\dots, r$. The kernel of this map contains $[\alpha_j]$ and divides $H_1(T_i;\RR)$ into a disjoint union $H_+ \cup H_-$, with $\tau(\rho(\gamma))>0$ for every $\gamma\in \pi_1(T_i)\cap H_+$ and $\tau(\rho(\gamma))<0$ for every $\gamma\in \pi_1(T_i)\cap H_-$. Then $\tau(\rho(\gamma))>0$ leads to $\rho(\gamma)(x)>x$ for all $x\in \RR$, and $\rho(\gamma)(x)<x$ when $\tau(\rho(\gamma))<0$. In particular, this shows that $[\mathcal{L}(\rho|_{\pi_1(T_i)}, 0)] = [\alpha_i]$ for all $i =1, \dots, r$, and so R1 holds.

    The observation that $\rho(\gamma)(x)>x$ for all $x\in \RR$ or $\rho(\gamma)(x)<x$ for all $x\in \RR$ whenever $\gamma \in H_+ \cup H_- \subset H_1(T_i;\RR)$ also shows that $(\rho, \rho(g)(0)) \in \mathcal{R}^*(\pi_1(M))$ for all $g \in \pi_1(M)$. With this fact in hand, the fact that conjugation leaves the translation number invariant implies that R2 is true.

    Finally, given that $[\mathcal{L}(\rho|_{\pi_1(T_i)}, 0))] = [\alpha_i]$ we know that if $i\in J$ and $\tau_i=\frac{p}{q}$ is rational, then $\rho(x_i)$ is conjugate to $\sh(\tau_i)$, and thus $\rho(q x_i-ph)$ is the identity in $\hstilde$. This implies $\ker(\rho)\cap \pi_1 (T_i)= \vbracket{\alpha_i}$. Consequently, R3 is true with $\nu : \RR \rightarrow \RR$ the identity and $\phi = \rho$.
\end{proof}

The converse of the previous proposition also holds, provided we place a restriction on our representations. The proof is most easily presented using left-orderings and the notion of cofinal elements: An element $h \in G$ is \emph{cofinal} relative to a left-ordering $\ooo$ of $G$ (or $\ooo$-cofinal for short) if
\[G = \{ g \in G : \exists k \in \mathbb{Z} \mbox{ such that } h^{-k} <_{\ooo} g <_{\ooo} h^k \}.
\]

\begin{proposition}\label{prop:detectedimplyJN}
    \begin{enumerate}
        \item[]
        \item If $(J, \{1, \ldots, r\}; [\alpha_*])$ is representation-detected and $[\alpha_*]$ is horizontal, then $(J;0;\gamma_1,\dots,\gamma_n;\tau_1,\dots,\tau_r)$ is JN-realisable.
        \item If $(\emptyset,\emptyset; [\alpha_*])$ is representation-detected and $[\alpha_*]$ is horizontal, then $(\emptyset,\{1,\dots,r\}; [\alpha_*])$ is representation-detected.
    \end{enumerate}
\end{proposition}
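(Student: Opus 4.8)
The plan is to reduce both statements to order-detection via Theorem \ref{prop:equbetweenorddetectandrepdetect}, extract from condition O3 an $\ooo$-convex normal subgroup $C$, and then exploit the crucial fact that, for horizontal slopes, the fiber class $h$ is cofinal. Write $G=\pi_1(M)$; let $\ooo$ order-detect the given tuple, let $C\leq G$ be the $\ooo$-convex normal subgroup from O3, let $\hat{\ooo}$ be the induced left-ordering on $G/C$, and let $p:G\to G/C$ be the projection. The key lemma is that $p(h)$ is $\hat{\ooo}$-cofinal. To prove it, let $D\leq G/C$ be the convex subgroup generated by $p(h)$; since $p(h)$ is central, $D$ really is a (convex) subgroup, and $D=G/C$ precisely when $p(h)$ is cofinal. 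Its preimage $\hat{C}=p^{-1}(D)$ is $\ooo$-convex and contains $h$, so for each $i$ the subgroup $\hat{C}\cap\pi_1(T_i)$ is a convex subgroup of $\pi_1(T_i)\cong\ZZ\oplus\ZZ$ containing the nontrivial primitive element $h$. Horizontality gives $h\notin\langle\alpha_i\rangle$, and the only convex subgroups of $\ZZ\oplus\ZZ$ for a fixed ordering are $\{id\}$, $\langle\alpha_i\rangle$ (when $[\alpha_i]$ is rational) and the whole group; hence $\pi_1(T_i)\subseteq\hat{C}$ for all $i$. Using the relations $y_i^{a_i}=h^{b_i}$ — the cyclic group $\langle y_i,h\rangle$ has a generator $z_i$ with $z_i^{\pm a_i}=h$, and convexity of $\hat C$ forces $z_i\in\hat C$, hence $y_i\in\hat C$ — together with $h\in\hat C$, we see that $\hat{C}$ contains the generating set $\{x_1,\dots,x_r,y_1,\dots,y_n,h\}$ of $G$, so $\hat{C}=G$ and $D=G/C$.

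Next, since $p(h)$ is $\hat{\ooo}$-cofinal, any dynamic realisation $\eta$ of $(G/C,\hat{\ooo})$ sends $p(h)$ to a fixed-point-free homeomorphism; replacing $\eta$ by a conjugate, we may assume $\eta(p(h))=\sh(1)$. Because $h$ is central, $\varphi:=\eta\circ p$ then takes values in $\hstilde$ with $\varphi(h)=\sh(1)$; in particular $\varphi(h)$ acts freely, so $(\varphi,x)\in\mathcal{R}^*(\pi_1(M))$ for every $x\in\RR$. Moreover $\varphi$ is conjugate to the representation $\eta'\circ p$ arising in the proof of Theorem \ref{prop:equbetweenorddetectandrepdetect} (where $\eta'$ is another dynamic realisation of $\hat{\ooo}$, so conjugate to $\eta$, and Lemma \ref{lemma:semi-conj} supplies the semiconjugacy witnessing R3 for $(\rho_\ooo,0)$); since that representation detects the slope $[\alpha_i]$ on $T_i$, since slopes are conjugation-invariant, and since for a representation into $\hstilde$ with $\varphi(h)=\sh(1)$ the line $[\mathcal{L}(\varphi|_{\pi_1(T_i)},x)]$ is independent of $x$ (an element $\gamma\in\pi_1(T_i)$ with $\tau(\varphi(\gamma))>0$ moves every point up, one with $\tau(\varphi(\gamma))<0$ moves every point down, and $\varphi(h)$ provides elements of each type), we conclude $[\mathcal{L}(\varphi|_{\pi_1(T_i)},x)]=[\alpha_i]$ for all $i$ and all $x$.

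For part (1) I would verify the three conditions of Definition \ref{def:jnreal} for $\rho:=\varphi$. Condition (1) is $\varphi(h)=\sh(1)$. Condition (2) holds because the line just identified equals $[x_i-\tau(\varphi(x_i))h]$, so $[\alpha_i]=[x_i-\tau_i h]$ forces $\tau(\varphi(x_i))=\tau_i$. For condition (3): if $i\in J$ and $[\alpha_i]$ is rational, then O3 gives $\pi_1(T_i)\cap C=\langle\alpha_i\rangle$, so writing $\alpha_i=q_i x_i-p_i h$ we get $\varphi(\alpha_i)=\varphi(x_i)^{q_i}\sh(-p_i)=id$, i.e. $\varphi(x_i)^{q_i}=\sh(p_i)$; any homeomorphism whose $q_i$-th power is the translation $\sh(p_i)$ is conjugate to $\sh(p_i/q_i)=\sh(\tau_i)$ (the degenerate case $\tau_i=0$ uses torsion-freeness of $\Homeo_+(\RR)$), which is exactly condition (3); and if $[\alpha_i]$ is irrational then $\tau(\varphi(x_i))=\tau_i\neq0$ makes $\varphi(x_i)$ fixed-point-free, hence conjugate to $\sh(\tau_i)$ (passing through a semiconjugacy to a minimal model if "conjugate" is to be read inside $\hstilde$). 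Thus $(J;0;\gamma_1,\dots,\gamma_n;\tau_1,\dots,\tau_r)$ is JN-realisable.

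For part (2) I would note that order-detection of $(\emptyset,\emptyset;[\alpha_*])$ only provides O1 and O3, but neither the cofinality lemma nor the construction of $\varphi$ above used O2, so we again obtain $\varphi:\pi_1(M)\to\hstilde$ with $\varphi(h)=\sh(1)$ and $[\mathcal{L}(\varphi|_{\pi_1(T_i)},x)]=[\alpha_i]$ for all $i$ and all $x$. Then $(\varphi,0)$ representation-detects $(\emptyset,\{1,\dots,r\};[\alpha_*])$: R1 is the case $x=0$; R2 is immediate since $(\varphi,\varphi(g)(0))\in\mathcal{R}^*(\pi_1(M))$ and the slopes do not depend on the basepoint, so they remain $[\alpha_i]$ for every $g$; and R3 holds with $\varphi$ itself and $\nu=\mathrm{id}$, since $J=\emptyset$ imposes no extra condition. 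The main obstacle is the cofinality lemma of the first paragraph — horizontality is precisely what stops $\pi_1(T_i)\cap\hat{C}$ from being the proper convex subgroup $\langle\alpha_i\rangle$ and thereby forces $\hat{C}$ to swallow each peripheral subgroup, hence all of $\pi_1(M)$ — together with the bookkeeping needed to identify condition (3) of JN-realisability with $\varphi(\alpha_i)=id$ for strongly detected rational slopes.
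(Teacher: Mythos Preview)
Your argument is essentially the paper's, unpacked: the paper outsources part (1) to \cite[Proposition 5.4]{BC17} and part (2) to the notion of boundary-cofinality and \cite[Proposition 7.2]{BC23}, whereas you reprove the content of those citations directly. The heart of both approaches is the cofinality lemma---horizontality forces the convex hull of $h$ to contain each $\pi_1(T_i)$ (since $h\notin\langle\alpha_i\rangle$ rules out the only proper nontrivial convex subgroup), hence each $x_i$, and the relations $y_i^{a_i}=h^{b_i}$ together with the root-closure of convex subgroups give the $y_i$'s---and you have this exactly right. For part (2) the paper argues cofinality of $h$ in $G$ itself (equivalently, your argument with $C=\{id\}$, which is permitted since $J=\emptyset$) and then quotes \cite{BC23}; your route via the explicit $\varphi=\eta\circ p$ and direct verification of R1--R3 is a perfectly good alternative, and you are correct that nothing in that construction uses O2.

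There is one genuine gap in part (1): condition (3) of JN-realisability for $j\in J$ with $\tau_j$ irrational. The conjugacy in Definition \ref{def:jnreal} is in $\hstilde$, and a fixed-point-free element of $\hstilde$ with irrational translation number need not be conjugate in $\hstilde$ to $\sh(\tau_j)$ (Denjoy's examples). Your parenthetical about passing to a minimal model is the right instinct, but carrying it out means semiconjugating the \emph{entire} representation $\varphi$, not just $\varphi(x_j)$, and then re-verifying conditions (1)--(3) for all indices simultaneously; this is nontrivial and is where \cite[Proposition 5.4]{BC17} does real work. (One can also sidestep the issue by observing from Theorem \ref{thm:rawallpositive}(4) that for irrational $\tau_j$ the strict and non-strict inequalities coincide, so $j\in J$ versus $j\notin J$ makes no difference to JN-realisability; but that is an \emph{a posteriori} argument, not a verification of the definition.) Your treatment of the rational case via $\varphi(x_j)^{q_j}=\sh(p_j)$ is correct: the induced circle map has finite order, hence is conjugate to a rotation.
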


\begin{proof} 
    To prove (1), apply Theorem \ref{prop:equbetweenorddetectandrepdetect} and let $\ooo$ be a left-ordering of $\pi_1(M)$ detecting $(J, \{1, \ldots, r\}; [\alpha_*])$. The result now follows from \cite[Proposition 5.4]{BC17}.

    For (2), apply Theorem \ref{prop:equbetweenorddetectandrepdetect} and suppose that $\ooo$ order-detects $(\emptyset, \emptyset; [\alpha_*])$. Then $h \in \pi_1(M)$ is $\ooo$-cofinal due to the relators $y_i^{a_i}=h^{b_i}$ and the assumption $[\alpha_j] \neq [h]$ which implies that $h$ is $\ooo|_{\pi_1(T_j)}$-cofinal in $\pi_1(T_j)$ for each $i=1,\dots,r$. Since $h$ is $\ooo$-cofinal, we conclude $\ooo$ is boundary-cofinal \cite[Definition 5.7]{BC23}, from which it follows that $\ooo$ order-detects $(\emptyset, \{1, \dots, r \}; [\alpha_*])$ by \cite[Proposition 7.2]{BC23}. Therefore $(\emptyset, \{1, \dots, r \}; [\alpha_*])$ is representation-detected by Theorem \ref{prop:equbetweenorddetectandrepdetect}.
\end{proof}




It is possible to calculate when $(J;0;\gamma_1,\dots,\gamma_n;\tau_1,\dots,\tau_r)$ is JN-realisable, and the technical tools required to do so are in Appendix \ref{sec:JN appendix}. We will need them shortly.



\section{Cable spaces, bases and gluing two pieces}\label{sec:cable}

\subsection{Notations and conventions}\label{subsec:cpqnotations}

Consider a fibered solid torus $V\cong D^2\times S^1$, which can be obtained by taking the cylinder $D^2\times I$ and identifying each $(x,0)$ with $(xe^{2\pi i\frac{p}{q}},1)$, where $p\geq1$ and $q> 1$ are coprime numbers. Let $h$ denote a regular fiber in the interior of the fibered solid torus, and let $n(h)$ be an open regular neighbourhood around $h$. The complement of $n(h)$ in $V$ is denoted by $\cpq$, namely, \[\cpq=V - n(h).\]

To calculate the fundamental group $\pi_1(\cpq)$, we employ the following strategy. Let $\frac{1}{2}D^2$ be a concentric disk within $D^2$ with half the radius of $D^2$. We set $V_0=\frac{1}{2}D^2 \times S^1$ and choose a regular fiber $h$ that lies on $\partial V_0$. In this context, $\cpq$ can be viewed as the gluing of $V_0 - n(h)$ and $\overline{(V-V_0)}-n(h)$ along an annulus, whose central curve is a regular fiber.

Note that $\overline{(V-V_0)}-n(h)$, being homotopy equivalent to a thickened torus, has the fundamental group $\ZZ\oplus \ZZ$. This group is generated by the standard meridian and longitude basis ${\mu,\lambda}$. Here, $\mu$ is the homotopy class of the curve $\partial D^2\times \{*\}$ and $\lambda$ is the homotopy class of $\{*\}\times S^1$. On the other hand, the space $V_0$ is homotopy equivalent to a solid torus with its fundamental group being infinite cyclic, generated by $t$, the homotopy class of the core curve of $V_0$. Applying the Seifert-van Kampen theorem, we deduce that the fundamental group of $\cpq$ is given by \[\pi_1(\cpq)=(\ZZ\oplus \ZZ)*_{\mu^p\lambda^q=t^q}\ZZ.\]
Let $K$ be a given knot in $S^3$. Consider the cable knot $\cpq(K)$. Its knot complement is constructed by gluing $\cpq$ to $S^3-n(K)$ along their respective boundaries, $\partial V$ and $\partial (S^3-n(K))$. This gluing operation is performed by identifying the generators $\mu$ and $\lambda$ of the fundamental group of $\cpq$ with the canonical meridian and longitude of $\partial (S^3-n(K))$, respectively.

\begin{figure}[ht]
    \begin{center}
        \includegraphics[width=8cm]{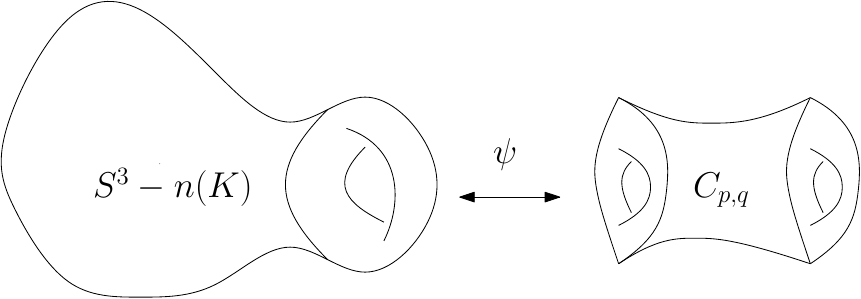}
    \end{center}
    \caption{Cable knot complement obtained by gluing $\cpq$ to ${S^3-n(K)}$}
    \label{fig:cableknot}
\end{figure}

In other words, the knot complement of the cable knot $\cpq(K)$ in $S^3$ can be expressed as
\[{S^3-n(\cpq(K))} = (S^3-n(K))\cup_\psi \cpq,\] where $\psi$ denotes the identification map. Refer to Figure \ref{fig:cableknot} for an illustrative depiction. We can compute the meridian and longitude of the cable knot $\cpq(K)$, denoted by $\mu_C$ and $\lambda_C$ respectively. They are given by the formulas \[\mu_C=t^s\mu^r\lambda^{-s}\mbox{ and } \lambda_C=\mu_C^{-pq}t^q,\] where $ps+qr=1$ and $-q<s<0<r\leq p$.

\begin{remark}
    The numbers $s$ and $r$ are known as B\'ezout's coefficients. The restriction $-q<s<0<r\leq p$ is not mandatory to obtain a correct expressions for $\mu_C$ and $\lambda_C$ in terms of $\mu,\lambda$ and $t$, but it is for computational convenience in later sections. Note that $r=p$ occurs if and only if $p=1$.
\end{remark}

Note that the subspace $\cpq$ is a Seifert-fibered space with incompressible tori as its boundary components. Therefore, the results of the previous section apply, and there exists an alternative construction for the cable space $\cpq$ that provides a different presentation of $\pi_1(\cpq)$ that agrees with the previous section. This alternative approach is more useful when we calculate sets of detected slopes.

We start with the same fibered solid torus as before and define $U=V-(\frac{1}{2}D^2\times S^1)$. Let $\mu$ and $\lambda$ denote the meridional and longitudinal classes in $\pi_1(U)$. Suppose $A$ is an annulus such that $U$ is homeomorphic to $A\times S^1$. Here, the $S^1$ factor corresponds to a regular fiber, $\mu^p\lambda^q$. The boundary $\partial A$ consists of curves $x_1$ and $y'$, representing $\mu^r\lambda^{-s}$ and $\mu^{-r}\lambda^s$ respectively, with $-q<s<0<r\leq p$ and $ps+qr=1$.

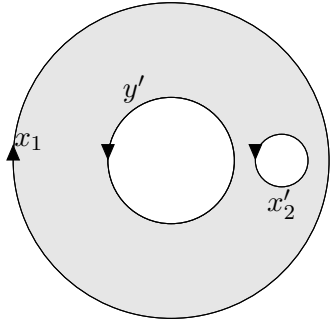
\begin{figure}[ht]
    \begin{center}
        \begin{tikzpicture}[line cap=round,line join=round,>=triangle 45,x=1.0cm,y=1.0cm, scale=0.7]
            \clip(-3.3,-3.3) rectangle (3.3,3.3);
            \draw [rotate around={0.:(0.,0.)},color=black] (0.,0.) ellipse (1.2cm and 1.2cm);
            \draw [rotate around={0.:(0.,0.)},color=black,fill=black,fill opacity=0.1] (0.,0.) ellipse (3.cm and 3.cm);
            \draw [rotate around={0.:(0.,0.)},color=black,fill=white] (0.,0.) ellipse (1.2cm and 1.2cm);
            \draw [rotate around={0.:(0.,0.)},color=black,fill=white] (2.1,0.) ellipse (0.5cm and 0.5cm);
            \draw [rotate around={0.:(0.,0.)},decoration={markings, mark=at position 0.5 with {\arrow{>}}}, postaction={decorate}] (0.,0.) ellipse (1.2cm and 1.2cm);
            \draw [rotate around={0.:(0.,0.)},decoration={markings, mark=at position 0.5 with {\arrow{>}}}, postaction={decorate}] (2.1,0.) ellipse (0.5cm and 0.5cm);
            \draw [rotate around={0.:(0.,0.)},decoration={markings, mark=at position 0.5 with {\arrow{<}}}, postaction={decorate}] (0.,0.) ellipse (3.cm and 3.cm);
            \node[above] at (-2.7,0) {$x_1$};
            \node[above] at (-0.7,0.9) {$y'$};
            \node[above] at (2.1,-1.3) {$x_2'$};
        \end{tikzpicture}
    \end{center}
    \caption{The planar surface $P$}
    \label{fig:surfaceP}
\end{figure}

By removing an open regular neighbourhood of a regular fiber from $U$, we obtain a space homeomorphic to $P\times S^1$, where $P$ is a planar surface with three boundary components. See Figure \ref{fig:surfaceP} for an illustration. The fundamental group of $U-n(h)$ is then given by \[\pi_1(U-n(h))=\langle x_1,x_2',y',h\ |\ h \mbox{ central, } y'x_1x_2' = 1 \rangle,\]
where $h$ is the class of a regular fiber. The cable space $\cpq$ can be obtained by attaching the solid torus $\frac{1}{2}D^2\times S^1$ back to $U-n(h)$ through mapping $\partial(\frac{1}{2}D^2)\times \{*\}$ to $(y')^{-q}h^s$. According to the Seifert-van Kampen theorem, the fundamental group of $\cpq$ is \[\pi_1(\cpq)=\langle x_1,x_2',y',h\ |\ h\mbox{ central, } (y')^q=h^s, \, y'x_1x_2' = 1 \rangle.\]

This presentation is different from the one discussed in the last section, since $\frac{s}{q} < 0$. To align it with that section, we define $y=y'h$ and $x_2=x_2'h^{-1}$ to obtain \[\pi_1(\cpq)=\langle x_1,x_2,y,h\ |\ h\mbox{ central, } y^q=h^{q+s}, \, yx_1x_2 = 1 \rangle.\]
A concrete isomorphism linking this presentation of $\pi_1(\cpq)$ and the previously computed presentation $\pi_1(\cpq)=(\mathbb{Z}\oplus \mathbb{Z})*_{\mu^p\lambda^q=t^q}\mathbb{Z}$ is specified by $\phi(x_1)=\mu^r\lambda^{-s},\, \phi(h)=\mu^p\lambda^q, \phi(y)=t^{q+s}, \phi(x_2)=\mu^{-r}\lambda^st^{-(s+q)}$.

Consider $\cpq$ as a subspace of the knot complement of the cable knot $\cpq(K)$. Each peripheral subgroup of $\pi_1(\cpq)$ now naturally has two bases, arising from our two presentations of the fundamental group above. The change of basis, computed via $\phi$, allows us to translate between slopes expressed relative to one basis and slopes relative to the other.

For the ``inner'' boundary torus $T_1$ of $\cpq$, we have two bases arising from the identification of $\partial (S^3-n(K))$ with $T_1$, namely $\mathcal{B}_K=\{\mu,\lambda\}$ and $\mathcal{B}_1=\{h,-x_1\}$. The change of basis matrix from $\{h, -x_1\}$ to $\{\mu,\lambda\}$, computed from $\phi$, is \[\begin{bmatrix} p & -r \\ q & s \end{bmatrix}.\]

The ``outer'' boundary torus $T_2$ of $\cpq$ has bases $\mathcal{B}_2=\{h,-x_2\}$ and $\mathcal{B}_C=\{\mu_C,\lambda_C\}$, with the corresponding change of basis matrix \[\begin{bmatrix} pq & pq+1 \\ 1 & 1 \end{bmatrix}.\]

To discuss relative JN-realisability and order-detection and apply results from previous sections, fix the convention that $[\alpha_*]=([\alpha_1],[\alpha_2]) \in \mathcal{S}(\cpq)$ means that $[\alpha_1]$ is slope on the ``inner'' boundary torus, and $[\alpha_2]$ on the ``outer'' boundary torus.

\subsection{Transition between bases and non-horizontal slopes}\label{subsec:slopetransition}

Use the notation developed above. In particular, there are integers $p,q,r,s$ with $p\geq1$ and $q> 1$ being coprime, $-q<s<0<r\leq p$ and $ps+qr=1$. Each of the bases $\mathcal{B}_K,\mathcal{B}_C,\mathcal{B}_1,\mathcal{B}_2$ gives rise to a homeomorphism $\mathcal{S}(T_i)\to \RR\cup\{\infty\}\cong S^1$ in the usual way. Specifically, for a basis $\mathcal{B} = \{b_1, b_2\}$ we map $[nb_1 + mb_2] \mapsto \frac{n}{m}$. Therefore, the change of basis matrix from $\mathcal{B}_K$ to $\mathcal{B}_1$ yields a homeomorphism $f:\RR\cup\{\infty\}\to \RR\cup\{\infty\}$ given by $f(x)=\frac{sx+r}{-qx+p}$, where $f(p/q)=\infty, f(-r/s)=0$ and $f(\infty)=-s/q\in (0,1)$. It is easy to verify that $f$ is strictly increasing on $(-\infty,p/q)$ and on $(p/q,\infty)$ and so that \[f((-\infty,p/q))=(-s/q,\infty)\ \mbox{ and }\ f((p/q,\infty))=(-\infty,-s/q).\]
Thus we have

\begin{lemma}\label{lemma:bktob1}
    For $b\in \RR$, we have \[f((-\infty,b))=\begin{cases}
            (-s/q,f(b))                      & \ \mbox{ if }b \leq p/q, \\
            [-\infty,f(b))\cup (-s/q,\infty] & \ \mbox{ if }b > p/q.
        \end{cases}\]
\end{lemma}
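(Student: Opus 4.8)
The plan is to derive everything from the three facts recorded just before the lemma: that $f$ is a homeomorphism of $\RR\cup\{\infty\}\cong S^1$, that it is strictly increasing on each of the two intervals $(-\infty,p/q)$ and $(p/q,\infty)$, and that $f((-\infty,p/q))=(-s/q,\infty)$, $f((p/q,\infty))=(-\infty,-s/q)$, together with the values $f(p/q)=\infty$ and $f(\infty)=-s/q$. No further manipulation of the Möbius transformation itself is required; the argument is a case split driven purely by monotonicity and continuity.

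First I would treat the case $b\le p/q$. Here $(-\infty,b)\subseteq(-\infty,p/q)$, and on the latter interval $f$ is a continuous strictly increasing bijection onto the ray $(-s/q,\infty)$. Since $f$ is increasing there with values bounded below by $-s/q$, we have $\lim_{x\to-\infty}f(x)=-s/q$, while $\lim_{x\to b^-}f(x)=f(b)$ by continuity (interpreting $f(b)=\infty$ when $b=p/q$). Monotonicity then yields $f((-\infty,b))=(-s/q,f(b))$, which is the first case of the statement; specializing to $b=p/q$ recovers the already-known identity $f((-\infty,p/q))=(-s/q,\infty)$.

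For the case $b>p/q$ I would decompose $(-\infty,b)=(-\infty,p/q)\sqcup\{p/q\}\sqcup(p/q,b)$ and apply $f$ termwise. The first piece maps onto $(-s/q,\infty)$ as above; the point $p/q$ maps to $\infty$; and on the third piece $f$ restricts to a continuous strictly increasing bijection of $(p/q,\infty)$ onto $(-\infty,-s/q)$, so since $b\in(p/q,\infty)$ we get $\lim_{x\to p/q^+}f(x)=-\infty$ and $\lim_{x\to b^-}f(x)=f(b)\in(-\infty,-s/q)$, hence $f((p/q,b))=(-\infty,f(b))$. Taking the union, $f((-\infty,b))=(-\infty,f(b))\cup\{\infty\}\cup(-s/q,\infty)$, which in the circle notation of the statement is precisely $[-\infty,f(b))\cup(-s/q,\infty]$ (both listed arcs contain the point $\infty$), as claimed.

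The only point requiring genuine care — and the place where I expect a reader could slip — is the bookkeeping around the point $\infty\in S^1$: one must keep in mind that $[-\infty,f(b))$ and $(-s/q,\infty]$ each denote arcs of the circle containing $\infty$, so their union is the single arc running from $-s/q$ through $\infty$ around to $f(b)$, which makes sense exactly because $f(b)<-s/q$ when $b>p/q$. Beyond this topological bookkeeping there is no substantive obstacle; everything follows from the monotonicity of $f$ on each of its two intervals of continuity.
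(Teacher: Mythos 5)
Your proof is correct and follows essentially the same route the paper intends: the paper states the lemma as an immediate consequence of the monotonicity of $f$ on $(-\infty,p/q)$ and $(p/q,\infty)$ together with the images $f((-\infty,p/q))=(-s/q,\infty)$ and $f((p/q,\infty))=(-\infty,-s/q)$, and your case split plus the bookkeeping at the point $\infty\in S^1$ is exactly the verification left implicit there.
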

Similarly, the change of basis matrix from $\mathcal{B}_2$ to $\mathcal{B}_C$ yields a homeomorphism $g:\RR\cup\{\infty\}\to \RR\cup\{\infty\}$ given by $g(x)=pq+\frac{1}{x+1}$, where $g(-1)=\infty$ and $g(\infty)=pq$. Analogously, we have
\begin{lemma}\label{lemma:b2tobc}
    For $b\in \RR$ with $p-qb>0$, we have \[g((-1-\frac{1}{p-qb},-1))=(-\infty, pq-p+qb);\] if $p-qb<0$, then \[g((-1,-1-\frac{1}{p-qb}))=(pq-p+qb ,\infty).\]
\end{lemma}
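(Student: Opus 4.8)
The plan is to treat $g(x) = pq + \frac{1}{x+1}$ as a strictly monotone homeomorphism on each of the two intervals $(-\infty,-1)$ and $(-1,\infty)$ on which it is finite, exactly as was done for $f$ in the proof of Lemma \ref{lemma:bktob1}. First I would record the elementary fact that $g'(x) = -1/(x+1)^2 < 0$, so $g$ is strictly decreasing on each of these intervals. Together with the limiting values $g(x)\to pq$ as $x\to-\infty$ (from below) and $g(x)\to-\infty$ as $x\to-1^-$, and $g(x)\to+\infty$ as $x\to-1^+$ and $g(x)\to pq$ as $x\to+\infty$ (from above), this yields the preliminary identities $g((-\infty,-1)) = (-\infty,pq)$ and $g((-1,\infty)) = (pq,\infty)$.

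Next I would locate the point $c := -1-\frac{1}{p-qb}$ relative to $-1$. When $p-qb>0$ we have $\frac{1}{p-qb}>0$, hence $c<-1$, and the interval $(c,-1)$ is a subinterval of $(-\infty,-1)$; since $g$ is strictly decreasing there and $-1$ is the right-hand (hence image-minimising) endpoint, $g$ carries $(c,-1)$ onto $(-\infty,g(c))$. When $p-qb<0$ we instead get $c>-1$, so $(-1,c)\subset(-1,\infty)$, and now $-1$ is the left-hand (image-maximising) endpoint, so $g$ carries $(-1,c)$ onto $(g(c),\infty)$.

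Finally I would evaluate $g(c)$: from $c+1 = -\frac{1}{p-qb}$ we get $\frac{1}{c+1} = qb-p$, whence $g(c) = pq + qb - p = pq - p + qb$. Substituting this into the two images found above gives exactly the two claimed equalities. I do not expect any genuine obstacle here; the only point requiring care is the bookkeeping of the sign of $p-qb$, which simultaneously determines on which side of $-1$ the point $c$ lies, which branch of $g$ is relevant, and --- because $g$ is orientation-reversing --- which endpoint of the resulting interval is the finite one.
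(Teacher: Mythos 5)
Your argument is correct and is essentially the intended one: the paper gives no explicit proof of this lemma, simply asserting it "analogously" to Lemma \ref{lemma:bktob1} via the monotonicity and limiting behaviour of the change-of-basis homeomorphism on each branch, which is exactly the analysis you carry out (with the correct extra care that $g$, unlike $f$, is orientation-reversing). Your endpoint computation $g\bigl(-1-\tfrac{1}{p-qb}\bigr)=pq-p+qb$ and the sign bookkeeping for $p-qb$ check out.
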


In order to implement the results concerning JN-realisability, recall that we require $[\alpha_*]=([\alpha_1],[\alpha_2])$ to be horizontal, that is, $[\alpha_i]\not=[h]$ for $i=1,2$, meaning $[\alpha_i] \neq [p\mu+q\lambda]$ relative to the basis $\mathcal{B}_K$.
We will need the following to deal with the non-horizontal cases separately.

\begin{lemma}\label{lemma:nonhorizontalslope}
    For the cable space $\cpq$, $(\{1,2\},\{1,2\};[h],[h])$ is order-detected.
\end{lemma}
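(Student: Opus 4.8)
The plan is to produce a single left-ordering $\ooo$ of $\pi_1(\cpq)$ realising O1--O3 with $J=K=\{1,2\}$ and $[\alpha_1]=[\alpha_2]=[h]$. I would work with the presentation $\pi_1(\cpq)=\langle x_1,x_2,y,h\mid h\text{ central},\ y^q=h^{q+s},\ yx_1x_2=1\rangle$, in which the peripheral subgroups are $\pi_1(T_1)=\langle h,x_1\rangle$ and $\pi_1(T_2)=\langle h,x_2\rangle$ (the bases $\mathcal{B}_1$, $\mathcal{B}_2$), so that the slope $[h]$ on $T_i$ is represented by the central element $h$, which is primitive in each $\pi_1(T_i)$ since $\{h,x_i\}$ is a $\ZZ$-basis. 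The naive candidate for the convex normal subgroup of O3 would be $\langle h\rangle$ itself, but this fails: $\pi_1(\cpq)/\langle h\rangle\cong\ZZ\ast(\ZZ/q\ZZ)$ has torsion, so it is not left-orderable and $\langle h\rangle$ is convex in no left-ordering. The remedy is to enlarge $\langle h\rangle$ to a normal subgroup that still meets each peripheral torus only in $\langle h\rangle$. Concretely, I would define $\epsilon:\pi_1(\cpq)\to\ZZ$ by $\epsilon(x_1)=1$, $\epsilon(x_2)=-1$, $\epsilon(y)=\epsilon(h)=0$; one checks at once that $\epsilon$ respects the three defining relations and is onto, and I set $C=\ker\epsilon$, a normal subgroup containing $h$ (and $y$) with $\pi_1(\cpq)/C\cong\ZZ$.

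The step carrying the real content is the verification that $C\cap\pi_1(T_i)=\langle h\rangle$ for $i=1,2$. This is immediate from the formula for $\epsilon$: restricted to $\pi_1(T_i)\cong\ZZ\oplus\ZZ$ in the basis $\{h,x_i\}$, the map $\epsilon$ sends $h\mapsto0$ and $x_i\mapsto\pm1$, hence is a surjection onto $\ZZ$ with kernel exactly $\langle h\rangle$. With this in hand I would build $\ooo$ as follows: since $\pi_1(\cpq)=(\ZZ\oplus\ZZ)\ast_{\mu^p\lambda^q=t^q}\ZZ$ is an amalgam of left-orderable groups over a cyclic subgroup, it is left-orderable (Theorem~\ref{thm:bludovglass}), hence so is its subgroup $C$; order $\pi_1(\cpq)$ lexicographically from the short exact sequence $\{id\}\to C\to\pi_1(\cpq)\xrightarrow{\epsilon}\ZZ\to\{id\}$, using any left-ordering of $C$ and the standard ordering of $\ZZ$. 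The resulting $\ooo$ has $C$ as an $\ooo$-convex normal subgroup.

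It then remains to check O1--O3, all of which are routine. Property O3 is essentially built in: $C$ is $\ooo$-convex and normal, $C\cap\pi_1(T_i)=\langle h\rangle=\langle\alpha_i\rangle$, each $[\alpha_i]=[h]$ is rational, and $i\in J=\{1,2\}$, which is exactly what O3 demands. For O1, the subgroup $\langle h\rangle=C\cap\pi_1(T_i)$ is a rank-one $\ooo|_{\pi_1(T_i)}$-convex subgroup of $\pi_1(T_i)\cong\ZZ\oplus\ZZ$; for an ordering of $\ZZ\oplus\ZZ$ the line $\mathcal{L}$ must pass through any convex rank-one subgroup, because $\pi_1(T_i)/\langle h\rangle\cong\ZZ$ inherits the quotient order and so the sign of any lattice point off the line $\langle h\rangle$ depends only on which side of that line it lies. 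Hence $s_i(\ooo)=[h]$. For O2, fix $g\in\pi_1(\cpq)$; since $C$ is normal, $gCg^{-1}=C$, which is $(g\cdot\ooo)$-convex by the standard conjugation argument (cf.\ the proof of Lemma~\ref{lemma:equivO3}), and $C\cap\pi_1(T_i)=\langle h\rangle$ is unchanged, so the same reasoning gives $s_i(g\cdot\ooo)=[h]$ for $i=1,2$; thus O2 holds with $[\beta_i]=[h]$ for all $i\in K$. This shows $\ooo$ order-detects $(\{1,2\},\{1,2\};[h],[h])$.

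The obstacle here is more conceptual than technical: one must recognise that $\langle h\rangle$ cannot itself be the convex subgroup of O3 (its quotient has torsion) and pass to the larger normal subgroup $C=\ker\epsilon$, whose crucial feature is that it still intersects each boundary torus in only $\langle h\rangle$; once $\epsilon$ is written down, every subsequent verification is immediate.
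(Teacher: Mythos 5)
Your proposal is correct and takes essentially the same route as the paper: your $C=\ker\epsilon$ is exactly the paper's subgroup $\langle\langle h,y\rangle\rangle$ (the kernel of the map obtained by killing the fiber class $h$ and the resulting torsion), and both arguments order $\pi_1(\cpq)$ lexicographically from the short exact sequence $1\to C\to\pi_1(\cpq)\to\ZZ\to 1$. The only difference is that you spell out the verification of O1--O3 (convexity of $C$, $C\cap\pi_1(T_i)=\langle h\rangle$, invariance under conjugation) that the paper leaves implicit.
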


\begin{proof}
    Consider the short exact sequence \[1\to K\to \pi_1(\cpq)\to \ZZ\to 1,\] where the map $\pi_1(\cpq) \rightarrow \mathbb{Z}$ is the result of killing the fiber class $h\in \pi_1(\cpq)$ and any resulting torsion. In other words, $K=\vbracket{\vbracket{h,y}}$. Since $K \leq \pi_1(\cpq)$, $K$ is left-orderable. Now any left-ordering $\ooo$ obtained lexicographically from the short exact sequence above order-detects $(\{1,2\},\{1,2\};[h],[h])$.
\end{proof}

\begin{lemma}\label{lemma:onenonhorizontalslopeimplies2}
    Suppose that for the cable space $\cpq$, $(\emptyset, \emptyset; [\alpha_1], [\alpha_2])$ is order-detected. Then either $[\alpha_1] = [\alpha_2] = [h]$ or $[h] \notin \{ [\alpha_1], [\alpha_2]\}$.
\end{lemma}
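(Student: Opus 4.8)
The plan is to argue by contradiction: suppose exactly one of the two slopes equals $[h]$, say (after relabelling) $[\alpha_1] = [h]$ while $[\alpha_2] \neq [h]$, and derive a contradiction from the structure of $\pi_1(\cpq)$ together with the fact that $h$ is central. The key point is the relator $y^q = h^{q+s}$ in the presentation $\pi_1(\cpq)=\langle x_1,x_2,y,h \mid h\text{ central},\ y^q = h^{q+s},\ yx_1x_2 = 1\rangle$: because $q+s \neq 0$ (recall $-q < s < 0$), this relator forces a strong cofinality relationship between $h$ and $y$.

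First I would fix a left-ordering $\ooo$ witnessing order-detection of $(\emptyset,\emptyset;[\alpha_1],[\alpha_2])$ and analyse the restriction to the inner peripheral subgroup $\pi_1(T_1)$. Since $[\alpha_1] = [h]$, the line $\mathcal{L}(r_1(\ooo))$ is the line spanned by $h$ in $H_1(T_1;\RR)$, which means that $h$ is \emph{not} $\ooo|_{\pi_1(T_1)}$-cofinal: the powers of $h$ fail to be cofinal in the rank-$2$ group $\pi_1(T_1)$ (the dual class $x_1$ lies outside every interval $(h^{-k},h^k)$, or $h$ is itself bounded — more precisely, the convex closure of $\langle h\rangle$ in $\pi_1(T_1)$ is a proper subgroup). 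Next I would examine $\pi_1(T_2) = \langle h, x_2\rangle$: since $[\alpha_2]\neq [h]$, the slope condition says that $h$ \emph{is} $\ooo|_{\pi_1(T_2)}$-cofinal in $\pi_1(T_2)$ — the line $\mathcal{L}(r_2(\ooo))$ is transverse to $[h]$, so for every $\gamma\in\pi_1(T_2)$ some power $h^k$ dominates it. The contradiction should come from propagating this: $h$ being cofinal in $\pi_1(T_2)$, together with centrality of $h$ and the relators (in particular $y^q = h^{q+s}$ linking $y$ to a nonzero power of $h$, and $yx_1x_2 = 1$ together with $x_2 \in \pi_1(T_2)$), should force $h$ to be cofinal in all of $\pi_1(\cpq)$, hence in particular cofinal in $\pi_1(T_1)$ — contradicting what we found there.

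Concretely, the mechanism is: if $h$ is $\ooo$-cofinal in $\pi_1(T_2)$ then it is $\ooo$-cofinal in $\langle h, x_2 \rangle$; since $h$ is central, the convex subgroup generated by $h$ in $\pi_1(\cpq)$ contains $x_2$, and from $x_2 = (yx_1)^{-1} = x_1^{-1}y^{-1}$ combined with $y^q = h^{q+s}$ we can bound $y$ (hence $x_1$) in terms of powers of $h$, using that $q+s\neq 0$ so that $y$ is cofinal iff $h$ is. This yields that $h$ is $\ooo$-cofinal in the subgroup generated by $h, x_1, x_2, y$, which is all of $\pi_1(\cpq)$; restricting back to $\pi_1(T_1) = \langle h, x_1\rangle$ shows $h$ is cofinal there, forcing $[\alpha_1]\neq[h]$, the desired contradiction. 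The symmetric case $[\alpha_2]=[h]$, $[\alpha_1]\neq[h]$ is handled the same way. I would also phrase this cleanly using the convex subgroup $C$ from condition O3 and the notion of $\ooo$-cofinality and boundary-cofinality as in \cite{BC23} rather than manipulating inequalities by hand.

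The main obstacle I anticipate is making the ``$h$ cofinal in $\pi_1(T_2)$ implies $h$ cofinal in $\pi_1(\cpq)$'' step fully rigorous: one must be careful that cofinality of a central element propagates across the amalgamated free product structure $\pi_1(\cpq) = (\ZZ\oplus\ZZ) *_{\mu^p\lambda^q = t^q} \ZZ$, and in particular that the element $t$ (equivalently $y$) does not escape to infinity faster than every power of $h$. This is exactly where the relator $y^q = h^{q+s}$ with $q+s\neq 0$ does the work — without it (e.g. if the gluing were along $\mu^p\lambda^q = t^0$) the statement would fail — so the heart of the argument is isolating and correctly invoking that relator, likely via the presentation $\pi_1(\cpq)=\langle x_1,x_2,y,h \mid h\text{ central},\ y^q = h^{q+s},\ yx_1x_2 = 1\rangle$ and an argument that a central, cofinal-on-one-boundary element must be cofinal globally, analogous to the cofinality argument already used in the proof of Proposition \ref{prop:detectedimplyJN}(2).
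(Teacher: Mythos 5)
Your proposal is correct and follows essentially the same route as the paper: the paper also observes that $[\alpha_2]\neq[h]$ forces $h$ to be $\ooo|_{\pi_1(T_2)}$-cofinal, forms the subgroup $H=\{g : \exists k,\ h^{-k}<_{\ooo}g<_{\ooo}h^k\}$ (a subgroup since $h$ is central), and uses $x_2\in H$, then $y\in H$ via $y^q=h^{q+s}$, then $x_1\in H$ via $yx_1x_2=1$, to conclude $h$ is $\ooo$-cofinal in all of $\pi_1(\cpq)$ and hence $[\alpha_1]\neq[h]$. The amalgamated-product subtlety you worried about does not arise, precisely because this generator-by-generator containment argument already shows $H=\pi_1(\cpq)$.
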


\begin{proof}
    Let $\ooo$ be the ordering of $\pi_1(\cpq)$ order-detecting $(\emptyset, \emptyset; [\alpha_1], [\alpha_2])$. By the previous lemma, we need only to prove that if $[h] \in \{ [\alpha_1], [\alpha_2]\}$ then $[\alpha_1] = [\alpha_2] = [h]$. To this end, we prove that if $[h] = [\alpha_1]$ then $[\alpha_2] = [h]$, with the case of $[h] = [\alpha_2]$ implying that $[h] = [\alpha_1]$ is similar.

    We do this by showing the contrapositive. Suppose that $[h] \neq [\alpha_2]$ and recall the generators $x_1, x_2, y, h$ of $\pi_1(\cpq)$ from Section \ref{subsec:cpqnotations}. Then observe that $h$ is cofinal with respect to $\ooo|_{\pi_1(T_2)}$, and that
    \[ H = \{ g \in \pi_1(\cpq) : \exists k \in \mathbb{Z} \mbox{ such that } h^{-k} <_{\ooo} g <_{\ooo} h^k \}
    \]
    is therefore a subgroup containing $x_2$. However, $H$ also contains $y$, and hence $x_1$, and thus $H = \pi_1(\cpq)$. It follows that $h$ is $\ooo$-cofinal, so that $[\alpha_1]$ cannot be equal to $[h]$.
\end{proof}

Note that the two lemmas above ensure that when translating slopes from one basis to the other, we are able to take care of the slopes which are expressed as $\infty$ relative to the given bases.

\subsection{Special cases of the gluing theorems for use when cabling}

In Section \ref{sec:finally cables}, we are interested in the case of gluing $\cpq$ to $\partial M$ where $M$ is a knot manifold. We highlight this special case of our gluing theorems here.

\begin{corollary}\label{corollary:gluingdetected}
    Let $M$ be a knot manifold with boundary torus $T_{1,1}$ and $\cpq$ a cable space with two torus boundary components $T_{2,1}$ and $T_{2,2}$. Fix a choice of peripheral subgroup $\pi_1(T_{i,j})$ for each boundary torus. Let $f:T_{1,1}\to T_{2,1}$ be a homeomorphism that identifies the slope $[\alpha_1]\in \mathcal{S}(\pi_1(T_{1,1}))$ with the slope $[\alpha_2]\in \mathcal{S}(\pi_1(T_{2,1}))$.
    \begin{enumerate}
        \item Suppose that $\ooo\in \LO(\pi_1(M))$ detects $(\emptyset,\emptyset;[\alpha_1])$ and that $\ooo'\in\LO(\pi_1(\cpq))$ detects $(\emptyset,\{2\};[\alpha_2],[\alpha_3])$. Then $M\cup_{f}\cpq$ is left-orderable and $(\emptyset,\emptyset;[\alpha_3])$ is order-detected by some left-ordering in $\LO(\pi_1(M\cup_f \cpq))$.
        \item
              Suppose that $\ooo\in \LO(\pi_1(M))$ detects $(\emptyset,\{1\};[\alpha_1])$ and that $\ooo'\in\LO(\pi_1(\cpq))$ detects $(\emptyset,\{2,3\};[\alpha_2],[\alpha_3])$. Then $M\cup_{f}\cpq$ is left-orderable and $(\emptyset,\{1\};[\alpha_3])$ is order-detected by some left-ordering in $\LO(\pi_1(M\cup_f \cpq))$.
        \item Suppose that $\ooo\in \LO(\pi_1(M))$ detects $(\{1\},\{1\};[\alpha_1])$ and that $\ooo'\in\LO(\pi_1(\cpq))$ detects $(\{2,3\},\{2,3\};[\alpha_2],[\alpha_3])$. Then $M\cup_{f}\cpq$ is left-orderable and $(\{1\},\{1\};[\alpha_3])$ is order-detected by some left-ordering in $\LO(\pi_1(M\cup_f \cpq))$.
    \end{enumerate}
\end{corollary}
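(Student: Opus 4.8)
All three parts will be obtained by specialising the gluing theorems of Section~\ref{sec:gluing} to $M_1=M$, $M_2=\cpq$, glued along $f\colon T_{1,1}\to T_{2,1}$; the re-indexing is trivial, since $r_1=1$, $r_2=2$, so $r_1+r_2-2=1$ and the lone surviving boundary torus of $M\cup_f\cpq$ is the image of $T_{2,2}$ carrying $[\alpha_3]$, while $J_1'=K_1'=\emptyset$ and $J_2',K_2'\subseteq\{1\}$. For part~(3) I would invoke Theorem~\ref{thm:main_gluing_thm} with $(J_1,K_1)=(\{1\},\{1\})$ and $(J_2,K_2)=(\{1,2\},\{1,2\})$ (in the position-indexing of that theorem), so that $1\in K_i$. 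The only hypothesis needing attention is the condition on the O3 convex subgroups $C_i$ in the rational case; but $1\in J_i$ for both $i$, so $(J_i\cup\{1\},K_i;[\alpha_{i,*}])=(J_i,K_i;[\alpha_{i,*}])$ is order-detected by assumption and the condition holds automatically. Theorem~\ref{thm:main_gluing_thm} then yields a left-ordering detecting $(J_1'\cup J_2',K_1'\cup K_2';[\alpha_3])$, and computing $J_1'=K_1'=\emptyset$, $J_2'=K_2'=\{1\}$ gives exactly $(\{1\},\{1\};[\alpha_3])$.

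Part~(2) is handled the same way, applying Theorem~\ref{thm:main_gluing_thm} with $(J_1,K_1)=(\emptyset,\{1\})$ and $(J_2,K_2)=(\emptyset,\{1,2\})$, so again $1\in K_i$. Now $J_i=\emptyset$ for both $i$, so we may take the convex subgroup $C_i$ of O3 to be $\{id\}$ (which satisfies O3 whenever $J_i=\emptyset$); with this choice $C_i\cap\pi_1(T_{i,1})\neq\langle\alpha_{i,1}\rangle$, so the technical hypothesis of Theorem~\ref{thm:main_gluing_thm} is vacuous. The theorem produces a left-ordering detecting $(J_1'\cup J_2',K_1'\cup K_2';[\alpha_3])=(\emptyset,\{1\};[\alpha_3])$, as wanted.

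Part~(1) is the delicate one: here $[\alpha_1]$ is only weakly detected on the $M$ side, so Theorem~\ref{thm:main_gluing_thm} is unavailable and I would use Theorem~\ref{thm:special_gluing_case} with $M_1=M$ (hence $\ooo_1=\ooo$) and $M_2=\cpq$. That theorem requires the ordering of $\pi_1(\cpq)$ to regularly detect \emph{both} of its boundary slopes, whereas our hypothesis only gives $(\emptyset,\{2\};[\alpha_2],[\alpha_3])$, i.e.\ weak detection of $[\alpha_3]$. To bridge this, first weaken to $(\emptyset,\emptyset;[\alpha_2],[\alpha_3])$ and then upgrade using the Seifert structure of $\cpq$: if $([\alpha_2],[\alpha_3])$ is horizontal, Proposition~\ref{prop:detectedimplyJN}(2) together with Theorem~\ref{prop:equbetweenorddetectandrepdetect} yields an ordering $\ooo_2$ detecting $(\emptyset,\{1,2\};[\alpha_2],[\alpha_3])$; if it is not horizontal, Lemma~\ref{lemma:onenonhorizontalslopeimplies2} forces $[\alpha_2]=[\alpha_3]=[h]$, and Lemma~\ref{lemma:nonhorizontalslope} supplies an ordering detecting $(\{1,2\},\{1,2\};[h],[h])$, hence also $(\emptyset,\{1,2\};[\alpha_2],[\alpha_3])$. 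The remaining hypothesis of Theorem~\ref{thm:special_gluing_case}---that every slope on $T_{2,1}$ be the regularly detected inner slope of some ordering of $\pi_1(\cpq)$---I would check by an elementary JN-realisability computation: writing $\gamma_1\in(0,1)$ for the Seifert invariant of the exceptional fibre of $\cpq$, for each $\tau_1\in\RR$ the tuple $(\emptyset;0;\gamma_1;\tau_1,-(\gamma_1+\tau_1))$ is JN-realisable by the homomorphism sending $h,y,x_1,x_2$ to the translations $\sh(1),\sh(\gamma_1),\sh(\tau_1),\sh(-(\gamma_1+\tau_1))$ (the defining relations are immediate), so Proposition~\ref{prop:jnimplyrepdetected} and Theorem~\ref{prop:equbetweenorddetectandrepdetect} give an ordering regularly detecting the horizontal slope $[x_1-\tau_1 h]$ on $T_{2,1}$, while the fibre slope $[h]$ is covered by Lemma~\ref{lemma:nonhorizontalslope}. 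Theorem~\ref{thm:special_gluing_case} then produces the desired left-ordering of $\pi_1(M\cup_f\cpq)$ detecting $(\emptyset,\emptyset;[\alpha_3])$.

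The main obstacle is precisely this last point in part~(1): reconciling "weak detection on one side" with the comparatively strong hypotheses of Theorem~\ref{thm:special_gluing_case}. What makes it work is that $\cpq$ is Seifert fibered, so weak detection can always be promoted to regular detection via Proposition~\ref{prop:detectedimplyJN}, and every slope on the inner torus is realised by a homomorphism to $\hstilde$ built from translations; once these facts are in place the three statements reduce to index bookkeeping on top of the gluing theorems.
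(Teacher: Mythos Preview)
Your overall strategy matches the paper's: parts~(2) and~(3) via Theorem~\ref{thm:main_gluing_thm}, part~(1) via Theorem~\ref{thm:special_gluing_case}. In particular your observation that when $J_i=\emptyset$ one may take $C_i=\{id\}$ to render the technical hypothesis of Theorem~\ref{thm:main_gluing_thm} vacuous is exactly the point, and for part~(3) the condition is automatic since $1\in J_i$ already.

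The one place where your argument diverges from the paper is in verifying, for part~(1), that every slope on the inner torus $T_{2,1}$ of $\cpq$ is the regularly detected first coordinate of some ordering. The paper does this by a Dehn-filling argument: for a rational non-fibre slope $[\alpha]$, the filling $\cpq(\alpha)$ is irreducible Seifert fibred (citing Heil), hence has left-orderable fundamental group, and a lexicographic ordering through the short exact sequence $1\to\langle\langle\alpha\rangle\rangle\to\pi_1(\cpq)\to\pi_1(\cpq(\alpha))\to1$ detects $(\{1\},\{1\};[\alpha],[\beta])$; the fibre slope is covered by Lemma~\ref{lemma:nonhorizontalslope}; and irrational slopes are then obtained by compactness of $\LO(\pi_1(\cpq))$ together with continuity and surjectivity of the slope map. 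Your route---building an explicit homomorphism into $\hstilde$ by translations, so that $(\emptyset;0;\gamma_1;\tau_1,-(\gamma_1+\tau_1))$ is visibly JN-realisable for every $\tau_1\in\RR$, and then invoking Proposition~\ref{prop:jnimplyrepdetected} plus Theorem~\ref{prop:equbetweenorddetectandrepdetect}---is a genuine alternative: it handles all horizontal slopes (rational and irrational) in one stroke, avoids the external citation, and sidesteps the compactness argument. The paper's approach has the mild advantage of yielding strong detection ($\{1\}\subset J$) at rational slopes, but Theorem~\ref{thm:special_gluing_case} only asks for $1\in K_2'$, so your weaker conclusion suffices.

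One small wording slip: in part~(1) you write that the hypothesis gives ``weak detection of $[\alpha_3]$'', but with the tuple $([\alpha_2],[\alpha_3])$ indexed by positions $1,2$ and $K=\{2\}$, it is $[\alpha_2]$ (the inner slope) that is only weakly detected while $[\alpha_3]$ is already regular. This does not affect your argument, since the upgrade via Proposition~\ref{prop:detectedimplyJN}(2) promotes both coordinates simultaneously.
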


\begin{proof}
    Case (1) is a special application of Theorem \ref{thm:special_gluing_case}.
    First, we note that $\ooo'$ can be replaced by an ordering detecting $(\emptyset, \{1, 2\}; [\alpha_2], [\alpha_3])$ by Proposition \ref{prop:detectedimplyJN} and Theorem \ref{prop:equbetweenorddetectandrepdetect}. Next, we verify that $\cpq$ satisfies the conditions required of $M_2$ in that theorem as follows. To this end,
    let $\cpq(\alpha)$ denote the Dehn filling of $\cpq$ obtained by attaching a solid torus to $T_{2,1}$ in such a way that the meridian is identified with the slope $[\alpha] \in \mathcal{S}(T_{2,1})$. Now let $[\alpha] \in \mathcal{S}(T_{1,2})$ be an arbitrary rational slope other than the fiber slope. By \cite[Proposition 5]{heil74}, $\cpq(\alpha)$ is an irreducible Seifert fibered manifold with one torus boundary component. It follows that $\pi_1(\cpq(\alpha))$ is left-orderable, since it surjects onto $\mathbb{Z}$ \cite[Theorem 1.1]{BRW05}. Therefore, a short exact sequence argument, using
    \[ 1 \rightarrow \langle \langle \alpha \rangle \rangle \rightarrow \pi_1(\cpq) \rightarrow \pi_1(\cpq(\alpha)) \rightarrow 1
    \] to lexicographically order $\pi_1(\cpq)$, shows that $(\{1 \}, \{1\}; [\alpha], [\beta])$ is order-detected for some $[\beta] \in \mathcal{S}(T_{2,2})$. When $[\alpha]$ is the slope of the fiber, we may use Lemma \ref{lemma:nonhorizontalslope}. Finally, we observe that $\mathrm{LO}(\pi_1(\cpq))$ is compact and that the slope map $s_{1,1}$ is continuous, having all rational slopes in its image. As rational slopes form a dense set in $\mathcal{S}(T_{1,2})$, the map $s_{1,1}$ must be surjective. Thus, whenever $[\alpha]$ is irrational, $(\emptyset, \{1\}; [\alpha], [\beta])$ is order-detected for some $[\beta] \in \mathcal{S}(T_{2,2})$ by arguments analogous to \cite[Proposition 7.6]{BC23}.
    Cases (2) and (3) are applications of Theorem \ref{thm:main_gluing_thm}.
\end{proof}

\section{Computations of relatively JN-realisable slopes in cable spaces}
\label{sec:cpqcalculations}

We adopt the notation introduced in Sections \ref{sec:seifertmanifold} and \ref{sec:cable}. Consider a Seifert fibered manifold as described in Section \ref{sec:seifertmanifold}, and let $J$ be a subset of $\{1,\dots,r-1\}$ and $\tau_*=(\tau_1,\dots,\tau_{r-1})\in \RR^{r-1}$. Define \[\TT{M;J;\tau_*}=\{\tau'\in \RR : (J;0;\gamma_1,\dots,\gamma_n;\tau_1,\dots,\tau_{r-1},\tau') \mbox{ is JN-realisable}\}\] and \[\TTT{M;J;\tau_*}=\{\tau'\in \RR : (J\cup\{r\};0;\gamma_1,\dots,\gamma_n;\tau_1,\dots,\tau_{r-1},\tau') \mbox{ is JN-realisable}\}.\]

For this section, and in the Appendix, we follow \cite{BC17} and for a fixed tuple $\tau_*=(\tau_1,\dots,\tau_{r-1})$, we set \begin{itemize}
    \item $r_1=|\{j : \tau_j\notin \ZZ, 1\leq j\leq r-1\}|$, the number of non-integral $\tau_j$;
    \item $s_0=|\{j : \tau_j\in \ZZ \mbox{ and } j\in \{1,2,\dots,r-1\}\backslash J\}|$, the number of integral $\tau_j$ whose indices are not in $J$;
    \item $b_0=-(\floor{\tau_1}+\dots+\floor{\tau_{r-1}})$;
    \item $m_0=b_0-(n+r_1+s_0-1)$;
    \item $m_1=b_0+s_0-1$.
\end{itemize}


We can explicitly calculate how $\TT{\cpq;\emptyset;\tau}$ changes for different values of $\tau$. Recall that $p\geq1$ and $q> 1$ are coprime and $r, s$ are chosen so that $ps + qr = 1$ and $-q<s<0<r\leq p$. For $\tau\in \RR$, let $\bar{\tau}$ denote its fractional part, that is, $\bar{\tau}=\tau-\floor{\tau}$. Then $(J; 0; \gamma_1, \dots, \gamma_n; \tau_1, \dots, \tau_r)$ is JN-realisable if and only if $(J; b; \gamma_1, \dots, \gamma_n; \bar{\tau_1}, \dots, \bar{\tau_r})$ is JN-realisable, where
\[b = -(\lfloor \tau_1 \rfloor + \dots + \lfloor \tau_r \rfloor ).\]

\begin{proposition}\label{prop:tcpqinterval}
    For $\tau\in \mathbb{R}$, we have $\TT{\cpq;\emptyset;\tau}\subset (-\lfloor{\tau}\rfloor-2,-\lfloor{\tau}\rfloor]$. More precisely, there are two rational numbers $\eta(\tau)\in (-\lfloor{\tau}\rfloor-2,-\lfloor{\tau}\rfloor-1)$ and $\xi(\tau)\in (-\lfloor{\tau}\rfloor-1,-\lfloor{\tau}\rfloor)$ such that
    \begin{equation*}
        \TT{\cpq;\emptyset;\tau} =
        \begin{cases}
            [-\tau-1,-\tau]                      & \text{if } \bar{\tau}=0,                                  \\
            [-\lfloor{\tau}\rfloor-1,\xi(\tau)]  & \text{if } \bar{\tau}>0 \text{ and } \gamma+\bar{\tau}<1, \\
            \{-\lfloor{\tau}\rfloor-1\}          & \text{if } \bar{\tau}>0 \text{ and } \gamma+\bar{\tau}=1, \\
            [\eta(\tau),-\lfloor{\tau}\rfloor-1] & \text{if } \bar{\tau}>0 \text{ and } \gamma+\bar{\tau}>1. \\
        \end{cases}
    \end{equation*}

\end{proposition}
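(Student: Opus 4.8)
The plan is to combine the reduction to fractional parts recorded just above with the Jankins--Neumann--Naimi realisability criterion of Appendix~\ref{sec:JN appendix}, specialised to the Seifert data of $\cpq$. Recall from Section~\ref{sec:cable} that $\cpq$ is Seifert fibered over an annulus with a single cone point, the exceptional-fiber invariant being $(q,q+s)$; since $-q<s<0$ we have $0<q+s<q$, so $\gamma=\frac{q+s}{q}\in(0,1)$ already equals its own fractional part. Because $yx_1x_2=1$ holds exactly in $\pi_1(\cpq)$, Definition~\ref{def:jnreal} says that $(\emptyset;0;\gamma;\tau,\tau')$ is JN-realisable precisely when there exist $f,g\in\hstilde$ with $f$ conjugate to $\sh(\gamma)$, $\tau(g)=\tau$, and $\tau(fg)=-\tau'$ (take $g=\rho(x_1)$, so that $\rho(x_2)=(fg)^{-1}$). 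Hence $\TT{\cpq;\emptyset;\tau}$ is precisely the set of numbers $-\tau(fg)$ obtained this way. First I would strip off the integer part: writing $g=\sh(\floor{\tau})g_0$ with $\tau(g_0)=\bar\tau:=\tau-\floor{\tau}\in[0,1)$ and using that $\sh(\floor{\tau})$ is central and commutes with $f$, we get $\tau(fg)=\floor{\tau}+\tau(fg_0)$, so it suffices to determine the set $E(\gamma,\bar\tau):=\{\tau(fg_0):f\sim\sh(\gamma),\ \tau(g_0)=\bar\tau\}$; then $\TT{\cpq;\emptyset;\tau}=-\floor{\tau}-E(\gamma,\bar\tau)$.

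The containment $\TT{\cpq;\emptyset;\tau}\subset(-\floor{\tau}-2,-\floor{\tau}]$ then follows from soft estimates: any conjugate $f$ of $\sh(\gamma)$ satisfies $y<f(y)<y+1$ for all $y$, so $fg_0(y)>g_0(y)$ while $fg_0(y)<g_0(y)+1<y+2$; since translation number is monotone, $\tau(fg_0)\ge\tau(g_0)=\bar\tau\ge 0$ and $\tau(fg_0)<2$, i.e.\ $E(\gamma,\bar\tau)\subset[0,2)$. (When $\bar\tau>0$ the same monotonicity gives the sharper bound $\tau(fg_0)\ge\max(\gamma,\bar\tau)$, which foreshadows the nontrivial endpoint.) The real content is the exact shape of $E(\gamma,\bar\tau)$, and here I would feed $(\emptyset;b;\gamma;\bar\tau,\bar{\tau'})$ into the JN inequalities of Appendix~\ref{sec:JN appendix} with $n=1$, $r=2$, $J=\emptyset$ and the one-term tuple $\tau_*=(\tau)$, for which the bookkeeping parameters specialise to $r_1+s_0=1$, $b_0=-\floor{\tau}$, $m_0=-\floor{\tau}-1$, and $m_1=-\floor{\tau}$ if $\bar\tau=0$ while $m_1=-\floor{\tau}-1$ if $\bar\tau>0$. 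Running the criterion identifies the (at most two, consecutive) integers $b$ that occur and, for each, the closed interval of admissible $\bar{\tau'}$, whose non-integer endpoint is a rational number built from $\gamma$ (and $\floor{\tau}$, $\bar\tau$).

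Finally I would assemble the four cases by translating back via $\tau'=\bar{\tau'}-b-\floor{\tau}$ and gluing the contributions of the admissible values of $b$. When $\bar\tau=0$ the two adjacent pieces fuse into $[-\tau-1,-\tau]$. When $\bar\tau>0$ the endpoint $m_0=m_1=-\floor{\tau}-1$ is always present, and the set grows from that point: upward to a rational $\xi(\tau)\in(-\floor{\tau}-1,-\floor{\tau})$ when $\gamma+\bar\tau<1$; not at all, leaving the singleton $\{-\floor{\tau}-1\}$, when $\gamma+\bar\tau=1$; and downward to a rational $\eta(\tau)\in(-\floor{\tau}-2,-\floor{\tau}-1)$ when $\gamma+\bar\tau>1$. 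I expect the main obstacle to be the middle step: invoking the general JN criterion of the Appendix correctly with these parameters, and in particular deciding whether each boundary value of $\bar{\tau'}$ is attained --- equivalently, whether the element $\rho(x_2)$ realising the extremal slope can be taken conjugate to a genuine rotation --- so as to get the closed/open status of the endpoints (and hence the degenerate single-point case) exactly right. The remaining steps are routine manipulations with floors and fractional parts.
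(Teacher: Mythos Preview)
Your approach is correct and is essentially the same as the paper's: both specialise the Appendix results to the parameters $n=1$, $r=2$, $J=\emptyset$, $r_1+s_0=1$, $b_0=-\lfloor\tau\rfloor$, $m_0=-\lfloor\tau\rfloor-1$ (and $m_1=m_0$ when $\bar\tau>0$, $m_1=-\lfloor\tau\rfloor$ when $\bar\tau=0$), and then read off the four cases from Theorems~\ref{thm:integraltau} and~\ref{thm:rawcases}(1),(2)(b),(3)(b),(4), with Theorem~\ref{thm:rawallpositive}(4) giving the rationality and explicit description of $\xi(\tau)$ and $\eta(\tau)$. Your additional direct translation-number argument for the crude containment $E(\gamma,\bar\tau)\subset[0,2)$ is correct (in particular $\tau(g_0)<1$ does force $g_0(y)<y+1$ for all $y$, since otherwise iterating using $g_0\circ\sh(1)=\sh(1)\circ g_0$ would give $\tau(g_0)\ge 1$), but it is not needed: the paper simply extracts the containment from Theorem~\ref{thm:rawcases}(1)(a).
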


\begin{proof}
    Note that we have $n=1,\, \gamma=\frac{q+s}{q}$, $r_1+s_0=1$, $b_0=-\lfloor{\tau}\rfloor$ and $m_0=-\lfloor{\tau}\rfloor-1$.
    If $\tau$ is an integer, then $\bar{\tau} = 0$ and Theorem \ref{thm:integraltau} applies. So $(\emptyset; 0; \frac{q+s}{q}; \tau, \tau')$ is JN-realisable if and only if $(\emptyset; -\lfloor \tau \rfloor - \lfloor \tau' \rfloor; \frac{q+s}{q}; 0, \bar{\tau'})$ is JN-realisable, which happens only when $\tau = \lfloor \tau \rfloor =-\lfloor \tau' \rfloor$ yielding $\tau' \in [- \tau -1, - \tau ]$.

    If $\tau$ is not an integer, then $n+r_1=2$ and we can resolve this case by appealing to Theorems \ref{thm:rawallpositive} and \ref{thm:rawcases}(2)(b), \ref{thm:rawcases}(3)(b), \ref{thm:rawcases}(4)(b), considering the cases $\bar{\tau}+\gamma = 1$, $\bar{\tau}+\gamma>1$, and $\bar{\tau}+\gamma<1$. Observing that $\TT{\cpq;\emptyset;\tau} \subset ( -\lfloor \tau \rfloor -2, -\lfloor \tau \rfloor)$, we consider the following cases.

    First, if $\bar{\tau}+\gamma = 1$ then Theorem \ref{thm:rawcases}(4)(b) applies and we find $\TT{\cpq;\emptyset;\tau} = \{ m_0 \} = \{ -\lfloor \tau \rfloor -1 \}$.

    Next, if $\gamma+\bar{\tau}<1$, then the conditions of Theorem \ref{thm:rawcases}(3)(b)(i) are met while the conditions of Theorem \ref{thm:rawcases}(2)(b)(i) are not. Therefore, we choose $\xi(\tau)\in (-\lfloor{\tau}\rfloor-1,-\lfloor{\tau}\rfloor)$ that is maximal subject to the property that there are coprime integers $0<a<m$ and a permutation $\{a_1,a_2,a_3\}$ of $\{a,m-a,1\}$ such that $\gamma<\frac{a_1}{m}$, $\bar{\tau}\leq \frac{a_2}{m}$, and $\xi(\tau)-\floor{\xi(\tau)}=\xi(\tau)+\floor{\tau}+1\leq \frac{a_3}{m}$. Using Theorem \ref{thm:rawallpositive}(4), we conclude that $\TT{\cpq;\emptyset;\tau} = [-\lfloor{\tau}\rfloor-1,\xi(\tau)]$.

    If $\bar{\tau}\not=0$ and $\gamma+\bar{\tau}>1$ then the conditions of Theorem \ref{thm:rawcases}(2)(b)(i) are met and the conditions of Theorem \ref{thm:rawcases}(2)(3)(i) are not, so we choose $\eta(\tau)\in (-\lfloor{\tau}\rfloor-2,-\lfloor{\tau}\rfloor-1)$ that is minimal subject to the property that there are coprime integers $0<a<m$ and a permutation $\{a_1,a_2,a_3\}$ of $\{a,m-a,1\}$ such that $1-\gamma<\frac{a_1}{m}$, $1-\bar{\tau}\leq \frac{a_2}{m}$, and $1-(\eta(\tau)-\floor{\eta(\tau)})=-\eta(\tau)-\floor{\tau}-1\leq \frac{a_3}{m}$. Then by Theorem \ref{thm:rawallpositive}(4) we conclude that $\TT{\cpq;\emptyset;\tau} = [\eta(\tau),-\lfloor{\tau}\rfloor-1]$.
\end{proof}

Recall that $-q<s<0$, and note $1-\gamma=\frac{-s}{q}$. To visualize how the sets change with $\tau$, we suppose that $n\leq \tau \leq n+1$ for some integer $n$. Proposition \ref{prop:tcpqinterval} tells us that:
\begin{equation*}
    \TT{\cpq;\emptyset;\tau} =
    \begin{cases}
        [-(n+1),-n]         & \text{if }\tau=n,                     \\
        [-(n+1),\xi(\tau)]  & \text{if } n<\tau < n+\frac{-s}{q},   \\
        \{-(n+1)\}          & \text{if }\tau = n+\frac{-s}{q},      \\
        [\eta(\tau),-(n+1)] & \text{if } n+\frac{-s}{q} <\tau< n+1, \\
        [-(n+2),-(n+1)]     & \text{if } \tau=n+1,
    \end{cases}
\end{equation*}
where $\eta(\tau)\in (-n-2,-n-1)$ and $\xi(\tau)\in (-n-1,-n)$ are some rational numbers depending on the value of $\tau$.

\begin{proposition}\label{prop:tcpqintervalbehav}
    Assume $n\leq \tau_1\leq\tau_2 \leq n+1$ for some integer $n$.
    \begin{enumerate}
        \item If $0\leq \bar{\tau_1}\leq \bar{\tau_2}\leq 1-\gamma$, then \[\{-(n+1)\}\subset \TT{\cpq;\emptyset;\tau_2}\subset \TT{\cpq;\emptyset;\tau_1}\subset [-(n+1),-n].\]
        \item If $1-\gamma\leq \bar{\tau_1}\leq \bar{\tau_2}< 1$, then \[\{-(n+1)\}\subset \TT{\cpq;\emptyset;\tau_1}\subset \TT{\cpq;\emptyset;\tau_2}\subset [-(n+2),-(n+1)].\]
    \end{enumerate}
\end{proposition}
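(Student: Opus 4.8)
The plan is to reduce both inclusion chains to a monotonicity statement for the auxiliary rational numbers $\xi(\tau)$ and $\eta(\tau)$ from Proposition~\ref{prop:tcpqinterval}, and then to read that monotonicity off the extremal descriptions of $\xi(\tau)$ and $\eta(\tau)$ recorded in the proof of that proposition. Throughout I write $\bar\tau = \tau - n$ and recall $1-\gamma = -s/q \in (0,1)$, so that in case (1) both $\tau_1,\tau_2$ lie in $[n,\, n+(1-\gamma)]$ and in case (2) both lie in $[n+(1-\gamma),\, n+1)$; in particular $\lfloor\tau_i\rfloor = n$ for $i=1,2$. By Proposition~\ref{prop:tcpqinterval} and the display following it, for such $\tau$ we have $\TT{\cpq;\emptyset;\tau} = [-(n+1),\, c(\tau)]$ in the range of case (1), where $c(\tau) = -n$ if $\bar\tau = 0$, $c(\tau) = \xi(\tau) \in (-(n+1),-n)$ if $0 < \bar\tau < 1-\gamma$, and $c(\tau) = -(n+1)$ if $\bar\tau = 1-\gamma$; and $\TT{\cpq;\emptyset;\tau} = [d(\tau),\, -(n+1)]$ in the range of case (2), where $d(\tau) = -(n+1)$ if $\bar\tau = 1-\gamma$ and $d(\tau) = \eta(\tau) \in (-(n+2),-(n+1))$ if $1-\gamma < \bar\tau < 1$. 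Since the sets in case (1) all share the left endpoint $-(n+1)$ and those in case (2) all share the right endpoint $-(n+1)$, and since $c(\tau) \in [-(n+1),-n]$ and $d(\tau) \in [-(n+2),-(n+1)]$ throughout, chain (1) follows once $c$ is shown to be non-increasing in $\bar\tau$ on $[0,\, 1-\gamma]$, and chain (2) once $d$ is shown to be non-increasing in $\bar\tau$ on $[1-\gamma,\, 1)$.

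For (1), I would use the description of $\xi(\tau)$ from the proof of Proposition~\ref{prop:tcpqinterval}: in the regime $0 < \bar\tau < 1-\gamma$, $\xi(\tau)$ is the largest element of $(-(n+1),-n)$ for which there exist coprime integers $0 < a < m$ and a permutation $\{a_1,a_2,a_3\}$ of $\{a,\, m-a,\, 1\}$ satisfying $\gamma < a_1/m$, $\bar\tau \le a_2/m$, and $\xi(\tau)+n+1 \le a_3/m$. With $n$ and $\gamma$ held fixed, $\bar\tau$ enters only through the inequality $\bar\tau \le a_2/m$; increasing $\bar\tau$ can only eliminate admissible triples $(m,a,\{a_1,a_2,a_3\})$, hence can only decrease the supremum of the available values $a_3/m$, hence can only decrease $\xi(\tau)$. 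So $\xi$ is non-increasing on $(0,\, 1-\gamma)$. Since $\xi(\tau) \in (-(n+1),-n)$ strictly there, we also have $\xi(\tau) < -n$ (the value of $c$ at $\bar\tau = 0$) and $\xi(\tau) > -(n+1)$ (the value of $c$ at $\bar\tau = 1-\gamma$), so $c$ does not jump up at either transition value, and therefore $c$ is non-increasing on all of $[0,\, 1-\gamma]$. Chain (1) follows: $\{-(n+1)\} \subset \TT{\cpq;\emptyset;\tau_2} \subset \TT{\cpq;\emptyset;\tau_1} \subset [-(n+1),-n]$.

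For (2), the argument is the mirror image. From the proof of Proposition~\ref{prop:tcpqinterval}, in the regime $1-\gamma < \bar\tau < 1$, $\eta(\tau)$ is the smallest element of $(-(n+2),-(n+1))$ for which there exist coprime integers $0 < a < m$ and a permutation $\{a_1,a_2,a_3\}$ of $\{a,\, m-a,\, 1\}$ satisfying $1-\gamma < a_1/m$, $1-\bar\tau \le a_2/m$, and $-\eta(\tau)-n-1 \le a_3/m$. Here $\bar\tau$ enters only through $1-\bar\tau \le a_2/m$, and increasing $\bar\tau$ relaxes this constraint, so more triples become admissible, the supremum of the available values $a_3/m$ can only increase, and hence $\eta(\tau)$ can only decrease. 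So $\eta$ is non-increasing on $(1-\gamma,\, 1)$, and since $\eta(\tau) < -(n+1)$ there while $d = -(n+1)$ at $\bar\tau = 1-\gamma$, $d$ is non-increasing on $[1-\gamma,\, 1)$. Chain (2) follows: $\{-(n+1)\} \subset \TT{\cpq;\emptyset;\tau_1} \subset \TT{\cpq;\emptyset;\tau_2} \subset [-(n+2),-(n+1)]$.

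The bulk of the work is already in Proposition~\ref{prop:tcpqinterval}; the only delicate point is to match the ``largest/smallest'' quantifiers defining $\xi(\tau)$ and $\eta(\tau)$ with a monotone dependence on $\bar\tau$, keeping track of the fact that $\bar\tau$ appears as $\bar\tau \le a_2/m$ in the first regime (so the family of admissible triples shrinks as $\bar\tau$ grows) but as $1-\bar\tau \le a_2/m$ in the second (so it grows). One also needs to observe that $\{-(n+1)\}$ lies in every interval that appears, which is what keeps the chains intact across the transition values $\bar\tau = 0$ and $\bar\tau = 1-\gamma$, where $\TT{\cpq;\emptyset;\tau}$ changes shape (from an interval with right endpoint $\xi(\tau)$, through the singleton $\{-(n+1)\}$, to an interval with left endpoint $\eta(\tau)$); this is immediate from the endpoint descriptions above.
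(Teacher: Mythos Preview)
Your proof is correct and follows essentially the same route as the paper's. Both arguments hinge on the single observation that, in the Jankins--Neumann--Naimi criterion of Theorem~\ref{thm:rawallpositive}(4), $\bar\tau$ enters only through the inequality $\bar\tau \le a_2/m$ (respectively $1-\bar\tau \le a_2/m$), so that increasing $\bar\tau$ shrinks (respectively enlarges) the set of admissible triples; the paper phrases this element-by-element (take $x\in\TT{\cpq;\emptyset;\tau_2}$, use its witnessing triple for $\tau_1$), while you package it as monotonicity of the endpoint functions $\xi$ and $\eta$, but the content is the same.
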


\begin{proof}
    First, consider the case where $0\leq \bar{\tau_1}\leq \bar{\tau_2}\leq 1-\gamma$. It follows immediately from Proposition \ref{prop:tcpqinterval} that
    \[\{-(n+1)\}\subset \TT{\cpq;\emptyset;\tau_1} \cap \TT{\cpq;\emptyset;\tau_2} \subset \TT{\cpq;\emptyset;\tau_1}\cup \TT{\cpq;\emptyset;\tau_2} \subset [-(n+1),-n].\]
    It remains to show that $\TT{\cpq;\emptyset;\tau_2}\subset \TT{\cpq;\emptyset;\tau_1}$.

    Suppose $x\in \TT{\cpq;\emptyset;\tau_2}$. If $x$ is an integer, then either $x=-(n+1)\in \TT{\cpq;\emptyset;\tau_1}$ or $x=-n$ with $\tau_2=n$. In the latter case, since $n\leq \tau_1\leq\tau_2=n$, we have $\tau_1=\tau_2=n$ and so $x\in \TT{\cpq;\emptyset;\tau_1}$. Now, assuming $x$ is non-integral, then by definition, $(\emptyset;0;\gamma;\tau_2,x)$ is JN-realisable which implies that $(\emptyset;-(\floor{\tau_2}+\floor{x});\gamma;\bar{\tau_2},\bar{x})$ is also JN-realisable. Note that $n \leq \tau_1 \leq \tau_2$ and $-(n+1) < x<-n$. However, when $\tau_2 = n+1$, $\TT{\cpq;\emptyset;\tau_2} = [-(n+2), -(n+1)]$ so that $-(n+1) < x<-n$ is not possible. Therefore, $-(\lfloor \tau_2 \rfloor + \lfloor x \rfloor) = -(n-(n+1))=1$.



    Therefore, $-(\floor{\tau_2}+\floor{x}) = 1$ and there are coprime integers $0<a<m$ and a permutation $\{a_1,a_2,a_3\}$ of $\{a,m-a,1\}$ such that $\gamma<\frac{a_1}{m}$, $\bar{\tau_2}\leq \frac{a_2}{m}$, and $\bar{x}\leq \frac{a_3}{m}$, according to Theorem \ref{thm:rawallpositive}(4). Because $\bar{\tau_1}\leq \bar{\tau_2} \leq \frac{a_2}{m}$, $(\emptyset;1;\gamma;\bar{\tau_1},\bar{x})$ is JN-realisable and so $x\in \TT{\cpq;\emptyset;\tau_1}$. It follows that (1) is proved.

    Now, suppose $1-\gamma \leq \bar{\tau_1}\leq \bar{\tau_2}< 1$. Proposition \ref{prop:tcpqinterval} tells us that
    \[\{-(n+1)\}\subset \TT{\cpq;\emptyset;\tau_1} \cap \TT{\cpq;\emptyset;\tau_2} \subset \TT{\cpq;\emptyset;\tau_1}\cup \TT{\cpq;\emptyset;\tau_2} \subset [-(n+2),-(n+1)].\]
    To complete the proof, we need to show $\TT{\cpq;\emptyset;\tau_1}\subset \TT{\cpq;\emptyset;\tau_2}$.

    Consider an element $x\in \TT{\cpq;\emptyset;\tau_1}$. If $x$ is an integer, then $x=-(n+1)\in \TT{\cpq;\emptyset;\tau_2}$, since $\tau_1$ cannot be an integer by assumption. Now, suppose that $x$ is not integral. By definition, $(\emptyset;0;\gamma;\tau_1,x)$ is JN-realizable and, therefore, equivalently, $(\emptyset;-(\floor{\tau_1}+\floor{x});\gamma;\bar{\tau_1},\bar{x})$ is JN-realisable. But $\lfloor \tau_1 \rfloor =1$ and $\lfloor x \rfloor = -n-2$, so the latter condition is that $(\emptyset;2;\gamma;\bar{\tau_1},\bar{x})$ is JN-realisable. By Theorem \ref{thm:rawallpositive}(3), this happens if and only if $(\emptyset;1;1-\gamma;1-\bar{\tau_1},1-\bar{x})$ is JN-realisable, and so we can find coprime integers $0<a<m$ and a permutation $\{a_1,a_2,a_3\}$ of $\{a,m-a,1\}$ so that $1-\gamma<\frac{a_1}{m}$, $1-\bar{\tau_1}\leq \frac{a_2}{m}$, and $1-\bar{x}\leq \frac{a_3}{m}$ by Theorem \ref{thm:rawallpositive}(4). Given $1-\gamma \leq \bar{\tau_1}\leq \bar{\tau_2}< 1$, we have $1-\bar{\tau_2}\leq 1-\bar{\tau_1}\leq \frac{a_2}{m}$. It follows that $(\emptyset;2;\gamma;\bar{\tau_2};\bar{x})$ is JN-realisable, and so $x\in \TT{\cpq;\emptyset;\tau_2}$. This completes our proof.
\end{proof}

\begin{remark}
    According to the previous two propositions, the sets $\TT{\cpq;\emptyset;\tau}$ act as an ``inch-worm'' moving towards $-\infty$ and $\infty$ as $\tau$ increases and decreases to $\pm \infty$ respectively. To see this, fix $n\in \mathbb{Z}$. Starting from $\tau = n $ to $\tau = n+ \frac{-s}{q}$ the inchworm pulls up its tail from $-n$ to $-(n+1)$, while keeping its head fixed at $-(n+1)$. Then, as $\tau$ increases from $n+\frac{-s}{q}$ to $n+1$ the inchworm moves its head forward from $-(n+1)$ to $-(n+2)$. 
\end{remark}

\begin{corollary}\label{cor:tcpqbehav}
    Let $\TT{\cpq;\emptyset;\tau}=[f(\tau),g(\tau)]$. The functions $\tau\mapsto f(\tau)$ and $\tau\mapsto g(\tau)$ are non-increasing. Moreover, $f$ decreases only over intervals where $g$ is constant and vice versa.
\end{corollary}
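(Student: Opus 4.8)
The plan is to read everything off the refined case description of $\TT{\cpq;\emptyset;\tau}$ recorded in (and just after) Proposition \ref{prop:tcpqinterval}, combined with the nesting statements of Proposition \ref{prop:tcpqintervalbehav}. First I would fix $n\in\ZZ$, set $\kappa_n=n+\frac{-s}{q}\in(n,n+1)$ (recall $\frac{-s}{q}=1-\gamma\in(0,1)$ since $-q<s<0$), and note that for $\tau$ with $\lfloor\tau\rfloor=n$ the conditions $\gamma+\bar{\tau}<1$, $=1$, $>1$ correspond exactly to $\tau<\kappa_n$, $\tau=\kappa_n$, $\tau>\kappa_n$. On $[n,\kappa_n]$ the description gives $f(\tau)=-(n+1)$, so $f$ is constant there, while Proposition \ref{prop:tcpqintervalbehav}(1) gives $\TT{\cpq;\emptyset;\tau_2}\subset\TT{\cpq;\emptyset;\tau_1}$ for $n\le\tau_1\le\tau_2\le\kappa_n$, hence $g(\tau_2)\le g(\tau_1)$; thus $g$ is non-increasing on $[n,\kappa_n]$, running from $g(n)=-n$ down to $g(\kappa_n)=-(n+1)$. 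Symmetrically, on $[\kappa_n,n+1]$ the description gives $g(\tau)=-(n+1)$, while Proposition \ref{prop:tcpqintervalbehav}(2) applied to $\tau_1,\tau_2\in[\kappa_n,n+1)$ forces $f$ non-increasing there, running from $f(\kappa_n)=-(n+1)$ toward $-(n+2)$, with $f(n+1)=-(n+2)$ supplied directly by Proposition \ref{prop:tcpqinterval}.

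Next I would splice these pieces together across consecutive integers. The only thing to verify is compatibility and preservation of monotonicity at the shared endpoints $\tau=\kappa_n$ and $\tau=n+1$: at $\kappa_n$ both descriptions give $f(\kappa_n)=g(\kappa_n)=-(n+1)$, so there is no conflict and both functions are continuous there; at $\tau=n+1$ one has $f(n+1)=-(n+2)\le\eta(\tau)=f(\tau)$ for $\tau\in(\kappa_n,n+1)$ and $g(n+1)=-(n+1)=g(\tau)$ for $\tau\in[\kappa_n,n+1)$, while on the next fundamental interval $[n+1,\kappa_{n+1}]$ the description restarts with $f\equiv-(n+2)$ and $g$ descending from $-(n+1)$. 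Hence no upward jump can occur, so $f$ and $g$ are non-increasing on all of $\RR$.

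For the ``moreover'' clause I would observe that $\RR$ is the union of the closed intervals $[n,\kappa_n]$ and $[\kappa_n,n+1]$, $n\in\ZZ$, overlapping only in the isolated points $\{n\}\cup\{\kappa_n\}$, that $f$ is constant on every $[n,\kappa_n]$, and that $g$ is constant on every $[\kappa_n,n+1]$. Consequently the set of $\tau$ at which $f$ fails to be locally constant is contained in $\bigcup_n[\kappa_n,n+1]$ (this is where both the $\eta$-decrease and the possible downward jump of $f$ at each integer live), and on each $[\kappa_n,n+1]$ the function $g$ is constant; so $f$ decreases only over intervals on which $g$ is constant, and the reverse assertion is identical with the roles of $f$ and $g$ exchanged. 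This matches the ``inch-worm'' picture of the preceding remark. I expect the only real work to be this endpoint bookkeeping---confirming that the values given by Proposition \ref{prop:tcpqinterval} at $\tau=\kappa_n$ and $\tau=n+1$ agree with the one-sided limits forced by Proposition \ref{prop:tcpqintervalbehav}, and that the discontinuity of $f$ possibly present at each integer is a downward jump sitting at the boundary between an interval on which $g$ is constant and one on which $f$ is constant---and no idea beyond the two cited propositions is needed.
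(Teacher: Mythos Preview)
Your argument is correct and follows exactly the route the paper intends: the corollary is stated without proof because it is meant to be read off immediately from Propositions~\ref{prop:tcpqinterval} and~\ref{prop:tcpqintervalbehav} (and the ``inch-worm'' remark), and you have simply written out that bookkeeping in full. There is nothing to add.
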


For each integer $n$, we have that \[[-(n+1),-n]= \TT{\cpq;\emptyset;n}=\bigcup_{n-1+\frac{-s}{q}\leq \tau \leq n+\frac{-s}{q}} \TT{\cpq;\emptyset;\tau},\] and it follows that \[\bigcup_{n\leq \tau} \TT{\cpq;\emptyset;\tau} = (-\infty,-n]\quad \mbox{ and }\quad \bigcup_{\tau \leq n} \TT{\cpq;\emptyset;\tau} = [-(n+1),\infty).\]

And for general $\tau_1, \tau_2\in \RR$, we have
\begin{align*}
    \bigcup_{\tau \geq \tau_1} \TT{\cpq;\emptyset;\tau} & =
    \begin{cases}
        (-\infty,-\floor{\tau_1}]                                  & \text{if }\bar{\tau_1}=0,             \\
        (-\infty,-\floor{\tau_1}-1]\cup \TT{\cpq;\emptyset;\tau_1} & \text{if } 0<\bar{\tau_1}<1-\gamma,   \\
        (-\infty,-\floor{\tau_1}-1]                                & \text{if } \bar{\tau_1}\geq 1-\gamma,
    \end{cases} \\
    \bigcup_{\tau \leq \tau_2} \TT{\cpq;\emptyset;\tau} & =
    \begin{cases}
        [-\floor{\tau_2}-1 ,\infty)                                & \text{if } \bar{\tau_2}\leq 1-\gamma, \\
        \TT{\cpq;\emptyset;\tau_2}\cup [-\floor{\tau_2}-1 ,\infty) & \text{if } \bar{\tau_2}> 1-\gamma.
    \end{cases}
\end{align*}

Analogues of these results can be obtained for $\TTT{\cpq;J;\tau}$. If $J=\emptyset$, then it follows from Theorem \ref{thm:rawcases} that when $\TT{\cpq;\emptyset;\tau}$ is a nondegenerate interval, $\TTT{\cpq;\emptyset;\tau}$ is its interior, and when $\TT{\cpq;\emptyset;\tau}$ is a degenerate interval, $\TTT{\cpq;\emptyset;\tau}$ coincides with it. If $J=\{1\}$, then Theorem \ref{thm:rawcases} still applies but some special attention is needed, for example, $\TTT{\cpq;\{1\};\tau} = \{-\tau-\gamma\}$ is no longer an interval, if $\tau$ is an integer.

\section{Detected slopes and cabling}
\label{sec:finally cables}

Let $M$ be a 3-manifold with incompressible torus boundary components $T_1,\dots,T_n$ and fixed choices of peripheral subgroups $\pi_1(T_i) \subset \pi_1(M)$. We recall our notation from earlier sections and fix the new notation needed to complete our computations relative to certain choices of bases for the peripheral subgroups $\pi_1(T_i)$.

Recall from Section \ref{sec:defs} that our sets of order-detected and representation-detected slopes are written $\mathcal{D}_{ord}(J,K;M)$ and $\mathcal{D}_{rep}(J,K;M)$ respectively, with each of these being subsets of $\mathcal{S}(M) = \mathcal{S}(T_1) \times \ldots \times \mathcal{S}(T_n)$.

Fix bases $\mathcal{B}_j=\{h_j, h_j^*\}$ for $\pi_1(T_j)$, $j=1,\dots,n$, and set $\mathcal{B}=\{\mathcal{B}_j : j=1,\dots,n\}$, and subsets $J \subset K \subset \{1, \ldots, n\}$. Then we define
\[ \mathcal{T}_{rep}(J,K;M;\mathcal{B}) = \{ (\tau_1, \dots, \tau_n) \in (\mathbb{R} \cup \{\infty\})^n : ([\tau_1 h_1+h_1^*],\dots,[\tau_n h_n+h_n^*]) \in \mathcal{D}_{rep}(J,K;M)\},
\]
were $\infty$ appears in the $i$-th coordinate in place of the slope $[h_i]$.
We similarly define $ \mathcal{T}_{ord}(J,K;M;\mathcal{B})$, and note that in our new notation, Theorem \ref{prop:equbetweenorddetectandrepdetect} says precisely that $ \mathcal{T}_{rep}(J,K;M;\mathcal{B}) = \mathcal{T}_{ord}(J,K;M;\mathcal{B}')$ as long as $\mathcal{B} = \mathcal{B}'$; however, these sets could differ if $\mathcal{B} \neq \mathcal{B}'$.



\subsection{Attaching cable spaces to knot manifolds}
Our next theorem shows how, with appropriate changes of bases, we can combine the notion of JN-realisability with that of representation and order-detection to calculate how detected slopes on the boundary of a knot manifold behave with respect to gluing on a copy of the cable space $\cpq$. We first deal with the slope $\infty$ separately.

\begin{proposition}\label{prop:infinite_slope}
    Assume that $p\geq1$ and $q> 1$ are coprime integers. Suppose that $M' = M \cup \cpq$ is a knot manifold that can be expressed as a union of a knot manifold $M$ and a cable space $\cpq$ whose inner boundary torus (see Section \ref{subsec:cpqnotations} for our inner and outer conventions) is identified with $\partial M$.
    Fix bases $\mathcal{B}_1=\{h,-x_1\}$ and $\mathcal{B}_2=\{h,-x_2\}$ of $\pi_1(\partial M)$ and $\pi_1(\partial M')$ as described in Section \ref{subsec:cpqnotations}. Then $\infty \in \mathcal{T}_{ord}(J, K; M; \mathcal{B}_1)$ if and only if $\infty \in \mathcal{T}_{ord}(J, K; M'; \mathcal{B}_2)$, where $(J,K)$ is any one of $(\emptyset, \emptyset), (\emptyset, \{1\}), (\{1\}, \{1\})$.
\end{proposition}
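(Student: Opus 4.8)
The plan is to prove the two implications of the biconditional separately. Observe first that the slope $\infty$ with respect to the basis $\mathcal{B}_1=\{h,-x_1\}$ (respectively $\mathcal{B}_2=\{h,-x_2\}$) is exactly the fiber slope $[h]$ on $\partial M$ (respectively on $\partial M'$), and that the decomposition $M'=M\cup\cpq$ identifies $\partial M$ with the inner boundary torus of $\cpq$ compatibly with the basis $\mathcal{B}_1$; in particular the gluing identifies the fiber slope on $\partial M$ with the fiber slope on the inner torus of $\cpq$. So the statement to prove is: $(J,K;[h])$ is order-detected on $\partial M$ if and only if $(J,K;[h])$ is order-detected on $\partial M'$, for $(J,K)$ equal to $(\emptyset,\emptyset)$, $(\emptyset,\{1\})$, or $(\{1\},\{1\})$.

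For the forward direction, suppose $(J,K;[h])$ is order-detected on $\partial M$. By Lemma \ref{lemma:nonhorizontalslope}, $(\{1,2\},\{1,2\};[h],[h])$ is order-detected on $\cpq$, and weakening this ordering supplies whatever detection data on $\cpq$ is demanded by the gluing theorems. When $(J,K)=(\emptyset,\emptyset)$ I would apply Theorem \ref{thm:special_gluing_case} with $M_2=\cpq$: the hypothesis that every slope on the inner torus of $\cpq$ is detected by an ordering with $1$ in the regular index set is precisely what is verified inside the proof of Corollary \ref{corollary:gluingdetected}(1), and that verification does not use horizontality. When $(J,K)$ is $(\emptyset,\{1\})$ or $(\{1\},\{1\})$ I would apply Theorem \ref{thm:main_gluing_thm}; its technical convex-subgroup hypothesis is either vacuously met (take $\{id\}$ as the O3-witness on $\cpq$ when $1\notin J$) or holds because Lemma \ref{lemma:nonhorizontalslope} already provides the strengthened detection $(\{1,2\},\{1,2\};[h],[h])$ on $\cpq$ and our hypothesis provides $(\{1\},\{1\};[h])$ on $M$. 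In each case one reads off from the conclusion of the gluing theorem that $(J,K;[h])$ is order-detected on $\partial M'$ (the outer torus of $\cpq$), i.e.\ $\infty\in\mathcal{T}_{ord}(J,K;M';\mathcal{B}_2)$.

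For the backward direction, suppose $(J,K;[h])$ is order-detected on $\partial M'$ by some $\ooo\in\LO(\pi_1(M'))$, and let $\ooo_M$ be the restriction of $\ooo$ to $\pi_1(M)\subset\pi_1(M')$ (an inclusion because $\partial M$ is incompressible in $M'$). I claim $\ooo_M$ order-detects $(J,K;[h])$ on $\partial M$. To identify the slope, restrict $\ooo$ instead to $\pi_1(\cpq)$: its slope on the outer torus $\partial M'$ is $[h]$ by O1, and taking $\{id\}$ as the convex subgroup in O3 shows that $\ooo|_{\pi_1(\cpq)}$ order-detects $(\emptyset,\emptyset;[\beta],[h])$, where $[\beta]$ is its slope on the inner torus $\partial M$. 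Lemma \ref{lemma:onenonhorizontalslopeimplies2} then forces $[\beta]=[h]$, giving O1 for $\ooo_M$. Running this same argument with $\ooo$ replaced by $g\cdot\ooo$ for $g\in\pi_1(M)$ --- which is legitimate because O2 for $\ooo$ keeps the slope of $g\cdot\ooo$ on $\partial M'$ equal to $[h]$, and $(g\cdot\ooo)|_{\pi_1(M)}=g\cdot\ooo_M$ --- gives O2 for $\ooo_M$ when $K=\{1\}$, and O2 is vacuous when $K=\emptyset$.

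It remains to produce the $\ooo_M$-convex normal subgroup required by O3. When $1\notin J$ the trivial subgroup $\{id\}$ suffices. When $J=K=\{1\}$, let $C\triangleleft\pi_1(M')$ be the $\ooo$-convex normal subgroup from O3, so $\pi_1(\partial M')\cap C=\langle h\rangle$, and take $C_M=C\cap\pi_1(M)$, which is $\ooo_M$-convex and normal in $\pi_1(M)$. Since $h\in C$ we get $\langle h\rangle\subseteq\pi_1(\partial M)\cap C_M$; a convex subgroup of $\pi_1(\partial M)\cong\ZZ\oplus\ZZ$ strictly containing $\langle h\rangle$ would have finite index and so give a nontrivial finite left-orderable quotient, which is impossible, so it is enough to rule out $\pi_1(\partial M)\subseteq C$. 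If $\pi_1(\partial M)\subseteq C$ then $x_1\in C$, hence $y\notin C$ (otherwise $x_2=x_1^{-1}y^{-1}\in C$ and $\pi_1(\partial M')=\langle h,x_2\rangle\subseteq C$, contradicting $\pi_1(\partial M')\cap C=\langle h\rangle$); but the relation $y^q=h^{q+s}$ with $0<q+s<q$ places the nonidentity element $y^q$ in $C$, and since $q\geq2$ convexity of $C$ then forces $y\in C$, a contradiction. Hence $\pi_1(\partial M)\cap C_M=\langle h\rangle$, which is O3 for $\ooo_M$. The main obstacle is this backward direction --- in particular, the observations that restricting $\ooo$ to $\pi_1(\cpq)$ and invoking Lemma \ref{lemma:onenonhorizontalslopeimplies2} forces the slope on $\partial M$ to be $[h]$, and that the strong-detection datum transports across the restriction thanks to the centrality of $h$ and the relation $y^q=h^{q+s}$ holding inside $\pi_1(\cpq)\subset\pi_1(M')$.
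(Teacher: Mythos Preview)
Your forward direction is the same as the paper's: invoke Lemma \ref{lemma:nonhorizontalslope} to get $(\{1,2\},\{1,2\};[h],[h])$ detected on $\cpq$, then feed this and the hypothesis on $M$ into Corollary \ref{corollary:gluingdetected}. The paper's proof is terser---it just cites Corollary \ref{corollary:gluingdetected} without splitting into cases---but the content is identical.

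Your backward direction, however, is genuinely more than what the paper provides: the paper's written proof only establishes the forward implication, despite the ``if and only if'' in the statement. Your argument for the reverse direction is correct. The key moves---restricting $\ooo$ to $\pi_1(\cpq)$ and using Lemma \ref{lemma:onenonhorizontalslopeimplies2} to force the inner slope to be $[h]$, iterating this over conjugates $g\cdot\ooo$ with $g\in\pi_1(M)$ to get O2, and for O3 intersecting the convex normal $C$ with $\pi_1(M)$ and using the relation $y^q=h^{q+s}$ together with convexity to rule out $\pi_1(\partial M)\subseteq C$---all go through. One small remark: your sentence about a convex subgroup strictly containing $\langle h\rangle$ giving a ``nontrivial finite left-orderable quotient'' is a slightly roundabout way to say that the convex subgroups of $\ZZ\oplus\ZZ$ with respect to an ordering of rational slope $[h]$ are exactly $\{id\}$, $\langle h\rangle$, and $\ZZ\oplus\ZZ$; either phrasing works. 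So your proposal is correct and in fact fills a gap in the paper's proof.
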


\begin{proof}
    Begin with $\infty \in \mathcal{T}_{ord}(J, K; M; \mathcal{B}_1)$ and let $\ooo$ be a left-ordering of $\pi_1(M)$ that order-detects $(J, K; [h])$. By Lemma \ref{lemma:nonhorizontalslope} there is an ordering $\ooo'$ of $\pi_1(\cpq)$ that detects $(\{1,2\}, \{1,2\}; [h], [h])$. By Corollary \ref{corollary:gluingdetected}, $(J, K; [h])$ is order-detected by some left-ordering of $\pi_1(M')$.
\end{proof}

The next theorem deals with the remaining slopes via JN-realisability.

\begin{theorem}\label{thm:cpqtord}
    Assume that $p\geq1$ and $q> 1$ are coprime integers. Suppose that $M' = M \cup \cpq$ is a knot manifold that can be expressed as a union of a knot manifold $M$ and a cable space $\cpq$ whose inner boundary torus (see Section \ref{subsec:cpqnotations} for our inner and outer conventions) is identified with $\partial M$.
    Fix bases $\mathcal{B}_1=\{h,-x_1\}$ and $\mathcal{B}_2=\{h,-x_2\}$ of $\pi_1(\partial M)$ and $\pi_1(\partial M')$ as described in Section \ref{subsec:cpqnotations}. Then, we have
    \begin{enumerate}
        \item $ \displaystyle \bigcup_{\tau\in \mathcal{T}_{ord}(\emptyset, \emptyset; M; \mathcal{B}_1)\setminus \{ \infty\} }\TT{\cpq;\emptyset;\tau} = \mathcal{T}_{ord}(\emptyset, \emptyset; M'; \mathcal{B}_2)\setminus \{ \infty\} ,$
        \item $ \displaystyle \bigcup_{\tau\in \mathcal{T}_{ord}(\emptyset, \{1\}; M; \mathcal{B}_1)\setminus \{ \infty\} } \TT{\cpq;\emptyset;\tau} \subset \mathcal{T}_{ord}(\emptyset, \{1\}; M'; \mathcal{B}_2)\setminus \{ \infty\} ,$
        \item $\displaystyle \bigcup_{\tau\in \mathcal{T}_{ord}(\{1\}, \{1\}; M; \mathcal{B}_1)\setminus \{ \infty\} } \TTT{\cpq;\{1\};\tau} \subset \mathcal{T}_{ord}(\{1\}, \{1\}; M'; \mathcal{B}_2)\setminus \{ \infty\} .$
    \end{enumerate}
\end{theorem}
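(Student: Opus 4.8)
The plan is to read all three statements through Corollary~\ref{corollary:gluingdetected}, whose three cases correspond exactly to the detection regimes $(\emptyset,\emptyset)$, $(\emptyset,\{1\})$, $(\{1\},\{1\})$ appearing in (1), (2), (3), combined with the translation between JN-realisability and detection on the cable space. First I would fix the bookkeeping: identify $\partial M$ with the inner torus $T_{2,1}$ of $\cpq$ via the gluing, so that $\mathcal{B}_1=\{h,-x_1\}$ is a common basis there and a real number $\tau$ names the horizontal slope $[\tau h - x_1]$ on both sides, while $\tau'$ names $[\tau' h - x_2]$ on $\partial M'=T_{2,2}$. With $\gamma=(q+s)/q$ as in Proposition~\ref{prop:tcpqinterval}, the dictionary I would record is: $\tau'\in\TT{\cpq;\emptyset;\tau}$ iff $(\emptyset;0;\gamma;\tau,\tau')$ is JN-realisable, which by Propositions~\ref{prop:jnimplyrepdetected} and~\ref{prop:detectedimplyJN} and Theorem~\ref{prop:equbetweenorddetectandrepdetect} is equivalent to order-detection of $(\emptyset,\{1,2\};[\tau h - x_1],[\tau' h - x_2])$ for $\cpq$; and $\tau'\in\TTT{\cpq;\{1\};\tau}$ iff $(\{1,2\};0;\gamma;\tau,\tau')$ is JN-realisable, equivalently $(\{1,2\},\{1,2\};[\tau h - x_1],[\tau' h - x_2])$ is order-detected. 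I would also note the elementary fact that, since O2 is vacuous when $K=\emptyset$ and O3 holds with $C=\{id\}$ when $J=\emptyset$, a tuple $(\emptyset,\emptyset;[\alpha_*])$ is order-detected precisely when some left-ordering realises the slopes $[\alpha_*]$.

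Next I would dispatch the containment ``$\subseteq$'' in (1), (2), (3) at once. Given $\tau$ in the relevant left-hand set and $\tau'$ in the corresponding set $\TT{\cpq;\emptyset;\tau}$ or $\TTT{\cpq;\{1\};\tau}$, the hypothesis $\tau\neq\infty$ makes all slopes horizontal. I would pick a left-ordering $\ooo$ of $\pi_1(M)$ detecting $(\emptyset,\emptyset;[\tau h - x_1])$, resp.\ $(\emptyset,\{1\};\cdot)$, resp.\ $(\{1\},\{1\};\cdot)$, and use the dictionary to pick a left-ordering $\ooo'$ of $\pi_1(\cpq)$ detecting $(\emptyset,\{1,2\};[\tau h-x_1],[\tau' h - x_2])$ in cases (1), (2) and $(\{1,2\},\{1,2\};\cdot)$ in case (3), discarding the index $1$ from $K$ in case (1). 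The basis conventions are chosen precisely so that the cabling gluing map identifies the slope named by $\tau$ on $\partial M$ with the one named by $\tau$ on $T_{2,1}$, so the hypotheses of Corollary~\ref{corollary:gluingdetected}(1), (2), (3) hold, and the corollary yields a left-ordering of $\pi_1(M')$ detecting $(\emptyset,\emptyset;[\tau' h - x_2])$, resp.\ $(\emptyset,\{1\};\cdot)$, resp.\ $(\{1\},\{1\};\cdot)$, on $\partial M'$; as $\tau'$ is finite this places $\tau'$ in $\mathcal{T}_{ord}(\cdot;M';\mathcal{B}_2)\setminus\{\infty\}$.

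The remaining reverse containment ``$\supseteq$'' in (1) is the one step where an ordering of $\pi_1(M')=\pi_1(M)*_{\pi_1(\partial M)}\pi_1(\cpq)$ must be split across the gluing torus, and I expect this to be the main obstacle. The plan here is: start with $\tau'\neq\infty$ such that $(\emptyset,\emptyset;[\tau' h - x_2])$ is order-detected on $\partial M'$ by some $\ooo'$; restrict $\ooo'$ to $\pi_1(\cpq)$ to get $\ooo_2$, whose $T_{2,2}$-slope is the horizontal slope $[\tau' h - x_2]$, and let $[\beta]$ be its $T_{2,1}$-slope. Because $\ooo_2$ realises the pair $([\beta],[\tau' h - x_2])$ it order-detects $(\emptyset,\emptyset;[\beta],[\tau' h - x_2])$, so Lemma~\ref{lemma:onenonhorizontalslopeimplies2} forces $[\beta]\neq[h]$ — this is where the cofinal-closure argument of that lemma, using the relators $y^q=h^{q+s}$ and $yx_1x_2=1$, does the work. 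Then Proposition~\ref{prop:detectedimplyJN}(2) followed by Proposition~\ref{prop:detectedimplyJN}(1), via Theorem~\ref{prop:equbetweenorddetectandrepdetect}, upgrades order-detection of the horizontal tuple $(\emptyset,\emptyset;[\beta],[\tau' h - x_2])$ to JN-realisability of $(\emptyset;0;\gamma;\tau_\beta,\tau')$, i.e.\ $\tau'\in\TT{\cpq;\emptyset;\tau_\beta}$, where $\tau_\beta\neq\infty$ is the $\mathcal{B}_1$-coordinate of $[\beta]$. Finally, restricting $\ooo'$ to $\pi_1(M)$ gives a left-ordering of $\pi_1(M)$ with slope $[\beta]$ on $\partial M$, which therefore detects $(\emptyset,\emptyset;[\beta])$, so $\tau_\beta\in\mathcal{T}_{ord}(\emptyset,\emptyset;M;\mathcal{B}_1)\setminus\{\infty\}$ and $\tau'$ lies in the left-hand union, giving the equality in (1). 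Throughout, the slope $\infty$ is excluded because it is handled separately in Proposition~\ref{prop:infinite_slope}.
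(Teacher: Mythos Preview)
Your proposal is correct and follows essentially the same approach as the paper's proof: the forward inclusions in all three parts go via the JN-realisability $\Rightarrow$ order-detection dictionary (Proposition~\ref{prop:jnimplyrepdetected} and Theorem~\ref{prop:equbetweenorddetectandrepdetect}) on $\cpq$ followed by the appropriate case of Corollary~\ref{corollary:gluingdetected}, and the reverse inclusion in (1) is obtained by restricting an ordering of $\pi_1(M')$ to both factors, invoking Lemma~\ref{lemma:onenonhorizontalslopeimplies2} to ensure horizontality on the inner torus, and then Proposition~\ref{prop:detectedimplyJN}(2),(1) to recover JN-realisability. Your observation that $(\emptyset,\emptyset;[\alpha_*])$-detection reduces to the bare slope condition is exactly what makes the restriction argument clean.
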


\begin{proof}
    Choose $s, r$ such that $ps+qr=1$ with $-q<s<0<r\leq p$.
    First, we prove (1). Let $\tau'\in \bigcup_{\tau\in \mathcal{T}_{ord}(\emptyset, \emptyset; M; \mathcal{B}_1)\setminus \{ \infty\} }\TT{\cpq;\emptyset;\tau} $ be given. Then we can find some $\tau \in \mathcal{T}_{ord}(\emptyset, \emptyset; M; \mathcal{B}_1)\setminus \{ \infty\} $ such that $\tau'\in \TT{\cpq;\emptyset;\tau}$. In other words, $(\emptyset;0; \frac{q+s}{q};\tau,\tau')$ is JN-realisable. By Proposition \ref{prop:jnimplyrepdetected} and Theorem \ref{prop:equbetweenorddetectandrepdetect}, $(\emptyset,\{1,2\};[\tau h -x_1],[\tau' h - x_2])$ is order-detected in $\pi_1(\cpq)$. Since $(\emptyset,\emptyset;[\tau h -x_1])$ is also order-detected in $\pi_1(M)$, $(\emptyset,\emptyset;[\tau' h- x_2])$ is order-detected in $\pi_1(M')$ by Corollary \ref{corollary:gluingdetected}(1). Therefore, $\tau'\in \mathcal{T}_{ord}(\emptyset, \emptyset; M'; \mathcal{B}_2)\setminus \{ \infty\} $.

    For the inclusion in the other direction in part (1), take $\tau'\in \mathcal{T}_{ord}(\emptyset, \emptyset; M'; \mathcal{B}_2)\setminus \{ \infty\} $. Then there exists a left-ordering $\ooo$ of $\pi_1(M')$ such that $[\mathcal{L}(\ooo|_{\pi_1(\partial M')})] = [\tau' h - x_2]$. Then the restriction $\ooo|_{\pi_1(\partial M)}$ determines a slope $[\alpha] \in \mathcal{S}(M)$. By Lemma \ref{lemma:onenonhorizontalslopeimplies2}, $[\alpha] = [\tau h -x_1]$ for some $\tau \in \mathbb{R}$. Thus, $(\emptyset, \emptyset; \tau') \in \mathcal{T}_{ord}(\emptyset, \emptyset ; M; \mathcal{B}_1)\setminus \{ \infty\} $ and $(\emptyset, \emptyset; [\tau h -x_1], [\tau' h - x_2])$ is order-detected in $\pi_1(\cpq)$ by $\ooo|_{\pi_1(\cpq)}$. By Propositions \ref{prop:equbetweenorddetectandrepdetect} and \ref{prop:detectedimplyJN}(2), $(\emptyset, \{1,2\}; [\tau h -x_1], [\tau' h - x_2])$ is representation-detected. But then by Proposition \ref{prop:detectedimplyJN}(1), $(\emptyset;0; \frac{q+s}{q};\tau,\tau')$ is JN-realisable. Hence $\tau'\in \TT{\cpq;\emptyset;\tau}$.

    To show part (2), let $\tau'\in \bigcup_{\tau\in \mathcal{T}_{ord}(\emptyset, \{1\}; M; \mathcal{B}_1)\setminus \{ \infty\} } \TT{\cpq;\emptyset;\tau} $. Then we can find some $\tau \in \mathcal{T}_{ord}(\emptyset, \{1\}; M; \mathcal{B}_1)\setminus \{ \infty\} $ such that $\tau'\in \TT{\cpq;\emptyset;\tau}$. In other words, $(\emptyset;0; \frac{q+s}{q}; \tau,\tau')$ is JN-realisable. By Proposition \ref{prop:jnimplyrepdetected} and Theorem \ref{prop:equbetweenorddetectandrepdetect}, $(\emptyset,\{1,2\};[\tau h -x_1],[\tau' h +x_2])$ is order-detected in $\pi_1(\cpq)$. Since $(\emptyset,\{1\};[\tau h -x_1])$ is also order-detected in $\pi_1(M)$, $(\emptyset,\{1\};[\tau' h- x_2])$ is order-detected in $\pi_1(M')$ by Corollary \ref{corollary:gluingdetected}(2). Therefore, $\tau'\in \mathcal{T}_{ord}(\emptyset, \{1\}; M'; \mathcal{B}_2)\setminus \{ \infty\} $.


    For part (3), we proceed as in the previous case. Take $\tau'\in \TTT{\cpq;\{1\};\tau}$ for some $\tau\in \mathcal{T}_{ord}(\{1\}, \{1\}; M; \mathcal{B}_1)\setminus \{ \infty\} $. Then $(\{1\}, \{1\}, [\tau h -x_1])$ is order-detected in $\pi_1(M)$ and by Proposition \ref{prop:jnimplyrepdetected} and Theorem \ref{prop:equbetweenorddetectandrepdetect} $(\{1,2\},\{1,2\};[\tau h -x_1],[\tau' h -x_2])$ is order-detected in $\pi_1(\cpq)$. Then apply Corollary \ref{corollary:gluingdetected}(3) to conclude.
\end{proof}

Under certain assumptions, we can improve the second containment in the previous theorem so as to become an equality.

\begin{corollary}\label{cor:specialcasedetect}
    With the same assumptions as in Theorem \ref{thm:cpqtord}, if $\mathcal{T}_{ord}(\emptyset,\emptyset;M;\mathcal{B}_1)\not=\RR \cup \{\infty\}$ or if $\mathcal{T}_{ord}(\emptyset,\emptyset;M;\mathcal{B}_1)=\mathcal{T}_{ord}(\emptyset,\{1\};M;\mathcal{B}_1)$, then \[ \bigcup_{\tau\in \mathcal{T}_{ord}(\emptyset, \{1\}; M; \mathcal{B}_1)\setminus \{ \infty\} } \TT{\cpq;\emptyset;\tau} = \mathcal{T}_{ord}(\emptyset, \{1\}; M'; \mathcal{B}_2)\setminus \{ \infty\} .\]
\end{corollary}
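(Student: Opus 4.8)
The inclusion of the left-hand side into the right-hand side is exactly Theorem \ref{thm:cpqtord}(2), so the entire content is the reverse inclusion, which I would reduce to a single equality about $M$ itself. Any left-ordering that order-detects $(\emptyset,\{1\};[\gamma])$ also order-detects $(\emptyset,\emptyset;[\gamma])$, since enlarging $K$ only strengthens O2 while O1 and O3 are unaffected when $J=\emptyset$; thus $\mathcal{T}_{ord}(\emptyset,\{1\};M';\mathcal{B}_2)\subseteq\mathcal{T}_{ord}(\emptyset,\emptyset;M';\mathcal{B}_2)$. Combining this with Theorem \ref{thm:cpqtord}(1) yields
\[
\mathcal{T}_{ord}(\emptyset,\{1\};M';\mathcal{B}_2)\setminus\{\infty\}\ \subseteq\ \bigcup_{\tau\in\mathcal{T}_{ord}(\emptyset,\emptyset;M;\mathcal{B}_1)\setminus\{\infty\}}\TT{\cpq;\emptyset;\tau},
\]
so it is enough to show that $\mathcal{T}_{ord}(\emptyset,\emptyset;M;\mathcal{B}_1)=\mathcal{T}_{ord}(\emptyset,\{1\};M;\mathcal{B}_1)$: with this equality the displayed union becomes the left-hand side of the corollary, and Theorem \ref{thm:cpqtord}(2) then provides the opposite containment.

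Under the second hypothesis of the corollary this equality holds by assumption. Under the first hypothesis, $\mathcal{T}_{ord}(\emptyset,\emptyset;M;\mathcal{B}_1)\neq\RR\cup\{\infty\}$, I would appeal to the structural description of order-detection on a knot manifold from \cite{BC23}: the set of weakly order-detected slopes of a knot manifold either coincides with the set of (regularly) order-detected slopes or equals all of $\RR\cup\{\infty\}$. Since it is assumed here to be a proper subset, the two sets agree, which is exactly what is required. The rest of the argument is bookkeeping with the implications among the three strengths of detection and the two parts of Theorem \ref{thm:cpqtord}.

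I expect the appeal to this knot-manifold dichotomy to be the essential --- and only non-routine --- step, and it is genuinely needed. Indeed, Proposition \ref{prop:tcpqinterval} shows that for $n\in\ZZ$ the set of $\tau$ with $-n\in\TT{\cpq;\emptyset;\tau}$ is exactly the interval $[n-1,n]$; so if $\mathcal{T}_{ord}(\emptyset,\emptyset;M;\mathcal{B}_1)$ were a closed arc of the form $[n,b]$ with $b>n$ whose endpoint $n$ were weakly but not regularly order-detected, then $-n$ would lie in the union over the weakly order-detected parameters but in no $\TT{\cpq;\emptyset;\tau}$ with $\tau$ regularly order-detected, so the two unions would differ and the asserted equality would fail. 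It is precisely the dichotomy that rules this out, by forcing every weakly order-detected slope to be order-detected once the weakly detected set is not all of $\RR\cup\{\infty\}$.
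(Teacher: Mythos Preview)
Your proof is correct and follows essentially the same approach as the paper: both reduce the reverse inclusion to the equality $\mathcal{T}_{ord}(\emptyset,\emptyset;M;\mathcal{B}_1)=\mathcal{T}_{ord}(\emptyset,\{1\};M;\mathcal{B}_1)$, and both invoke \cite[Theorem 1.2]{BC23} to obtain this equality from the first hypothesis. The only difference is packaging: the paper re-runs the restriction argument (take $\ooo$ on $\pi_1(M')$, restrict to $\pi_1(M)$ and to $\pi_1(\cpq)$, apply Proposition \ref{prop:detectedimplyJN}) directly, whereas you observe that this work is already encapsulated in the equality of Theorem \ref{thm:cpqtord}(1) together with the trivial containment $\mathcal{T}_{ord}(\emptyset,\{1\};M';\mathcal{B}_2)\subseteq\mathcal{T}_{ord}(\emptyset,\emptyset;M';\mathcal{B}_2)$. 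Your route is marginally slicker for exactly this reason.
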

\begin{proof}
    Note that $\mathcal{T}_{ord}(\emptyset,\emptyset;M;\mathcal{B}_1)\not=\RR \cup \{\infty\}$ implies $\mathcal{T}_{ord}(\emptyset,\emptyset;M;\mathcal{B}_1)=\mathcal{T}_{ord}(\emptyset,\{1\};M;\mathcal{B}_1)$ by \cite[Theorem 1.2]{BC23}. Thus, we assume that $\mathcal{T}_{ord}(\emptyset,\emptyset;M;\mathcal{B}_1)=\mathcal{T}_{ord}(\emptyset,\{1\};M;\mathcal{B}_1)$ in order to complete the proof. As Theorem \ref{thm:cpqtord}(2) holds, the proof will be finished if we show the reverse inclusion.

    Let $\tau'\in \mathcal{T}_{ord}(\emptyset, \{1\}; M'; \mathcal{B}_2)\setminus \{ \infty\}$. Then there is a left-ordering $\ooo$ of $\pi_1(M')$ with $[\mathcal{L}(\ooo|_{\pi_1(\partial M')})]=[\tau' h -x_2]$. The restriction $\ooo|_{\pi_1( M)}$, regarding $\pi_1(M)$ as a subgroup of $\pi_1(M')$, order-detects $(\emptyset,\emptyset;[\tau h-x_1])$ for some $\tau$ by Lemma \ref{lemma:onenonhorizontalslopeimplies2}. Since $\mathcal{T}_{ord}(\emptyset,\emptyset;M;\mathcal{B}_1)=\mathcal{T}_{ord}(\emptyset,\{1\};M;\mathcal{B}_1)$, we know that $(\emptyset,\{1\};[\tau h-x_1])$ is also order-detected. Similarly, the restriction $\ooo|_{\pi_1(\cpq)}$, regarding $\pi_1(\cpq)$ as a subgroup of $\pi_1(M')$, order-detects $(\emptyset,\emptyset;[\tau h-x_1],[\tau'h -x_2])$. By Proposition \ref{prop:detectedimplyJN}, $(\emptyset;0; \frac{q+s}{q};\tau,\tau')$ is JN-realisable. Therefore, $\tau'\in \TT{\cpq;\emptyset;\tau}$ with $\tau \in \mathcal{T}_{ord}(\emptyset, \{1\}; M; \mathcal{B}_1)\setminus \{ \infty\} $.
\end{proof}

In particular, if $\mathcal{T}_{ord}(\emptyset, \{1\}; M; \mathcal{B}_1)$ is known to be a proper subinterval of $\RR \cup \{\infty\}$, then the results of Section \ref{sec:cpqcalculations} allow us to compute the union $\bigcup_{\tau\in \mathcal{T}_{ord}(\emptyset, \{1\}; M; \mathcal{B}_1)} \TT{\cpq;\emptyset;\tau} $ and thus the set of detected slopes on the boundary of the manifold resulting from attaching a cable space.

\subsection{Cable knots in \texorpdfstring{$S^3$}{S3}}

We are able to give much more precise results in the case of cable knots in $S^3$. First, we require several technical computations of the intervals $\TT{\cpq;\emptyset;\tau}$ for specific values of $\tau$.

\begin{lemma}\label{lemma:brange}
    Suppose that $p\geq1$ and $q> 1$ are coprime and $ps+qr=1$ with $-q<s<0<r\leq p$. If $b$ is an integer satisfying $0\leq b\leq \frac{p}{q}$, then \[0<\min\{\frac{bs+r}{p-qb}, \frac{q+s}{q}\}\leq \frac{1}{2} <\max\{\frac{bs+r}{p-qb} ,\frac{q+s}{q}\}\leq 1.\] Moreover, $\frac{-s}{q}<\frac{(b-1)s+r}{p-q(b-1)}<\frac{bs+r}{p-qb}$.
\end{lemma}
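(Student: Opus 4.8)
The plan is to reduce the whole statement to the single B\'ezout identity $ps+qr=1$ together with two easy integrality facts: since $\gcd(p,q)=1$ and $q>1$, the number $p/q$ is not an integer, so the integer $b$ with $0\le b\le p/q$ actually satisfies $b<p/q$, whence $p-qb\ge 1$ and $p-q(b-1)=(p-qb)+q\ge q+1$; and since $s>-q$ with $s\in\ZZ$ we have $q+s\ge 1$. Throughout I write $D=p-qb\ge 1$, $u=\frac{bs+r}{D}$ and $v=\frac{q+s}{q}$.

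First I would handle the outer bounds. That $0<v<1$ is immediate from $-q<s<0$. For $u$, multiplying the numerator by $q$ and substituting $qr=1-ps$ rewrites $bs+r>0$ as $1+s(bq-p)>0$, which holds because $bq-p\le -1$ and $s<0$; the same substitution shows $u\le 1$ is equivalent to $(p-qb)(q+s)\ge 1$, which follows from $p-qb\ge 1$ and $q+s\ge 1$. This already gives $0<\min\{u,v\}$ and $\max\{u,v\}\le 1$.

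The core of the proof is the statement $\min\{u,v\}\le\tfrac12<\max\{u,v\}$, i.e.\ that $u$ and $v$ are neither both $\le\tfrac12$ nor both $>\tfrac12$. Setting $E=q+2s\in\ZZ$, a cross-multiplication using $qr=1-ps$ gives the two equivalences $v\le\tfrac12\iff E\le 0$ and $u\le\tfrac12\iff DE\ge 2$. If both were $\le\tfrac12$ we would need $E\le 0$ yet $DE\ge 2$, impossible since $D\ge 1$. If both were $>\tfrac12$ we would need $E\ge 1$ and $DE\le 1$, forcing $D=E=1$; but $D=1$ makes $u=bs+r$ a positive integer that is $\le 1$, hence equal to $1$, and subtracting $bs+r=1$ from $ps+qr=1$ while using $p=bq+1$ forces $s=1-q$, so $E=2-q\le 0$, contradicting $E=1$. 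I expect this exclusion of the ``both $>\tfrac12$'' case, where integrality and the B\'ezout relation must be combined, to be the only genuinely delicate point; the rest is linear algebra over $\QQ$. From ``not both $\le\tfrac12$'' and ``not both $>\tfrac12$'' exactly one of $u,v$ exceeds $\tfrac12$, so $\min\{u,v\}\le\tfrac12<\max\{u,v\}$.

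For the ``moreover'' clause I would set $x_k=\frac{ks+r}{p-kq}$, so the two middle quantities are $x_{b-1}$ and $x_b$, both with positive denominators $D_{b-1}=p-q(b-1)$ and $D_b=p-qb$ by the first paragraph. Cross-multiplying $\frac{-s}{q}<x_{b-1}$ and cancelling reduces it, via $qr=1-ps$, to $0<1$; and $x_b-x_{b-1}$ has numerator $sD_b+q(bs+r)=ps+qr=1>0$ over a product of positive denominators, so $x_{b-1}<x_b$. Combining gives $\frac{-s}{q}<x_{b-1}<x_b$, which is the claimed chain of inequalities.
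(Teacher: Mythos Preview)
Your proof is correct. The approach differs from the paper's in two noteworthy respects. First, the paper treats $p=1$ separately by direct computation and only argues the general case for $p,q\ge 2$; your argument via $D=p-qb\ge 1$ and $E=q+2s\in\ZZ$ is uniform and covers $p=1$ without special casing. Second, for the heart of the lemma---showing that $u=\frac{bs+r}{p-qb}$ and $v=\frac{q+s}{q}$ straddle $\tfrac12$---the paper proceeds via the identity $u+v=1+\frac{1}{q(p-qb)}>1$, which immediately gives $\max\{u,v\}>\tfrac12$, and then argues that if $u>\tfrac12$ the integrality of $2(bs+r)-(p-qb)$ forces $u-\tfrac12\ge\frac{1}{2(p-qb)}\ge\frac{1}{q(p-qb)}$, whence $v=1+\frac{1}{q(p-qb)}-u\le\tfrac12$. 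Your route, reducing both comparisons with $\tfrac12$ to sign/size conditions on the integer $DE$ and then eliminating the two bad configurations, is equally elementary and arguably more systematic; the paper's sum identity is perhaps more illuminating as to \emph{why} the two numbers must lie on opposite sides of $\tfrac12$. The treatment of the ``moreover'' clause is essentially identical in both: the paper phrases the second inequality as monotonicity of $x\mapsto\frac{xs+r}{p-qx}$ on $(-\infty,p/q)$, while you compute the difference directly, but the underlying computation---that the cross-multiplied numerator collapses to $ps+qr=1$---is the same.
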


\begin{proof}
    If $p=1$, then we have $s=1-q$ and $r=1$. The only possible choice of $b$ is $b=0$. The statements can be easily verified by direct calculations. So we may assume $p,q\geq 2$ and $-q<s<0<r< p$.

    Since $q(bs+r)+s(p-qb)=1$, $bs+r$ and $p-qb$ are coprime. Let us show the first set of inequalities. The assumption $-q<s<0$ gives $0<\frac{q+s}{q}<1$ immediately. Since $p,q\geq 2$ are coprime, $\frac{p}{q}$ is not integral and so $0\leq b \leq \frac{p-1}{q} < \frac{p}{q}$ and $p-qb>0$. It follows that $\frac{bs+r}{p-qb}> \frac{(\frac{p}{q})s +r}{p-qb}=\frac{1}{q(p-qb)}>0$. Therefore, we have $0<\min\{\frac{bs+r}{p-qb} ,\frac{q+s}{q}\}$. Observe that $\frac{bs+r}{p-qb}+\frac{q+s}{q}=1+\frac{1}{q(p-qb)}>1$. And hence $\frac{1}{2} <\max\{\frac{bs+r}{p-qb} ,\frac{q+s}{q}\}$.

    Next, suppose that $\frac{bs+r}{p-qb}>\frac{1}{2}$. Then $2(bs+r)-(p-qb)>0$ and so that $\frac{bs+r}{p-qb}-\frac{1}{2}= \frac{2(bs+r)-(p-qb)}{2(p-qb)}\geq \frac{1}{2(p-qb)}\geq \frac{1}{q(p-qb)}$. Therefore, we have $\frac{q+s}{q}=1+\frac{1}{q(p-qb)}-\frac{bs+r}{p-qb}\leq 1 +\frac{1}{q(p-qb)} - (\frac{1}{2}+\frac{1}{q(p-qb)})=\frac{1}{2}$. It follows that $\min\{\frac{bs+r}{p-qb}, \frac{q+s}{q}\}\leq \frac{1}{2}$. Finally, note that $\frac{bs+r}{p-qb}\leq 1$ if and only if $b\leq \frac{p-r}{s+q}$. The latter is true since $b \leq \frac{p-1}{q}$ and $\frac{p-1}{q}\leq \frac{p-r}{s+q}$ if and only if $1 \leq q+s$. This completes the proof of the first set of inequalities.

    For the first part of the last inequality, note that $\frac{-s}{q}<\frac{(b-1)s+r}{p-q(b-1)}$ is equivalent to $(-s)(p-q(b-1))<q((b-1)s+r)$, which reduces to $0< ps+qr =1$ and therefore holds. The second part follows from observing that the function $f(x) = \frac{xs+r}{p-qx}$ is increasing on $(\infty, \frac{p}{q})$.
\end{proof}

\begin{proposition}\label{prop:tpqspecialcase}
    Assume that $p\geq1$ and $q> 1$ are coprime integers and $r,s$ are chosen such that $ps+qr=1$ and $-q<s<0<r\leq p$. Let $b$ be an integer with $0\leq b\leq \frac{p}{q}$ and $\cpq$ be a cable space as defined in Section \ref{subsec:cpqnotations}. Then $0 < \frac{bs+r}{p-qb} \leq 1$ and \[\TT{\cpq;\emptyset;\frac{bs+r}{p-qb}} = [-1-\frac{1}{p-qb},-1].\] Moreover, if $p,q\geq 2$, then
    \[
        \TT{\cpq;\{1\};\frac{bs+r}{p-qb}}=
        \begin{cases}
            [-1-\frac{1}{p-q(b-1)},-1], & \frac{bs+r}{p-qb}<1  \\
            \{-\frac{2q+s}{q}\},        & \frac{bs+r}{p-qb}=1.
        \end{cases}
    \]

\end{proposition}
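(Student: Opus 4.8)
The plan is to substitute $\tau_0 := \frac{bs+r}{p-qb}$ into Proposition~\ref{prop:tcpqinterval} and into its $J=\{1\}$ analogue coming from Theorem~\ref{thm:rawcases}, then to pin down the resulting endpoints using the arithmetic already assembled in Lemma~\ref{lemma:brange}. The facts I would extract from that lemma are: $0<\tau_0\le 1$ (which gives at once the inequality $0<\frac{bs+r}{p-qb}\le 1$ in the statement); $p-qb\ge 1$, with $p-qb\ge 2$ whenever $\tau_0<1$ (since then $bs+r$ is a positive integer $<p-qb$); the identity $\tau_0+\gamma=1+\frac{1}{q(p-qb)}$ with $\gamma=\frac{q+s}{q}$, so that $\tau_0>1-\gamma=\frac{-s}{q}$; and the fact that $\frac{-s}{q}$ and $\tau_0$ are Farey neighbours, as $q(bs+r)+s(p-qb)=1$, with Farey mediant $\frac{(b-1)s+r}{p-q(b-1)}$. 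I would then split on whether $\tau_0=1$. If $\tau_0=1$ then $bs+r=p-qb$, and $q(bs+r)+s(p-qb)=1$ forces $p-qb=1$ and $q+s=1$; the $\bar\tau=0$ clause of Proposition~\ref{prop:tcpqinterval} gives $\TT{\cpq;\emptyset;1}=[-2,-1]=[-1-\frac{1}{p-qb},-1]$, and the integral-$\tau$ case recorded after Proposition~\ref{prop:tcpqintervalbehav} (a consequence of Theorem~\ref{thm:rawcases}) gives $\TT{\cpq;\{1\};1}=\{-1-\gamma\}=\{-\frac{2q+s}{q}\}$.

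Now suppose $\tau_0<1$, so that $\lfloor\tau_0\rfloor=0$, $\bar\tau_0=\tau_0>0$ and $\gamma+\bar\tau_0>1$; then Proposition~\ref{prop:tcpqinterval} gives $\TT{\cpq;\emptyset;\tau_0}=[\eta(\tau_0),-1]$, where $-\eta(\tau_0)-1$ is the largest value of $\frac{a_3}{m}$ over coprime integers $0<a<m$ and permutations $\{a_1,a_2,a_3\}$ of $\{a,m-a,1\}$ satisfying $\frac{-s}{q}<\frac{a_1}{m}$ and $1-\tau_0\le\frac{a_2}{m}$. I would show this maximum is $\frac{1}{p-qb}$. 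It is attained by $m=p-qb$, $a=bs+r$ (coprime by Lemma~\ref{lemma:brange}, with $0<a<m$ since $0<\tau_0<1$), $a_1=a$, $a_2=m-a$, $a_3=1$. For optimality, take any admissible triple: if $a_3\ge 2$ then the entry $1$ occupies $a_1$ or $a_2$, forcing respectively $m<q$ (from $\frac1m>\frac{-s}{q}$) or $m\le p-qb$ (from $\frac1m\ge 1-\tau_0$, using $(p-qb)-(bs+r)\ge 1$), so in both cases $\frac{m}{q(p-qb)}<\frac12$; combining $\frac{a_1}{m}>\frac{-s}{q}$ and $\frac{a_2}{m}\ge 1-\tau_0$ with $a_1+a_2=m+1-a_3$ and $\frac{-s}{q}+(1-\tau_0)=1-\frac{1}{q(p-qb)}$ yields $a_3-1<\frac{m}{q(p-qb)}<\frac12$, contradicting $a_3\ge 2$; hence $a_3=1$, so $a_1+a_2=m$ and $\frac{a_1}{m}\le\tau_0$, which places the reduced fraction $\frac{a_1}{m}$ in $(\frac{-s}{q},\tau_0]$, whose denominator is $\ge p-qb$ because $\frac{-s}{q},\tau_0$ are Farey neighbours; thus $\frac{a_3}{m}=\frac1m\le\frac{1}{p-qb}$. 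This gives $\TT{\cpq;\emptyset;\tau_0}=[-1-\frac{1}{p-qb},-1]$.

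For the $J=\{1\}$ statement, still with $\tau_0<1$ and $p,q\ge 2$, I would invoke the $J=\{1\}$ clause of Proposition~\ref{prop:tcpqinterval} provided by Theorem~\ref{thm:rawcases}, which likewise gives $\TT{\cpq;\{1\};\tau_0}=[\eta'(\tau_0),-1]$ with $-\eta'(\tau_0)-1$ the maximum of $\frac{a_3}{m}$ over the same triples, but now subject to the \emph{two strict} inequalities $\frac{-s}{q}<\frac{a_1}{m}$ and $1-\tau_0<\frac{a_2}{m}$. The dichotomy of the previous paragraph now kills $a_3\ge 2$ outright (the estimate gives $a_3-1<\frac12$), and for $a_3=1$ the fraction $\frac{a_1}{m}$ must lie \emph{strictly} between the Farey neighbours $\frac{-s}{q}$ and $\tau_0$, so $m\ge q+(p-qb)=p-q(b-1)$, with equality realised by the mediant triple $m=p-q(b-1)$, $a=(b-1)s+r$ (coprime by Lemma~\ref{lemma:brange}, with $0<a<m$ since $\frac{-s}{q}<\frac{a}{m}<\tau_0<1$). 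Hence the maximum is $\frac{1}{p-q(b-1)}$ and $\TT{\cpq;\{1\};\tau_0}=[-1-\frac{1}{p-q(b-1)},-1]$.

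The step I expect to be the main obstacle is the optimality half of both computations — ruling out admissible $(a,m;a_1,a_2,a_3)$ that would beat the explicit triple. The estimate above settles $a_3\ge 2$ cleanly, using only $q\ge 2$ and $p-qb\ge 2$, but care is needed to make sure the Farey-neighbour denominator bound really applies in the case $a_3=1$ (that $\gcd(a_1,m)=1$, and that $\frac{-s}{q}$, $\tau_0$, and the mediant $\frac{(b-1)s+r}{p-q(b-1)}$ are honest Farey neighbours, exactly as recorded in Lemma~\ref{lemma:brange}), and to verify that the advertised optimal triples actually obey the strict inequalities in the $J=\{1\}$ setting. The degenerate case $\tau_0=1$, and the verification that the relevant clauses of Theorem~\ref{thm:rawcases} produce closed intervals at the right-hand endpoint $-1$, are handled separately as indicated above.
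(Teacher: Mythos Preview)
Your argument is correct and reaches the same conclusion as the paper, but by a cleaner route. The paper performs a case split on which of $\gamma=\frac{q+s}{q}$ and $\tau_0=\frac{bs+r}{p-qb}$ lies in $(0,\tfrac12]$ (using Lemma~\ref{lemma:brange}), and in each case argues separately that the ``$1$'' in the permutation $\{a,m-a,1\}$ must land in the $a_3$ slot, after which it finds the minimal denominator $M$ by an explicit telescoping computation of the form $N=(p-qb)(\cdot)+q(\cdot)$. You collapse both halves of that case split into a single estimate: adding the two constraints and using $\frac{-s}{q}+(1-\tau_0)=1-\frac{1}{q(p-qb)}$ together with $m\le\max\{q-1,p-qb\}$ gives $a_3-1<\tfrac12$, killing $a_3\ge 2$ uniformly. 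Your $a_3=1$ step then repackages the paper's telescoping identity as the Farey-neighbour denominator bound for fractions in $\bigl(\frac{-s}{q},\tau_0\bigr]$ (respectively the open interval for $J=\{1\}$), which is exactly the same arithmetic but stated conceptually. What you gain is brevity and a transparent reason for the answers $p-qb$ and $p-q(b-1)$; what the paper's approach buys is that it never invokes Farey theory and keeps everything self-contained in the JN-realisability language.

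Two small points to tighten. First, your citation for the $\tau_0=1$, $J=\{1\}$ case is slightly off: the remark after Proposition~\ref{prop:tcpqintervalbehav} records $\TTT{\cpq;\{1\};\tau}=\{-\tau-\gamma\}$ for integral $\tau$, which concerns $J\cup\{r\}$ rather than $\TT{\cpq;\{1\};\tau}$. The correct justification (which the paper uses) is the reduction before Theorem~\ref{thm:integraltau}: for $\tau_0=1\in\ZZ$ with $1\in J$, JN-realisability of $(\{1\};0;\gamma;1,\tau')$ reduces to that of $(\emptyset;-1;\gamma;\tau')$, forcing $\tau'=-1-\gamma$. Second, you assert that the $J=\{1\}$ analogue of Proposition~\ref{prop:tcpqinterval} yields $\TT{\cpq;\{1\};\tau_0}=[\eta'(\tau_0),-1]$ when $\tau_0\in(0,1)$; this is true, but since the paper only states Proposition~\ref{prop:tcpqinterval} for $J=\emptyset$, you should spell out the one-line check from Theorem~\ref{thm:rawcases} (here $n+r_1=2$, $s_0=0$, $m_0=m_1=-1$, and $\gamma+\tau_0>1$ rules out $\TT\cap(-1,0)$ via part~(3)(b)).
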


\begin{proof}
    If $p=1$, then we have $s=1-q$ and $r=1$. The only possible choice of $b$ is $b=0$ and so $\frac{bs+r}{p-qb}=1$. And by Proposition \ref{prop:tcpqinterval}, we see that $\TT{\cpq;\emptyset;1}=[-2,-1]$. So the claim holds. Now, we assume $p,q\geq 2$ and so $-q<s<0<r< p$.

    From Lemma \ref{lemma:brange}, we see that $0< \frac{bs+r}{p-qb} \leq 1$ and that ${bs+r}$ and ${p-qb}$ are coprime.
    First, we consider the case $\frac{bs+r}{p-qb}=1$. It follows that $bs+r=p-qb=1$. By Proposition \ref{prop:tcpqinterval}, we have $\TT{\cpq;\emptyset; \frac{bs+r}{p-qb}} =[-2,-1]$, which is consistent with the formula $\TT{\cpq;\emptyset;\frac{bs+r}{p-qb}} = [-1-\frac{1}{p-qb},-1]$. On the other hand, by the discussion at the beginning of Appendix \ref{thm:integraltau}, $(\{1\};0;\frac{q+s}{q};\frac{bs+r}{p-qb},\tau')$ is JN-realisable if and only if $(\emptyset;-1;\frac{q+s}{q};\tau')$ is JN-realisable, which happens if and only if $\tau' + \frac{q+s}{s} = -1$, meaning $\tau'=-\frac{2q+s}{q}$, that is, $\TT{\cpq;\{1\};\frac{bs+r}{p-qb}}=\{-\frac{2q+s}{q}\}$. So, the proposition holds in this case.

    By Lemma \ref{lemma:brange}, the remaining cases are $0<\frac{q+s}{q}\leq \frac{1}{2}<\frac{bs+r}{p-qb}<1$ and $0<\frac{bs+r}{p-qb}\leq \frac{1}{2}<\frac{q+s}{q}<1$. Note that regardless of whether $J=\{1\}$ or $J=\emptyset$ we have $\{-1\}\subset \TT{\cpq;J;\frac{bs+r}{p-qb}}\subset (-2,0)$ by Theorem \ref{thm:rawcases}(1). Furthermore, since $\frac{bs+r}{p-qb}+\frac{q+s}{q}=1+\frac{1}{q(p-qb)}>1$, we have \[ \TT{\cpq;J;\frac{bs+r}{p-qb}}\cap (-1,0)=\emptyset \mbox{ and } \TT{\cpq;J;\frac{bs+r}{p-qb}}\cap (-2,-1)\not=\emptyset\] by Theorem \ref{thm:rawcases}(2)(b) and (3)(b). Therefore, we have $\TT{\cpq;J;\frac{bs+r}{p-qb}}\subset (-2,-1]$.

    Next for $J=\{1\}$ or $J=\emptyset$, we need to determine all possible $\tau'$ with $-2<\tau' < -1$ such $(J;0;\frac{q+s}{q};\frac{bs+r}{p-qb},\tau')$ is JN-realisable. Since $-2<\tau' < -1$, $(J;0;\frac{q+s}{q};\frac{bs+r}{p-qb},\tau')$ is JN-realisable if and only if $(J;2;\frac{q+s}{q};\frac{bs+r}{p-qb},2+\tau')$ is JN-realisable, which is equivalent to JN-realisability of $(J;1;1-\frac{q+s}{q};1-\frac{bs+r}{p-qb},-1-\tau')$ by Theorem \ref{thm:rawallpositive}(3). Moreover, by Theorem \ref{thm:rawallpositive}(4), $(J;1;1-\frac{q+s}{q};1-\frac{bs+r}{p-qb},-1-\tau')$ is JN-realisable if and only if there are coprime $A,N$ with $0<A<N$ and a permutation $\{A_1,A_2,A_3\}$ of $\{A,N-A,1\}$ such that

    \[1-\frac{q+s}{q}< \frac{A_1}{N},\ 1-\frac{bs+r}{p-qb} \leq \frac{A_2}{N}\ \mbox{ and } -1-\tau'\leq \frac{A_3}{N}\] if $J=\emptyset$; if $J=\{1\}$, then the second inequality is strict. We rewrite these inequalities as \[ \frac{q+s}{q}>1- \frac{A_1}{N},\ \frac{bs+r}{p-qb} \geq 1-\frac{A_2}{N}\ (\mbox{resp. }\frac{bs+r}{p-qb} > 1-\frac{A_2}{N})\ \mbox{ and } \tau'\geq -1 -\frac{A_3}{N}\]
    to prepare for the following case-by-case analysis.

    \begin{casesp}
        \item $0<\frac{q+s}{q}\leq \frac{1}{2}<\frac{bs+r}{p-qb}<1$.

        Since $0<\frac{q+s}{q}\leq \frac{1}{2}$ and $\frac{q+s}{q}>1- \frac{A_1}{N}$, we must have $A_1\not=1$. So without loss of generality by replacing $A$ with $N-A$, we may assume $A_1=N-A$. It follows that $\frac{q+s}{q}>\frac{A}{N}$. We claim that $A_2=A$ and $A_3=1$. To see this, suppose not, and proceed as follows:

        From $\frac{q+s}{q}>\frac{A}{N}$ and $\frac{bs+r}{p-qb}\geq 1-\frac{1}{N}$, we have $s>q(\frac{A-N}{N})$ and $bs+r\geq (p-qb)(\frac{N-1}{N})$. If $A\geq 2$, then \begin{align*}
            1=(p-qb)s+q(bs+r) & >q(p-qb)(\frac{A-N}{N})+q(p-qb)(\frac{N-1}{N}) \\
                              & = q(p-qb)(\frac{A-1}{N})                       \\
                              & \geq q(p-qb)\frac{1}{N}                        \\
                              & \geq q(p-qb)(1-\frac{bs+r}{p-qb})              \\
                              & = q((p-qb)-(bs+r)) >0.
        \end{align*}
        Since $q((p-qb)-(bs+r))$ is an integer, and we have $1> q((p-qb)-(bs+r)) >0$, this is a contradiction. Therefore, $A=1$. If $A=1$, then $\frac{bs+r}{p-qb}\geq 1-\frac{1}{N}$ becomes $\frac{bs+r}{p-qb}\geq 1-\frac{A}{N}$ as claimed (with strict inequality if $J=\{1\}$).

        \begin{casesp}
            \item $J=\emptyset$.
            In this case, whether or not $\tau' \in \TT{\cpq; \emptyset;\frac{bs+r}{p-qb}}$ is determined by whether or not we can find two coprime integers $A,N$ with $0<A<N$ and \[\frac{q+s}{q}>\frac{A}{N},\ \frac{bs+r}{p-qb} \geq 1-\frac{A}{N}\ \mbox{ and } \tau'\geq -1 -\frac{1}{N}. \]
            Therefore $\TT{\cpq; \emptyset;\frac{bs+r}{p-qb}}=[-1-\frac{1}{M},-1],$ where $M\geq 2$ is the smallest $N$ for which such a coprime pair $A, N$ exist. We claim $M=p-qb$ to complete the proof in this case. Recall that $0<\frac{q+s}{q}\leq \frac{1}{2}<\frac{bs+r}{p-qb}<1$ and $0\leq b\leq \frac{p-1}{q}<\frac{p}{q}$.

            Taking $A=(p-qb)-(bs+r)>0$ and $M=p-qb$ gives $\frac{bs+r}{p-qb}=1-\frac{A}{M}\geq 1-\frac{A}{M}$. Also note $\frac{q+s}{q} > \frac{A}{M}$ if and only if $\frac{-s}{q}<\frac{bs+r}{p-qb}$, which is true by Lemma \ref{lemma:brange}. We also have $0<A<M$ since $M-A=bs+r>0$, and we know that $A,M$ are coprime since $(s+q)M-qA=1$. To see any such $N$ satisfies $N\geq M=p-qb$ so that $M$ is minimal, we proceed as follows.

            The inequalities $\frac{q+s}{q}>\frac{A}{N},\ \frac{bs+r}{p-qb} \geq 1-\frac{A}{N}$ are equivalent to $\frac{q+s}{q}>\frac{A}{N}\geq \frac{(p-qb)-(bs+r)}{p-qb}$, which is also equivalent to \[(p-qb)(q+s)N>Aq(p-qb)\geq((p-qb)-(bs+r))qN.\]
            It follows that $0<(p-qb)(q+s)N-Aq(p-qb) = (p-qb)((q+s)N-Aq)$ and thus $p-qb\leq (p-qb)(q+s)N-Aq(p-qb)$. Next note that $0\leq Aq(p-qb)+((bs+r)-(p-qb))qN$, adding this inequality to the previous one we arrive at
            \[(Aq(p-qb)+((bs+r)-(p-qb))qN)+((p-qb)(q+s)N-Aq(p-qb))\geq p-qb.\]
            Since $(p-qb)s+q(bs+r)=1$, the left-hand side of the inequality is actually equal to $N$. This completes the proof of minimality.

            \item $J=\{1\}$.
            In this case, we investigate pairs of coprime integers $A,N$ with $0<A<N$ and \[\frac{q+s}{q}>\frac{A}{N},\ \frac{bs+r}{p-qb} > 1-\frac{A}{N}\ \mbox{ and } \tau'\geq -1 -\frac{1}{N}. \]
            As above, it follows that $\TT{\cpq; \{1\};\frac{bs+r}{p-qb}}=[-1-\frac{1}{M},-1],$ where $M\geq 2$ is the smallest choice of $N$ for which such a coprime pair $A, N$ exists. We claim $M=p-q(b-1)$.

            Firstly, take $A=(p-q(b-1))-((b-1)s+r)=((p-qb)-(bs+r))+(q+s)>0$ and $M=p-q(b-1)=(p-qb)+q>0$. Since $M-A=(b-1)s+r>0$ and $(s+q)M-qA=1$, we have $0<A<M$ and $M,A$ are coprime. Furthermore, we observe that $\frac{bs+r}{p-qb}>1-\frac{A}{M}$ and $\frac{q+s}{q}>\frac{A}{M}$ are equivalent to $\frac{bs+r}{p-qb}>1-\frac{A}{M}> \frac{-s}{q}$, and since $1-\frac{A}{M}=\frac{(b-1)s+r}{p-q(b-1)}$ these inequalities hold by Lemma \ref{lemma:brange}.

            To see that this choice of $M$ is minimal, we proceed as the same as in Subcase 1(i) and suppose that $A, N$ with $0<A<N$ are another coprime pair satisfying the inequalities $\frac{q+s}{q}>\frac{A}{N}$ and $\frac{bs+r}{p-qb} > 1-\frac{A}{N}$. These are equivalent to $\frac{q+s}{q}>\frac{A}{N}> \frac{(p-qb)-(bs+r)}{p-qb}$, which is also equivalent to \[(p-qb)(q+s)N>Aq(p-qb)>	((p-qb)-(bs+r))qN.\]
            It follows that $0<(p-qb)(q+s)N-Aq(p-qb) = (p-qb)((q+s)N-Aq)$ and $0< Aq(q-qb)+((bs+r)-(p-qb))qN = q(A(q-qb)+((bs+r)-(p-qb))N)$. We conclude $p-qb\leq (p-qb)(q+s)N-Aq(p-qb)$ and $q\leq Aq(q-qb)+((bs+r)-(p-qb))qN$. Adding these inequalities, we see that \[(Aq(p-qb)+((bs+r)-(p-qb))qN)+((p-qb)(q+s)N-Aq(p-qb))\geq p-qb+q.\]
            By using $(p-qb)s+q(bs+r)=1$, we see that the left-hand side is equal to $N$, so our choice of $M$ above is minimal.
        \end{casesp}
        \item $0<\frac{bs+r}{p-qb}\leq \frac{1}{2}<\frac{q+s}{q}<1$.

        Since $\frac{1}{2}\geq \frac{bs+r}{p-qb}$ and $\frac{bs+r}{p-qb}\geq 1-\frac{A_2}{N}$ with the inequality being strict if $J=\{1\}$, we must have $A_2\not=1$. We may assume $A_2=N-A$ by replacing $A$ by $N-A$ if necessary. Using exactly the same reasoning as in Case 1, we can assume $A_1=N$ and $A_3=1$. So, the problem reduces to determining the existence of coprime $A, N$ with $0<A<N$ such that \[ \frac{q+s}{q}>1- \frac{A}{N},\ \frac{bs+r}{p-qb} \geq \frac{A}{N}\ \mbox{ and } \tau'\geq -1 -\frac{1}{N},\] with the second inequality being strict if $J=\{1\}$. To complete the proof, we mimic Case 1.

        \begin{casesp}
            \item $J=\emptyset$.
            In this case, we aim to find two coprime integers $A,N$ with $0<A<N$ and \[\frac{q+s}{q}>1-\frac{A}{N},\ \frac{bs+r}{p-qb} \geq \frac{A}{N}\ \mbox{ and } \tau'\geq -1 -\frac{1}{N}, \]
            and with $N \geq 2$ minimal, so that $\TT{\cpq; \emptyset;\frac{bs+r}{p-qb}}=[-1-\frac{1}{N},-1]$. For clarity, we write $M$ for the minimal such integer and claim $M=p-qb$ to complete the proof in this case. 

            Let $M=p-qb>0$ and $A=bs+r>0$. We see that $0<A<M$ and $A,M$ are coprime. Moreover $\frac{bs+r}{p-bq}=\frac{A}{M}$ and $\frac{q+s}{q}>1-\frac{A}{M}$ by direct calculations and an application of Lemma \ref{lemma:brange}. To see that $M$ is minimal, we proceed as follows.

            The inequalities $\frac{q+s}{q}>1-\frac{A}{N}$ and $\frac{bs+r}{p-qb}\geq \frac{A}{N}$ are equivalent to $-\frac{s}{q}<\frac{A}{N} \leq \frac{bs+r}{p-qb}$, which is also equivalent to $-(p-qb)sN<Aq(p-qb)\leq q(bs+r)N$. It follows that $0<(p-qb)sN+Aq(p-qb)$, which factors as $(p-qb)(sN+qA)$, and thus $p-qb\leq (p-qb)sN+Aq(p-qb)$. Similarly, $0\leq q(bs+r)N-Aq(p-qb)$. Also note that $(p-qb)s+q(bs+r)=1$ and we finish the proof in this case by calculating \[N=(q(bs+r)N-Aq(p-qb))+((p-qb)sN+Aq(p-qb))\geq p-qb.\]

            \item $J=\{1\}$.
            In this case, we aim to find two coprime integers $A,N$ with $0<A<N$ and \[\frac{q+s}{q}>1-\frac{A}{N},\ \frac{bs+r}{p-qb} > \frac{A}{N}\ \mbox{ and } \tau'\geq -1 -\frac{1}{N}, \]
            and with $N \geq 2$ minimal, so that $\TT{\cpq; \{1\};\frac{bs+r}{p-qb}}=[-1-\frac{1}{N},-1]$. As before, we write $M$ for the minimal such integer and claim that $M=p-q(b-1)$.

            We proceed as in Subcase 2(i), however, $0<q(bs+r)N-Aq(p-qb)$ is now a strict inequality. Since it factors as $q((bs+r)N-A(p-qb))$, we have $q\leq (bs+r)qN-Aq(p-qb)$. Recalling that $(p-qb)s+q(bs+r)=1$, we compute \[N=(q(bs+r)N-A(q(p-qb)))+((p-qb)sN+Aq(p-qb))\geq p-qb+q.\]
        \end{casesp}
    \end{casesp}
\end{proof}



\begin{lemma}\label{lemma:brange2}
    Suppose that $p\geq1$ and $q> 1$ are coprime and $ps+qr=1$ with $-q<s<0<r\leq p$. If $b$ is an integer with $b\geq \frac{p}{q}$, then $b\geq \frac{r}{-s}>\frac{p}{q}$ and \[0\leq \min\{\frac{bs+r}{p-qb}, \frac{q+s}{q}\}< \frac{1}{2} \leq \max\{\frac{bs+r}{p-qb} ,\frac{q+s}{q}\}< 1.\] Moreover, if $\frac{bs+r}{p-qb}\not=0$, then $\frac{-s}{q}>\frac{bs+r}{p-qb}\geq \frac{-s}{q+1}$ and $b>\frac{r}{-s}$.
\end{lemma}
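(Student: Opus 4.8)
The plan is to follow the structure of the proof of Lemma~\ref{lemma:brange}, using throughout the master identity $q(bs+r)+s(p-qb)=1$ (which also shows $bs+r$ and $p-qb$ are coprime). First I would set $k=qb-p$ and record the elementary consequences of the hypotheses. Since $\gcd(p,q)=1$ and $q\geq 2$, the number $p/q$ is not an integer, so $b\geq p/q$ forces $b>p/q$ and hence $k$ is a positive integer; equivalently $p-qb=-k<0$. The master identity then reads $q(bs+r)=1+sk$, and since $-q<s<0$ forces $s\leq -1$ and $k\geq 1$, we get $1+sk\leq 1+s\leq 0$, so $bs+r\leq 0$, which is exactly $-sb\geq r$, i.e. $b\geq \frac{r}{-s}$. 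The strict inequality $\frac{r}{-s}>\frac{p}{q}$ is just a restatement of $rq+ps=1>0$. Finally, $\frac{bs+r}{p-qb}\neq 0$ is equivalent to $bs+r<0$ (since $bs+r\leq 0$), i.e. to $-sb>r$, i.e. to $b>\frac{r}{-s}$; this disposes of the last assertion.

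Next I would derive the closed formula
\[
\frac{bs+r}{p-qb}=\frac{-s}{q}-\frac{1}{kq},
\]
directly from $q(bs+r)=1+sk$ and $p-qb=-k$. Since $-sk\geq 1$ this gives $0\leq \frac{bs+r}{p-qb}<\frac{-s}{q}<1$, and combining with $\frac{q+s}{q}=1-\frac{-s}{q}$ yields the auxiliary identity $\frac{bs+r}{p-qb}+\frac{q+s}{q}=1-\frac{1}{kq}$, which lies in $[\tfrac12,1)$ because $kq\geq 2$. The heart of the ``moreover'' clause is a divisibility observation: as $bs+r$ is an integer, $q\mid 1+sk$, so $sk\equiv -1\pmod q$; since $sk\leq -1$, either $sk=-1$ (equivalently $bs+r=0$) or $sk\leq -q-1$, i.e. $-sk\geq q+1$. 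In the latter case $k\geq \frac{q+1}{-s}$, hence $\frac{1}{kq}\leq \frac{-s}{q(q+1)}$, and the formula above gives $\frac{bs+r}{p-qb}\geq \frac{-s}{q}-\frac{-s}{q(q+1)}=\frac{-s}{q+1}$, completing the chain $\frac{-s}{q}>\frac{bs+r}{p-qb}\geq\frac{-s}{q+1}$.

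It remains to prove the sandwich $0\leq\min<\tfrac12\leq\max<1$ for the pair $\{\frac{bs+r}{p-qb},\frac{q+s}{q}\}$, and here I would split on the sign of $2(-s)-q$. If $-s\leq q/2$, then $\frac{q+s}{q}=1-\frac{-s}{q}\geq\tfrac12$ bounds the maximum from below, while $\frac{bs+r}{p-qb}<\frac{-s}{q}\leq\tfrac12$ bounds the minimum from above. If $-s>q/2$, then $\frac{q+s}{q}<\tfrac12$, and I must instead show $\frac{bs+r}{p-qb}\geq\tfrac12$; in this regime $bs+r=0$ is impossible (it would force $s=-1$ and $k=1$, hence $q<2$), so the divisibility step gives $-sk\geq q+1$, and then $\frac{bs+r}{p-qb}\geq\tfrac12$ reduces to $k\bigl(2(-s)-q\bigr)\geq 2$, which follows from $2(-s)-q\geq 1$ together with a final split on $k\geq 2$ versus $k=1$ (the latter forcing $-s=-sk\geq q+1$, so $2(-s)-q\geq q+2\geq 4$). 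The bounds $\min\geq 0$ and $\max<1$ are then immediate from $0\leq\frac{bs+r}{p-qb}<\frac{-s}{q}<1$ and $0<\frac{q+s}{q}<1$.

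The only subtle point is the congruence $sk\equiv -1\pmod q$ and the way it upgrades ``$bs+r\neq 0$'' to ``$-sk\geq q+1$''; I do not anticipate any real obstacle beyond careful sign-tracking and the attention the small case $k=1$ requires (which is precisely where $bs+r$ can vanish), exactly as in Lemma~\ref{lemma:brange}.
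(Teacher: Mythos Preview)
Your proof is correct and follows essentially the same route as the paper's. The only stylistic difference is your introduction of $k=qb-p$ and the resulting closed formula $\frac{bs+r}{p-qb}=\frac{-s}{q}-\frac{1}{kq}$, together with the congruence $sk\equiv -1\pmod q$, which packages the paper's observation ``$bs+r$ is a negative integer, hence $bs+r\leq -1$'' in an equivalent but slightly more systematic form; the final case split on whether $-s\leq q/2$ is identical to the paper's.
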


\begin{proof}
    It is clear that $0<\frac{q+s}{q}<1$. From $ps+qr=1$, we have $\frac{r}{-s}=\frac{p}{q}+\frac{1}{q(-s)}$. But $b\geq \frac{p}{q}$ is an integer, so $b\geq \frac{p}{q}+\frac{1}{q}\geq \frac{p}{q}+\frac{1}{q(-s)}=\frac{r}{-s}$, since $p\geq1$ and $q> 1$ are coprime integers and $-q<s<0<r\leq p$. Also note that $bs+r$ and $p-qb$ are coprime.

    Observe that $\frac{bs+r}{p-qb} < \frac{-s}{q}$ holds, as it is equivalent to $0< s(p-qb)+q(bs+r)$, the right-hand side of which is equal to $1$. Thus
    $\frac{q+s}{q}+\frac{bs+r}{p-qb}< \frac{q+s}{q}+\frac{-s}{q}=1$ and it follows that \[0\leq \min\{\frac{bs+r}{p-qb}, \frac{q+s}{q}\}< \frac{1}{2} \quad \text{ and }\quad \max\{\frac{bs+r}{p-qb}, \frac{q+s}{q}\}<1.\]

    Note that $b\geq \frac{r}{-s}$ is an integer. If $b=\frac{r}{-s}$, then $s=-1$; it follows that $\frac{q+s}{q}\geq\frac{1}{2}$ and this is the only case where $\frac{bs+r}{p-qb}=0$, so the claimed inequalities of the lemma hold in this case. On the other hand, if $b>\frac{r}{-s}$, then $b\geq\frac{r+1}{-s}$. The inequality $\frac{bs+r}{p-qb} \geq \frac{-s}{q+1}$ reduces to $bs+r \leq -1$ upon using the identity $ps+qr=1$, and thus it holds. Therefore
    \[\max\{\frac{bs+r}{p-qb},\frac{q+s}{q}\} \,\geq\, \max\{\frac{-s}{q+1}, \frac{q+s}{q}\}.\]
    If $-s\leq \frac{q}{2}$, then $\frac{q+s}{q}\geq \frac{1}{2}$; if $\frac{q}{2}<-s$, then $\frac{q}{2} + \frac{1}{2}\leq-s$ and so $\frac{-s}{q+1}\geq\frac{1}{2}$. It follows that $\max\{\frac{q+s}{q},\frac{-s}{q+1}\}\geq \frac{1}{2}$ and hence \[\frac{1}{2} \leq \max\{\frac{bs+r}{p-qb} ,\frac{q+s}{q}\}.\]
\end{proof}

\begin{proposition}\label{prop:tpqspecialcase2}
    Assume $p\geq1$ and $q> 1$ are coprime integers and $r,s$ satisfy $ps+qr=1$ and $-q<s<0<r\leq p$. Let $b$ be an integer with $b\geq \frac{p}{q}$, and $\cpq$ be the cable space defined in Section \ref{subsec:cpqnotations}. Then \[\TT{\cpq;\emptyset;\frac{bs+r}{p-qb}} = [-1, -1+\frac{1}{bq-p}].\]
\end{proposition}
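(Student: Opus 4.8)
The plan is to follow the template of the proof of Proposition \ref{prop:tpqspecialcase}, now in the regime $b\geq p/q$, where the denominator $p-qb$ is negative. Write $\tau=\frac{bs+r}{p-qb}$ and $\gamma=\frac{q+s}{q}$, so that $1-\gamma=\frac{-s}{q}$ as in the proof of Proposition \ref{prop:tcpqinterval}. Since $\gcd(p,q)=1$ and $q>1$, the number $p/q$ is not an integer, so $b\geq p/q$ forces $bq-p\geq 1$; Lemma \ref{lemma:brange2} gives $b\geq\frac{r}{-s}$, hence $bs+r\leq 0$ and $0\leq\tau<1$, together with the dichotomy $\min\{\gamma,\tau\}<\tfrac12\leq\max\{\gamma,\tau\}<1$ and, when $\tau\neq0$, the inequalities $\frac{-s}{q+1}\leq\tau<\frac{-s}{q}=1-\gamma$. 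I would first dispose of the degenerate case $\tau=0$: here $bs+r=0$ forces $s=-1$, $r=b$ and $bq-p=1$, and $\TT{\cpq;\emptyset;0}=[-1,0]=[-1,-1+\tfrac{1}{bq-p}]$ follows from the integral-$\tau$ clause of Proposition \ref{prop:tcpqinterval} (equivalently Theorem \ref{thm:integraltau}). When $0<\tau<1$ we have $\lfloor\tau\rfloor=0$, $\bar\tau=\tau$ and $\gamma+\bar\tau<1$, so Proposition \ref{prop:tcpqinterval} applies in its second case: $\TT{\cpq;\emptyset;\tau}=[-1,\xi(\tau)]$, where, reading off the proof of that proposition via Theorem \ref{thm:rawallpositive}(4), $\xi(\tau)+1$ is the largest value of $\tfrac{a_3}{m}$ over all coprime integers $0<a<m$ and permutations $(a_1,a_2,a_3)$ of $(a,m-a,1)$ satisfying $\gamma<\tfrac{a_1}{m}$ and $\tau\leq\tfrac{a_2}{m}$. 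It thus suffices to show this maximum equals $\tfrac{1}{bq-p}$.

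For the inequality $\xi(\tau)+1\geq\tfrac{1}{bq-p}$ I would exhibit the witness $m=bq-p$, $a=-(bs+r)$, with $(a_1,a_2,a_3)=(m-a,\,a,\,1)$: indeed $\gcd(a,m)=\gcd(bs+r,bq-p)=1$ by Lemma \ref{lemma:brange2}; the identity $q(m-a)=(q+s)m+1$ (a rearrangement of $ps+qr=1$) shows $0<a<m$ and $\tfrac{m-a}{m}=\gamma+\tfrac{1}{q(bq-p)}>\gamma$; and $\tfrac{a}{m}=\tau$, so the constraint $\tau\leq\tfrac{a_2}{m}$ holds with equality while $\tfrac{a_3}{m}=\tfrac{1}{bq-p}$.

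The heart of the argument, and the step I expect to be the main obstacle, is the matching upper bound: every admissible triple satisfies $\tfrac{a_3}{m}\leq\tfrac{1}{bq-p}$. The dichotomy from Lemma \ref{lemma:brange2} is decisive: exactly one of $\gamma,\tau$ is $\geq\tfrac12$, and for $m\geq3$ the inequality in which it occurs forces the corresponding position $a_1$ or $a_2$ to equal $\max\{a,m-a\}$; the only admissible configuration with $m\leq2$ is $m=2,a=1$, which occurs only when $\tau=\tfrac12$, forcing $bq-p=2$ and $\tfrac{a_3}{m}=\tfrac12=\tfrac{1}{bq-p}$. So for $m\geq3$, after possibly replacing $a$ by $m-a$, I may assume $\max\{a,m-a\}=m-a$, leaving $a_3\in\{1,a\}$.

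If $a_3=1$, the remaining two inequalities confine $\tfrac{a}{m}$ to a half-open interval whose endpoints form a Farey-neighbour pair: $\bigl(\tfrac{-(bs+r)}{bq-p},\tfrac{-s}{q}\bigr)$ when $\gamma\geq\tfrac12$ (so $\tau\leq\tfrac am<\tfrac{-s}{q}$) and $\bigl(\tfrac{q+s}{q},\tfrac{b(q+s)-(p-r)}{bq-p}\bigr)$ when $\tau\geq\tfrac12$ (so $\gamma<\tfrac am\leq 1-\tau$); one verifies by cross-multiplication that each pair consists of reduced fractions of unimodular cross-product differing by $\tfrac{1}{q(bq-p)}$, so the mediant property forces $m\geq bq-p$ and hence $\tfrac{a_3}{m}=\tfrac1m\leq\tfrac{1}{bq-p}$. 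If instead $a_3=a$ (so $a\geq2$), I would derive a contradiction: the constraint $\tfrac am\leq(\text{the smaller of }\gamma,\tau)+\tfrac{1}{q(bq-p)}$ combined with the fact that this smaller quantity is $<\tfrac1m$ (resp. $\leq\tfrac1m$) gives $\tfrac{a-1}{m}<\tfrac{1}{q(bq-p)}$, whence $m>q(bq-p)\geq q$; but the same inequality together with $\gamma\geq\tfrac1q$ (from $q+s\geq1$), respectively $\tau\geq\tfrac{-s}{q+1}$, forces $m<q$, respectively $m\leq q+1$ (so $m=q+1$ and $bq-p=1$), each contradicting $0<\tau<1$. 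Combining the two bounds yields $\xi(\tau)+1=\tfrac{1}{bq-p}$, which is the assertion.
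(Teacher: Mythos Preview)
Your proof is correct and follows the same overall architecture as the paper's: reduce via Proposition~\ref{prop:tcpqinterval} to the case $0<\tau<1$ with $\gamma+\tau<1$, translate the question into the search problem of Theorem~\ref{thm:rawallpositive}(4), split on which of $\gamma,\tau$ is $\geq\tfrac12$ (using Lemma~\ref{lemma:brange2}), and in each branch reduce to the situation $a_3=1$. The witness $m=bq-p$, $a=-(bs+r)$ is exactly the paper's.

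The two proofs differ in how the subcases are dispatched. For the main bound $m\geq bq-p$ when $a_3=1$, the paper argues by direct algebraic manipulation of the two inequalities (multiplying out and combining using $ps+qr=1$), whereas you recognise that the constraints trap $\tfrac{a}{m}$ in a half-open interval whose endpoints are Farey neighbours with denominators $q$ and $bq-p$, so the mediant property gives the bound at once. This is a cleaner and more conceptual shortcut. For ruling out the residual case (your $a_3=a\geq2$, the paper's ``$A_2=1$'' or ``$A_1=1$'' subcases), the paper shows directly that one is forced into $A=1$ or $A=N-1$, collapsing the subcase; you instead exploit the identity $\gamma+\tau=1-\tfrac{1}{q(bq-p)}$ to obtain competing bounds $m>q(bq-p)$ and $m\leq q+1$, and then chase the surviving possibility $m=q+1$, $bq-p=1$ down to $s=-1$ and hence $\tau=0$. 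Both routes work; yours is more compressed, and the final clause ``each contradicting $0<\tau<1$'' deserves a sentence of unpacking, since in the $\tau\geq\tfrac12$ branch the contradiction is simply $m<q$ versus $m>q$, while it is only the $\gamma\geq\tfrac12$ branch that actually invokes $\tau>0$.
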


\begin{proof}

    Lemma \ref{lemma:brange2} applies here. We see that $0\leq \frac{bs+r}{p-qb}<1$. Moreover, $\frac{bs+r}{p-qb}=0$ occurs only when $s=-1$ and $b=\frac{r}{-s}=r$. If this is the case, then $\TT{\cpq;\emptyset;\frac{bs+r}{p-qb}}=[-1,0]$ by Proposition \ref{prop:tcpqinterval} and so the statement holds since $bq-p=ps+qr=1$.

    By Lemma \ref{lemma:brange2} we are left to consider the cases $0<\frac{bs+r}{p-qb}<\frac{1}{2}\leq \frac{q+s}{q}<1$ and $0<\frac{q+s}{q}<\frac{1}{2}\leq \frac{bs+r}{p-qb}<1$. In either case, $bs+r$ and $p-qb$ are coprime and $\{-1\}\subset \TT{\cpq;\emptyset;\frac{bs+r}{p-qb}} \subset [-1,0]$ by Proposition \ref{prop:tcpqinterval}, since $\frac{bs+r}{p-qb}<\frac{-s}{q}$ by Lemma \ref{lemma:brange2}. Hence, we need to determine all possible $-1<\tau'<0$ such that $(\emptyset;0;\frac{q+s}{q};\frac{bs+r}{p-qb},\tau')$ is JN-realisable by the definition of $\TT{\cpq;\emptyset;\frac{bs+r}{p-qb}}$. Since $-1<\tau'<0$, $(\emptyset;0;\frac{q+s}{q};\frac{bs+r}{p-qb},\tau')$ is JN-realisable if and only if $(\emptyset;1;\frac{q+s}{q};\frac{bs+r}{p-qb},1+\tau')$ is JN-realisable from the discussion at the beginning of the Appendix. Moreover, by Theorem \ref{thm:rawallpositive}(4), $(\emptyset;1;\frac{q+s}{q};\frac{bs+r}{p-qb},1+\tau')$ is JN-realisable if and only if there are coprime numbers $A,N$ with $0<A<N$ and a permutation $\{A_1, A_2,A_3\}$ of $\{A, N-A, 1\}$ such that
    \[\frac{q+s}{q}<\frac{A_1}{N},\ \frac{bs+r}{p-bq}\leq \frac{A_2}{N}\ \text{ and }\ 1+\tau'\leq \frac{A_3}{N}.\]

    \begin{casesp}
        \item $0<\frac{bs+r}{p-qb}<\frac{1}{2}\leq \frac{q+s}{q}<1$.

        Since $\frac{1}{2}\leq \frac{q+s}{q} <\frac{A_1}{N}$, we must have $A_1\not=1$ and $N>2$. We may assume $A_1=N-A$ by replacing $A$ by $N-A$ if necessary.

        \begin{casesp}
            \item\label{case:1.i} $A_2=1$.
            In this case, we aim to find two coprime integers $A,N$ with $0<A<N$ and \[ \frac{q+s}{q}<\frac{N-A}{N},\ \frac{bs+r}{p-bq}\leq \frac{1}{N}\ \text{ and }\ 1+\tau'\leq \frac{A}{N}.\] The first inequality $\frac{q+s}{q}<\frac{N-A}{N}$ is equivalence to $\frac{-s}{q}>\frac{A}{N}$. By Lemma \ref{lemma:brange2}, we have $\frac{1}{N} \geq \frac{bs+r}{p-bq}\geq \frac{-s}{q+1}$. Therefore, $\frac{-s}{q}>\frac{-s}{q+1}A$ and so $A<\frac{q+1}{q}$. The only possible choice is $A=1$ and so the next subcase will finish the proof of Case 1.
            \item\label{case:1.ii} $A_3=1$.
            In this case, we aim to find two coprime integers $A,N$ with $0<A<N$ and \[ \frac{q+s}{q}<\frac{N-A}{N},\ \frac{bs+r}{p-bq}\leq \frac{A}{N}\ \text{ and }\ 1+\tau'\leq \frac{1}{N},\] and with $N$ minimal, so $\TT{\cpq;\emptyset;\frac{bs+r}{p-qb}}=[-1,-1+\frac{1}{N}]$. As before we write $M$ for the minimal such integer and prove $M=qb-p$ to complete the proof in this case. Recall that from Lemma \ref{lemma:brange2}, $0<\frac{-s}{q+1}\leq \frac{bs+r}{p-bq}<\frac{-s}{q}$.

            Firstly, note taking $A=-(bs+r)$ and $M=qb-p$ gives $\frac{bs+r}{p-bq} =\frac{A}{M}\leq \frac{A}{M}$ and $\frac{q+s}{q}<\frac{M-A}{M}$, since $\frac{bs+r}{p-bq}<\frac{-s}{q}$. Also $0<A<M$ and $A,M$ are coprime. To see that any such $N$ satisfies $N\geq M=qb-p$ so that $M$ is minimal, we proceed as follows.

            The inequalities $\frac{q+s}{q}<\frac{N-A}{N}$ and $\frac{bs+r}{p-bq}\leq \frac{A}{N}$ are equivalent to $\frac{bs+r}{p-bq}\leq\frac{A}{N}<\frac{-s}{q}$. Observing that both $bs+r$ and $p-qb$ are negative, we arrive at $Nq(bs+r)\geq Aq(p-qb)$ and $sN(p-qb)>-Aq(p-qb)$. Since both $sN(p-qb)$ and $-Aq(p-qb)$ are multiples of $(p-qb)$, we have $sN(p-qb)\geq -Aq(p-qb)+(qb-p)$. Since $(p-qb)s+q(bs+r)=1$, we see that \[N=Ns(p-qb)+Nq(bs+r)\geq -Aq(p-qb)+(qb-p)+Aq(p-qb)=qb-p.\]
        \end{casesp}

        \item $0<\frac{q+s}{q}<\frac{1}{2}\leq \frac{bs+r}{p-qb}<1$.

        If $A_2=1$, then $\frac{1}{2}\leq \frac{bs+r}{p-qb}\leq \frac{1}{N}$ implies $N=2$. Moreover, $N=2$ and $A=1$ is a possible pair if and only if $\frac{bs+r}{p-qb}=\frac{1}{2}$. If this is the case, $bs+r=-1$ and $p-qb=-2$, since $bs+r$ and $p-qb$ are coprime. Moreover $N=2$ is the minimal $N$ satisfying the desired inequalities, so $\TT{\cpq;\emptyset;\frac{bs+r}{p-qb}} = [-1, -\frac{1}{2}]$. This agrees with the formula $\TT{\cpq;\emptyset;\frac{bs+r}{p-qb}} = [-1, -1+\frac{1}{bq-p}]$.

        Having dealt with $A_2=1$, we may assume $A_2=A$ by replacing $A$ by $N-A$ if necessary.
        \begin{casesp}
            \item $A_1=1$. In this case, we aim to find two coprime integers $A,N$ with $0<A<N$ and \[ \frac{q+s}{q}<\frac{1}{N},\ \frac{bs+r}{p-bq}\leq \frac{A}{N}\ \text{ and }\ 1+\tau'\leq \frac{N-A}{N}.\]
            Since $-q<s<0$, we have $\frac{1}{q}\leq \frac{q+s}{q} <\frac{1}{N}$. Therefore, $N<q$. Lemma \ref{lemma:brange2} says $\frac{bs+r}{p-bq}\geq \frac{-s}{q+1}$, which gives us \[\frac{q}{q+1}=\frac{q+s}{q+1}+\frac{-s}{q+1}<\frac{q+s}{q}+\frac{bs+r}{p-qb} < \frac{1}{N}+\frac{A}{N}.\] So if $A\leq N-2$, then we have \[N<q \ \text{ and }\ \frac{q}{q+1}<\frac{A+1}{N} \leq \frac{N-1}{N},\] which cannot be simultaneously true. The only possibility is that $A=N-1$ and so we can resolve Case 2 by addressing the following subcase.
            \item $A_1=N-A$. In this case, we aim to find two coprime integers $A,N$ with $0<A<N$ and \[ \frac{q+s}{q}<\frac{N-A}{N},\ \frac{bs+r}{p-bq}\leq \frac{A}{N}\ \text{ and }\ 1+\tau'\leq \frac{1}{N},\] and with $N$ minimal, so that $\TT{\cpq;\emptyset;\frac{bs+r}{p-qb}}=[-1,-1+\frac{1}{N}]$. The rest of the proof is exactly the same as Subcase 1(ii).
        \end{casesp}
    \end{casesp}
\end{proof}

\begin{theorem}\label{thm:torusknot}
    Assume $p\geq1$ and $q> 1$ are coprime integers and $K$ is a knot in $S^3$. Let $M=S^3-n(K)$ and $M'=S^3-n(\cpq(K))$ denote the knot complements of $K$ and its cable $\cpq(K)$, respectively. Fix bases $\mathcal{B}_K=\{\mu,\lambda\}$ and $\mathcal{B}_C=\{\mu_C,\lambda_C\}$ of $\pi_1(\partial M)$ and $\pi_1(\partial M')$, respectively, as explained in Section \ref{subsec:cpqnotations}. Then, we have:
    \begin{enumerate}
        \item If $K$ is the unknot and $p,q\geq 2$, then $\cpq(K)$ is the torus knot $T_{p,q}$ and we have \[\mathcal{T}_{ord}(\emptyset,\{1\};M';\mathcal{B}_C)=[-\infty,pq-p-q] \] and \[ \mathcal{T}_{ord}(\{1\},\{1\};M';\mathcal{B}_C)=(-\infty,pq-p-q).\]
        \item If $K$ is a nontrivial knot, then $[-\infty,pq-p]\subset \mathcal{T}_{ord}(\emptyset,\{1\};M';\mathcal{B}_C).$ Moreover,
              \begin{enumerate} \item If $2g(K)-1<\frac{p}{q}$, then \[[-\infty,pq-p+(2g(K)-1)q]\subset \mathcal{T}_{ord}(\emptyset,\{1\};M';\mathcal{B}_C);\]
                  \item if $2g(K)-1>\frac{p}{q}$, then\[ [pq-p+ (2g(K)-1)q,\infty]\subset\mathcal{T}_{ord}(\emptyset,\{1\};M';\mathcal{B}_C).\]
              \end{enumerate}
    \end{enumerate}
\end{theorem}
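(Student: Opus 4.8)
The proof of Theorem~\ref{thm:torusknot} is an exercise in pushing known sets of detected slopes on $\partial M$ through the cable-space gluing machinery of Theorem~\ref{thm:cpqtord}, while tracking carefully the change of basis $\mathcal{B}_K \to \mathcal{B}_1$ on the inner torus and $\mathcal{B}_2 \to \mathcal{B}_C$ on the outer torus. The first step is to translate the known detected slopes of the input manifold $M$ from the $\mathcal{B}_K$ basis into the $\mathcal{B}_1$ basis using the homeomorphism $f(x) = \frac{sx+r}{-qx+p}$ from Section~\ref{subsec:slopetransition}, so that Theorem~\ref{thm:cpqtord} becomes applicable. For part (1), the unknot complement $M = D^2\times S^1$ has $\mathcal{T}_{ord}(\emptyset,\{1\};M;\mathcal{B}_K) = \RR\cup\{\infty\}$ in the $\{\mu,\lambda\}$ basis (every slope other than the meridian is detected on a solid torus, and a short exact sequence argument via $\pi_1(D^2\times S^1)\to\ZZ$ handles detection); after applying $f$, which is a bijection of $\RR\cup\{\infty\}$, we still get all slopes in the $\mathcal{B}_1$ basis. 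For part (2), the crucial input is $[-\infty, 2g(K)-1] \subset \mathcal{T}_{ord}(\emptyset,\{1\};M;\mathcal{B}_K)$ for a nontrivial knot in $S^3$, which follows from the fact that $\pi_1(S^3 - n(K))$ admits left-orderings detecting these slopes---this is the known lower bound on detected slopes of knot manifolds in $S^3$, e.g.\ via \cite{BC23}, \cite{CW11}, or the fibered/genus bound arguments. Lemma~\ref{lemma:bktob1} then tells us exactly what interval this becomes in the $\mathcal{B}_1$ basis, splitting into the cases $2g(K)-1 \le p/q$ and $2g(K)-1 > p/q$.

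**Carrying it out.** With the $\mathcal{B}_1$-basis interval of detected $\tau$-values on $\partial M$ in hand, I would invoke Theorem~\ref{thm:cpqtord}(2) (and Corollary~\ref{cor:specialcasedetect} when an equality is wanted) to conclude that $\bigcup_{\tau} \TT{\cpq;\emptyset;\tau} \subset \mathcal{T}_{ord}(\emptyset,\{1\};M';\mathcal{B}_2)$, where the union runs over $\tau$ in the $\mathcal{B}_1$-detected set. The next step is to compute this union explicitly using the ``inch-worm'' results of Section~\ref{sec:cpqcalculations}: Propositions~\ref{prop:tcpqinterval}, \ref{prop:tcpqintervalbehav}, and the displayed formulas for $\bigcup_{\tau\ge\tau_1}\TT{\cpq;\emptyset;\tau}$ and $\bigcup_{\tau\le\tau_2}\TT{\cpq;\emptyset;\tau}$, together with the precise endpoint computations in Propositions~\ref{prop:tpqspecialcase} and \ref{prop:tpqspecialcase2} evaluated at $\tau = f(2g(K)-1) = \frac{(2g(K)-1)s + r}{p - q(2g(K)-1)}$ (and at $\tau = f(\infty) = -s/q$, $\tau = f(-\infty) = -s/q$ for the endpoints at infinity, handled via Proposition~\ref{prop:infinite_slope} and Lemma~\ref{lemma:nonhorizontalslope}). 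Finally, I would translate the resulting interval of $\tau'$-values from the $\mathcal{B}_2$ basis into the $\mathcal{B}_C$ basis using the homeomorphism $g(x) = pq + \frac{1}{x+1}$ and Lemma~\ref{lemma:b2tobc}, which is precisely engineered so that $g$ of the relevant endpoint $-1 - \frac{1}{p-qb}$ (resp.\ $-1+\frac{1}{bq-p}$) comes out to $pq - p + qb$ with $b = 2g(K)-1$; this is where the clean answer $pq - p + (2g(K)-1)q$ emerges. For the genus bound in the torus knot case (part (1)), one uses that $g(T_{p,q}) = \frac{(p-1)(q-1)}{2}$, so $2g(K)-1 = pq - p - q$ translated through $g$ becomes the answer $pq - p - q$, with the strong-detection statement following from Theorem~\ref{thm:cpqtord}(3) and the $\TTT{\cpq;\{1\};\tau}$ computations (the ``interior'' remark after Corollary~\ref{cor:tcpqbehav}).

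**The main obstacle.** The genuine difficulty is bookkeeping: getting the two changes of basis to compose correctly so that the endpoint $2g(K)-1$ in the $\mathcal{B}_K$ basis lands on the advertised $pq - p + (2g(K)-1)q$ in the $\mathcal{B}_C$ basis, and simultaneously keeping track of which side of the inch-worm is moving and whether the interval is $[-\infty, \cdot]$ or $[\cdot, \infty]$ depending on the sign of $2g(K)-1 - p/q$. In particular one must verify that the integer $b = 2g(K)-1$ satisfies the hypotheses of Proposition~\ref{prop:tpqspecialcase} (namely $0 \le b \le p/q$) in the first case and of Proposition~\ref{prop:tpqspecialcase2} (namely $b \ge p/q$) in the second case, and that the specific value $f(2g(K)-1)$ is exactly of the form $\frac{bs+r}{p-qb}$ so that those propositions apply verbatim---this identity $f(n) = \frac{ns+r}{p-qn}$ is immediate from the formula for $f$, but it is the linchpin. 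A secondary subtlety is justifying the equality (as opposed to mere containment) in part (1) and in the torus knot genus statement: for the unknot this requires knowing $\mathcal{T}_{ord}(\emptyset,\{1\};M;\mathcal{B}_K) = \RR\cup\{\infty\}$ exactly, and then Corollary~\ref{cor:specialcasedetect} does not directly apply (its hypothesis fails), so one instead argues the reverse containment directly via Lemma~\ref{lemma:onenonhorizontalslopeimplies2} and Proposition~\ref{prop:detectedimplyJN} as in the proof of Theorem~\ref{thm:cpqtord}(1), combined with the fact that $\cpq(\text{unknot})$ is the Seifert-fibered torus knot complement whose detected slopes are governed entirely by JN-realisability. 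The nontrivial-knot statements in part (2) are only containments, so no reverse inclusion is needed there and the argument is purely a forward push through Theorem~\ref{thm:cpqtord}(2) and the interval computations.
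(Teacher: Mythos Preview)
Your approach to part~(2) is close to the paper's in spirit, but you overstate the input: you assume $[-\infty,2g(K)-1]\subset\mathcal{T}_{ord}(\emptyset,\{1\};M;\mathcal{B}_K)$ for every nontrivial knot $K$, which is not established in the references you cite and is not what the paper uses. The paper only needs (and only invokes, via \cite{BGYpreprinta}) the \emph{two single slopes} $0$ and $2g(K)-1$; pushing each of these through Theorem~\ref{thm:cpqtord}(2), Propositions~\ref{prop:tpqspecialcase}/\ref{prop:tpqspecialcase2}, and Lemma~\ref{lemma:b2tobc} already yields the claimed containments. Your interval-based computation would give the same answer by the inch-worm monotonicity, so this is easily repaired by shrinking your input to those two points.

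For part~(1) there is a genuine gap. Your plan begins by applying Theorem~\ref{thm:cpqtord} with $M=D^2\times S^1$, but that theorem requires $M$ to be a knot manifold, and the solid torus is explicitly excluded (its boundary torus is compressible, so the detection framework and even the slope map $s$ are not defined for it). The paper's route is different: it observes that $M'=\cpq\cup(D^2\times S^1)$ is itself a Seifert fibred space with base orbifold a disk with two cone points, so Propositions~\ref{prop:jnimplyrepdetected} and \ref{prop:detectedimplyJN} apply directly to $M'$. Concretely, filling the inner torus of $\cpq$ along $\lambda$ (slope $0$ in $\mathcal{B}_K$, i.e.\ $f(0)=r/p$ in $\mathcal{B}_1$) converts that boundary into a second exceptional fibre, and one obtains
\[
\tau'\in\mathcal{T}_{ord}(\emptyset,\{1\};M';\mathcal{B}_2)\iff(\{1\};0;\tfrac{q+s}{q};\tfrac{r}{p},\tau')\text{ is JN-realisable},
\]
which is exactly $\TT{\cpq;\{1\};r/p}$. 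Proposition~\ref{prop:tpqspecialcase} with $b=0$ then gives $[-1-\tfrac{1}{p+q},-1]$, and Lemma~\ref{lemma:b2tobc} yields $[-\infty,pq-p-q]$. Your closing remark that the torus knot complement's detected slopes are ``governed entirely by JN-realisability'' is pointing at the right idea, but you do not identify that the relevant object is $\TT{\cpq;\{1\};r/p}$ (with $J=\{1\}$, not $\emptyset$, because Dehn filling imposes the strong condition), nor that Proposition~\ref{prop:tpqspecialcase} with $b=0$ is the computation that produces the endpoint $pq-p-q$. The genus identity $2g(T_{p,q})-1=pq-p-q$ is an a posteriori check, not part of the argument.
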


\begin{proof}
    For part (1), choose $r$ and $s$ such that $ps+qr=1$ and $-q<s<0<r<p$.
    Note that in this case $M'$ is the knot complement of the torus knot $T_{p,q}$, which can be viewed as the result of gluing a solid torus to $\cpq$ by identifying the canonical meridian of the solid torus to $\lambda$ in $\mathcal{B}_K$, or to the slope $f(0)=\frac{r}{q}$ with respect to the basis $\mathcal{B}_1$. Here $f$ is the transition map between the bases $\mathcal{B}_K$ and $\mathcal{B}_1$ discussed in Section \ref{subsec:slopetransition}. Now $M'$ is a Seifert manifold and $\tau' \in \mathcal{T}_{ord}(\emptyset,\{1\};M';\mathcal{B}_2)$ if and only if
    $(\emptyset;0;\frac{q+s}{q},\frac{r}{p};\tau')$ is JN-realisable, which is also equivalent to saying $(\{1\};0;\frac{q+s}{q};\frac{r}{p},\tau')$ is JN-realisable.
    Therefore, we have \[\mathcal{T}_{ord}(\emptyset,\{1\};M';\mathcal{B}_2)=\TT{\cpq;\{1\};\frac{r}{p}}=[-\frac{p+q+1}{p+q},-1]\] by Proposition \ref{prop:tpqspecialcase} with $b=0$. According to the change of basis described in Lemma \ref{lemma:b2tobc}, we get
    \[\mathcal{T}_{ord}(\emptyset,\{1\};M';\mathcal{B}_C)=[-\infty, pq-p-q].\]
    Similarly, $\mathcal{T}_{ord}(\{1\},\{1\};M';\mathcal{B}_C)=(-\infty, pq-p-q)$ by Theorem \ref{thm:rawcases}(4)(b).

    To see part (2), suppose that $K$ is a nontrivial knot and choose $r$ and $s$ such that $ps+qr=1$ and $-q<s<0<r\leq p$. Then $0\in\mathcal{T}_{ord}(\emptyset,\{1\};M;\mathcal{B}_K)$ by \cite[Example 6.3]{BGYpreprinta}, and so $f(0)=\frac{r}{q}\in \mathcal{T}_{ord}(\emptyset, \{1\}; M; \mathcal{B}_1)$. By Theorem \ref{thm:cpqtord} and Proposition \ref{prop:tpqspecialcase} with $b=0$, we have $[-1-\frac{1}{p},-1]=\TT{\cpq;\emptyset;\frac{r}{q}}\subset \mathcal{T}_{ord}(\emptyset,\{1\};M';\mathcal{B}_2)$. By Lemma \ref{lemma:b2tobc}, we have \[[-\infty,pq-p]\subset \mathcal{T}_{ord}(\emptyset,\{1\};M';\mathcal{B}_C).\]

    In addition, note that $2g(K)-1\in\mathcal{T}_{ord}(\emptyset,\{1\};M;\mathcal{B}_K)$ by \cite[Corollary 1.4]{BGYpreprinta}. For ease of writing, set $b=2g(K)-1$. Then $f(b)=\frac{bs+r}{p-qb}\in \mathcal{T}_{ord}(\emptyset, \{1\}; M; \mathcal{B}_1)$ and hence $\TT{\cpq;\emptyset;\frac{bs+r}{p-qb}}\subset \mathcal{T}_{ord}(\emptyset,\{1\};M';\mathcal{B}_2)$ by Theorem \ref{thm:cpqtord}. If $1\leq b \leq \frac{p}{q}$, then Proposition \ref{prop:tpqspecialcase} applies and $\TT{\cpq;\emptyset;\frac{bs+r}{p-qb}}=[-1-\frac{1}{p-qb},-1]$; if $b \geq \frac{p}{q}$, then Proposition \ref{prop:tpqspecialcase2} applies and $\TT{\cpq;\emptyset;\frac{bs+r}{p-qb}}=[-1,-1+\frac{1}{qb-q}]$. Lemma \ref{lemma:b2tobc} then gives us that \[[-\infty,pq-p-q+2g(K)q]\subset \mathcal{T}_{ord}(\emptyset,\{1\};M';\mathcal{B}_C),\] when $1\leq 2g(K)-1 \leq \frac{p}{q}$; and \[[pq-p-q+2g(K)q,\infty]\subset \mathcal{T}_{ord}(\emptyset,\{1\};M';\mathcal{B}_C),\] when $2g(K)-1>\frac{p}{q}$.
\end{proof}

\begin{remark}
    The previous result is based on \cite[Corollary 1.4]{BGYpreprinta}, which shows that the slope $2g(K)-1$ is always order-detected. In fact, \cite[Corollary 1.4]{BGYpreprinta} shows that every rational slope whose distance from the longitudinal slope divides $2g(K)-1$ is order-detected. Because of this, the conclusion of Theorem \ref{thm:torusknot}(2)(b) can be improved in cases where $2g(K)-1$ is not prime.
\end{remark}

With these results, we are able to mirror the following result of Hedden and Hom, which completely describes how L-space knots behave with respect to cabling. 

\begin{theorem}\cite{Hedden09, Hom11}
    \label{thm:heddenhom}
    Suppose that $p,q \geq 2$ are relatively prime.	The $(p, q)$-cable of a knot $K \subset S^3$ is an L-space knot if and only if $K$ is an L-space knot and $p/q \geq 2 g(K) - 1$.
\end{theorem}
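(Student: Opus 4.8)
The plan is to reduce the statement to the knot Floer homological structure of L-space knots, using two classical inputs about cabling: the Seifert genus formula $g(\cpq(K)) = q\,g(K) + \tfrac{(p-1)(q-1)}{2}$, and Gordon's surgery identity $S^3_{pq}(\cpq(K)) \cong S^3_{p/q}(K)\mathbin{\#} L(q,p)$ expressing the cabling-slope surgery on the cable in terms of $p/q$-surgery on the companion. Throughout I would invoke the Ozsváth--Szabó picture: a knot $J$ is an L-space knot if and only if it admits a positive L-space surgery, equivalently $CFK^\infty(J)$ is filtered homotopy equivalent to a staircase complex, in which case $S^3_r(J)$ is an L-space for exactly the slopes $r \ge 2g(J)-1$; and the fact that $\operatorname{rk}\widehat{HF}$ is multiplicative under connected sum, so that $Y_1 \mathbin{\#} Y_2$ is an L-space if and only if both $Y_1$ and $Y_2$ are.

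For the ``if'' direction I would argue as Hedden: if $K$ is an L-space knot and $p/q \ge 2g(K)-1$, then $S^3_{p/q}(K)$ is an L-space, and since $L(q,p)$ is an L-space the Gordon identity makes $S^3_{pq}(\cpq(K))$ an L-space; as $pq > 0$ this is a positive L-space surgery, so $\cpq(K)$ is an L-space knot.

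For the ``only if'' direction I would follow Hom and break the conclusion into two parts. First, to see that $K$ is an L-space knot: apply the cabling formula for $CFK^\infty$ (via bordered Heegaard Floer homology, or the immersed-curve formalism for knot complements), which realises $CFK^\infty(\cpq(K))$ out of $q$ interlinked, suitably shifted copies of $CFK^\infty(K)$; a staircase can result only if $CFK^\infty(K)$ is itself a staircase, i.e.\ only if $K$ is an L-space knot. Equivalently one can run Hom's $\varepsilon$-invariant computation: a nontrivial L-space knot $J$ satisfies $\varepsilon(J) = 1$ and $\tau(J) = g(J)$, and the cabling formulas for $\varepsilon$ and $\tau$ push these constraints back from $\cpq(K)$ onto $K$. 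Second, granted that $K$ is an L-space knot, I would extract $p/q \ge 2g(K)-1$ from the same cabling formula: the shifted copies of the staircase $CFK^\infty(K)$ inside $CFK^\infty(\cpq(K))$ overlap, creating subquotients that are not thin, unless the shift --- governed by $p/q$ --- is at least $2g(K)-1$.

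The hard part is precisely this last step. It cannot be obtained from Dehn surgery alone: substituting the genus formula into the Gordon identity turns the inequality $p/q \ge 2g(K)-1$ into a tautology about $\cpq(K)$, so comparing which surgeries are L-spaces yields no obstruction. Extracting the bound requires genuinely controlling the filtered chain homotopy type of $CFK^\infty(\cpq(K))$ --- knowing the cabling formula sharply enough to detect exactly when the staircase shape is destroyed --- and this numerical bookkeeping is the technical core of Hom's argument, matching (on the L-space side) the role played on the orderability side by Propositions \ref{prop:tpqspecialcase} and \ref{prop:tpqspecialcase2} in our Theorem \ref{thm:torusknot}.
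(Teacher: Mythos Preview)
The paper does not prove this statement: it is quoted from the literature with the citation \cite{Hedden09, Hom11} and no proof is given. Your sketch is a fair outline of the original Hedden--Hom arguments, but there is nothing in the present paper to compare it against; the theorem appears only as motivation for the analogous orderability results (Theorem~\ref{thm:torusknot} and Corollary~\ref{cor:cableintervals}).
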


A knot $K \subset S^3$ is an L-space knot if and only if the set of NLS-detected slopes is precisely $[-\infty, 2g(K)-1]$, otherwise all slopes are NLS-detected (see e.g. \cite[Corollary 1.12]{RR17}, combined with \cite[Proposition 2.1]{OS05}). Theorem \ref{thm:heddenhom} therefore has a natural interpretation in terms of intervals of NLS-detected slopes and their behaviour with respect to cabling. We have an analogous result for the behaviour of order-detected slopes, and their behaviour with respect to cabling.

\begin{corollary}
    \label{cor:cableintervals}
    Assume $p\geq1$ and $q> 1$ are coprime integers. Let $M=S^3-n(K)$ and $M'=S^3-n(\cpq(K))$ be the knot complements of a nontrivial knot $K$ and its cable $\cpq(K)$, respectively. Fix bases $\mathcal{B}_K=\{\mu,\lambda\}$ and $\mathcal{B}_C=\{\mu_C,\lambda_C\}$ of $\pi_1(\partial M)$ and $\pi_1(\partial M')$ as explained in Section \ref{subsec:cpqnotations}.
    \begin{enumerate}
        \item If $[-\infty,2g(K)-1]= \mathcal{T}_{ord}(\emptyset,\emptyset;M;\mathcal{B}_K)$ and $2g(K)-1 <\frac{p}{q}$, then $[-\infty,pq-p-q+2g(K)q] = \mathcal{T}_{ord}(\emptyset,\{1\};M';\mathcal{B}_C)$.
        \item If $[-\infty,2g(K)-1]= \mathcal{T}_{ord}(\emptyset,\{1\};M;\mathcal{B}_K)$ and $2g(K)-1 >\frac{p}{q}$, then $\mathcal{T}_{ord}(\emptyset,\{1\};M';\mathcal{B}_C)=\RR\cup\{\infty\}$.
        \item If $ \mathcal{T}_{ord}(\emptyset,\{1\};M;\mathcal{B}_K)=\RR\cup\{\infty\}$, then $\mathcal{T}_{ord}(\emptyset,\{1\};M';\mathcal{B}_C)=\RR\cup\{\infty\}$.
    \end{enumerate}
\end{corollary}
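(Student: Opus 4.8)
The plan is to deduce all three parts by pushing the hypotheses through the change-of-basis homeomorphisms $f$ (from $\mathcal{B}_K$ to $\mathcal{B}_1$, as in Section \ref{subsec:slopetransition}) and $g$ (from $\mathcal{B}_2$ to $\mathcal{B}_C$), transporting detected slopes across the gluing of $\cpq$ onto $M$ via Theorem \ref{thm:cpqtord} and (for the equality in part (1)) Corollary \ref{cor:specialcasedetect}, and then evaluating the resulting union $\bigcup_\tau\TT{\cpq;\emptyset;\tau}$ using the ``inch-worm'' description in Section \ref{sec:cpqcalculations}. The slope $\infty=[h]$ will always be treated separately: by Proposition \ref{prop:infinite_slope} it lies in $\mathcal{T}_{ord}(\emptyset,\{1\};M';\mathcal{B}_2)$ if and only if it lies in $\mathcal{T}_{ord}(\emptyset,\{1\};M;\mathcal{B}_1)$, and since $[h]$ has $\mathcal{B}_K$-coordinate $p/q$, its membership is decided by comparing $p/q$ with $2g(K)-1$ (and is automatic in part (3)). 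Recall also that for $\cpq$ one has $1-\gamma=-s/q$.

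For part (1): since $\mathcal{T}_{ord}(\emptyset,\emptyset;M;\mathcal{B}_K)=[-\infty,2g(K)-1]$ is a proper subset of $\RR\cup\{\infty\}$, \cite[Theorem 1.2]{BC23} gives $\mathcal{T}_{ord}(\emptyset,\{1\};M;\mathcal{B}_K)=[-\infty,2g(K)-1]$ as well, so the hypothesis of Corollary \ref{cor:specialcasedetect} is met. Using $2g(K)-1<p/q$ and Lemma \ref{lemma:bktob1} (together with $f(\infty)=-s/q$), one computes $\mathcal{T}_{ord}(\emptyset,\{1\};M;\mathcal{B}_1)=f([-\infty,2g(K)-1])=[-s/q,\,f(2g(K)-1)]$, a genuine bounded subinterval of $\RR$ not containing $\infty$. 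Corollary \ref{cor:specialcasedetect} then identifies $\mathcal{T}_{ord}(\emptyset,\{1\};M';\mathcal{B}_2)\setminus\{\infty\}$ with $\bigcup_{\tau\in[-s/q,\,f(2g(K)-1)]}\TT{\cpq;\emptyset;\tau}$. Writing $\TT{\cpq;\emptyset;\tau}=[F(\tau),G(\tau)]$, Corollary \ref{cor:tcpqbehav} collapses this union to the single closed interval $[F(f(2g(K)-1)),\,G(-s/q)]$; Proposition \ref{prop:tcpqinterval} (the case $\gamma+\bar\tau=1$) gives $\TT{\cpq;\emptyset;-s/q}=\{-1\}$, and Proposition \ref{prop:tpqspecialcase} with $b=2g(K)-1$ (which satisfies $1\le b\le p/q$) gives $\TT{\cpq;\emptyset;f(2g(K)-1)}=[-1-\tfrac{1}{p-q(2g(K)-1)},-1]$. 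Hence $\mathcal{T}_{ord}(\emptyset,\{1\};M';\mathcal{B}_2)=[-1-\tfrac{1}{p-q(2g(K)-1)},-1]$ — the point $\infty$ being excluded by Proposition \ref{prop:infinite_slope} since $p/q\notin[-\infty,2g(K)-1]$ — and applying $g$ via Lemma \ref{lemma:b2tobc} yields $\mathcal{T}_{ord}(\emptyset,\{1\};M';\mathcal{B}_C)=[-\infty,\,pq-p-q+2g(K)q]$. (The inclusion $\supseteq$ here is also Theorem \ref{thm:torusknot}(2)(a); the point of the hypothesis is the reverse inclusion, which needs the equality-version Corollary \ref{cor:specialcasedetect}.)

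For part (2): with $\mathcal{T}_{ord}(\emptyset,\{1\};M;\mathcal{B}_K)=[-\infty,2g(K)-1]$ and $2g(K)-1>p/q$, Lemma \ref{lemma:bktob1} now gives that $\mathcal{T}_{ord}(\emptyset,\{1\};M;\mathcal{B}_1)=f([-\infty,2g(K)-1])$ is the complement in $\RR\cup\{\infty\}$ of the short open arc from $f(2g(K)-1)$ to $-s/q$; in particular it contains $(-\infty,f(2g(K)-1)]$, $[-s/q,\infty)$, and (since $p/q<2g(K)-1$) the point $\infty$. Using the displayed union formulas of Section \ref{sec:cpqcalculations} and Lemma \ref{lemma:brange2} (which gives $0\le f(2g(K)-1)<1-\gamma$), one has $\bigcup_{\tau\le f(2g(K)-1)}\TT{\cpq;\emptyset;\tau}=[-1,\infty)$ and $\bigcup_{\tau\ge -s/q}\TT{\cpq;\emptyset;\tau}=(-\infty,-1]$, so $\bigcup_{\tau\in\mathcal{T}_{ord}(\emptyset,\{1\};M;\mathcal{B}_1)\setminus\{\infty\}}\TT{\cpq;\emptyset;\tau}=\RR$. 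Then Theorem \ref{thm:cpqtord}(2) forces $\mathcal{T}_{ord}(\emptyset,\{1\};M';\mathcal{B}_2)\supseteq\RR$, Proposition \ref{prop:infinite_slope} adds $\infty$, and $g$ transports $\RR\cup\{\infty\}$ to $\RR\cup\{\infty\}$. Part (3) is the same argument with the intermediate steps trivialised: $\mathcal{T}_{ord}(\emptyset,\{1\};M;\mathcal{B}_1)=f(\RR\cup\{\infty\})=\RR\cup\{\infty\}$, the union formulas give $\bigcup_{\tau\in\RR}\TT{\cpq;\emptyset;\tau}=\RR$, and Theorem \ref{thm:cpqtord}(2) together with Proposition \ref{prop:infinite_slope} finish it.

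I expect the bookkeeping to be routine, but the genuinely delicate points are two. First, keeping straight the arc structure on $\mathcal{S}(T_i)\cong S^1$ — in particular distinguishing the notation $[-\infty,b]$ (a closed arc through the fibre slope) from honestly bounded subintervals of $\RR$ — when pushing forward along the Möbius-type maps $f$ and $g$, so that one does not inadvertently wrap around the circle; this is exactly where the cases of Lemmas \ref{lemma:bktob1} and \ref{lemma:b2tobc} must be invoked with care. Second, in part (1) one must know that the union $\bigcup_\tau\TT{\cpq;\emptyset;\tau}$ really is the single \emph{closed} interval with the claimed endpoints, which relies on the ``$F$ decreases only where $G$ is constant, and vice versa'' content of Corollary \ref{cor:tcpqbehav} (so the pieces chain together with no gap) combined with the exact endpoint computation of Proposition \ref{prop:tpqspecialcase}. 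Accordingly, part (1) is the most involved, being the only part that requires the equality of Corollary \ref{cor:specialcasedetect} rather than just the containment of Theorem \ref{thm:cpqtord}(2).
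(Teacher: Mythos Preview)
Your proposal is correct and follows essentially the same route as the paper's own proof: change of basis via Lemmas \ref{lemma:bktob1} and \ref{lemma:b2tobc}, Corollary \ref{cor:specialcasedetect} for the equality in part (1) versus Theorem \ref{thm:cpqtord}(2) for the containments in parts (2)--(3), and the endpoint/union computations of Section \ref{sec:cpqcalculations}. Two small differences are worth noting: in part (2) you obtain $\bigcup_{\tau\le f(2g(K)-1)}\TT{\cpq;\emptyset;\tau}=[-1,\infty)$ directly from the displayed union formulas together with Lemma \ref{lemma:brange2}, whereas the paper computes $\TT{\cpq;\emptyset;f(2g(K)-1)}$ explicitly via Proposition \ref{prop:tpqspecialcase2}; and you handle the fibre slope $\infty$ throughout via Proposition \ref{prop:infinite_slope}, which is arguably cleaner than the paper's appeal to \cite[Corollary 1.4]{BGYpreprinta} in part (2) and makes explicit (in part (1)) the exclusion of $\infty$ from $\mathcal{T}_{ord}(\emptyset,\{1\};M';\mathcal{B}_2)$ that the paper leaves implicit.
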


\begin{proof}
    For convenience, write $b=2g(K)-1$.

    To show part (1), we argue as follows. Choose $r,s$ with $ps+qr=1$ and $-q<s<0<r\leq p$. Since $\mathcal{T}(\emptyset,\emptyset;M;\mathcal{B}_K)=[-\infty,b] \neq \RR$, we have $\mathcal{T}_{ord}(\emptyset, \emptyset;M;\mathcal{B}_K)=\mathcal{T}(\emptyset,\{1\};M;\mathcal{B}_K)=[-\infty,b]$ by \cite[Theorem 1.2]{BC23}. From $[-\infty,b]= \mathcal{T}_{ord}(\emptyset,\{1\};M;\mathcal{B}_K)$ and $b<\frac{p}{q}$, we have $f([-\infty,b])=[-\frac{s}{q},\frac{sb+r}{p-qb}]= \mathcal{T}_{ord}(\emptyset,\{1\};M;\mathcal{B}_1)$ by Lemma \ref{lemma:bktob1}, where $f$ is the map arising from the change of basis. It follows from Corollary \ref{cor:specialcasedetect} that \[\bigcup_{\tau\in [-\frac{s}{q},\frac{sb+r}{p-qb}]}\TT{\cpq;\emptyset;\tau} = \mathcal{T}_{ord}(\emptyset,\{1\};M';\mathcal{B}_2).\]
    Since $b <\frac{p}{q}$, Proposition \ref{prop:tpqspecialcase} applies and gives us $$\TT{\cpq;\emptyset;\frac{sb+r}{p-qb}}=[-1-\frac{1}{p-qb},-1].$$ From Proposition \ref{prop:tcpqinterval}, we have $\TT{\cpq;\emptyset;\frac{-s}{q}}=\{-1\}$. By Corollary \ref{cor:tcpqbehav} and its accompanying discussion, \[\bigcup_{\tau\in [-\frac{s}{q},\frac{sb+r}{p-qb}]}\TT{\cpq;\emptyset;\tau}= [-1-\frac{1}{p-qb},-1],\] and so $[-1-\frac{1}{p-qb},-1]= \mathcal{T}_{ord}(\emptyset,\{1\};M';\mathcal{B}_2)$. It follows from Lemma \ref{lemma:b2tobc} that $[-\infty,pq-p-q+2g(K)]= \mathcal{T}_{ord}(\emptyset,\{1\};M';\mathcal{B}_C)$.

    Part (2) is argued in a similar way. Since $[-\infty,b]= \mathcal{T}_{ord}(\emptyset,\{1\};M;\mathcal{B}_K)$ and $b>\frac{p}{q}$, Lemma \ref{lemma:bktob1} says $f([-\infty,b])=[-\infty,\frac{sb+r}{p-qb}]\cup[\frac{-s}{q},\infty]= \mathcal{T}_{ord}(\emptyset,\{1\};M;\mathcal{B}_1)$, where $f$ is the map arising from the change of basis. It follows from Theorem \ref{thm:cpqtord} that
    \[\bigcup_{\tau\in (-\infty,\frac{sb+r}{p-qb}]\cup[\frac{-s}{q},\infty)}\TT{\cpq;\emptyset;\tau} \subset \mathcal{T}_{ord}(\emptyset,\{1\};M';\mathcal{B}_2).\]
    We apply Corollary \ref{cor:tcpqbehav} and its accompanying discussion as above. 
    We find that
    \[\bigcup_{\tau\in [\frac{-s}{q},\infty)}\TT{\cpq;\emptyset;\tau}=(-\infty,-1].\]
    Since $b> \frac{p}{q}$, Proposition \ref{prop:tpqspecialcase2} gives us $\TT{\cpq;\emptyset;\frac{sb+r}{p-qb}} =[-1,-1+\frac{1}{qb-p}]$. It follows that $$\bigcup_{\tau\in (-\infty,\frac{sb+r}{p-qb}]}\TT{\cpq;\emptyset;\tau} = [-1,\infty) .$$
    Since the meridional slope (i.e., the slope $\infty$) is always order-detected \cite[Corollary 1.4]{BGYpreprinta}, we conclude that $\RR\cup\{\infty\}=\mathcal{T}_{ord}(\emptyset,\{1\};M';\mathcal{B}_2)$ and so $\mathcal{T}_{ord}(\emptyset,\{1\};M';\mathcal{B}_C)=\RR\cup\{\infty\}$.

    Part (3) follows immediately from the fact \[\bigcup_{\tau\in \RR}\TT{\cpq;\emptyset;\tau} =\RR\subset \mathcal{T}_{ord}(\emptyset,\{1\};M';\mathcal{B}_2),\] together with the same observations as above.
\end{proof}

Note that if $K'$ is the $(p,q)$-cable of a knot $K$ in $S^3$, then $g(K') = (p-1)(q-1)/2 + qg(K)$. Thus, Theorem \ref{thm:introcables} follows from Theorem \ref{thm:torusknot} and Corollary \ref{cor:cableintervals}.

\begin{remark}
    In a similar manner, one can analyse the set of strongly order-detected slopes: if some interval is contained in $\mathcal{T}_{ord}(\{1\},\{1\};M;\mathcal{B})$, then $\mathcal{T}_{ord}(\{1\},\{1\};M';\mathcal{B}_C)$ contains a corresponding interval. While there are many special cases where intervals of strongly order-detected slopes are shown to exist, there are no general results similar to \cite[Corollary 1.4]{BGYpreprinta} that allow for a clean statement like Theorem \ref{thm:cpqtord} which applies to all knots in $S^3$. Since our results would only be piecemeal, we do not pursue an explicit computation of strongly-detected intervals of slopes on the boundary of cable knots in this manuscript.
\end{remark}

\appendix

\section*{Appendix: Tools for computing JN-realisable tuples}
\label{sec:JN appendix}

\setcounter{section}{1}
\setcounter{theorem}{0}

We follow the notation introduced at the beginning of Section \ref{sec:seifertmanifold} and in Section \ref{sec:cpqcalculations} that matches that of \cite{BC17}. These results are from \cite[Appendix]{BC17}, the proofs of which follow from an analysis of \cite{EHN81, JN85a, JN85b, Naimi94}.

Set $\bar{\tau_i}=\tau_i-\floor{\tau_i}\in [0,1)$ for $i=1,\dots, r$ and $b=-(\floor{\tau_1}+\dots+\floor{\tau_r})$. Then $(J;0;\gamma_1,\dots,\gamma_n;\tau_1,\dots,\tau_r)$ is JN-realisable if and only if $(J;b;\gamma_1,\dots,\gamma_n;\bar{\tau_1},\dots,\bar{\tau_r})$ is JN-realisable.

Firstly, consider the case where there exists $j$ such that $\tau_j$ is an integer, and use $J^0$ to denote $J\backslash \{j: \tau_j\in \ZZ\}$. Since JN realizability is invariant under permutation of $(\tau_1, \ldots, \tau_r)$, we may assume that the $\tau_j$'s are indexed in such a way that $\tau_1,\dots,\tau_{r_1}$ are not integers, and $\tau_{r_1+1},\dots,\tau_r$ are integers and $J\cap \{r_1+1,\dots, r\}=\{r_2+1,\dots , r\}$ for some $r_2 \geq r_1$. Then $(J;b;\gamma_1,\dots,\gamma_n;\bar{\tau_1},\dots,\bar{\tau_r})$ is JN-realisable if and only if $(J^0;b;\gamma_1,\dots,\gamma_n;\bar{\tau_1},\dots,\bar{\tau_{r_2}})$ is JN-realisable since $\tau_j\in \ZZ$ with $j\in\ZZ$ forces the function $g_j$ corresponding to $\bar{\tau_j}$ to be the identity. Therefore, in the case where there exists $j$ such that $\tau_j$ is an integer, we can reduce to considering the case where $j\in J$ implies $\tau_j\notin\ZZ$. To handle this case, we have the following theorem.

\begin{theorem}[\cite{BC17}, Theorem A.1 or \cite{JN85b} Theorem 1]\label{thm:integraltau}
    Suppose that if $j \in J$ then $\tau_j \notin \mathbb{Z}$, and let $s$ be the number of $\tau_j$ which
    are integers. If $s>0$, then $(J;b;\gamma_1,\dots,\gamma_n;\bar{\tau_1},\dots,\bar{\tau_r})$ is JN-realisable if and only if $2-s\leq b\leq n+r-2$.
\end{theorem}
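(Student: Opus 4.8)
The plan is to establish the two implications separately: necessity by an elementary displacement estimate in $\hstilde$, and sufficiency by invoking the fundamental-domain constructions of Eisenbud--Hirsch--Neumann and Jankins--Neumann. By the reduction made immediately above the statement, JN-realisability of $(J;b;\gamma_1,\dots,\gamma_n;\bar\tau_1,\dots,\bar\tau_r)$ is the existence of elements $a_1,\dots,a_k\in\hstilde$ with $k=n+r$ --- namely $f_1,\dots,f_n,g_1,\dots,g_r$ listed in this order --- such that $f_i$ is conjugate to $\sh(\gamma_i)$, $g_j$ is conjugate to $\sh(\bar\tau_j)$ for $j\in J$, $\tau(g_j)=\bar\tau_j$ for all $j$, and $a_1\cdots a_k=\sh(b)$. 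Call $g_j$ \emph{degenerate} when $\bar\tau_j=0$; by hypothesis every degenerate index lies outside $J$, there are exactly $s$ of them, and each degenerate factor has a fixed point since its translation number vanishes. The first step is the displacement bound: each non-degenerate factor $a$ satisfies $0<a(x)-x<1$ for all $x$ --- if $a=h\sh(\beta)h^{-1}$ with $0<\beta<1$ because $0<h(y+\beta)-h(y)<h(y+1)-h(y)=1$, and if $a$ merely has $\tau(a)=\bar\tau_j\in(0,1)$ because that rules out fixed points (forcing $a(x)>x$) while $a(x_0)-x_0\ge 1$ anywhere would iterate to $\tau(a)\ge 1$ --- whereas each degenerate factor, fixing some $p$, satisfies $-1<a(x)-x<1$ for all $x$ because it carries $[p,p+1]$ onto itself. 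I would also observe here that $J$ is irrelevant to the answer: enlarging $J$ only strengthens constraints (shrinking the realisable set of $b$), and in the sufficiency direction the factors attached to $J$ will be chosen to be honest rotations anyway.

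For necessity I would argue as follows. Since $b\in\ZZ$ the element $\sh(b)$ is central in $\hstilde$, so the relation $a_1\cdots a_k=\sh(b)$ is invariant under cyclic permutation of the factors; using $s>0$, cyclically permute so that $a_1$ is degenerate, with fixed point $p$. Then $a_2\cdots a_k(p)=a_1^{-1}\sh(b)(p)=p+b$, and telescoping gives
\[
b=a_2\cdots a_k(p)-p=\sum_{j=2}^{k}\bigl(a_j(y_j)-y_j\bigr),\qquad y_j=a_{j+1}\cdots a_k(p),
\]
with $y_k=p$. Among $a_2,\dots,a_k$ there are $s-1$ degenerate factors, contributing terms in $(-1,1)$, and $n+r-s$ non-degenerate ones, contributing terms in $(0,1)$; hence $b$ lies strictly between $-(s-1)$ and $n+r-1$, so, being an integer, $2-s\le b\le n+r-2$.

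For sufficiency, given an integer $b$ with $2-s\le b\le n+r-2$, I would construct the factors following \cite{EHN81,JN85b,Naimi94}. Taking every non-degenerate factor to be an honest rotation ($\sh(\gamma_i)$ or $\sh(\bar\tau_j)$) makes the conjugacy constraints from $J$ automatic, so the task reduces to choosing the $s$ degenerate factors --- each required only to have a fixed point --- on appropriately chosen fundamental domains so that the full ordered product equals $\sh(b)$. The Eisenbud--Hirsch--Neumann construction realises in this way every integer $b$ in the admissible interval: the single ``free'' degenerate factor absorbs the fractional discrepancy left by the rotations, which is exactly why the answer depends only on $n$, $r$, $s$, $b$ and not on the $\gamma_i$ or $\bar\tau_j$; one checks the two endpoints $b=2-s$ and $b=n+r-2$ directly and obtains the interior values because the realisable set is an interval of integers.

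The hard part is the sufficiency, and within it the \emph{global} realisation of the composite product as an exact translation with the prescribed rotation data attached to each slot: matching the composite at a single point (the telescoping computation above, run in reverse) is easy, but forcing it to be a translation everywhere is precisely the fundamental-domain gluing argument of \cite{EHN81}, whose endpoint bookkeeping was refined in \cite{JN85b} and \cite{Naimi94}. A self-contained proof would essentially reproduce that analysis, so --- in keeping with the provenance stated at the head of this appendix --- I would cite it rather than redo it.
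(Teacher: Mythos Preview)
Your proposal is correct, and in fact goes well beyond what the paper does: the paper simply states this theorem with attribution to \cite[Theorem A.1]{BC17} and \cite[Theorem 1]{JN85b} and offers no argument whatsoever, as announced at the head of the appendix. So there is no ``paper's own proof'' to compare against.

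Your necessity argument is clean and complete. The key ingredients --- the displacement bounds $0<a(x)-x<1$ for factors with translation number in $(0,1)$ and $-1<a(x)-x<1$ for factors with a fixed point, the centrality of $\sh(b)$ allowing cyclic permutation, and the telescoping sum evaluated at a fixed point of a degenerate factor --- are all correctly stated and assembled, yielding $b\in(1-s,\,n+r-1)$ and hence $2-s\le b\le n+r-2$. This is the standard argument going back to \cite{EHN81}, but you have written it out explicitly where the paper does not.

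For sufficiency you do exactly what the paper does, namely defer to the literature, which is appropriate here. Your remark that taking the non-degenerate factors to be honest rotations trivially handles the $J$-constraints, reducing the problem to distributing the degenerate factors, is a helpful orientation even if the full construction is then outsourced. One small caution: your phrasing ``the realisable set is an interval of integers'' is true but is itself part of what the cited constructions establish, so it should not be presented as an independent observation.
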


On the other hand, if no $\tau_j$ is an integer, then we have:

\begin{theorem}[\cite{EHN81}, \cite{JN85b}, \cite{Naimi94} and \cite{BC17} Theorem A.2]\label{thm:rawallpositive}
    Suppose that $n+r\geq 3$, $J\subset \{1,2,\dots,r\}$, $b\in \ZZ$ and $0<\gamma_1,\dots,\gamma_n,\bar{\tau_1},\dots,\bar{\tau_r}<1$. Then we have the following.
    \begin{enumerate}
        \item If $(J;b;\gamma_1,\dots,\gamma_n;\bar{\tau_1},\dots,\bar{\tau_r})$ is JN-realisable, then $1\leq b \leq n+r-1$.
        \item If $2\leq b\leq n+r-2$, then $(J;b;\gamma_1,\dots,\gamma_n;\bar{\tau_1},\dots,\bar{\tau_r})$ is JN-realisable in $\widetilde{\PSL(2,\RR)}$.
        \item $(J;n+r-1;\gamma_1,\dots,\gamma_n;\bar{\tau_1},\dots,\bar{\tau_r})$ is JN-realisable if and only if $(J;1;1-\gamma_1,\dots,1-\gamma_n;1-\bar{\tau_1},\dots,1-\bar{\tau_r})$ is JN-realisable.
        \item $(J;1;\gamma_1,\dots,\gamma_n;\bar{\tau_1},\dots,\bar{\tau_r})$ is JN-realisable if and only if there are coprime integers $0<A<N$ and some permutation $(\frac{A_1}{N},\frac{A_2}{N},\dots,\frac{A_{n+r}}{N})$ of $(\frac{A}{N},1-\frac{A}{N},\frac{1}{N},\dots,\frac{1}{N})$ such that \begin{itemize}
                  \item $\gamma_i<\frac{A_i}{N}$ for all $1\leq i\leq n$;
                  \item $\bar{\tau_j}<\frac{A_{n+j}}{N}$ for all $j\in J$;
                  \item $\bar{\tau_j}\leq \frac{A_{n+j}}{N}$ for all $j\in \{1,2,\dots,r\}\backslash J$.
              \end{itemize}
    \end{enumerate}
\end{theorem}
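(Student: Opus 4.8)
The plan is to assemble the four parts from the Milnor--Wood circle of ideas together with the Jankins--Neumann--Naimi arithmetic, working throughout with products of homeomorphisms in $\hstilde$. The elementary fact underlying everything is that an element $\phi \in \hstilde$ conjugate to $\sh(\gamma)$ with $0 < \gamma < 1$ satisfies $\phi(x) > x$ for all $x \in \RR$ (conjugation by an increasing homeomorphism preserves this), and that any $g \in \hstilde$ with $\tau(g) = \bar\tau \in (0,1)$ is fixed-point-free, hence also satisfies $g(x) > x$ for all $x$, since $x \mapsto g(x) - x$ is continuous, periodic of period one, nowhere zero, and of positive mean.

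For part (1) I would observe that in any hypothetical realisation each of the $n+r$ factors $f_i$, $g_j$ satisfies $\phi(x) > x$, hence so does the product $f_1\circ\cdots\circ f_n\circ g_1\circ\cdots\circ g_r = \sh(b)$; thus $b = \tau(\sh(b)) > 0$, and since $b \in \ZZ$ we get $b \ge 1$. For the upper bound, conjugate the defining relation by the orientation-reversing reflection $x \mapsto -x$ (conjugation by which is an automorphism of $\hstilde$ sending $\sh(\gamma)$ to $\sh(-\gamma)$) and absorb the central elements via $\sh(-\gamma) = \sh(1-\gamma)\circ\sh(-1)$; this produces $n+r$ factors, each conjugate to a shift $\sh(1-\gamma_i)$ or with translation number $1-\bar\tau_j \in (0,1)$, whose product is $\sh(n+r-b)$. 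The first inequality now gives $n+r-b \ge 1$. The same reflect-and-absorb manoeuvre, carried out so as to preserve the set $J$ of factors that remain rigidly conjugate to shifts, is exactly what is needed for the duality of part (3): it sets up a bijection between realisations of $(J; n+r-1; \gamma_*; \bar\tau_*)$ and of $(J; 1; 1-\gamma_*; 1-\bar\tau_*)$.

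Part (2) asserts that the open interior $2 \le b \le n+r-2$ of the Milnor--Wood window is always realised, and moreover by a representation into $\widetilde{\PSL(2,\RR)}$. Following \cite{EHN81}, I would put a hyperbolic cone structure on the base orbifold with appropriately chosen cone angles and boundary translation lengths, so that its holonomy lands in $\widetilde{\PSL(2,\RR)} \subset \hstilde$, and then vary the geometric parameters, using a connectedness/degree argument for the map sending geometric data to the tuple of boundary translation numbers, to hit the prescribed values $\gamma_i$, $\bar\tau_j$ for any Euler number in the stated range; the conjugacy condition at indices in $J$ is automatic because elliptic and hyperbolic/parabolic elements of $\widetilde{\PSL(2,\RR)}$ are conjugate in $\hstilde$ to the relevant shifts.

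The heart of the theorem is part (4), the extremal case $b = 1$ (equivalently $b = n+r-1$ by part (3)), which is the Jankins--Neumann realisability criterion proved in general by Naimi. For the implication ($\Leftarrow$), given coprime $0 < A < N$ and a permutation $(A_1/N,\dots,A_{n+r}/N)$ of $(A/N,\,1-A/N,\,1/N,\dots,1/N)$ dominating $(\gamma_*, \bar\tau_*)$, I would build the $f_i$, $g_j$ explicitly as homeomorphisms of $S^1 = \RR/\ZZ$ that act as rotation by $A_i/N$ outside a short arc and are conjugate to the rotations $R_{\gamma_i}$, $R_{\bar\tau_j}$ (the strict versus non-strict inequalities dictating how the defect is absorbed into the arc), placed cyclically around the circle so that the composition is the identity on $S^1$ with lift $\sh(1)$; this arc bookkeeping is the Eisenbud--Hirsch--Neumann construction. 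The implication ($\Rightarrow$) is the genuinely hard direction: from a realisation with the extremal value $b=1$ one must extract the fraction $A/N$, and although \cite{JN85b} handles $n+r=3$ by direct analysis of rotation sets, the general case rests on \cite{Naimi94}, whose proof is a delicate induction on the number of exceptional fibres exploiting the rigidity forced by extremality. I expect this last implication to be the main obstacle; in a self-contained write-up one simply cites \cite{Naimi94}, and the remaining ingredients---integrality of $b$, the reflection duality, and the hyperbolic construction in the middle range---are comparatively routine.
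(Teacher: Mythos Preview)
The paper does not give a proof of this theorem. It appears in the appendix explicitly as a quoted result: the authors write that ``these results are from \cite[Appendix]{BC17}, the proofs of which follow from an analysis of \cite{EHN81, JN85a, JN85b, Naimi94},'' and the theorem is stated with the attribution ``\cite{EHN81}, \cite{JN85b}, \cite{Naimi94} and \cite{BC17} Theorem A.2'' in its heading. There is therefore nothing in the paper to compare your proposal against.

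That said, your sketch is a faithful outline of how the argument runs in the cited literature. The lower bound in (1) via $\phi(x)>x$ for each factor, the reflection $x\mapsto -x$ combined with absorbing central shifts for the upper bound and for the duality (3), the Eisenbud--Hirsch--Neumann hyperbolic construction for the interior range (2), and the identification of (4) as the Jankins--Neumann criterion with Naimi's theorem supplying the hard direction---these are exactly the ingredients assembled in \cite[Appendix]{BC17}. Your remark that the $(\Rightarrow)$ direction of (4) is the genuine obstacle, and that in practice one cites \cite{Naimi94} rather than reproving it, is accurate and is precisely what the paper does.
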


We can use these theorems to develop a notion of ``relative JN-realisability" as follows. Following \cite[Appendix]{BC17}, for a fixed tuple $\tau_*=(\tau_1,\dots,\tau_{r-1})$, we set \begin{itemize}
    \item $r_1=|\{j: \tau_j\notin \ZZ, 1\leq j\leq r-1\}|$, the number of non-integral $\tau_j$;
    \item $s_0=|\{j:\tau_j\in \ZZ \mbox{ and } j\in \{1,2,\dots,r-1\}\backslash J\}|$, the number of integral $\tau_j$ whose indices not in $J$;
    \item $b_0=-(\floor{\tau_1}+\dots+\floor{\tau_{r-1}})$;
    \item $m_0=b_0-(n+r_1+s_0-1)$;
    \item $m_1=b_0+s_0-1$.
\end{itemize}

For a Seifert fibered manifold as in Section \ref{sec:seifertmanifold}, fix $J\subset \{1,\dots,r-1\}$ and $\tau_*=(\tau_1,\dots,\tau_{r-1})\in \RR^{r-1}$.
Set \[\TT{M;J;\tau_*}=\{\tau'\in \RR: (J;0;\gamma_1,\dots,\gamma_n;\tau_1,\dots,\tau_{r-1},\tau') \mbox{ is JN-realisable}\}\]
and \[\TTT{M;J;\tau_*}=\{\tau'\in \RR: (J\cup\{r\};0;\gamma_1,\dots,\gamma_n;\tau_1,\dots,\tau_{r-1},\tau') \mbox{ is JN-realisable}\}.\]

\begin{theorem}[\cite{BC17} Proposition A.4\footnote{The statements of 2(a)(iii) and 3(a)(iii) are slightly changed from that of \cite{BC17} Proposition A.4, where there is a small error in the case where $n+r_1=2$.}]\label{thm:rawcases}
    Fix $J\subset \{1,\dots,r-1\}$ and $\tau_*=(\tau_1,\dots,\tau_{r-1})\in \RR^{r-1}$. Suppose that $n+r_1+s_0\geq 2$. Then we have the following.
    \begin{enumerate}
        \item \begin{enumerate}
                  \item $(m_0,m_1)\subset \TTT{M;J;\tau_*}\subset \TT{M;J;\tau_*}\subset (m_0-1,m_1+1)$.
                  \item $[m_0,m_1]\subset \TT{M;J;\tau_*}$.
                  \item If $s_0>0$, then $m_0<m_1$ and $(m_0,m_1)= \TTT{M;J;\tau_*}\subset \TT{M;J;\tau_*}= [m_0,m_1]$.
              \end{enumerate}
        \item \begin{enumerate}
                  \item If $\TT{M;J;\tau_*}\cap (m_0-1,m_0)\not= \emptyset$, then
                        \begin{enumerate}
                            \item $s_0=0$;
                            \item $1\geq |\{i: \gamma_i\leq \frac{1}{2}\}|+|\{j\in J: 0<\bar{\tau_j}\leq \frac{1}{2}\}|+|\{j\notin J: 0<\bar{\tau_j}\leq \frac{1}{2}\}|$;
                            \item
                                  if $n+r_1 \geq 3$ then there is some $\eta \in(m_0-1,m_0)\cap \QQ$ such that $\TTT{M;J;\tau_*}\cap(m_0-1,m_0]=(\eta,m_0]$ and $\TT{M;J;\tau_*}\cap(m_0-1,m_0]=[\eta,m_0]$, or if $n + r_1 = 2$ then there is some $\eta \in(m_0-1,m_0) \cap \QQ$ such that $\TTT{M;J;\tau_*}=(\eta,m_0)$ and $\TT{M;J;\tau_*}=[\eta,m_0]$.
                        \end{enumerate}
                  \item If $n+r_1=2$, then $\TT{M;J;\tau_*}\cap (m_0-1,m_0)\not= \emptyset$ if and only if \begin{itemize}
                            \item either $-(\gamma_1+\dots+\gamma_n+\tau_1+\dots+\tau_{r-1})<m_0$,
                            \item or $n=0$, $-\sum_{j=1}^r \tau_j=m_0$, $\tau_j\in \QQ$ for all $j=1,\dots,r$ and there is some $j\in J$ with $\tau_j$ not integral.
                        \end{itemize}
              \end{enumerate}
        \item \begin{enumerate}
                  \item If $\TT{M;J;\tau_*}\cap (m_1,m_1+1)\not= \emptyset$, then
                        \begin{enumerate}
                            \item $s_0=0$;
                            \item $1\geq |\{i: \gamma_i\geq \frac{1}{2}\}|+|\{j\in J: 0<\bar{\tau_j}\geq \frac{1}{2}\}|+|\{j\notin J: 0<\bar{\tau_j}\geq \frac{1}{2}\}|$;
                            \item
                                  If $n+r_1 \geq 3$ then there is some $\xi\in (m_1,m_1+1)\cap \QQ$ such that $\TTT{M;J;\tau_*}\cap [m_1,m_1+1)=[m_1,\xi)$ and $\TT{M;J;\tau_*}\cap [m_1,m_1+1)=[m_1,\xi]$ or if $n + r_1 = 2$ then there is some $\xi\in (m_1,m_1+1)\cap \QQ$ such that $\TTT{M;J;\tau_*}=(m_1,\xi)$ and $\TT{M;J;\tau_*}=[m_1,\xi]$.
                        \end{enumerate}
                  \item If $n+r_1=2$, then $\TT{M;J;\tau_*}\cap(m_1,m_1+1)\not=\emptyset$ if and only if
                        \begin{itemize}
                            \item either $-(\gamma_1+\dots+\gamma_n+\tau_1+\dots+\tau_{r-1})>m_0$,
                            \item or $n=0$, $-\sum_{j=1}^r \tau_j=m_0$, $\tau_j\in \QQ$ for all $j=1,\dots,r$ and there is some $j\in J$ with $\tau_j$ not integral.
                        \end{itemize}
              \end{enumerate}
        \item \begin{enumerate}
                  \item $\TT{M;J;\tau_*}$ is a closed subinterval of $(m_0-1,m_1+1)$ with rational endpoints.
                  \item Either $\TTT{M;J;\tau_*}$ is the interior of $\TT{M;J;\tau_*}$ or $s_0=0$, $n+r_1=2$ and $\TTT{M;J;\tau_*}=\TT{M;J;\tau_*}=\{m_0\}$.
                  \item $\TTT{M;J;\tau_*}=\{m_0\}$ if and only if $s_0=0$, $n+r_1=2$, $m_0= -(\gamma_1+\dots+\gamma_n+\tau_1+\dots+\tau_{r-1})$ and either $n\not=0$ or $\tau_j\notin \QQ$ for some $j\in\{1,\dots,r\}$ or there is some $j\in J$ with $\tau_j$ not integral.
              \end{enumerate}
    \end{enumerate}
\end{theorem}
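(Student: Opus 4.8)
Since Theorem~\ref{thm:rawcases} is, except for parts 2(a)(iii) and 3(a)(iii) in the case $n+r_1=2$, a verbatim restatement of \cite[Proposition~A.4]{BC17}, the plan is to invoke that reference for the bulk of the statement and to supply the short extra argument needed to establish and correct the two modified clauses. I would begin by recalling the normalisation at the head of this appendix: for a real number $\tau'$, writing $b=b_0-\floor{\tau'}$, the tuple $(J;0;\gamma_1,\dots,\gamma_n;\tau_1,\dots,\tau_{r-1},\tau')$ is JN-realisable precisely when $(J;b;\gamma_1,\dots,\gamma_n;\bar{\tau_1},\dots,\bar{\tau_{r-1}},\bar{\tau'})$ is, and---after deleting the integral slots (which can be dropped, as explained above) and passing from $J$ to $J^0$---this is decided by Theorem~\ref{thm:integraltau} whenever some slot among the data is integral and by Theorem~\ref{thm:rawallpositive} when none is. Consequently $\TT{M;J;\tau_*}$, and likewise $\TTT{M;J;\tau_*}$, splits as the disjoint union over $b\in\ZZ$ of its trace on the strip $\{\tau':\floor{\tau'}=b_0-b\}$, and membership in that trace is governed entirely by the admissibility of the single remaining parameter $\bar{\tau'}\in[0,1)$ for the given value of $b$.

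Next I would run through the admissible values of $b$. Theorem~\ref{thm:rawallpositive}(1)--(2) (with Theorem~\ref{thm:integraltau} governing the subcases where some slot is integral, which is where the quantity $s_0$ enters) confines $b$ to a bounded integer interval whose interior is automatically realisable; translating $b=b_0-\floor{\tau'}$ back into the $\tau'$-coordinate converts the endpoints of this interval into exactly $m_0$ and $m_1$, which yields the inclusions $(m_0,m_1)\subset\TTT{M;J;\tau_*}$, $[m_0,m_1]\subset\TT{M;J;\tau_*}$ and $\TT{M;J;\tau_*}\subset(m_0-1,m_1+1)$ of parts (1)(a)--(b), as well as the assertion of (1)(c) that a positive $s_0$ forces $m_0<m_1$ and pins $\TT{M;J;\tau_*}$, $\TTT{M;J;\tau_*}$ to $[m_0,m_1]$, $(m_0,m_1)$. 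The two extreme strips, covering the half-open neighbourhoods $(m_0-1,m_0)$ and $(m_1,m_1+1)$ of the endpoints, are controlled by Theorem~\ref{thm:rawallpositive}(3)--(4): its Diophantine criterion---coprime $0<A<N$ together with a prescribed permutation of $\frac{A}{N},1-\frac{A}{N},\frac1N,\dots,\frac1N$---cuts out, on each such strip, a subinterval with rational endpoint $\eta$ or $\xi$, and whether that endpoint is attained is exactly the difference between the strict inequality $\bar{\tau_r}<\frac{A_{n+r}}{N}$, which is the constraint on the new slot $r$ when computing $\TTT{M;J;\tau_*}$ (since $r\in J\cup\{r\}$ there), and the weak inequality $\bar{\tau_r}\le\frac{A_{n+r}}{N}$ relevant to $\TT{M;J;\tau_*}$. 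Together with Theorem~\ref{thm:integraltau} to decide realisability at the integers $m_0$, $m_1$ themselves, this yields parts (2)(b), (3)(b), (4)(a)--(c), and in particular the dichotomy in (4)(b) that $\TTT{M;J;\tau_*}$ is either the interior of $\TT{M;J;\tau_*}$ or the singleton $\{m_0\}$.

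The step where the genuine work lies is the degenerate case $n+r_1=2$ that forces the corrected forms of 2(a)(iii) and 3(a)(iii). Here I would first observe that the hypotheses of case 2(a) or 3(a) force $s_0=0$: were $s_0$ positive, Theorem~\ref{thm:integraltau} would make all of $[m_0,m_1]$ with $m_0<m_1$ realisable and Theorem~\ref{thm:rawallpositive}(2) would then push realisability strictly past both endpoints, contradicting the assumption that $\TT{M;J;\tau_*}$ meets $(m_0-1,m_0)$ or $(m_1,m_1+1)$. With $s_0=0$ and $n+r_1=2$, the ``automatic'' middle range of Theorem~\ref{thm:rawallpositive}(2) is empty as soon as $\bar{\tau'}$ is irrational, so every realisable $\tau'$ of non-integral fractional part lies in the bottom strip $b=1$ or the top strip; I would analyse these two strips directly via Theorem~\ref{thm:rawallpositive}(4), applying the duality of Theorem~\ref{thm:rawallpositive}(3) for the top strip. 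Following the strict inequality on slot $r$ through this computation shows that the admissible $\bar{\tau'}$ for $\TTT{M;J;\tau_*}$ form an open interval missing the endpoint that corresponds to the integer $m_0$ (resp.\ $m_1$), so $\TTT{M;J;\tau_*}=(\eta,m_0)$ (resp.\ $(m_1,\xi)$); for $\TT{M;J;\tau_*}$ the weak inequality includes that endpoint, and in addition the integral value $\tau'=m_0$ (resp.\ $m_1$) is itself realisable by Theorem~\ref{thm:integraltau} with $s=1$, so $\TT{M;J;\tau_*}=[\eta,m_0]$ (resp.\ $[m_1,\xi]$)---exactly the corrected clauses. The main obstacle throughout is the bookkeeping: keeping straight which strip a given $\tau'$ occupies, how the ambient parameter count in Theorem~\ref{thm:rawallpositive} shifts according as $\bar{\tau'}$ is or is not integral, and how the strict-versus-weak inequalities on the $r$-th slot survive the complementation $\gamma_i\mapsto 1-\gamma_i$, $\bar{\tau_j}\mapsto 1-\bar{\tau_j}$ of Theorem~\ref{thm:rawallpositive}(3). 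This is elementary but error-prone---witness the erroneous original statement---so I would write out the $n+r_1=2$ case explicitly rather than rely on symmetry.
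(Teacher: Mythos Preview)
The paper does not provide a proof of this theorem at all: it is stated in the appendix as a citation of \cite[Proposition~A.4]{BC17}, with the footnote noting the correction to 2(a)(iii) and 3(a)(iii) in the $n+r_1=2$ case but without supplying any argument for it. Your plan to cite \cite{BC17} for the bulk and then justify the corrected clauses via Theorems~\ref{thm:integraltau} and~\ref{thm:rawallpositive} is therefore already more than the paper does, and the overall strategy is sound.

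One small wrinkle: your argument that $s_0=0$ under the hypotheses of 2(a) or 3(a) is garbled. You write that if $s_0>0$ then realisability would be ``pushed strictly past both endpoints, contradicting the assumption that $\TT{M;J;\tau_*}$ meets $(m_0-1,m_0)$''---but meeting $(m_0-1,m_0)$ is exactly the hypothesis, not something to be contradicted. The correct argument is the opposite: if $s_0>0$ then part (1)(c) forces $\TT{M;J;\tau_*}=[m_0,m_1]$, which does \emph{not} meet $(m_0-1,m_0)$. Since (2)(a)(i) is already covered by the citation to \cite{BC17}, you could simply drop this paragraph rather than repair it.
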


\bibliographystyle{plain}

\bibliography{cablesbib}

\end{document}